\definecolor{darkred}{RGB}{100,0,0}
\definecolor{darkgreen}{RGB}{0,100,0}
\definecolor{darkblue}{RGB}{0,0,150}
\newtheorem{thm}{Theorem}
\newtheorem{prp}{Proposition}
\newtheorem{lem}{Lemma}
\newtheorem{cor}{Corollary}
\def\beq{\begin{equation}}
\def\eeq{\end{equation}}
\def\beqn{\begin{eqnarray*}}
\def\eeqn{\end{eqnarray*}}
\def\bitem{\begin{itemize}}
\def\eitem{\end{itemize}}
\def\benum{\begin{enumerate}}
\def\eenum{\end{enumerate}}
\def\bmult{\begin{multline*}}
\def\emult{\end{multline*}}
\def\bcenter{\begin{center}}
\def\ecenter{\end{center}}
\DeclareMathOperator*{\argmin}{arg\, min}
\DeclareMathOperator{\tr}{tr}
\def\cA{\mathcal{A}}
\def\cB{\mathcal{B}}
\def\cC{\mathcal{C}}
\def\cE{\mathcal{E}}
\def\cF{\mathcal{F}}
\def\cL{\mathcal{L}}
\def\cN{\mathcal{N}}
\def\cS{\mathcal{S}}
\def\cU{\mathcal{U}}
\def\bA{\mathbf{A}}
\def\bB{\mathbf{B}}
\def\bE{\E}
\def\bI{\mathbf{I}}
\def\bN{\mathbf{N}}
\def\bP{\P}
\def\bQ{\mathbf{Q}}
\def\bR{\mathbf{R}}
\def\bS{\mathbf{S}}
\def\bT{\mathbf{T}}
\def\bU{\mathbf{U}}
\def\bX{\mathbf{X}}
\def\bZ{\mathbf{Z}}
\def\ba{\mathbf{a}}
\def\1{{\mathbf 1}}
\newcommand\bSigma{{\boldsymbol\Sigma}}
\newcommand\bGamma{{\boldsymbol\Gamma}}
\def\bbB{\mathbb{B}}
\def\bbR{\mathbb{R}}
\def\bPi{\boldsymbol{\Pi}}
\newcommand{\E}{\operatorname{\mathbb{E}}}
\renewcommand{\P}{\operatorname{\mathbb{P}}}
\newcommand{\Var}{\operatorname{Var}}
\newcommand{\var}[1]{\operatorname{Var}\left(#1\right)}
\newcommand{\cov}[1]{\operatorname{Cov}\left(#1\right)}
\newcommand{\ol}{\overline}
\title{Optimal Sparsity Testing in Linear regression Model}
\author{Alexandra Carpentier and Nicolas Verzelen}
\begin{document}

\maketitle
\begin{abstract}
We consider the problem of sparsity testing in the high-dimensional linear regression model. The problem is to test whether the number of non-zero components (aka the sparsity) of the regression parameter $\theta^*$ is less than or equal to  $k_0$. We pinpoint the minimax separation distances for this problem, which amounts to quantifying how far a $k_1$-sparse vector $\theta^*$ has to be  from the set of $k_0$-sparse vectors so that a test is able to reject the null hypothesis with high probability. 
Two scenarios are considered. In the independent scenario, the covariates are i.i.d. normally~distributed  and the noise level is known. In the general scenario, both the covariance matrix of the covariates and the noise level are unknown. 
Although the minimax separation distances differ in these two scenarios, both of them actually depend on $k_0$ and $k_1$ illustrating that for this composite-composite testing problem both the size of the null and of the alternative hypotheses play a key role. 

\end{abstract}

\section{Introduction}

In the last decade, a lot of effort has been devoted to developing sound statistical methods for high-dimensional data.  Most of the estimation procedures rely on the assumption that the parameter of interest has some possibly unknown structure.  A prominent example is the high-dimensional linear regression problem where it is usually assumed that the regression parameter is sparse~\cite{buhlmann2011statistics}. Despite the pervasiveness of the sparsity assumption in the literature, very few contributions challenge this assumption. 

In this work, we tackle the largely ignored problem of assessing the sparsity of the regression parameter. Henceforth, we consider the random design high-dimensional linear regression model
\beq\label{eq:model_linear_regression}
Y =  \bX\theta^* + \sigma \epsilon\ ,
\eeq
where the unknown parameter $\theta^*$ belongs to $\bbR^p$, the noise vector $\epsilon\in \bbR^n$ follows a standard normal distribution and where the rows of $\bX$ are i.i.d. sampled according to the normal distribution $\cN(0,\bSigma)$. 
For a given integer $k_0$, we study  the problem of testing whether the vector $\theta^*$ has at most $k_0$ non-zero components.

\subsection{Minimax separation distance}

Before discussing our contribution, we first formalize the sparsity testing problem. 
For a vector $\theta$, $\|\theta\|_0$ denotes its number of non-zero entries. Then, given a non-negative integer $k_0\in [0,p]$, write 
$\bbB_0[k_0] = \{\theta \in \mathbb R^p : \|\theta\|_0 \leq k_0\}$ 
for the set of $k_0$-sparse vectors $\theta$. Rephrasing our aim, we want to test whether $\theta^*$ belongs  to $\bbB_0[k_0]$.

In order to assess the quality of a testing procedure, we rely on the framework of minimax separation distances~\cite{ingster_suslina} which is described in the following paragraphs. Let $\|.\|_2$ denote the $l_2$ distance in $\mathbb{R}^p$. 
For any $\theta^*\in \mathbb{R}^p$,  $d_2(\theta^*,\bbB_0[k_0]) := \inf_{u \in \bbB_0[k_0]} \|\theta - u\|_2$ stands for its $l_2$ distance to the set of $k_0$-sparse vectors. Intuitively, any $\alpha$-level test $\phi$ of the null hypothesis $\{\theta^* \in \mathbb B_0[k_0]\}$ cannot reject the null with high probability when $d_2(\theta^*,\bbB_0[k_0])$ is too small. In this work, we aim at characterizing the smallest distance $\rho$, such that there exists a test achieving a small type I error probability and rejecting the null with high probability whenever $d_2(\theta^*,\bbB_0[k_0])$ is larger than $\rho \sigma $. These informal definitions are made precise in the next subsection. In the sequel, $\bP_{\theta^*,\bSigma,\sigma}$ stands for the distribution of $(Y,\bX)$ in \eqref{eq:model_linear_regression}.

In high-dimensional linear regression, the intrinsic difficulty of estimation or testing problems sometimes depends on some specific features such as the knowledge of the noise level $\sigma^2$ or the knowledge of the distribution of the design. In this work, we focus on two emblematic settings. In the {\bf independent} setting, we assume that the covariates are independent ($\bSigma= \bI_p$) and the noise level $\sigma$ is known. In the {\bf general} setting, both the covariance of the covariates and the noise level are unknown. 

\subsubsection{Independent setting}

Fix a positive integer $1\leq\Delta \leq p-k_0$, we consider the alternative hypothesis where $\theta^*$ is $k_0+\Delta$-sparse. Given $\rho>0$ and a test $\phi$, we introduce its risk $R(\phi;k_0,\Delta,\rho)$ as the sum of the type I and type II error probabilities 
\beq\label{eq:risk}
R(\phi;k_0,\Delta,\rho):= \sup_{\theta^* \in \bbB_0[k_0]}\P_{\theta^*,\bI_p,\sigma}[\phi=1] +  \sup_{\theta^* \in \bbB_0[k_0+\Delta],\ d_2(\theta^*; \bbB_0[k_0])\geq  \rho \sigma}\P_{\theta^*,\bI_p,\sigma}[\phi=0]\ , 
\eeq
where we only consider parameters $\theta^*$ in the alternative hypothesis that lie at a distances $d_2$ higher than $\rho\sigma$ from the null. For a fixed (known) $\gamma\in (0,1)$, the separation distance 
$\rho_\gamma[\phi;k_0,\Delta]$ of $\phi$ is the largest $\rho$ such that its risk is higher than $\gamma$, i.e.
$\rho_{\gamma}(\phi;k_0,\Delta):= \sup \left\{\rho>0\ |R(\phi;k_0,\Delta,\rho)>\gamma\right\}$.
Parameters $\theta^*$ lying at a distance larger than $\sigma \rho_{\gamma}(\phi;k_0,\Delta)$ from the null are therefore detected with probability higher than 1-$\gamma$ by $\phi$. 
Finally, the minimax separation distance is
\beq\label{eq:separationkvminmax}
\rho^*_{\gamma}[k_0,\Delta]:= \inf_{\phi}\rho_{\gamma}(\phi;k_0,\Delta )\ ,
\eeq
where the infimum is taken over all tests $\phi$.

\subsubsection{General setting}

In the general case, neither the covariance matrix $\bSigma$ of the covariates, nor the noise level $\sigma$ is known. We only assume that the the eigenvalues of $\bSigma$ are bounded away from zero and from infinity. 
Respectively write $\eta_{\min}(\bSigma)$ and $\eta_{\max}(\bSigma)$ for its smallest and largest eigenvalues.
Given $\eta>1$, define 
\beq\label{eq:definition_Ueta}
\cU(\eta)=\{\bSigma: \eta^{-1}\leq \eta_{\min}(\bSigma)\leq \eta_{\max}(\bSigma)\leq \eta\}\ .
\eeq
Fix $\rho>0$.  In this general model, the risk of a test $\phi$ is now taken as
\[
\bR_{g}(\phi;k_0,\Delta,\rho):= \sup_{\theta^* \in \bbB_0[k_0],\ \bSigma\in \cU(\eta),\ \sigma>0}\P_{\theta^*,\bSigma,\sigma}[\phi=1] +  \sup_{ \sigma >0, \theta^* \in \bbB_0[k_0+\Delta],\ d_2(\theta^*; \bbB_0[k_0])\geq \sigma \rho,\ \bSigma \in \cU[\eta]}\P_{\theta^*,\bSigma,\sigma}[\phi=0]\ .   
\]
Since both $\bSigma$ and $\sigma$ are unknown, we evaluate the type I and type II error probabilities uniformly over all $\sigma>0$ and all $\bSigma\in \cU[\eta]$. The class of covariance matrices is constrained in $\cU[\eta]$ in order to preclude too difficult settings where the eigenvalues of $\bSigma$ differ too much to each other. Then, as in the previous subsection, the  separation distance of a test $\phi$ is 
$\boldsymbol{\rho}_{g,\gamma}(\phi;k_0,\Delta):= \sup \left\{\rho>0\ |\bR_g(\phi;k_0,\Delta,\rho)>\gamma\right\}$ and the minimax separation distance in the general setting is defined by
\beq\label{eq:separationkvminmax_general}
\boldsymbol{\rho}^*_{g,\gamma}[k_0,\Delta]:= \inf_{\phi}\boldsymbol{\rho}_{g,\gamma}(\phi;k_0,\Delta )\ .
\eeq

\medskip 

\noindent 
In this work, we address both independent and general settings. More specifically, 
\begin{enumerate}
 \item[(i)] We characterize the minimax separation distances   in both the independent ($\rho^*_{\gamma}[k_0,\Delta]$) and the general ($\boldsymbol{\rho}^*_{g,\gamma}[k_0,\Delta]$) settings by providing upper and lower bounds that match (up to a polylogarithmic loss in some regimes). 
 
 \item[(ii)] We introduce computationally feasible testing procedures that (almost) simultaneously achieve this minimax separation distance over all $\Delta$. 
\end{enumerate}

\subsection{Previous results and related literature}

Before further describing our contribution, we first discuss related  results in the literature.

\paragraph{Signal detection.} The signal detection problem which amounts to testing whether $\theta^*=0$ is a special instance of the sparsity testing problem (corresponding to $k_0 = 0$). Signal detection in the Gaussian vector model (which corresponds to an orthogonal design) has been extensively studied ~\cite{ingster_suslina,baraud02,jin2004, collier2015minimax,collier2016optimal} in the last fifteen years. More recently, this problem has also been investigated in the random design linear regression model~\cite{2010_EJS_Ingster, 2011_AS_Arias-Castro, carpentier2018minimax}. 

To simplify the discussion, let us consider the high-dimensional setting where  $p\geq n^{1+\zeta}$ for
 some fixed constant $\zeta >0$. Then, one can deduce from \cite{2010_EJS_Ingster} that the minimax separation distance in the independent setting satisfies
 \[
 \rho_{\gamma}^{*2}[0,\Delta] \asymp_{\gamma,\zeta} \min\left[ \frac{\Delta\log\big(p\big)}{n} ,  n^{-1/2}\right]\ ,
 \]
where $f(\Delta,n,p) \asymp_{\gamma,\zeta} g(\Delta,n,p)$ means that there exist  positive  constants $c_{\gamma,\zeta}$ and $c'_{\gamma,\zeta}$ (possibly depending on $\gamma$ and $\zeta$) such that 
$f(\Delta,n,p)\leq c_{\gamma}  g(\Delta,n,p)\leq c'_{\gamma}f(\Delta,n,p)$ for all $\Delta$, $n$, and $p$. For $\Delta \leq \sqrt{n}/\log(p)$, this separation distance is achieved by measuring the raw correlations between the response and the covariates and rejecting when too many of these correlations are unusually large. This can be done through the Higher-Criticism scheme~\cite{2010_EJS_Ingster, 2011_AS_Arias-Castro}. For denser alternatives ($\Delta \geq \sqrt{n}/\log(p)$), we start from the identity $\mathbb E Y_i^2=\sigma^2+ \|\theta^*\|_2^2$ (and the $Y_i$ are i.i.d.). Hence, a test rejecting when the empirical mean of $Y_i^2$ is significantly larger than $\sigma^2$ achieves the optimal squared separation distance of order $n^{-1/2}$~\cite{2010_EJS_Ingster, 2011_AS_Arias-Castro}. In the specific regime where $p$ is of the same order as $n$, and $\Delta$ is close to $\sqrt{n}$, the analysis has to be refined, see \cite{carpentier2018minimax}.

\medskip

In the general setting (unknown $\bSigma$ and unknown $\sigma$), it has been proved in \cite{2010_AS_Verzelen} that, 
\beqn 
\boldsymbol{\rho}_{g,\gamma}^{*2}[0,\Delta] &\asymp_{\gamma,\xi} & \frac{\Delta\log\big(p\big)}{n} \quad \quad \text{ if }\Delta \leq p^{1/2 - \xi}\wedge \frac{n}{\log(p)} \text{ for any fixed $\xi\in (0,1/2)$}  \  ;\\                                                 
\boldsymbol{\rho}_{g,\gamma}^{*2}[0,\Delta] &\geq& c_{\gamma} \frac{\sqrt{p}}{n}\quad \quad    \text{ if } \Delta\geq \sqrt{p} \ .
\eeqn
However, for sparse alternatives,  the corresponding test in~\cite{2010_AS_Verzelen} relies on a $l_0$ type variable selection method and has therefore exponential computationally complexity. For denser alternatives $(\Delta\geq \sqrt{p})$, the lower bound entails that the minimax separation distance is large whenever $p\geq n^2$. Comparing both the independent and the general settings, we observe  that the separation distance is significantly larger in the general setting for dense alternatives $\Delta\geq \sqrt{n}/\log(p)$.

\paragraph{Composite-composite testing problems and related work.}
An important difference between the signal detection ($k_0=0$) problem and the general sparsity testing problem ($k_0>0$) is that, in the latter, the null hypothesis is composite, thereby making the analysis of the problem more challenging.  Up to our knowledge, the analysis of such composite problems has been considered only in a few work~\cite{Juditsky_convexity,baraud2005testing,comminges2013minimax,carpentier2015testing}, although the  problems of constructing adaptive confidence regions (e.g.~\cite{cai2004adaptation,cai2006adaptive, MR2906872,nickl_vandegeer, cai2017confidence,cai2016accuracy}) or of functional estimation (e.g.~\cite{lepski1999estimation, gayraud2005adaptive,cailow2011,MR2382653,MR2589318}) are also related to such testing problems.

In particular, Nickl and Van de Geer~\cite{nickl_vandegeer} consider the problem of constructing adaptive and honest confidence sets for $\theta^*$ in the linear regression model~\eqref{eq:model_linear_regression} with known variance $\sigma^2$. To achieve adaptivity to the unknown sparsity of $\theta^*$, Nickl and van de Geer need to test hypotheses of the form $\|\theta^*\|_0\leq k_0$. Following the so-called ``infimum testing'' principle, described in a systematic way in~\cite{gine2015mathematical}, 
they consider the statistic $\inf_{\theta\in \bbB_0[k_0]}\|Y-\bX\theta\|_2^2/n$. This statistic corresponds to the infimum of the empirical variance when one corrects $Y$ by a $k_0$-sparse vector $\theta$. Under the null, this statistic is not much larger than the noise level $\sigma^2$. This leads them to derive
$$\rho_{\gamma}^{*2}[k_0,\Delta] \leq c_{\gamma}  \left[n^{-1/2} + \frac{k_0\log(p)}{n}\right]\ , $$
for some $c_{\gamma}>0$. Comparing this bound with its counterpart in the signal detection problem ($k_0=0$), we observe an increase by an additive term $\tfrac{k_0\log(p)}{n}$ accounting for the complexity of the null hypothesis.

Up to our knowledge, it is still unknown whether the upper bound of Nickl and van de Geer is optimal (that is whether $\rho_{\gamma}^{*2}[k_0,\Delta]$ actually depends on $k_0\log(p)/n$). In this manuscript,  we answer this open question, this for all $k_0$ and $\Delta$.

\paragraph{Sparsity testing in the Gaussian sequence model.}
The Gaussian sequence model $Y=\theta^*+ \sigma\epsilon$ corresponds to case $p=n$ and $\bX= \bI_p$. In~\cite{carpentier2017adaptive}, we have pinpointed the minimax separation distances for all $k_0$ and $\Delta$ both when $\sigma$ is known and $\sigma$ is unknown.  In particular, the optimal separation distance actually depends on the size $k_0$ of the null hypothesis for large $k_0$ but is significantly smaller than what is obtained by infimum tests strategies such as those in~\cite{gine2015mathematical}.

Generally speaking,~\cite{carpentier2017adaptive} is closely related to the aims and results of this paper, but there is a significant challenge in adapting the results in~\cite{carpentier2017adaptive} which are available for the Gaussian sequence setting, to the linear regression setting.

Related to this problem, some authors~\cite{MR2382653,MR2420411, MR2325113,MR2589318} have considered the problem of estimating $\|\theta^*\|_0$ in the Gaussian sequence model in a Bayesian framework where all $\theta^*_i$'s are sampled according to some mixture distribution. Although some of the ideas can be borrowed from their work, this Bayesian setting is quite different (see~\cite{carpentier2017adaptive} for a discussion).

\subsection{Our results}

In this paper, we characterize the minimax separation distances $\rho^{*}_\gamma[k_0,\Delta]$ and $\boldsymbol{\rho}_{g,\gamma}^{*2}[0,\Delta]$. To alleviate the discussion, we restrict ourselves throughout this paper to the high dimensional regime $p \geq n^{1+\zeta}$ where $\zeta>0$ is an arbitrarily small absolute constant.

\paragraph{Independent setting.} We establish matching (up to a multiplicative constants depending on $\gamma$) upper and lower bounds for  $\rho^{*}_\gamma[k_0,\Delta]$, this for almost all values of $k_0$ and $\Delta$; see Table~\ref{Tab:KV} for a summary of these results. An aggregated test is also shown to simultaneously achieve the optimal separation distance for all $\Delta>0$, entailing that adaptation to the sparsity is possible for this problem. In our exhaustive picture of $\rho^{*}_\gamma[k_0,\Delta]$, some of the regimes in $k_0$ and $\Delta$ are addressed by simple extensions of signal detection tests. However, other regimes turn out to be more challenging and require novel ideas. In what follows, we briefly mention these original aspects.
\begin{itemize}
\item We prove that, when $k_0 \geq c n/\log(p)$, then the testing problem
becomes extremely difficult, in the sense that the separation distance $\rho^*_{\gamma}[k_0,\Delta]$ is very large. For  $k_0 \geq n$, this separation distance is even infinite.  This is not unexpected since identifiability problems arise in this regime.
\item For moderate $k_0\in [\frac{\sqrt{n}}{\log(p)}, p^{1/2 - \zeta}]$ and large $\Delta$, we prove that the upper bound of Nickl and Van de Geer~\cite{nickl_vandegeer} turns out to be optimal, i.e.~the squared minimax separation distance is achieved  by their infimum test and is of the order of $\frac{k_0 \log(p)}{n}$. The general idea is to reduce the problem of sparsity testing (with known variance) to a detection problem with unknown variance. 

\item For larger $k_0 \in [p^{1/2 - \zeta}, cn/\log(p)]$ (where $\zeta>0$ is an arbitrarily small absolute constant, and where $c>0$ is an absolute constant), then both upper and lower bounds are new. The lower bound is based  on moment matching strategies and best polynomial approximation akin to those of  \cite{carpentier2017adaptive} in the Gaussian model. But the derivation is significantly more involved in the regression setting. For small $\Delta$ ($\Delta\le k_0$), an optimal test is built using any estimator of $\theta^*$ achieving a small $l_{\infty}$ error (see e.g. \cite{javanmard2014confidence, zhang2014confidence,van2014asymptotically,javanmard2018debiasing}). The test simply rejects when this estimator has more than $k_0$ unusually large entries. For denser alternatives ($\Delta \geq k_0$), the approach is quite different. We build a statistic based on the empirical Fourier transform of some correction of the raw correlations between the covariates and the responses $Y$. This approach is reminiscent of sparsity estimators in~\cite{MR2420411,carpentier2017adaptive} in the Gaussian sequence model.

\end{itemize}

 \begin{table}
\caption{Square minimax separation distances $\rho^{*2}_\gamma[k_0,\Delta]$ in the independent setting  for $k_0 \in [1,p-1]$ and $\Delta\in [1,p-k_0]$ when $p\geq n^{1+\zeta}$ with a fixed $\zeta>0$. Separation distances are given up to constants that may depend on $\gamma$ and $\zeta$. }
\label{Tab:KV}
 \begin{center}
 \begin{tabular}{c|c|c}
 $k_0$& $\Delta$ &  $\rho^{*2}_\gamma[k_0,\Delta]$\\ \hline \hline
$k_0 \leq p^{1/2-\zeta}$ & $1\leq \Delta \leq k_0 + \frac{\sqrt{n}}{\log(p)}$ & $ \frac{\Delta \log(p)}{n} $ \\ 
&  $k_0+\frac{\sqrt{n}}{\log(p)} \leq \Delta \leq p-k_0$  & $\frac{1}{\sqrt{n}} + \frac{k_0\log(p)}{n}$ \\ 
\hline
$p^{1/2+\zeta} \leq k_0 \leq c_\gamma \frac{n}{\log(p)}$ &  $1 \leq \Delta \leq k_0p^{-\zeta}$
&  $\frac{\Delta \log(p)}{n}$\\
 & 
$k_0\leq  \Delta \leq p-k_0$ & $\frac{k_0}{n\log(p)}$\\
 \end{tabular}
 \end{center}
\end{table}

\paragraph{General setting.}   We derive lower and upper bounds of the minimax separation distance $\boldsymbol{\rho}^*_{g,\gamma}[k_0,\Delta]$. These bounds match except  in the large $k_0$ and $\Delta$ regime, where there is a $\log^2(p)$ mismatch. See  Table~\ref{fig:UV} for a summary of the results. As in the independent setting, we emphasize below the most novel ingredient of our analysis. 
\begin{itemize}
\item Achieving the optimal squared distance $\Delta\log(p)/n$ could be easily done if one has access to an estimator whose $l_{\infty}$ distance to $\theta^*$ is less than $\sigma\sqrt{\log(p)/n}$ with high probability. However, such an estimator is unknown for general covariance matrices $\bSigma \in \cU(\eta)$. For $\|\theta^*\|_0\geq \sqrt{n}$, it is even proved that no such estimator exists~\cite{cai2017confidence}. Here, we first select a reasonable candidate for the support of $\theta^*$ by relying on the non-convex penalized least-square estimator MCP~\cite{zhang2010nearly}. Then, a test based on the restricted least-squares estimator applied to the selected subset is shown to achieve the desired separation distance. We also introduce an alternative test based on an iterative application of a projected version of the square-root Lasso.
\end{itemize}

\begin{table}
\caption{Square minimax separation distances in the general setting (in the $\asymp_{\gamma,\eta}$ sense, see Subsection~\ref{ss:not}). We report in this table only the case where $n^{1+\zeta} \leq p \leq n^{2 - \zeta}$, where $\zeta\in (0,1)$ can be chosen arbitrarily small. 
LB stands for Lower bound and UB stands for upper bound. }
\label{fig:UV}
 \begin{center}
 \begin{tabular}{c|c|c}
 $k_0$& $\Delta$ &  $\boldsymbol{\rho}_{g,\gamma}^{*2}[k_0,\Delta]$\\ \hline \hline
$k_0 \leq p^{1/2-\zeta} $ & $1\leq \Delta \leq p^{1/2-\zeta} \land k_0$ & $ \frac{\Delta \log(p)}{n}$ \\ 
& $ p^{1/2+\zeta} \land k_0 \leq \Delta \leq p-k_0$ & $\frac{\sqrt{p}}{n}$ \\ 
\hline
$p^{1/2+\zeta} \leq  k_0 \leq c_\gamma \frac{n}{\log(p)}$ &  $1 \leq \Delta \leq k_0p^{-\zeta}$
&  $\frac{\Delta \log(p)}{n}$\\
 & 
$k_0\leq \Delta \leq p-k_0$ & LB : $\frac{k_0}{n\log(p)}$\\
 & 
 & UB : $\frac{k_0\log(p)}{n}$\\
 \end{tabular}
 \end{center}
\end{table}

\subsection{Other related work}

Two recent works~\cite{zhu2017projection,javanmard2017flexible} have among other things consider general testing problems that encompass the sparsity testing problem. These two contributions assess the quality of their tests according to the $l_\infty$ separation distance (instead of $l_2$ as we do here) to the null hypothesis, i.e.~$d_\infty(\theta^*;\bbB_0[k_0])= \inf_{\theta \in \mathbb B_0[k_0]} \|\theta^* - \theta\|_\infty$.
In their setting, the covariance $\bSigma$ of the covariates is unknown but its inverse $\bSigma^{-1}$ is assumed to be sparse (each row of $\bSigma^{-1}$ has at most than $n/\log(p)$ non-zero entries) so that it can be reasonably well estimated. 
In that setting, the computationally feasible test in~\cite{zhu2017projection} has a small type II error probability when $k_0 \log(p)$ is much smaller than $n^{1/4}$ and when $d_\infty(\theta^*;\bbB_0[k_0])\geq c \sigma n^{-1/4}$. 

In~\cite{javanmard2017flexible}, Javanmard and Lee use a test based on the debiased Lasso. It achieves a small type I error probability. Whenever $(k_0+\Delta) \log(p)$ is much smaller than $\sqrt{n}$, and also $d_\infty(\theta^*;\bbB_0[k_0])\geq c \sigma \sqrt{\log(p)/n}$, its type II error probability is also small.
Translating these results in the $l_2$ separation distance setting, we observe that this test achieves a squared separation distance $\Delta \log(p)/n$ which, in view of Table~\ref{fig:UV}, is optimal for small $\Delta$. Their approach could be used instead of ours in their setting. However, we stress out that they achieve this bound to the price of considering  a much more restricted class of covariance matrices than $\cU(\eta)$ - they need that each row of $\bSigma^{-1}$ is at most $n/\log(p)$ sparse, while $\cU(\eta)$ contains all matrices $\bSigma$ that have their spectrum contained in $[\eta^{-1}, \eta]$. 

\medskip

A recent line of work has focused on testing the nullity of a given subset of coordinates of $\theta^*$  (e.g.~\cite{zhu2018linear, zhu2018significance, bradic2018testability, javanmard2018debiasing, van2014asymptotically, zhang2014confidence,cai2017confidence}), but both the settings and the methodology are quite different.

\subsection{Notation}\label{ss:not}

For any positive integer $d$ and $u \in \mathbb R^d$, we write $\cS(u)= \{i: u_i\neq 0\}$ for the support of a vector $u$.
For $u\in \mathbb{R}^d$ and $S\subset [d]$, we write $u_{S}= (u_i \1_{i\in S})_i$ for the vector in $\mathbb{R}^d$ whose values outside $S$ have been set to $0$. For a vector $\gamma$, $\gamma_{(i)}$ stands for its $i$-th  largest (in absolute value) entry. Given $S\subset \{1,\ldots,p\}$, $\overline{S}$ stands for its complement.

In the sequel, $c$, $c_1$, $c'$ denote numerical positive constants that may vary from line to line. Given some quantity $\delta$,  $c_{\delta}$ stands for a positive constant possibly depending on $\delta$ that may vary from line to line. Underlined constant such as $\underline{c}$, $\underline{c}^{(1)}$ do not vary in the paper.

Let $a,b\in \mathbb R$ be two functions that may depend on several quantities such as $n,p,\Delta, k_0$ and let $u\in \mathbb{R}$.  We write $a\lesssim_u b$ (resp.~$a\approx_u b$) if there exists a constant $c_u>0$ that depends only on $u$ (resp.~two constants $c_u^+, c_u^->0$ that depend only on $u$) such that $a\leq c_u b$ (resp.~such that $c_u^- b\leq a \leq c_u^+ b$).

For $x>0$, $\lfloor x\rfloor$ (resp. $\lceil x\rceil$) stands for the largest (resp. smallest) integer which is less (resp. greater) or equal to $x$. Also, $\log_2$ stands for the binary logarithm. Finally, $\overline{\Phi}$ stands for the tail distribution function of a standard normal distribution.

\section{Independent setting} \label{sec:independent_design}\label{sec:ides}
To simplify the notation, we denote $\P_{\theta^*,\sigma}$ the distribution of the data when $\bSigma$ is the identity matrix. Recall that we are especially interested in the high-dimensional setting. This is why we shall sometimes assume that $p\geq n$ or even $p\geq n^{1+\zeta}$ for some $\zeta>0$ arbitrarily small.

\subsection{Minimax lower bound}

As a starting point, we prove that, when the size $k_0$ of the null hypothesis is too large, consistent testing is impossible. Indeed, assume that $k_0\geq n$. Then, for any $(Y,\bX)\in \mathbb{R}^{n}\times \mathbb{R}^{n\times p}$ such that $\mathrm{Rank}(\bX)\geq n$, there exists $\theta\in \bbB_0[k_0]$  that perfectly fits this sample ($Y=\bX\theta$)  and it is therefore impossible to decipher whether $\theta^*$ is $k_0$-sparse or not. The following proposition formalizes this observation.

\begin{prp}\label{prp:k0large}
If $k_0 \geq n$, then, for any $\gamma<1/2$, and  $1\leq \Delta\leq p-k_0$, we have 
$\rho^*_{\gamma}[k_0,\Delta] = \infty$.
\end{prp}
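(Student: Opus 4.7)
The plan is to prove the proposition via a Bayesian two-point lower bound. Fix an arbitrary test $\phi$ and an arbitrary $\rho > 0$; I would show $R(\phi; k_0, \Delta, \rho) > \gamma$, which then implies $\rho_\gamma(\phi; k_0, \Delta) = +\infty$ for every $\phi$ and hence $\rho^*_\gamma[k_0, \Delta] = +\infty$.

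The structural ingredient is the identifiability failure flagged in the paragraph preceding the statement: since $p \geq k_0 + \Delta \geq n+1$, the design $\bX$ has $\mathrm{rank}(\bX) = n$ almost surely, so for any fixed $n$-subset $S \subset [p]$ the submatrix $\bX_S$ is a.s.\ invertible, and the image $\bX(\bbB_0[k_0])$ equals all of $\bbR^n$. Consequently, fixing a concrete alternative $\theta^* \in \bbB_0[k_0+\Delta]$ with $d_2(\theta^*, \bbB_0[k_0]) \geq \rho\sigma$ (for example $\theta^* = (\rho\sigma/\sqrt{\Delta}) \sum_{i=1}^{k_0+\Delta} e_i$), the $\bX$-measurable vector defined by $\tilde\theta(\bX)_S = \bX_S^{-1}\bX\theta^*$ and $\tilde\theta(\bX)_{\bar S} = 0$ lies in $\bbB_0[n] \subset \bbB_0[k_0]$ and satisfies $\bX \tilde\theta(\bX) = \bX \theta^*$. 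In other words, conditional on $\bX$, the alternative law $\mathcal{L}(Y\mid \bX, \theta^*) = \mathcal{N}(\bX\theta^*, \sigma^2 I_n)$ already belongs to the null family $\{\mathcal{N}(\bX\theta, \sigma^2 I_n) : \theta \in \bbB_0[k_0]\}$.

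Next I would lift this conditional indistinguishability to an unconditional statement by choosing a prior $\nu_0$ on $\bbB_0[k_0]$ and invoking Le Cam's two-point inequality
\[
R(\phi; k_0, \Delta, \rho) \;\geq\; \P_{\nu_0}[\phi = 1] + \P_{\theta^*,\sigma}[\phi = 0] \;\geq\; 1 - \mathrm{TV}(\P_{\nu_0}, \P_{\theta^*,\sigma}),
\]
where $\P_{\nu_0} := \int \P_{\theta, \sigma}\, d\nu_0(\theta)$. A natural candidate for $\nu_0$ is a random-support mixture: draw $S'$ uniformly among the $k_0$-subsets of $[p]$ and place iid Rademacher or Gaussian coordinates on $S'$, with variance tuned so that the first and second moments of $Y$ under $\nu_0$ match those under $\theta^*$.

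The main obstacle will be the total-variation bound: naive two-point priors give $\mathrm{TV} \to 1$ as the signal scale grows, because $\|\bX(\theta - \theta^*)\|$ scales linearly with the amplitude of $\theta^*$. The random-support mixture circumvents this by absorbing the large mean $\bX\theta^*$ into the spread of the mixture rather than into a deterministic shift; this is precisely where the surjectivity $\bX(\bbB_0[k_0]) = \bbR^n$ is crucial, since it ensures that the mixture has enough degrees of freedom to match arbitrary targets. Concentration of $\bX_{S'}\bX_{S'}^T$ around $(k_0/p)\bX\bX^T$ is the quantitative ingredient needed to bound $\mathrm{TV}(\P_{\nu_0},\P_{\theta^*,\sigma})$ by a constant $< 1 - \gamma$ uniformly in $\rho$, which in turn yields the required $R(\phi;k_0,\Delta,\rho) > \gamma$.
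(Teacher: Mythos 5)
Your plan has a fatal structural flaw: you fix a \emph{point} alternative $\theta^*$ with $d_2(\theta^*,\bbB_0[k_0])\geq \rho\sigma$ and hope to find a null prior $\nu_0$ on $\bbB_0[k_0]$ with $\mathrm{TV}(\P_{\nu_0},\P_{\theta^*,\sigma})\leq 1-\gamma$ uniformly in $\rho$. No choice of $\nu_0$ can achieve this once $\rho$ is a moderate constant. Indeed, for any fixed $\theta\in\bbB_0[k_0]$ the residual vector $Y-\bX\theta^*$ has i.i.d.\ $\cN(0,\sigma^2+\|\theta-\theta^*\|_2^2)$ entries (this is exactly the computation in Lemma~\ref{lem:t_comp}), and $\|\theta-\theta^*\|_2\geq d_2(\theta^*,\bbB_0[k_0])\geq\rho\sigma$ for every $\theta$ in the null. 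Hence under \emph{any} null prior $\|Y-\bX\theta^*\|_2^2$ is stochastically larger than $(1+\rho^2)\sigma^2\chi^2_n$, while under the alternative it is $\sigma^2\chi^2_n$; the test rejecting when $\|Y-\bX\theta^*\|_2^2\leq 2n\sigma^2$ separates the two with error probabilities $e^{-cn}$ as soon as, say, $\rho\geq 2$, so $\mathrm{TV}\to 1$ and your Le Cam bound gives nothing. The conditional identifiability observation (that $\bX\theta^*=\bX\tilde\theta(\bX)$ for some $\bX$-measurable $\tilde\theta\in\bbB_0[n]$) does not rescue this, because the null hypothesis consists of fixed parameters, and the lift is precisely where the argument collapses. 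Your secondary ingredient is also unsubstantiated and in fact false in the relevant regime: matching the first two moments of $Y$ ignores the joint law of $(Y,\bX)$, and the alignment statistic $\langle Y,\bX\theta^*\rangle$ already distinguishes your centered random-support mixture (mean zero, fluctuations of order $\|\theta^*\|_2\sqrt{p/k_0}+\sigma$) from the point alternative (mean $\approx\sqrt{n}\|\theta^*\|_2$) unless $p\gg nk_0$; moreover for $p=n+1$, $k_0=n$ the claimed concentration of $\bX_{S'}\bX_{S'}^T$ around $(k_0/p)\bX\bX^T$ fails at the needed scale.

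The paper avoids both problems by making the \emph{alternative} a diffuse mixture as well, and by generating the infinite separation through scaling rather than by pushing $\rho\to\infty$ against a fixed target. Concretely (after reducing to $p=n+1$, $k_0=n$, $\Delta=1$): the null prior is $\cN(0,\bI_n)\otimes\delta_0$, the alternative prior is the same Gaussian bulk shifted by a \emph{small} amount $r$ in the extra coordinate; conditionally on a well-conditioned design, the two noiseless laws $\cN(0,\underline{\bX}\,\underline{\bX}^T)$ and $\cN(r\bX_{.,p},\underline{\bX}\,\underline{\bX}^T)$ are close in total variation for $r$ small, adding noise only contracts the distance, and conditioning the Gaussian coordinates to be bounded away from zero gives a fixed Euclidean separation $\underline{c}_-\wedge r_0$. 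Letting $\sigma_0\to 0$ (equivalently, blowing up the prior scale at fixed $\sigma$) makes the separation measured in noise units arbitrarily large while the testing risk stays above $1-3\delta$. If you want to keep $\sigma$ fixed, the correct repair of your plan is therefore not a cleverer null mixture against a point $\theta^*$, but a shifted-mixture alternative whose ``signal'' is carried by the same Gaussian bulk as the null, exactly as in the paper.
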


In the sequel, we therefore restrict ourselves  to the case where $k_0 <  n$. The next theorem provides a lower bound for the minimax separation distance of the sparsity testing problem.

\begin{thm}\label{thm:lbkvkd}
Assume that $p\geq 2n$. 
There exist positive numerical constants $c_1$--$c_5$ such that the following holds for all $\gamma\leq 0.06$ and for all $p\geq c_1$.  For $1\leq \Delta\leq p-k_0$, one has
\beq\label{eq:lower_lbkvkd}
\rho_{\gamma}^{*2}[k_0,\Delta] \geq c_1 \left\{
\begin{array}{ccc}
\min\Big[ \frac{1}{\sqrt{n}} + \frac{k_0}{n}\log\big[1+ \frac{\sqrt{p}}{k_0}\big] , \frac{\Delta}{n} \log(1+\frac{\sqrt{p}}{\Delta}) \Big]\ & \text{ if } & 0\leq k_0\leq \sqrt{p}\wedge n \ ;\\
\frac{\Delta}{n } \frac{\log^2\big[1+ \sqrt{\frac{k_0}{\Delta}}\big]}{\log(p)}& \text{ if } & \sqrt{p}< k_0< n\ .
\end{array}
\right.
\eeq
Furthermore, if $p\geq c_2 n^2$ and $k_0\geq c_3 n /\log(\sqrt{p}/n)$, then 
\beq\label{eq:lower_ultra_high_known_variance}
\rho_{\gamma}^{*2}[k_0,\Delta] \geq c_4\frac{\Delta \wedge k_0}{n} \log\left(2\vee   \frac{\sqrt{p}}{k_0}\right)e^{c_5\frac{k_0}{n}\log(1+\frac{\sqrt{p}}{k_0})}\ ,
\eeq
for all $1\leq \Delta\leq p-k_0$. 
\end{thm}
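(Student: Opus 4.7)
The plan is to apply Le Cam's two-prior lower bound. For each regime I construct priors $\pi_0$ on $\bbB_0[k_0]$ and $\pi_1$ on $\{\theta \in \bbB_0[k_0+\Delta] : d_2(\theta, \bbB_0[k_0]) \geq \rho\sigma\}$ such that the induced mixtures $P_{\pi_i} = \int \P_{\theta,\bI_p,\sigma}\, d\pi_i(\theta)$ have total variation distance at most $1 - 2\gamma$, which will imply $\rho_\gamma^*[k_0,\Delta] \geq \rho$. The TV distance will be controlled via chi-squared, exploiting that conditionally on $\bX$ each $\P_{\theta,\bI_p,\sigma}$ equals $\cN(\bX\theta, \sigma^2 \bI_n)$, so the relevant second moment of the likelihood ratio reduces to $\mathbb E_\bX \mathbb E_{\theta, \theta'}\exp(\langle \bX\theta, \bX\theta'\rangle/\sigma^2)$, an integrable Gaussian quantity.

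For the regime $k_0 \leq \sqrt p \wedge n$, the bound~\eqref{eq:lower_lbkvkd} decomposes into three competing terms that I treat separately. The Ingster-type term $\Delta \log(1+\sqrt p/\Delta)/n$ is obtained by taking $\pi_0 = \delta_0$ and $\pi_1$ uniform over $\Delta$-sparse signed signals of amplitude $\tau$, reusing the signal-detection argument of~\cite{2010_EJS_Ingster}. The $1/\sqrt n$ term corresponds to a dense alternative $\pi_1$ spreading mass $\tau^2$ over many coordinates, detectable only through the second moment $\mathbb E Y_i^2 = \sigma^2 + \tau^2$. The composite-null term $k_0 \log(1+\sqrt p /k_0)/n$ is genuinely new and is obtained by letting $\pi_0$ place $k_0$ large atoms at a random location $S\subset[p]$, and $\pi_1$ add a small perturbation on the \emph{same} random coordinates $S$ together with a small extra component outside $S$, so that $d_2(\theta; \bbB_0[k_0])\geq \rho\sigma$ while the marginal law of $Y$ differs only at higher-order moments.

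For the moderate regime $\sqrt p < k_0 < n$, I adapt the best-polynomial-approximation / moment-matching strategy of~\cite{carpentier2017adaptive}. The plan is to construct one-dimensional distributions $\mu_0, \mu_1$ on $\{0\} \cup [a,A]$ with $\mu_i(\{0\}) = 1 - (k_0 + i\Delta)/p$ whose first $L$ moments coincide, for $L$ of order $\log p / \log(1 + \sqrt{k_0/\Delta})$, while the gap $\int x^2\, d(\mu_1 - \mu_0)$ is as large as possible (which is exactly a best-polynomial-approximation problem on a compact interval). Taking $\pi_i$ to be the product $\mu_i^{\otimes p}$ conditioned on the sparsity and separation events, the chi-squared between $P_{\pi_0}$ and $P_{\pi_1}$ expands (after $\bX$-conditioning and Hermite-expansion) as a combinatorial sum whose first $L-1$ contributions vanish by moment matching and whose tail is controlled by Chebyshev-type inequalities on polynomial approximation errors. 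The exponential enhancement~\eqref{eq:lower_ultra_high_known_variance} in the regime $p \geq c_2 n^2$, $k_0 \gtrsim n/\log(\sqrt p/n)$ reflects near-non-identifiability: the exponentially many $k_0$-sparse aliases for $\bX\theta$ inflate the chi-squared moment bound by the claimed factor $e^{c_5 k_0\log(1+\sqrt p/k_0)/n}$, which is absorbed by a correspondingly larger $\rho$.

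The principal technical obstacle, compared with the Gaussian sequence analogue treated in~\cite{carpentier2017adaptive}, is precisely the random design. In the sequence model, chi-squared computations factorise as a product of independent one-dimensional integrals over the coordinates of $\theta^*$. In the regression model, $\bX\theta^*$ mixes all coordinates, and the chi-squared acquires the multivariate form $\mathbb E_\bX[\exp(\theta^\top \bX^\top \bX \theta'/\sigma^2)]$, whose cumulant expansion involves all products of entries of the Wishart matrix $\bX^\top\bX$. Controlling this expansion, either via Isserlis/Wick-style Gaussian integration by parts or by a rotation-invariance reduction exploiting the $\cN(0,\bI_p)$ law of the rows of $\bX$, and reconciling it with the moment-matching construction on $\mu_0, \mu_1$ (so that only the prescribed order-$L$ terms survive), is where I expect the bulk of the work to lie.
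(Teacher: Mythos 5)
Your overall framework (Le Cam with two priors, recycling detection lower bounds for the $\frac{\Delta}{n}\log(1+\sqrt p/\Delta)$ and $n^{-1/2}$ terms, and moment matching à la \cite{carpentier2017adaptive} for $k_0>\sqrt p$) points in the right direction, but the two genuinely new pieces of the theorem are left with gaps. First, for the composite-null term $\frac{k_0}{n}\log(1+\frac{\sqrt p}{k_0})$ and especially for \eqref{eq:lower_ultra_high_known_variance}, your construction ($k_0$ large atoms on a random support, perturbed ``on the same coordinates plus a small extra component'') and the heuristic that exponentially many sparse aliases ``inflate the chi-squared bound, which is absorbed by a larger $\rho$'' is not a proof — and as stated it is backwards: for a lower bound you need the divergence between the two mixtures to be \emph{small}, so an inflated chi-squared buys you nothing. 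The paper's mechanism is different: it takes both priors uniform over $k$-sparse vectors with entries $\pm\rho_{k_0}/\sqrt{k_0}$ (with $k=k_0$ and $k=k_0+\Delta$) and shows each is within TV $0.47$ of a \emph{third-party} hypothesis, pure noise with inflated variance $\P_{0,1+\rho^2}$, by importing the known lower bounds for detection with \emph{unknown} variance of \cite{2010_AS_Verzelen,verzelen_minimax}; the triangle inequality plus Lemma~\ref{lem:minimax_lower_bound_general} then gives the rate, and the exponential factor in \eqref{eq:lower_ultra_high_known_variance} comes from the $\rho'_k$ bound of that literature (valid when $k\log(\sqrt p/(e^{3/2}k))\geq n$), not from any aliasing count. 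Without some such reduction, your sketch does not produce either term.

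Second, in the regime $\sqrt p<k_0<n$ you propose to control the chi-squared of the product moment-matched priors directly, via a multivariate Hermite/Wick expansion of $\E_{\bX}\exp(\theta^\top\bX^\top\bX\theta'/\sigma^2)$, and you defer exactly this expansion to ``the bulk of the work.'' That is the step the authors explicitly state they were unable to carry out: the mixed Wishart terms couple all coordinates and coordinate-wise moment matching of $\mu_0,\mu_1$ does not visibly annihilate them. The paper sidesteps this with a coordinate-by-coordinate hybrid argument, $\|\mathbf{P}_0-\mathbf{P}_1\|_{TV}\leq\sum_j\|\mathbf{P}_0^{(j)}-\mathbf{P}_0^{(j+1)}\|_{TV}$ as in \eqref{eq:decomposition_TV}, where consecutive hybrids differ in a single coordinate; conditioning on all other coordinates reduces each term to a univariate factor $\int e^{\omega_p\theta_p+\xi_p\theta_p^2/2}\,d(\mu_0-\mu_1)(\theta_p)$ whose first $m/2$ Taylor coefficients vanish by moment matching, the tail being controlled on the event $\{|\omega_p|\lesssim\sqrt{n\log p},\,\xi_p\leq 2n\}$. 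This telescoping is precisely what costs the extra $\log p$ in the denominator of \eqref{eq:lower_lbkvkd} (and the conjectured suboptimality near $k_0\approx\sqrt p$), so your route, if it worked, would be stronger — but as it stands the central technical difficulty of the theorem is unresolved in your proposal. You would also need the reduction step (Lemma~\ref{lem:reduction}: monotonicity in $\Delta$, and in $k_0$ at the price of changing $p$) to pass from the restricted range $k_0^{2/3}\vee p^{1/4}\leq\Delta\leq k_0$ where the moment-matching bound is proved to the full range of $(k_0,\Delta)$ claimed in the statement.
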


In particular, \eqref{eq:lower_ultra_high_known_variance} entails that the sparsity testing problem turns out to be extremely difficult in the regime $n/\log(p)\lesssim k_0\lesssim n$ (at least when $p\geq n^2$). 

The different regimes in \eqref{eq:lower_lbkvkd} will be discussed together with the upper bounds at the end of the section. Let us shortly comment on the proof of Theorem \ref{thm:lbkvkd}. The functional $\rho^*_\gamma[k_0,\Delta]$  is (almost) nondecreasing with respect to $k_0$. As a consequence, the lower bound $\frac{\Delta}{n} \log(1+\frac{\sqrt{p}}{\Delta})$ is a straightforward consequence of the analysis of the detection problem e.g.~in~\cite{2010_EJS_Ingster}. 

The two lower bounds $\frac{1}{\sqrt{n}} + \frac{k_0}{n}\log\big[1+ \frac{\sqrt{p}}{k_0}\big]$ and \eqref{eq:lower_ultra_high_known_variance} are based on a reduction argument. The proof stems from the fact it is impossible to decipher between two sets of hypothesis if these two sets of hypotheses are almost indistinguishable from a third party hypothesis. Here, the third party hypothesis corresponds to $\theta^*=0$ and a tailored noise variance $\sigma' > \sigma$. Plugging minimax lower bounds for detection with {\it unknown} variance allows us to get the desired rate. See the proof for more details.  

In fact, it is most challenging to prove the minimax lower bound in the regime $\Delta >k_0 > \sqrt{p}$ as we cannot apply any reduction technique to signal detection problem and we need to take into account that both the null and the alternative hypotheses are composite. As for the Gaussian sequence model~\cite{carpentier2017adaptive}, we use a general moment matching technique~\cite{lepski1999estimation}, but the non-orthogonal design matrix $\bX$ makes the computations more tricky.

\subsection{Testing procedures}

In this subsection, we fix $\alpha$ and $\delta \in (0,1)$. 
We now introduce three testing procedures whose combination leads to matching the previous minimax lower bound.

Without loss of generality, we assume that $n$ is divisible by $3$ and we divide the sample $(Y,\bX)$ into three subsamples $(Y^{(1)},\bX^{(1)})$ and  
$(Y^{(2)},\bX^{(2)})$ and $(Y^{(3)},\bX^{(3)})$ of equal size $m=n/3$. For $i=1,2,3$, we write $\P_{\theta^*,\sigma}^{(i)}$ for the probability according to the $i$-th sub-sample. 
In fact, some of the tests introduced below only use the first two subsamples. Nevertheless, we use three subsamples throughout the paper to simplify the presentation.

To characterize the performances of the testing procedures, we shall control the type I error probability uniformly over the null hypothesis and control the type II error probability on some 'large'  parameter subset of the alternative. To simplify the statements of the results we shall refer to these two properties as {(\bf P1}) and {(\bf P2}) as defined below.

\smallskip

\noindent 
{\bf Property  P1}. A test $\phi$ satisfies {(\bf P1}[$\alpha$]) if its type I error probability is less than or equal to $\alpha$, that is 
$
 \sup_{\theta^* \in \bbB_0[k_0]}\P_{\theta^* ,\sigma}[\phi=1]\leq \alpha$

 \smallskip

\noindent 
{\bf Property  P2}. A test $\phi$ satisfies ({\bf P2}[$\beta$]) on a set  $\Theta$ if its type II error probability is uniformly less than or equal to $\beta$, uniformly on $\Theta$, that is 
$ \inf_{\theta^* \in \Theta}\P_{\theta^* ,\sigma}[\phi=1]\geq 1-\beta$

\medskip

Following the discussion in the previous subsection, we restrict our attention to sparsities $k_0$ that are less than $n/\log(p)$. This is formalized in the following condition ($\bA[\alpha]$) where $\underline{c}^{(\bf A)}, \underline{c}^{(\bf A)'}$ are numerical constants (respectively small enough for $\underline{c}^{(\bf A)}$ and large enough for $\underline{c}^{(\bf A)'}$) whose values are constrained in Propositions \ref{prp:t}--\ref{prp:analyse_Z_i}. 

\medskip 

\noindent 
($\bA[\alpha]$)\centerline{$(k_0\vee 1) \log(\frac{p}{\alpha})+ \log^2\big(\frac{p}{\alpha }\big) \leq \underline{c}^{(\bf A)} n$\,\text{ and }\, $p\geq \underline{c}^{(\bA)'}\ .$\quad }

\subsubsection{Test $\phi^{(t)}$  based on a $l_{\infty}$ estimation of $\theta^*$}\label{sec:test_l_infty}

The first test aims at detecting whether $\theta^*$ contains at least $k_0+1$ 'large' entries. In order to do so, we need to build a reasonable $l_{\infty}$ estimator of $\theta^*$. Note that estimators based on the debiased Lasso have already been proved to achieve such a property (see e.g.~\cite{javanmard2018debiasing}) in some settings. For the sake of completeness and as a gentle introduction to more challenging settings, we introduce here a slightly different estimator.

As a first step, we rely on a square-root Lasso~\cite{squarerootLasso} estimator based on the first subsample. From the design matrix $\bX^{(1)}$, we build its column normalized modification $\bT^{(1)}$ by
$$\mathbf T^{(1)} = \Big(\bX^{(1)}_{.,1}/\|\bX^{(1)}_{.,1}\|_2, \ldots, \bX^{(1)}_{.,p}/\|\bX^{(1)}_{.,p}\|_2 \Big)\ .$$
Set $\lambda = 2\sqrt{\overline{\Phi}^{-1}(\delta/(4p))}$. The square-root Lasso estimator is then defined by 
\beq\label{eq:def_SL_1}
\widehat{\theta}_{SL,N} \in \arg \min \|Y^{(1)}-\bT^{(1)}\theta\|_2 + \lambda \|\theta\|_1 \ ; \quad (\widehat{\theta}_{SL})_i= (\widehat{\theta}_{SL,N})_i/\|\bX^{(1)}_{.,i}\|_2, \ i=1,\ldots, p\ . 
\eeq
In this section, we could replace the square-root Lasso estimator by a classical Lasso estimator since the noise level $\sigma$ is known. Also, the design is normalized for the purpose of simplifying some proof arguments, but the results remain valid (with slightly different constants) with the unnormalized design matrix $\bX^{(1)}$.

 Then, given $\widehat{\theta}_{SL}$, we use the second sample to improve the estimation  of $\theta^*$. The estimator $\widetilde{\theta}_{\bI}$ is based on the empirical raw correlations between the covariates and the residuals.
\beq
  \widetilde{\theta}_{\bI} = \frac{1}{m}\bX^{(2)T}\big(Y^{(2)}- \bX^{(2)}\widehat{\theta}_{SL} \big)+  \widehat{\theta}_{SL}\  . \label{eq:phi_proj}
  \eeq
  Since the design is independent, $\widetilde{\theta}_{\bI}$ is an unbiased estimator of $\theta^*$. It is not hard to show (see the proof of the next proposition) that, under weak assumptions, this estimator satisfies has $\|\widetilde{\theta}_{\bI}-\theta^*\|_{\infty}\lesssim c\sigma \sqrt{\log(p)/n}$ with high probability. 
  This is why we define the test $\phi^{(t)}
  $ rejecting the null if $\big|(\widetilde{\theta}_{\bI})_{(k_0+1)}\big|\geq \underline{c}^{(t)} \sigma\sqrt{\log(p/\alpha)/n}$, where a suitable value for the constant $\underline{c}^{(t)}$ is defined in the proof of Proposition \ref{prp:t} below. This test is powerful when $\theta^*$ contains at least $k_0+1$ large entries. This is formalized in the following proposition.

\begin{prp}\label{prp:t}
There exist numerical constants $\underline{c}^{(t)}$,  $c$ and $c'$  such that the following holds under Condition ($\bA[\alpha\wedge \beta\wedge \delta]$). The test $\phi^{(t)}$ satisfies \emph{({\bf P1}[$\alpha+\delta$])} and \emph{({\bf P2}[$\beta+\delta$])} on the collections
\beq\label{eq:cond_P_separation}
\bbB_0[k_0+\Delta] \bigcap \Big\{\theta^*,\quad    |\theta^*_{(k_0+1)}|\geq c \sigma \sqrt{\frac{1}{n}\log\big(\frac{p}{\alpha\wedge \beta}\big)}\Big\}\ ,
\eeq
with $1\leq \Delta \leq c'n/\log(p/\delta)$.
 \end{prp}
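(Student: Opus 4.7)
\bigskip
\noindent \textbf{Proof plan for Proposition~\ref{prp:t}.}

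The plan is to establish an $\ell_\infty$ error bound of the form $\|\widetilde{\theta}_{\bI}-\theta^*\|_\infty \lesssim \sigma\sqrt{\log(p/(\alpha\wedge\beta\wedge\delta))/n}$ with probability at least $1-\delta$, and then derive Properties (\textbf{P1}) and (\textbf{P2}) directly from this bound. The starting point is the algebraic decomposition
\[
\widetilde{\theta}_{\bI}-\theta^* \;=\; \Bigl(\bI - \tfrac{1}{m}\bX^{(2)T}\bX^{(2)}\Bigr)(\widehat{\theta}_{SL}-\theta^*)\;+\;\tfrac{\sigma}{m}\bX^{(2)T}\epsilon^{(2)}\,,
\]
which follows from substituting $Y^{(2)}=\bX^{(2)}\theta^*+\sigma\epsilon^{(2)}$ into the definition~\eqref{eq:phi_proj}. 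The two terms will be controlled separately, crucially exploiting the independence between $\widehat{\theta}_{SL}$ (a function of the first subsample) and $(\bX^{(2)},\epsilon^{(2)})$.

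First I would invoke standard oracle inequalities for the square-root Lasso~\cite{squarerootLasso}: under Condition $(\bA)$, the restricted eigenvalue / compatibility constant of the normalized design $\bT^{(1)}$ is bounded away from $0$ with probability at least $1-\delta$, so with the choice $\lambda=2\sqrt{\overline{\Phi}^{-1}(\delta/(4p))}\asymp\sqrt{\log(p/\delta)}$ one obtains
\[
\|\widehat{\theta}_{SL}-\theta^*\|_2 \;\lesssim\; \sigma\sqrt{\tfrac{(k_0+\Delta)\log(p/\delta)}{n}}\,,\qquad \|\widehat{\theta}_{SL}-\theta^*\|_1 \;\lesssim\; (k_0+\Delta)\sigma\sqrt{\tfrac{\log(p/\delta)}{n}},
\]
with probability at least $1-\delta$. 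For the noise term $\tfrac{\sigma}{m}\bX^{(2)T}\epsilon^{(2)}$, conditional on $\bX^{(2)}$ its $i$-th entry is $\mathcal{N}(0,\sigma^2\|\bX^{(2)}_{.,i}\|_2^2/m^2)$, and since $\|\bX^{(2)}_{.,i}\|_2^2\lesssim m$ with high probability, a union bound over $i\in[p]$ gives $\bigl\|\tfrac{\sigma}{m}\bX^{(2)T}\epsilon^{(2)}\bigr\|_\infty \lesssim \sigma\sqrt{\log(p/\delta)/n}$.

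The main obstacle is the bias term $(\bI-\tfrac{1}{m}\bX^{(2)T}\bX^{(2)})u$ with $u:=\widehat{\theta}_{SL}-\theta^*$. Conditioning on the first subsample freezes $u$, and each coordinate $i$ reads
\[
u_i-\tfrac{1}{m}\sum_{k=1}^m X^{(2)}_{k,i}\langle X^{(2)}_{k,\cdot},u\rangle \;=\; u_i\Bigl(1-\tfrac{1}{m}\sum_k (X^{(2)}_{k,i})^2\Bigr)\;-\;\tfrac{1}{m}\sum_k X^{(2)}_{k,i}\sum_{j\neq i} X^{(2)}_{k,j}u_j\,.
\]
The first piece concentrates around $0$ with fluctuations of order $|u_i|/\sqrt{m}$ by a chi-square tail bound. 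For the second piece, conditionally on the column $\bX^{(2)}_{.,i}$, it is a centred Gaussian with variance $\tfrac{1}{m^2}\|\bX^{(2)}_{.,i}\|_2^2\|u_{-i}\|_2^2\lesssim\|u\|_2^2/m$, so a Hanson--Wright or conditional Gaussian tail plus a union bound over the $p$ coordinates yield
\[
\bigl\|(\bI-\tfrac{1}{m}\bX^{(2)T}\bX^{(2)})u\bigr\|_\infty \;\lesssim\; \|u\|_2\sqrt{\tfrac{\log(p/\delta)}{m}} \;\lesssim\; \sigma\sqrt{\tfrac{(k_0+\Delta)\log^2(p/\delta)}{n^2}}\,,
\]
which is at most $\sigma\sqrt{\log(p/\delta)/n}$ precisely under Condition $(\bA[\delta])$, i.e.\ $(k_0+\Delta)\log(p/\delta)\lesssim n$. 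This range is where the restriction $\Delta\leq c'n/\log(p/\delta)$ is used.

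Combining the three contributions gives $\|\widetilde{\theta}_{\bI}-\theta^*\|_\infty\leq C\sigma\sqrt{\log(p/(\alpha\wedge\beta\wedge\delta))/n}$ on an event of probability at least $1-\delta$. To close the argument I would choose $\underline{c}^{(t)}$ slightly larger than $C$. For (\textbf{P1}): under the null, any index outside the $k_0$-sized support of $\theta^*$ satisfies $|(\widetilde{\theta}_{\bI})_i|\leq\|\widetilde{\theta}_{\bI}-\theta^*\|_\infty$, and since $\widetilde{\theta}_{\bI}$ has at most $k_0$ coordinates in that support, necessarily $|(\widetilde{\theta}_{\bI})_{(k_0+1)}|\leq\|\widetilde{\theta}_{\bI}-\theta^*\|_\infty<\underline{c}^{(t)}\sigma\sqrt{\log(p/\alpha)/n}$. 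For (\textbf{P2}): if $|\theta^*_{(k_0+1)}|\geq c\sigma\sqrt{\log(p/(\alpha\wedge\beta))/n}$ with $c$ large enough relative to $\underline{c}^{(t)}$ and $C$, then by the reverse triangle inequality the top $k_0+1$ entries of $|\widetilde{\theta}_{\bI}|$ exceed the threshold, so the test rejects. Tracking the probabilities ($\alpha$ for the conditional Gaussian bound in the null, $\beta$ analogously in the alternative, and $\delta$ for the Lasso/restricted-eigenvalue events) yields the claimed error budgets.
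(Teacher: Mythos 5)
Your proposal is correct and follows essentially the same route as the paper: the same decomposition of $\widetilde{\theta}_{\bI}-\theta^*$ into a diagonal fluctuation, an off-diagonal cross term controlled via $\|\widehat{\theta}_{SL}-\theta^*\|_2$, and the Gaussian noise term, each handled by conditional chi-square/Gaussian tails plus a union bound, with Condition ($\bA$) absorbing the $(k_0+\Delta)\log(p/\delta)\lesssim n$ requirement (this is the paper's Lemma~\ref{lem:thresh}). The derivation of (\textbf{P1}) and (\textbf{P2}) from the resulting $\ell_\infty$ bound and the choice of $\underline{c}^{(t)}$ likewise matches the paper's argument.
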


Again, we emphasize that similar performances are achieved by the debiased Lasso test of Javanmard and Lee~\cite{javanmard2017flexible}.

\subsubsection{Test $\phi^{(\chi)}$ based on the $l_2$ norm of the residuals}\label{ss:phi_chi}

The second test is also simple. We heavily rely on the knowledge of the noise level $\sigma$. In the detection setting $(k_0=0)$, \cite{2010_EJS_Ingster,2011_AS_Arias-Castro} consider a test rejecting the null when the squared norm $\|Y\|_2^2/(n\sigma^2)$ is large compared to one. Indeed, in expectation, $\|Y\|_2^2/(n\sigma^2)$ is equal to $\|\theta^*\|_2^2/\sigma^2+1$. Here, we have to adapt this statistic as $\|\theta^*\|_2^2$ is unknown under the null. 

First, we project the square-root Lasso estimator $\widehat{\theta}_{SL}$ onto the parameter set corresponding to the null hypothesis. More precisely, we define $\widetilde{\theta}_{SL,k_0}= \arg\min_{\theta\in \bbB_0[k_0]} \|\widehat{\theta}_{SL}-\theta\|_2^2$. In other words, $\widetilde{\theta}_{SL,k_0}$ is obtained from $\widehat{\theta}_{SL}$ by thresholding its  $(p-k_0)$ smallest entries to zero.  Then, given $\widetilde{\theta}_{SL,k_0}$, we use the second sample to assess whether $\theta^*$ is significantly different from $\widetilde{\theta}_{SL,k_0}$. Define the residuals vectors $\widehat{R}_{k_0}= Y^{(2)}- \bX^{(2)}\widetilde{\theta}_{SL,k_0}$ and, for $R\in \mathbb{R}^m$,   the statistic
$ Z_{\chi}[R] = \frac{\|R\|_2^2  }{m \sigma^2} -1$.

 Take the threshold $v_{\alpha,\delta}^{(\chi)}= \sqrt{\frac{\log(1/\alpha)}{m}}  + \frac{(k_0\vee 1)\log(p/\delta)}{m}$, we consider the test $\phi^{(\chi)}
 $ rejecting the null hypothesis when  $Z_{\chi}[\widehat{R}_{k_0}]>
 \underline{c}^{(\chi)} v_{\alpha,\delta}^{(\chi)}$, where the numerical constant $\underline{c}^{(\chi)}$ is introduced in the proof of the following proposition.

\begin{prp}\label{prp:chi}
There exist numerical constants $\underline{c}^{(\chi)}$ and $c$ and  such that the following holds under Condition ($\bA[\alpha\wedge \beta\wedge\delta]$). The test $\phi^{(\chi)}$ satisfies  \emph{({\bf P1}[$\alpha+\delta$])} and \emph{({\bf P2}[$\beta$])} on the collection
\beq\label{eq:condition_distance_chi}
\Big\{\theta^*,\quad d^2_2\big[\theta^*;\bbB_0[k_0]\big]\geq c \sigma^2  \Big[ \frac{k_0\vee 1}{n}\log(p/\delta) + \sqrt{\frac{\log(2/(\alpha\wedge \beta))}{n}}\Big]\Big\}\ .
\eeq
\end{prp}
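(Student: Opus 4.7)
The plan is to analyze $Z_\chi[\widehat{R}_{k_0}]$ by conditioning on the first subsample $(Y^{(1)},\bX^{(1)})$, on which $\widetilde{\theta}_{SL,k_0}$ depends. Writing $v := \theta^*-\widetilde{\theta}_{SL,k_0}\in\bbR^p$, we have
\[
\widehat{R}_{k_0}= Y^{(2)}-\bX^{(2)}\widetilde{\theta}_{SL,k_0}=\bX^{(2)}v+\sigma\epsilon^{(2)} .
\]
Conditionally on the first subsample, $v$ is deterministic while the rows of $\bX^{(2)}$ are i.i.d.\ $\cN(0,\bI_p)$ and independent of $\epsilon^{(2)}$. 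Hence each coordinate of $\widehat{R}_{k_0}$ is independent $\cN(0,\|v\|_2^2+\sigma^2)$, so that $\|\widehat{R}_{k_0}\|_2^2$ has, conditionally, the distribution $(\|v\|_2^2+\sigma^2)\chi^2_m$. In particular, setting $\xi^2:=\|v\|_2^2/\sigma^2$,
\[
Z_{\chi}[\widehat{R}_{k_0}]\ \stackrel{d}{=}\ \xi^{2}+(1+\xi^{2})\tfrac{\chi^{2}_{m}-m}{m}\ ,\qquad \bbE\!\left[Z_{\chi}\mid\text{sample 1}\right]=\xi^{2}\ .
\]
Laurent--Massart's inequalities then give, for any $t>0$,
$\P\!\left(|Z_{\chi}-\xi^{2}|\ge (1+\xi^{2})(2\sqrt{t/m}+2t/m)\,\big|\,\text{sample 1}\right)\le 2e^{-t}$.

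For Property \textbf{P1}, I invoke the square-root Lasso guarantee on the first subsample: under condition $(\bA[\delta])$ and for $\theta^{*}\in\bbB_{0}[k_{0}]$, with $\P^{(1)}$-probability at least $1-\delta$ one has $\|\widehat{\theta}_{SL}-\theta^*\|_2^2\lesssim \sigma^2 (k_0\vee 1)\log(p/\delta)/m$. Since $\widetilde{\theta}_{SL,k_0}$ is the best $k_{0}$-term approximation of $\widehat{\theta}_{SL}$ and $\theta^{*}$ is itself $k_{0}$-sparse, the triangle inequality yields $\|v\|_{2}\le 2\|\widehat{\theta}_{SL}-\theta^*\|_{2}$, so that $\xi^{2}\lesssim (k_{0}\vee 1)\log(p/\delta)/m$ on this event. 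Applying the $\chi^{2}$-upper-tail bound with $t=\log(1/\alpha)$ and using that condition $(\bA)$ keeps $\xi^{2}$ bounded, I get $Z_{\chi}\lesssim (k_{0}\vee 1)\log(p/\delta)/m+\sqrt{\log(1/\alpha)/m}$ with probability at least $1-\alpha-\delta$, which is the definition of the threshold $v^{(\chi)}_{\alpha,\delta}$ up to a universal constant. Tuning $\underline{c}^{(\chi)}$ yields \textbf{P1}$[\alpha+\delta]$.

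For Property \textbf{P2}, no sparsity is imposed on $\theta^{*}$, but we do have the deterministic bound $\|v\|_{2}\ge d_{2}(\theta^{*},\bbB_{0}[k_{0}])$ since $\widetilde{\theta}_{SL,k_{0}}\in\bbB_{0}[k_{0}]$. Hence $\xi^{2}\ge d_{2}^{2}(\theta^{*},\bbB_{0}[k_{0}])/\sigma^{2}$ almost surely. Applying the $\chi^{2}$-lower-tail bound with $t=\log(1/\beta)$ gives
\[
Z_{\chi}\ \ge\ \xi^{2}-(1+\xi^{2})\bigl(2\sqrt{\log(1/\beta)/m}+2\log(1/\beta)/m\bigr)\ \ge\ \tfrac12\xi^{2}-c\sqrt{\log(1/\beta)/m}
\]
with probability at least $1-\beta$, where I used $(\bA)$ to ensure $\sqrt{\log(1/\beta)/m}\le 1/8$. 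Combining, as soon as $d_{2}^{2}(\theta^{*},\bbB_{0}[k_{0}])\ge c\sigma^{2}\bigl[(k_{0}\vee 1)\log(p/\delta)/n+\sqrt{\log(2/(\alpha\wedge\beta))/n}\bigr]$ for a sufficiently large constant $c$, we have $Z_{\chi}>\underline{c}^{(\chi)}v^{(\chi)}_{\alpha,\delta}$ with probability at least $1-\beta$, which is \textbf{P2}$[\beta]$.

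The main obstacles are two quantitative pieces that must be lined up with the right constants: (i) invoking a square-root Lasso oracle inequality under the normalized design $\bT^{(1)}$ with the prescribed choice $\lambda=2\sqrt{\overline{\Phi}^{-1}(\delta/(4p))}$, which must hold uniformly over $\theta^{*}\in\bbB_{0}[k_{0}]$ and will require the standard restricted eigenvalue / compatibility argument for Gaussian designs (this has presumably been established earlier in the paper for $\phi^{(t)}$); and (ii) keeping the $(1+\xi^{2})$ prefactor in the $\chi^{2}$-deviation harmless, which is what Condition $(\bA[\alpha\wedge\beta\wedge\delta])$ is engineered for via the requirement $(k_{0}\vee 1)\log(p/\alpha)+\log^{2}(p/\alpha)\lesssim n$. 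Everything else is routine bookkeeping with the chi-square concentration inequality.
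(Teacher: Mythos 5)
Your proposal is correct and follows essentially the same route as the paper's proof: condition on the first subsample so that the residual coordinates are i.i.d.\ $\cN(0,\sigma^2+\|v\|_2^2)$, apply $\chi^2$ concentration (the paper's Lemma~\ref{lem:t_comp}, proved via Lemma~\ref{lem:conchi}), bound $\|v\|_2$ under the null by the square-root Lasso guarantee of Lemma~\ref{lem:square_root_Lasso} (your triangle-inequality step for the best $k_0$-term approximation is exactly how that lemma is proved), and under the alternative use only $\|v\|_2\geq d_2(\theta^*;\bbB_0[k_0])$ together with Condition ($\bA$) to absorb the $(1+\xi^2)$ deviation factor.
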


It turns out that a combination of  $\phi^{(t)}$ and $\phi^{(\chi)}$ is matching the minimax lower bound of Theorem \ref{thm:lbkvkd} when $k_0\leq \sqrt{p}$. For larger null hypotheses, we need to rely on more intricate tests that are discussed in the next section.

\subsubsection{Test $\phi^{(f)}$ based on the empirical Fourier transform of the raw covariances}\label{ss:fourier}

In the Gaussian sequence framework ($p=n$ and $\bX=\bI_p$), \cite{carpentier2017adaptive} have recovered the optimal separation distance using test based on the empirical Fourier transform of the data. In this section, we adapt this approach in the linear regression model.

Conditionally to $Y$, it is shown in the proof of Proposition \ref{prp:analyse_Z_f} below that the normalized raw covariances $\bX^T Y /\|Y\|_2$ follow a normal distribution with mean $\theta^* \|Y\|_2/[\sigma^2+\|\theta^*\|_2^2]$ and variance $\bI_p - \theta^*\theta^{*T}/[\sigma^2+\|\theta^*\|_2^2]$. Since $\|Y\|_2^2$ is concentrated around $n[\sigma^2+\|\theta^*\|_2^2]$ and assuming that $\|\theta^*\|_2^2$ is small compared to $\sigma^2$, this implies that the raw covariances are almost distributed as a normal distribution with mean $\sqrt{n}\theta^*/\sigma$ and covariance $\bI_p$. This observation leads us to adapt the Fourier tests of \cite{carpentier2017adaptive} in our setting through raw covariances. 

\medskip 

The purpose of the empirical  Fourier transform statistic considered in \cite{carpentier2017adaptive} (but see also \cite{MR2420411,MR2325113} for previous work), is to approximate the discontinuous function $\sum_{i=1}^p \1_{\theta^*_i\neq 0}$. First, introduce, for $s>0$, the function 
\beq\label{phi_t}
\varphi(s;x)= \int_{-1}^{+1}(1-|\xi|) \cos\big(\xi s x\big)e^{ \xi^2 s^2/2}d\xi\ . 
\eeq
For $Z\sim \cN(a, 1)$, standard computations lead to $\E[\varphi(s;Z)]= 2 \frac{1-\cos(sa)}{(sa)^2}=:g(sa)$. In particular, the function $g$ takes values in $[0,1]$ with $g(0)=0$ and $\lim_{|a|\rightarrow \infty} g(sa)= 1$ (see \cite{carpentier2017adaptive}). In some way, $g(sa)$ is a smooth approximation to $\1_{a\neq 0}$. The larger $s$ is, the closer $g(sa)$ is to the indicator function. However, $\varphi(s; Z)$ exhibit a higher variance for large $s$.

\medskip

The conditional distribution of $\bX^T Y /\|Y\|_2$ is close to a normal distribution with mean $\sqrt{n}\theta^*/\sigma$ and variance-covariance matrix $\bI_p$, provided that $\|\theta^*\|_2^2$ is small compared to $\sigma^2$. Hence,  it would be tempting to use a statistic of the form $\sum_{i=1}^p\varphi(s; (\bX^T Y)_i/\|Y\|_2)$, which in expectation would be close to $\sum_{i=1}^p g(s\theta^*_i)$, which in turn would approximate $\|\theta^*\|_0$. Unfortunately, large coordinates $|\theta^*_{i}|$ may perturb the concentration of the statistic since the true conditional covariance of $\bX^T Y /\|Y\|_2$ is $\bI_p- \theta^*\theta^{*T}/[\sigma^2+\|\theta^*\|_2^2]$. To address this technical issue, we first correct $\theta^*$ by removing its large coefficients.

As in Subsection~\ref{sec:test_l_infty}, the first two samples are respectively dedicated to building the Lasso estimator $\widehat{\theta}_{SL}$ and the debiased  estimator $\widetilde{\theta}_{\bI}$. If $|[\widetilde{\theta}_{\bI}]_{(k_0+1)}|>\underline{c}^{(t)}\sigma \sqrt{\log(2p/\alpha)/n}$, then the test $\phi^{(f)}$ introduced below rejects the null hypothesis, otherwise we define $\overline{\theta}_{\bI}$ as 
\beq\label{eq:definition_theta_bar}
\overline{\theta}_{\bI,i} =\widetilde{\theta}_{\bI,i} \1_{\{|\widetilde{\theta}_{\bI,i}|>  \underline{c}^{(t)} \sigma \sqrt{\frac{\log(2p/\alpha)}{n}}\}}\ . 
\eeq
In Subsection~\ref{sec:test_l_infty}, we argued that, with high probability,  $\|\widetilde{\theta}_{\bI}-\theta^*\|_{\infty}\leq\underline{c}^{(t)}\sigma \sqrt{\log(2p/\alpha)/n}$. As a consequence, $\|\overline{\theta}_{\bI}-\theta^*\|_{\infty}\leq 2\underline{c}^{(t)} \sigma \sqrt{\frac{\log(2p/\alpha)}{n}}$  and the support of $\overline{\theta}_{\bI}$ is included in that of $\theta^*$

\medskip 

Finally, we use the third subsample to compute the corrected raw covariances  $W_j= {\bf X}^{(3)T}_j \ol{Y}^{(3)}$ with $\ol{Y}^{(3)}=  Y^{(3)}-\bX^{(3)}\overline{\theta}_{\bI}$  relative to the linear regression model with parameter $\theta^* - \overline{\theta}_{\bI}$.
Then, following the above heuristic explanation,  we consider the statistic 
\[
Z_f : = \sum_{j=1}^p \1_{(\overline{\theta}_{\bI})_j=0}\varphi(s;\frac{W_j}{\|\ol{Y}^{(3)}\|_2})+\1_{(\overline{\theta}_{\bI})_j\neq 0}\ , 
\]
with tuning parameter $s = \sqrt{\log(e\frac{k_0}{\sqrt{p}})}\lor 1$. For $j$ in the support of $\overline{\theta}_{\bI}$, we are already confident that $\theta^*_j$ is non zero and we do not have to rely on $\varphi$. Finally, the test $\phi^{(f)}
$ rejects the null when $Z_f\geq k_0 + v^{(f)}_{\alpha}$ with $v^{(f)}_{\alpha}= s^2/5 + se^{s^2/2}\sqrt{2p\log(2/\alpha)}$. 

In comparison to the original statistic of \cite{carpentier2017adaptive} for the Gaussian sequence model, we use here a slightly smaller tuning parameter $s$ and the threshold  $v^{(f)}_{\alpha}$ has an additional term $s^2/5$.

\begin{prp}\label{prp:analyse_Z_f}
There exist constants $c$, $c_{\alpha}$, $c'_{\alpha}$, and  $c''_{\alpha}$ such that the following holds under Condition ($\bA[\alpha\wedge\delta ]$).  The test $\phi^{(f)}$ satisfies \emph{({\bf P1}$[\alpha+\delta]$)} and \emph{({\bf P2}[$\alpha+\delta+ e^{-n/27}$])} on the collection of parameters $\theta^*$ satisfying
$\|\theta^*\|_{0}\leq c n/\log(p/\delta)$,  $d^2_2\big[\theta^*;\bbB_0[k_0]\big] \leq   \sigma^2$ and at least one of the two following conditions.
\begin{eqnarray}
 |\theta^*_{(k_0+q)}| &\geq&  c_{\alpha}\sigma \sqrt{\frac{k_0}{qn\log(1+k_0/\sqrt{p})}}\ ,\, \text{ for some } q\geq  c'_{\alpha}\Big( (k_0^{3/4} p^{1/8})\lor \sqrt{p}\Big)
\ ; \label{eq:separation_log}
\\
 \label{eq:separation_log_l2}
 \sum_{i=1}^p \Big[\theta_i^{*2}&\wedge&  \frac{\sigma^2}{n\log(1+k_0/\sqrt{p})}\Big]\geq c''_{\alpha}\sigma^2 \frac{k_0}{n\log(1+k_0/\sqrt{p})}\ .
\end{eqnarray}

\end{prp}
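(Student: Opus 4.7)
The plan is to analyze $Z_f$ by first reducing to an auxiliary linear regression model in which the large entries of $\theta^*$ have been subtracted off, and then mimicking the Gaussian sequence analysis of \cite{carpentier2017adaptive}. The first step is to harness the control of the preliminary estimator from the proof of Proposition~\ref{prp:t}: on an event $\cA$ of probability at least $1-\alpha-\delta$ (depending only on the first two subsamples), one has $\|\widetilde{\theta}_{\bI}-\theta^*\|_{\infty}\leq \underline{c}^{(t)}\sigma\sqrt{\log(2p/\alpha)/n}$, from which the construction of $\overline{\theta}_{\bI}$ guarantees both $\cS(\overline{\theta}_{\bI})\subset \cS(\theta^*)$ and $\|\theta^*-\overline{\theta}_{\bI}\|_{\infty}\leq 2\underline{c}^{(t)}\sigma\sqrt{\log(2p/\alpha)/n}$. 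Writing $\theta^{(r)} := \theta^*-\overline{\theta}_{\bI}$, condition ($\bA[\alpha\wedge\delta]$) together with $d_2^2(\theta^*,\bbB_0[k_0])\leq \sigma^2$ yield $\|\theta^{(r)}\|_2^2 \lesssim \sigma^2$ on $\cA$. From now on, I condition on $\cA$ and treat $\overline{\theta}_{\bI}$ as deterministic.

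The second step is the distributional analysis. Conditionally on $\overline{Y}^{(3)} = \bX^{(3)}\theta^{(r)}+\sigma\epsilon^{(3)}$, Gaussian conditioning of the rows of $\bX^{(3)}$ yields
\[
\frac{\bX^{(3)T}\overline{Y}^{(3)}}{\|\overline{Y}^{(3)}\|_2}\ \Big|\ \overline{Y}^{(3)} \sim \cN\!\left(\frac{\theta^{(r)}\,\|\overline{Y}^{(3)}\|_2}{\sigma^2+\|\theta^{(r)}\|_2^2},\ \bI_p-\frac{\theta^{(r)}\theta^{(r)T}}{\sigma^2+\|\theta^{(r)}\|_2^2}\right),
\]
and $\|\overline{Y}^{(3)}\|_2^2/m$ concentrates around $\sigma^2+\|\theta^{(r)}\|_2^2$ with Gaussian tails (this concentration will yield the $e^{-n/27}$ term in Property \textbf{P2}). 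Using the Fourier identity $\E[\varphi(s;Z)]=\int_{-1}^{1}(1-|\xi|)\cos(\xi s a)e^{\xi^2 s^2(1-\tau^2)/2}d\xi$ for $Z\sim\cN(a,\tau^2)$, one then computes, up to corrections of order $\|\theta^{(r)}\|_2^2/\sigma^2$,
\[
\E[Z_f\mid \cA] \approx \sum_{j\notin \cS(\overline{\theta}_{\bI})} g\!\left(\frac{s\sqrt{m}\,\theta^*_j}{\sigma}\right) + |\cS(\overline{\theta}_{\bI})|.
\]
Under the null, the first sum runs over at most $k_0-|\cS(\overline{\theta}_{\bI})|$ nonzero indices of $\theta^*$ and is bounded by $k_0-|\cS(\overline{\theta}_{\bI})|$, while the bias from the rank-one perturbation of the covariance is shown to be absorbed by the $s^2/5$ offset in $v^{(f)}_{\alpha}$.

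The third step is the concentration of $Z_f$ around its conditional mean. The uniform bound $|\varphi(s;\cdot)|\leq e^{s^2/2}$, together with the fact that the conditional covariance above is a rank-one perturbation of the identity, shows that $\Var(Z_f\mid \cA,\overline{Y}^{(3)})\lesssim p\,e^{s^2}$ (with an additional $O(k_0 e^{s^2})$ contribution from the rank-one correction). A Chebyshev-type bound at scale $s\,e^{s^2/2}\sqrt{2p\log(2/\alpha)}$ --- the dominant term of $v^{(f)}_{\alpha}$ --- then gives Property \textbf{P1}[$\alpha+\delta$]. For Property \textbf{P2}, under \eqref{eq:separation_log} I would use the asymptotic $g(x)\to 1$ as $|x|\to\infty$ and the specific choice $s^2=\log(ek_0/\sqrt{p})\vee 1$ to show that the $q$ additional large coordinates contribute at least $q/2$ to the mean of $Z_f$, which exceeds $k_0+v^{(f)}_{\alpha}$ by the assumed lower bound on $q$. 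Under \eqref{eq:separation_log_l2}, the local bound $g(x)\gtrsim x^2\wedge 1$ converts the truncated $\ell_2^2$ sum directly into the required lower bound on $\E[Z_f\mid \cA]$.

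The hard part will be Step~3: tracking the variance contribution of the rank-one correction $\theta^{(r)}\theta^{(r)T}/(\sigma^2+\|\theta^{(r)}\|_2^2)$. Although this matrix has small operator norm, $\varphi(s;\cdot)$ is a non-linear function with exponential growth in $s$, so its contribution to $\Var(Z_f)$ is amplified by $e^{s^2}$. The truncation defining $\overline{\theta}_{\bI}$ is precisely what is needed to keep $\|\theta^{(r)}\|_{\infty}^2$ at order $\log(p)/n$ and to match the threshold $v^{(f)}_{\alpha}$; in particular, the shifted-index form $|\theta^*_{(k_0+q)}|$ in \eqref{eq:separation_log}, rather than a bound on an arbitrary coordinate, reflects that only after the top-$k_0$ truncation are the coordinates $W_j$ near-independent Gaussians whose aggregated Fourier statistic can be concentrated at the desired scale.
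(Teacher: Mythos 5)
Your overall architecture matches the paper's: reduce to the corrected parameter $\theta=\theta^*-\overline{\theta}_{\bI}$ on the event where $\|\widetilde{\theta}_{\bI}-\theta^*\|_\infty\lesssim\sigma\sqrt{\log(p/\alpha)/n}$, use the conditional Gaussian law of $\bX^{(3)T}\ol{Y}^{(3)}/\|\ol{Y}^{(3)}\|_2$, show the covariance perturbation only shifts the conditional mean by at most $s^2/5$ (this is exactly Lemma~\ref{lem:esperance_fourier_statistics}, which needs $s\|\theta\|_\infty\leq\sigma$ from the thresholding and Condition ($\bA$)), control $\|\theta\|_2^2\lesssim\sigma^2$ and $\|Y\|_2^2\gtrsim n(\sigma^2+\|\theta\|_2^2)$ (source of $e^{-n/27}$), and conclude on the alternative via the separation computations of \cite{carpentier2017adaptive}.

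The genuine gap is your Step 3. The test $\phi^{(f)}$ is fixed in the statement with threshold $v^{(f)}_\alpha= s^2/5+ se^{s^2/2}\sqrt{2p\log(2/\alpha)}$, i.e.\ a deviation of roughly $s\sqrt{2\log(2/\alpha)}$ conditional standard deviations (the conditional standard deviation of $Z_f$ is of order $e^{s^2/2}\sqrt{p}$). A Chebyshev-type bound with $\Var(Z_f\mid\cdot)\lesssim p\,e^{s^2}$ therefore only yields a type I error of order $1/\big(s^2\log(2/\alpha)\big)$, which is not bounded by $\alpha$ (e.g.\ $s\asymp 1$, $\alpha=0.05$ gives roughly $0.13$), and you are not free to inflate the threshold by a $1/\sqrt{\alpha}$ factor since the test is prescribed. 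What is needed is a sub-Gaussian tail at scale $se^{s^2/2}\sqrt{2pt}$ with probability $e^{-t}$, and the paper obtains it (Lemma~\ref{lem:concentration_fourier_statistics}) by writing $W/\|Y\|_2= v+\bGamma^{1/2}U$ with $U$ standard Gaussian and $\bGamma=\bI_p-\theta\theta^T/(\sigma^2+\|\theta\|_2^2)\preceq\bI_p$, checking that $u\mapsto\sum_j\1_{(\overline{\theta}_{\bI})_j=0}\varphi\big(s;v_j+(\bGamma^{1/2}u)_j\big)$ is $se^{s^2/2}\sqrt{p}$-Lipschitz (because $\|\bGamma^{1/2}(u-u')\|_1\leq\sqrt{p}\|u-u'\|_2$), and invoking the Gaussian concentration theorem; the same two-sided bound is needed for the type II analysis, where Chebyshev has the identical defect. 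Relatedly, your worry that the rank-one covariance correction inflates the fluctuations by an extra $O(k_0e^{s^2})$ is unfounded on this route: since $\|\bGamma\|_{op}\leq 1$ the Lipschitz constant is unaffected, and the rank-one term only matters through the conditional mean, where the truncation defining $\overline{\theta}_{\bI}$ yields the $s^2/5$ bias already built into $v^{(f)}_\alpha$.
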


 The test $\phi^{(f)}$ rejects the null hypothesis when there are many small non-zero coefficients in $\theta^*$. In particular, if $\theta^*$ contains $2k_0>2\sqrt{p}$ coefficients of order $\sigma  (n\log(p))^{-1/2}$, then the null hypothesis is rejected with high probability. Note that $\sigma (n\log(p))^{-1/2}$ is much smaller than the value needed to recover the position of these non-zero coefficients, which is of the order $\sigma \sqrt{\log(p)/n}$.  This behavior is reminiscent of the minimax lower bound in Theorem \ref{thm:lbkvkd}, where the squared separation distance is proven to be at least of the order $k_0/[n\log(p)]$ for $\Delta\geq k_0\geq \sqrt{p}$.

  When there are a few entries in $\theta^*$ that are neither large nor small - see below for more precisions, it turns out that $\phi^{(f)}$  only matches the minimax lower bound up to some $\log\log(p)$ multiplicative factor. To address this issue we need to introduce an additional test $\phi^{(i)}$.

\subsubsection{Intermediary regime: Test $\phi^{(i)}$ based on the empirical Fourier transform of the raw covariance}\label{ss:fourier_intermediary}

In this subsection, we focus on entries $\theta^*_i$ that are neither large (with respect to $\sigma \sqrt{\log(p)/n}$) as in the analysis $\phi^{(t)}$ nor small (with respect to $\sigma \sqrt{1/(n\log(p))}$) as in the analysis of $\phi^{(f)}$. This setting turns out to be  relevant for large $k_0$ only and we assume henceforth that $k_0\geq 2^{11}\sqrt{p}$. As in the previous section, 
we adapt a test from~\cite{carpentier2017adaptive} in the Gaussian sequence setting by applying the empirical Fourier transform to the raw covariances. 

Given two tuning parameters $r$ and $l$, define the function 
\beq\label{eq:def_eta}
\eta_{r,w}(x)= \frac{r}{(1-2\overline{\Phi}(r))}\int_{-1}^1 \frac{e^{-r^2\xi^2/2}}{\sqrt{2\pi}} e^{\xi^2 w^2/2 }\cos(\xi  w x)d\xi\ .
\eeq
and the statistic
\[V(r,w)= \sum_{j=1}^p \1_{(\overline{\theta}_I)_j= 0} \big[1 - \eta_{r,w}(W_j/\|\ol{Y}^{(3)} \|_2)\big]+ \1_{(\overline{\theta}_I)_j\neq 0}\ .\]

In order to get a grasp of this statistic let us consider the expectation of $\eta_{r,w}(X)$ for $X\sim \cN(x,1)$. Simple computations (see \cite{carpentier2017adaptive}) lead to $
\E[1- \eta_{r,w} (X)]=  1- \frac{1}{1-2\overline{\Phi}(r)}\int_{-r}^{r} \phi(\xi)\cos(\xi x \frac{w}{r} )d\xi$, which for large $r$, is close to $1- \exp(-x^2\tfrac{w^2}{2r^2})$. Thus, in contrast to the population function $g$ introduced in the previous subsection, which converges to $1$ at a quadratic rate, this function converges to one at an exponential rate, thereby better handling  moderate values of $\theta^*_i$. The downside of using this statistic is that $\E[1- \eta_{r,w} (X)]$ does not lie in $[0,1]$.

The test $\phi^{(i)}$ is an aggregation of multiple tests based on the statistics $V(r,w)$ for different tuning parameters $r$ and $w$. Define
 $l_0= \lceil k_0^{4/5}p^{1/10}\rceil$ and the dyadic collection $\cL_{0}=\{l_0, 2l_0,4l_0, \ldots, l_{\max}\}$ where $l_{\max}=  2^{\lfloor \log_2 (k_0/l_0)\rfloor} l_0/4 \leq k_0/4$. Note that $\cL_{0}$ is not empty if $k_0\geq 2^{11} \sqrt{p}$ and $p$ is large enough.
Given any $l\in \cL_{0}$, define 
\beq \label{eq:param}
 r_{l} =  \sqrt{2\log(\tfrac{k_0}{l})} \ ; \quad \quad w_l = \sqrt{\log(\tfrac{l}{\sqrt{p}})}\ .
\eeq
 Then, the test $\phi^{(i)}$ rejects the null hypothesis if, for some $l \in \cL_{0}$, 
\beq\label{eq:rejection_intermediary}
  V(r_{l},w_l) \geq k_0+ l+ v^{i}_{\alpha,l} \, \quad \quad\text{ where } \quad v^{i}_{\alpha,l}=\frac{e^{1/2}}{2}\omega_l^2+ \sqrt{2 ln^{1/2}\log\Big(\frac{\pi^2 [1+\log_2(l/l_0)]^2}{6\alpha}\Big)}\ .
\eeq

In comparison to the test in~\cite{carpentier2017adaptive}, the collection of tuning parameters $\cL_0$ is slightly narrower and the threshold $v^{i}_{\alpha,l}$ has an additional corrective term of the  order of $\omega_l^2$.

\begin{prp}\label{prp:analyse_Z_i}
There exist positive constants   $c, c_{\alpha}, c_\alpha'$ such that the following holds under Condition ($\bA[\alpha\wedge\delta ]$).  The test $\phi^{(i)}$ satisfies \emph{({\bf P1}$[\alpha+\delta]$)} and \emph{({\bf P2}[$\alpha+\delta+ e^{-n/27}$])} on the collection of parameters $\theta^*$ satisfying
$\|\theta^*\|_{0}\leq c n/\log(p/\delta)$,  $d^2_2\big[\theta^*;\bbB_0[k_0]\big] \leq   \sigma^2$ and 
\beq
 |\theta^*_{(k_0+q)}| \geq  c_{\alpha}\sigma \frac{1+\log\left(\frac{k_0}{q\wedge k_0}\right)}{\sqrt{n \log\left(1+\frac{k_0}{\sqrt{p}}\right)}}\ ,\, \text{ for some } q\geq  c'_{\alpha} k_0^{4/5} p^{1/10}
\ . \label{eq:separation_log_intermediairy}
\eeq
\end{prp}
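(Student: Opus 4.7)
My plan is to adapt the Fourier-based analysis of \cite{carpentier2017adaptive} from the Gaussian sequence setting to the regression setting, via a conditional Gaussianization of the renormalized raw covariances. The first step is to condition successively on $(Y^{(1)},\bX^{(1)},Y^{(2)},\bX^{(2)})$, so that $\bar{\theta}_{\bI}$ becomes deterministic, and then on $\ol{Y}^{(3)}=Y^{(3)}-\bX^{(3)}\bar{\theta}_{\bI}$. By the argument proving \prpref{t}, outside an event of probability at most $\alpha+\delta$ we may assume $\cS(\overline{\theta}_{\bI})\subseteq\cS(\theta^*)$ and $\|\overline{\theta}_{\bI}-\theta^*\|_\infty\lesssim \sigma\sqrt{\log(p/\alpha)/n}$. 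Conditionally on $\ol{Y}^{(3)}$, the vector $(W_j/\|\ol{Y}^{(3)}\|_2)_j$ is exactly Gaussian with mean proportional to $(\theta^*-\overline{\theta}_{\bI})_j$ and covariance $\bI_p$ minus a rank-one perturbation governed by $\theta^*-\overline{\theta}_{\bI}$. Using the hypothesis $d_2^2[\theta^*;\bbB_0[k_0]]\leq\sigma^2$, the denominator $\|\ol{Y}^{(3)}\|_2^2/m$ concentrates exponentially around $\sigma_{\text{eff}}^2:=\sigma^2+\|\theta^*-\overline{\theta}_{\bI}\|_2^2\in[\sigma^2,2\sigma^2]$, reducing the problem to an approximate Gaussian sequence problem with identity covariance, effective signal $\mu_j=\sqrt{m}(\theta^*-\overline{\theta}_{\bI})_j/\sigma_{\text{eff}}$ and a residual denominator-error absorbed in the $e^{-n/27}$ term.

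The second step is to control moments of $\eta_{r,w}$. For $X\sim\cN(\mu,1)$, the identity $\E[\cos(\xi wX)]=e^{-\xi^2w^2/2}\cos(\xi w\mu)$ yields $\E[1-\eta_{r,w}(X)]=1-\tfrac{1}{1-2\ol{\Phi}(r)}\int_{-r}^r\phi(\xi)\cos(\xi w\mu/r)d\xi$, which lies in $[0,1]$ and behaves as $1-\exp(-\mu^2 w^2/(2r^2))$ for large $r$. A second-order Taylor expansion in $w\mu/r$ allows one to absorb the error coming from replacing $\|\ol{Y}^{(3)}\|_2$ by a deterministic scaling, and the aggregated second-order contribution accounts for the $\tfrac{e^{1/2}}{2}w_l^2$ correction in $v^i_{\alpha,l}$. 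Under $H_0$, one decomposes $V(r_l,w_l)=|\cS(\overline{\theta}_{\bI})|+\sum_{j\in\cS(\theta^*)\setminus\cS(\overline{\theta}_{\bI})}[\cdots]+\sum_{j\notin\cS(\theta^*)}[\cdots]$; the first two contributions sum to at most $k_0$ in expectation (since each summand is bounded by $1$ in expectation for small signal), while the third is centered with variance dominated by $p\,e^{w_l^2}/r_l^2\asymp l\sqrt{p}/\log(k_0/l)$. Gaussian--Lipschitz concentration (with a Hanson--Wright contribution from the rank-one covariance correction) then gives fluctuations of the announced order $\sqrt{ln^{1/2}\log(\cdots)}$ using $p\geq n^{1+\zeta}$ and $l\geq l_0$. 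A union bound over $\cL_0$ weighted by $6/(\pi^2[1+\log_2(l/l_0)]^2)$ closes the type~I analysis.

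The third step handles the type~II error. Choose $l^\star\in\cL_0$ with $l^\star\asymp(q\wedge k_0)/4$; this is legitimate since $\cL_0$ covers $[l_0,k_0/4]$ dyadically and the separation hypothesis forces $q\geq c'_\alpha l_0$. The tuning then yields $w_{l^\star}/r_{l^\star}\asymp\sqrt{\log(1+k_0/\sqrt{p})}/(1+\log(k_0/(q\wedge k_0)))$, so that the separation assumption translates into $|\mu_j|\,w_{l^\star}/r_{l^\star}\gtrsim 1$ for at least $l^\star$ indices $j$ outside $\cS(\overline{\theta}_{\bI})$ (choosing $c_\alpha$ large enough). Each such index contributes a positive constant to $\E[1-\eta_{r_{l^\star},w_{l^\star}}(\mu_j)]$, so that $\E[V(r_{l^\star},w_{l^\star})]\geq k_0+c_1\,l^\star$ with $c_1>2$, and the concentration estimate from Step~2 places $V(r_{l^\star},w_{l^\star})$ above $k_0+l^\star+v^i_{\alpha,l^\star}$ with the claimed probability.

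The main obstacle is the joint tuning $r_l=\sqrt{2\log(k_0/l)}$, $w_l=\sqrt{\log(l/\sqrt{p})}$, which is the unique scaling balancing the variance blow-up $e^{w_l^2}/r_l^2$ against the kernel's sensitivity to moderate signals; handling the rank-one perturbation in the conditional covariance of $(W_j)$ and the stochastic denominator $\|\ol{Y}^{(3)}\|_2$ without losing logarithmic factors is what forces the additional $\tfrac{e^{1/2}}{2}w_l^2$ correction absent from the Gaussian sequence analogue in \cite{carpentier2017adaptive}. The restriction $\|\theta^*\|_0\leq cn/\log(p/\delta)$ enters precisely to control the square-root Lasso step of Step~1, while $d_2^2[\theta^*;\bbB_0[k_0]]\leq\sigma^2$ keeps the rank-one covariance correction under control throughout Step~2.
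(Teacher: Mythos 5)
Your overall architecture is the same as the paper's: condition on the first two subsamples and on $\|\ol{Y}^{(3)}\|_2$, so that $W/\|\ol{Y}^{(3)}\|_2$ is exactly Gaussian with mean $v=\theta\|\ol{Y}^{(3)}\|_2/(\sigma^2+\|\theta\|_2^2)$, $\theta=\theta^*-\overline{\theta}_{\bI}$, and covariance $\bI_p-\theta\theta^T/(\sigma^2+\|\theta\|_2^2)$; control the conditional mean and fluctuations of $V(r_l,w_l)$; union bound over $\cL_0$; and for the type II error rescale to $|v_i|\gtrsim \sqrt{n}\,|\theta^*_i|/\sigma$ via the $\chi^2$ lower bound on $\|\ol{Y}^{(3)}\|_2^2$ (source of the $e^{-n/27}$) together with $\|\theta\|_2^2\lesssim\sigma^2$ from $d_2^2[\theta^*;\bbB_0[k_0]]\leq\sigma^2$. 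The genuine gap is in your type II bookkeeping. The hypothesis \eqref{eq:separation_log_intermediairy} gives at least $k_0+q$ coordinates above the separation level, and what must be shown is that $\sum_j\bigl(1-\Psi_{l^\star}(v_j)\bigr)$ exceeds $k_0+l^\star$ plus the deviation term. Your argument, ``at least $l^\star$ indices outside $\cS(\overline{\theta}_{\bI})$ each contribute a positive constant, hence $\E[V]\geq k_0+c_1 l^\star$'', does not produce the $k_0$: the set $\cS(\overline{\theta}_{\bI})$ may be empty (or much smaller than $k_0$), and contributions bounded below only by a constant $c<1$ from $k_0+q$ coordinates give $c(k_0+q)$, which is below $k_0$ whenever $q\ll k_0$. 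One needs each large coordinate to contribute $1-O(l^\star/k_0)-2e^{-w_{l^\star}^2v_j^2/r_{l^\star}^2}$, and the factor $1+\log\bigl(k_0/(q\wedge k_0)\bigr)$ in the separation is there precisely so that the exponential deficit is $O(q/k_0)$ and the aggregate loss over $k_0+q$ terms stays of order $l^\star$. This is exactly the computation from the proof of Proposition 3 of~\cite{carpentier2017adaptive} that the paper invokes on the vector $\tilde v$, and your sketch does not reconstruct it.

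Two further quantitative points. First, the $\tfrac{e^{1/2}}{2}w_l^2$ correction is not produced by a Taylor expansion absorbing the replacement of $\|\ol{Y}^{(3)}\|_2$ by a deterministic scale; no such replacement is needed (under the null, coordinates outside $\cS(\theta^*)$ have conditional mean exactly $0$ whatever $\|\ol{Y}^{(3)}\|_2$ is, and the bound $1-\Psi_l\leq 1+l/k_0$ used on the support is uniform in the mean). It comes solely from the rank-one covariance deficit: the conditional variance $1-\theta_j^2/(\sigma^2+\|\theta\|_2^2)$ against the kernel factor $e^{\xi^2w_l^2/2}$ calibrated for unit variance, whose mismatch sums over $j$ to at most $\tfrac{e^{1/2}}{2}w_l^2$ when $w_l\|\theta\|_\infty\leq\sigma$. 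Second, your claim that Gaussian--Lipschitz concentration yields fluctuations of order $\sqrt{l\,n^{1/2}\log(\cdot)}$ ``using $p\geq n^{1+\zeta}$'' is backwards: with a Lipschitz constant of order $p^{1/4}l^{1/2}$ (equivalently a variance scale of order $l\sqrt{p}$) the deviation scale is $\sqrt{l\,p^{1/2}\log(\cdot)}$, which is larger, not smaller, when $p\geq n^{1+\zeta}$; this is what the paper's concentration lemma actually delivers, and you cannot recover an $n^{1/2}$ scale by that route.
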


In Comparison to Condition~\eqref{eq:separation_log} for Proposition \ref{prp:analyse_Z_f}, $|\theta^*_{(k_0+q)}|$ is possibly much smaller than for $\phi^{(f)}$ in the regime where $k_{0}^{4/5}p^{1/10}\lesssim_\alpha q \lesssim_\alpha \sqrt{p}$.

\subsubsection{Aggregated test}

To conclude this section, we evaluate the performances of the combination of all the previous tests. In fact, $\phi^{(i)}$ is only defined in the large $k_0$ regime. We take the convention that $\phi^{(i)}$ is a trivial test that always accepts the null hypothesis in the small $k_0$ regime. Consider the aggregated  test 
\[
\phi^{(ag)}= \max(\phi^{(t)},\phi^{(\chi)}, \phi^{(f)},\phi^{(i)},\1\{d_2^2(\widehat{\theta}_{SL},\bbB_0[k_0])\geq \sigma^2/2\})\ .
\]
The last test  $\1\{d_2^2(\widehat{\theta}_{SL},\bbB_0[k_0])\geq \sigma^2/2\}$ is introduced for technical purpose to handle very dense alternatives 
($\|\theta^*\|_0\geq c n/\log(p/\delta)$).

\begin{thm}\label{thm:ubkvkd}
Let $\delta\in (0,1)$ and $\varsigma\in (0,1)$. There exists positive constants  $c_{\varsigma}$ and $c_{\varsigma,\delta}$ such that the following holds. 
Assume  that $p \geq c_\varsigma$  and that Condition ($\bA[\delta \wedge \delta]$) is satisfied. 
Define 
\beq\label{eq:upper_lbkvkd}
\rho^2_{k_0,\Delta,\varsigma}
= \left\{
\begin{array}{ccc}
\min\Big[ \frac{\Delta}{n} \log(p),\frac{1}{\sqrt{n}} + \frac{k_0}{n}\log(p) \Big]\ & \text{ if } & 0\leq k_0\leq p^{1/2-\varsigma}
\ ;\\
\min[\frac{\Delta\log(p)}{n},\frac{k_0}{n\log(p) }\big]& \text{ if } &  k_0 > p^{1/2+\varsigma} \ .\\
\end{array}\right.
\eeq
The test $\phi^{(ag)}$ satisfies \emph{({\bf P1}$[\delta+ 4\alpha]$)} and \emph{({\bf P2}[$\delta+\alpha+ e^{-n/27}$])} on the collection of parameters 
\[
\bbB_0[k_0+\Delta]\cap \big\{ \theta^*, \ d^2_2\big[\theta^*;\bbB_0[k_0]\big]\geq c_{\delta,\varsigma} \rho^2_{k_0,\Delta,\varsigma}\big\}
\]
with $1\leq \Delta \leq p-k_0$.
\end{thm}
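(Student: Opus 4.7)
My plan is to decouple the two error types. Type~I follows from a union bound over the five sub-tests, each tuned with significance $\alpha+\delta$, giving the claimed $\delta+4\alpha$. Type~II is proved by a case analysis on $(k_0,\Delta)$ and on the structure of $\theta^*$: in each regime exactly one sub-test is effective, which is why the type~II constant keeps a single $\alpha$ rather than $4\alpha$.

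For type~I, I would apply Propositions \ref{prp:t}--\ref{prp:analyse_Z_i} with parameters $(\alpha,\delta)$ so that each of $\phi^{(t)},\phi^{(\chi)},\phi^{(f)},\phi^{(i)}$ has type~I error at most $\alpha+\delta$ under the null. For the safety-net test $\1\{d_2^2(\widehat{\theta}_{SL},\bbB_0[k_0])\geq \sigma^2/2\}$, Condition $\bA[\delta]$ and a standard square-root Lasso oracle inequality yield $\|\widehat\theta_{SL}-\theta^*\|_2^2\lesssim \sigma^2 k_0\log(p/\delta)/n\ll \sigma^2/2$ on an event of probability at least $1-\delta$; since $\theta^*\in\bbB_0[k_0]$, $d_2^2(\widehat\theta_{SL},\bbB_0[k_0])\leq \|\widehat\theta_{SL}-\theta^*\|_2^2$, so the safety-net triggers under the null with probability at most $\delta$. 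Summing the five bounds gives $\delta+4\alpha$.

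For type~II, fix $\theta^*\in\bbB_0[k_0+\Delta]$ with $d_2^2(\theta^*,\bbB_0[k_0])\geq c_{\delta,\varsigma}\sigma^2\rho^2_{k_0,\Delta,\varsigma}$ (the $\sigma^2$ being implicit in the theorem statement), and let $S_0$ index the top-$k_0$ coordinates of $\theta^*$ in absolute value. If $|\theta^*_{(k_0+1)}|\geq c_\alpha\sigma\sqrt{\log(p)/n}$, then $\phi^{(t)}$ rejects by Proposition~\ref{prp:t}. Otherwise every residual entry is bounded by $c_\alpha\sigma\sqrt{\log(p)/n}$, and I branch on the regime: (i)~in the small-$\Delta$ subcases ($\Delta\lesssim k_0+\sqrt n/\log p$ when $k_0\leq p^{1/2-\varsigma}$, and $\Delta\lesssim k_0 p^{-\zeta}$ when $k_0>p^{1/2+\varsigma}$) one has $\rho^2=\Delta\log(p)/n$, and the $l_\infty$ bound gives $\sum_{i\notin S_0}\theta^{*2}_i\leq c_\alpha^2\sigma^2\Delta\log(p)/n$, contradicting the separation assumption as soon as $c_{\delta,\varsigma}>c_\alpha^2$, so this branch is vacuous; (ii)~for $k_0\leq p^{1/2-\varsigma}$ with large $\Delta$, $\rho^2=n^{-1/2}+k_0\log(p)/n$ is exactly the threshold of Proposition~\ref{prp:chi}, so $\phi^{(\chi)}$ triggers; (iii)~for $k_0>p^{1/2+\varsigma}$ with $\Delta\gtrsim k_0$ and $\rho^2=k_0/(n\log p)$, I split on where the residual $l_2^2$ mass sits: if a constant fraction is borne by entries of magnitude $\leq\sigma/\sqrt{n\log(1+k_0/\sqrt p)}$, then \eqref{eq:separation_log_l2} holds and $\phi^{(f)}$ triggers; otherwise a dyadic pigeonhole along the scales $w_l$ from \eqref{eq:param} exhibits some $q\geq c'_\alpha k_0^{4/5}p^{1/10}$ at which \eqref{eq:separation_log_intermediairy} holds, and $\phi^{(i)}$ triggers. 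Finally, when $\|\theta^*\|_0>c_\delta n/\log(p/\delta)$, Propositions~\ref{prp:analyse_Z_f}--\ref{prp:analyse_Z_i} no longer apply; in this very dense regime the separation hypothesis together with a slow-rate square-root Lasso bound force $d_2^2(\widehat\theta_{SL},\bbB_0[k_0])\geq\sigma^2/2$ with high probability, so the safety-net test triggers.

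The main obstacle is the dichotomy in case~(iii). The truncation in \eqref{eq:separation_log_l2} and the dyadic collection $\cL_0$ built into $\phi^{(i)}$ are tuned precisely to the target rate $k_0/(n\log p)$, so the mass partition of $\theta^*_{\bar S_0}$ must be aligned with the scales $w_l$ and with the threshold $\sigma/\sqrt{n\log(1+k_0/\sqrt p)}$; verifying that, modulo constants, every admissible magnitude profile falls into the domain of exactly one of $\phi^{(f)},\phi^{(i)}$ requires careful bookkeeping, in particular at the transition $q\asymp k_0^{4/5}p^{1/10}$ between the two regimes. A secondary difficulty is the very dense regime, where the square-root Lasso can no longer be controlled coordinate-wise and the safety-net has to be justified through $\|\bX(\widehat\theta_{SL}-\theta^*)\|_2$-type inequalities rather than sparsity-based oracle bounds on $\widehat\theta_{SL}$ itself.
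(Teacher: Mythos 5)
Your overall architecture (union bound for type I; regime-by-regime case analysis for type II, with the $\phi^{(f)}$/$\phi^{(i)}$ dichotomy in the large-$k_0$, $\Delta\gtrsim k_0$ regime) matches the paper's, and your case (iii) dichotomy is the direct form of what the paper proves by contraposition: assume none of $\phi^{(t)},\phi^{(f)},\phi^{(i)}$ rejects, then sum the resulting bounds on $\theta^{*2}_{(k_0+q)}$ over $q< c'_\alpha k_0^{4/5}p^{1/10}$, over $q\in[c'_\alpha k_0^{4/5}p^{1/10},k_0]$ (using the $(1+\log(k_0/q))^2$ weights, whose sum over $q\le k_0$ is of order $k_0$), and over $q>k_0$ (using the truncated-$\ell_2$ condition), to get $d_2^2\lesssim \sigma^2 k_0/(n\log p)$. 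That contrapositive summation avoids the pigeonhole loss you worry about; your ``dyadic pigeonhole along $w_l$'' can be made to work only if weighted exactly this way, so the bookkeeping you flag is real but not fatal.

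The genuine gap is your treatment of the very dense regime $\|\theta^*\|_0>c\,n/\log(p/\delta)$ (equivalently large $\Delta$) when $k_0\ge p^{1/2+\varsigma}$. You claim that there the separation hypothesis plus a slow-rate square-root Lasso bound force $d_2^2(\widehat\theta_{SL},\bbB_0[k_0])\ge\sigma^2/2$, so the safety-net fires. This is false: the required separation in that regime is only $c_{\delta,\varsigma}\sigma^2 k_0/(n\log p)\ll\sigma^2$, so $\theta^*$ can have, say, $\Delta\asymp p$ tiny extra coordinates whose total squared mass is a small fraction of $\sigma^2$; nothing then prevents $\widehat\theta_{SL}$ from lying well within $\sigma^2/2$ of $\bbB_0[k_0]$ (slow-rate bounds control errors from above and cannot push the estimator away from the null set). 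Such alternatives must still be caught by $\phi^{(t)},\phi^{(f)},\phi^{(i)}$, whose Propositions \ref{prp:t}, \ref{prp:analyse_Z_f}, \ref{prp:analyse_Z_i} you can no longer invoke because of their restriction $\|\theta^*\|_0\le c\,n/\log(p/\delta)$. The paper's resolution is a trichotomy: either $\phi^{(\chi)}$ rejects (if $\|\theta^*-\widetilde\theta_{SL,k_0}\|_2^2\ge\sigma^2/2$), or the safety-net rejects (if $d_2^2(\widehat\theta_{SL},\bbB_0[k_0])\ge\sigma^2/2$), or else by the triangle inequality $\|\theta^*-\widehat\theta_{SL}\|_2^2\le 2\sigma^2$, and the second statement of Lemma \ref{lem:thresh} (which needs no sparsity, only this $\ell_2$ bound) re-establishes the $\ell_\infty$ control of $\widetilde\theta_{\bI}$, i.e.\ the event $\cB$, which is the only place the sparsity restriction entered the proofs of Propositions \ref{prp:t}, \ref{prp:analyse_Z_f}, \ref{prp:analyse_Z_i}; one then concludes as in the sparse case. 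Without this step your argument does not establish the theorem for $\Delta\gtrsim n/\log p$, $k_0\ge p^{1/2+\varsigma}$. A secondary, easily repaired slip: four sub-tests at level $\alpha+\delta$ plus the safety-net at $\delta$ sum to $4\alpha+5\delta$, not the claimed $\delta+4\alpha$; the paper gets $\delta+4\alpha$ by observing that every $\delta$-probability exception is the \emph{same} event, namely the control of $\widehat\theta_{SL}$ from Lemma \ref{lem:square_root_Lasso}, so these exceptional events are not added up.
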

The case $k_0+\Delta \leq c n/\log(p/\delta)$ is a simple corollary of the previous results, whereas the dense case $k_0+\Delta > c n/\log(p/\delta)$ requires further work.

To further compare this result with the minimax lower bound of Theorem \ref{thm:lbkvkd}, we assume that $p\geq n^{1+\zeta}$ for some $\zeta>0$. Recall that we also suppose $k_0\leq c n/\log(p)$. From Theorems \ref{thm:lbkvkd} and \ref{thm:ubkvkd}, we deduce that 
\medskip

\noindent 
 {\bf Case 1}: $k_0\leq p^{1/2 - \kappa}$ with an arbitrary    $\kappa\in (0,1/2)$. 
 \[
  \rho_{\gamma}^{*2}[k_0,\Delta]\asymp_{\gamma,\kappa,\varsigma}\left\{
  \begin{array}{cc}
  \frac{\Delta}{n}\log(p) & \text{ if } \Delta \leq \frac{\sqrt{n}}{\log(p)}+ k_0\ ; \\
 \frac{1}{\sqrt{n}}+  \frac{k_0\log(p)}{n} & \text{ if } \Delta > \frac{\sqrt{n}}{\log(p)}+ k_0\ .
                                                                    \end{array}
  \right.
 \]

 \noindent 
{\bf Case 2}: $k_0\geq p^{1/2 + \kappa}$ with  an arbitrary $\kappa\in (0,1/2)$. For any $\varsigma \in (0,1/2)$ arbitrarily small, we have
\[\rho_{\gamma}^{*2}[k_0,\Delta]\asymp_{\gamma,\kappa,\zeta,\varsigma} \left\{\begin{array}{cc}
  \frac{\Delta}{n}\log(p) & \text{ if } \Delta < k_0p^{ -\varsigma} \ ;\\
  \frac{k_0}{n\log(p)} & \text{ if } \Delta \geq  k_0\ , 
                                                                    \end{array}\right.
  \  \]
and that all these bounds are simultaneously achieved by the test $\phi^{(ag)}$. As a consequence, $\phi^{(ag)}$ is simultaneous minimax over all $k_0$ and all $\Delta$ except in the regimes when $k_0$ is close to $\sqrt{p}$ or when $\Delta$ is close to $k_0$, in which case, there is possibly a polylogarithmic  difference between the minimax lower and upper bounds.

\begin{proof}
If $k_0\leq p^{1/2-\kappa}$ and $p\geq n^{1+\zeta}$, then  $\log(1+\sqrt{p}/k_0)\asymp _{\kappa}\log(p)$. Hence \eqref{eq:lower_lbkvkd} in Theorem~\ref{thm:lbkvkd} ensures that the square minimax  separation distance is at least  of the order of $\min(\frac{\Delta}{n}\log(1+\frac{\sqrt{p}}{\Delta}), \frac{1}{\sqrt{n}}+  \frac{k_0\log(p)}{n})$. The first term is (up to numerical constants) larger than second one when $\Delta\geq k_0\vee \sqrt{n}$. For $\Delta \leq k_0\vee \sqrt{n}$, we have  $\log(\sqrt{p}/\Delta)\asymp_{\kappa,\zeta}\log(p)$. Hence, the square minimax separation distance 
is at least of the order of  $\min(\frac{\Delta}{n}\log(p), \frac{1}{\sqrt{n}}+  \frac{k_0\log(p)}{n})$
which matches the upper bound of Theorem~\ref{thm:ubkvkd}. If $k_0\geq p^{1/2+\kappa}$, $p\geq n^{1+\zeta}$, and $\Delta\leq k_0p^{-\varsigma}$, then $\log(1+\sqrt{k_0/\Delta})\asymp_{\kappa,\zeta,\varsigma}\log(p)$ and Theorem~\ref{eq:lower_lbkvkd} ensures that the square minimax  separation distance is at least of the order of $\Delta\log(p)/n$, matching again Theorem~\ref{thm:ubkvkd}. When $\Delta\geq k_0$, $\Delta\log^2(1+\sqrt{k_0/\Delta})\geq c k_0$, and we deduce from Theorems~\ref{thm:lbkvkd} and \ref{thm:ubkvkd} that the square minimax separation distance is of order of $k_0/[n\log(p)]$. 
\end{proof}

\medskip

\noindent Let us summarize the different regimes
\begin{itemize}
\item If $\Delta$ is small - first result in Cases 1 and 2 - then the squared  minimax separation distance ($\Delta\log(p)/n$) is the same as for signal detection ($k_0=0$). The upper bound can be achieved using any  $\sqrt{\log(p)/n}$ $l_\infty$-consistent estimator of $\theta^*$ and simply counting the number of its large entries. In the independent setting such estimator is easily built using the raw correlation ($\widetilde{\theta}_{\bI}$) between the variables and the response. Alternatively, one could use the debiased Lasso~\cite{javanmard2014confidence, zhang2014confidence,van2014asymptotically,javanmard2018debiasing} which is valid for a wider class of $\bSigma$.

\item If $\Delta$ is large and $k_0$ is small - second result in Case 1 - then the squared minimax separation distance can be understood as the sum of the quantity $n^{-1/2}$ arising in signal detection and the complexity $k_0\log(p)/n$ of the null hypothesis. The matching upper bound is achieved by computing the $l_2$ norm of the residuals when plugging a suitable estimator of $\theta^*$. The upper bound was already obtained in~\cite{nickl_vandegeer} (for a computationally inefficient method) but the matching minimax lower bound is new.

\item Finally, if $\Delta$ is large and $k_0$ is large - second result in Case 2 - then the minimax separation distance is highly non standard and depends on the complexity of the null hypothesis. Both the lower and upper bound are new. In some way, they  both draw inspiration from the analysis~\cite{carpentier2017adaptive} of the same problem in the Gaussian sequence framework. 
\end{itemize}

In this paper, we focused on recovering the minimax separation distance in the the high-dimensional setting, namely we require $p \geq n^{1+\zeta}$, where $\zeta>0$ is an arbitrarily small universal constant.
Aside from this restriction, there are two gaps in our analysis: 
\begin{itemize}
\item Fist, when $p\geq c_2 n^2$ and $k_0\in (n/\log(p), n)$, our minimax lower bounds in Theorem~\ref{thm:lbkvkd} imply that the testing problem is almost impossible. However, for $p\leq c_2n^2$, we did not manage to prove similar lower bounds. We conjecture that, for $p\leq c_2 n^2$ and $k_0\in (n/\log(p), n)$, $\rho^*_{\gamma}[k_0,p]$ is huge, but we did not manage to prove it. 
\item Some poly-log terms mismatch between the upper and lower bounds arise when  $k_0$ is close to $\sqrt{p}$ - e.g.~$k_0\in [\sqrt{p} \log^{-\zeta}(p), \sqrt{p} \log^{\zeta}(p)]$ for some $\zeta>0$ and when $\Delta$ gets close to $k_0$ from below - i.e.~$k_0 p^{-\zeta} \leq \Delta \leq k_0$ for some arbitrarily small universal constant $\zeta>0$. In that regime, we could improve our upper bounds by adapting some higher-criticism~\cite{jin2004} procedures as it was done in the sequence model~\cite{carpentier2017adaptive}. However, even with this new procedure this would not completely close the gap. We conjecture that our minimax bound~\eqref{eq:lower_lbkvkd} is not completely sharp in that regime (see its proof for a tentative explanation).

\end{itemize}

\section{General Setting}\label{sec:general}

 In this section, we focus on the general setting where $\bSigma$ is unknown and is only assumed to belong to some class $\cU(\eta)$~\eqref{eq:definition_Ueta} for some $\eta>1$. The noise variance $\sigma^2$ is also assumed to be unknown.

\subsection{Minimax lower bound}

 Obviously, $\boldsymbol{\rho}^{*}_{g,\gamma}[k_0,\Delta]$ is at least as large as $\rho^{*}_{\gamma}[k_0,\Delta]$ since the covariance matrix $\bSigma$ is unknown and $\bI_p$ belongs to $\cU[\eta]$. Therefore, Theorem~\ref{thm:lbkvkd} in the previous section provides a lower bound on $\boldsymbol{\rho}^{*}_{g,\gamma}[k_0,\Delta]$. It turns out that  that this lower bound is sometimes loose and that the general setting is actually more challenging in some regimes as shown by  the following proposition. 

\begin{prp}\label{prp:lbuvkd}
Assume that $p\geq 2n$. 
There exist positive numerical constants $c>0$ and $c'>0$ such that for $p\geq c_3$ and for all $\gamma\leq 0.06$, one has 
\beq\label{eq:lower_lbuvkd}
\boldsymbol{\rho}_{g,\gamma}^{*2}[k_0,\Delta] \geq c \left\{
\begin{array}{ccc}
 \frac{\Delta}{n}\log\left(1+\frac{\sqrt{p}}{\Delta}\right)\exp\left[c'\frac{\Delta}{n}\log\left(1+\frac{\sqrt{p}}{\Delta}\right)  \right] &\text{ if }& 0\leq k_0  \leq \sqrt{p}\wedge n\ ;
 \\
\frac{\Delta}{n } \frac{\log^2\big[1+ \sqrt{\frac{k_0}{\Delta}}\big]}{\log(p)}& \text{ if } & \sqrt{p} \leq k_0\leq n\ ,
\end{array}
\right.
\eeq
with $1\leq \Delta\leq p-k_0$.
\end{prp}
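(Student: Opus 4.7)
The two cases of~\eqref{eq:lower_lbuvkd} I would handle separately. When $\sqrt{p}\leq k_0\leq n$, the claimed bound coincides exactly with the second case of Theorem~\ref{thm:lbkvkd}. Since $\bI_p\in\cU(\eta)$ and a fixed, known noise level is an admissible instance in the general setting, any test of the general problem is also a test of the independent problem with the same risk, so $\boldsymbol{\rho}^{*2}_{g,\gamma}[k_0,\Delta]\geq\rho^{*2}_{\gamma}[k_0,\Delta]$ and Theorem~\ref{thm:lbkvkd} concludes this case.

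For $k_0\leq\sqrt p\wedge n$, the target bound features the extra multiplicative factor $\exp[c'\Delta\log(1+\sqrt p/\Delta)/n]$, which is genuinely new and must be extracted from the unknown noise level~$\sigma$. I would use a Bayesian two-point reduction, taking $\bSigma=\bI_p$ under both hypotheses. Under $\pi_0$, set $\theta^*=0$ and $\sigma=\sigma_0$, where $\sigma_0$ will be calibrated. Under $\pi_1$, draw a uniformly random $(k_0+\Delta)$-subset $S\subset[p]$, set $\theta_i^*=\pm\mu$ i.i.d. for $i\in S$ with $\mu^2=\rho^2\sigma^2/\Delta$, and keep $\sigma$ fixed. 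Then $d_2(\theta^*,\bbB_0[k_0])^2=\Delta\mu^2=\rho^2\sigma^2$, so $\pi_1$ is supported in the constrained alternative. Calibrate $\sigma_0^2=\sigma^2(1+\eta^2)$ with $\eta^2:=\|\theta^*\|_2^2/\sigma^2=(1+k_0/\Delta)\rho^2$ so that the marginal variance of each $Y_i$ matches between the two priors; this matching is the only mechanism available once $\sigma$ is unknown.

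The $\chi^2$ computation goes through because $\bX$ has i.i.d.\ $\cN(0,\bI_p)$ rows under both priors, so $\chi^2+1=\E_\bX\E_{\theta,\theta'\sim\pi_1}\!\int\phi_\sigma(Y-\bX\theta)\phi_\sigma(Y-\bX\theta')/\phi_{\sigma_0}(Y)\,dY$. The inner Gaussian integral in $Y$ is explicit, and since the resulting per-row $2\times 2$ quadratic form and the covariance $\Gamma$ of $(x_i^T\theta,x_i^T\theta')$ share the eigenbasis $(1,\pm 1)/\sqrt 2$, the row-wise Gaussian expectation closes in diagonal form. After collecting constants one obtains
\[
\chi^2(\P_{\pi_1}\|\P_{\pi_0})+1 \;=\; \E_{\theta,\theta'\sim\pi_1}\!\left[\Big(1-\tfrac{\rho^2\,O}{\Delta(1+\eta^2)}\Big)^{-n}\right],
\]
where $O=\theta^T\theta'/\mu^2$ is the signed overlap of the two supports. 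Averaging the sign patterns conditional on $m=|S\cap S'|$ replaces $(1-\beta O)^{-n}$ by quantities dominated by $\cosh^{m}(\lambda)$ terms with $\lambda\sim n\rho^2/(\Delta(1+\eta^2))$, and the hypergeometric MGF of $|S\cap S'|$ (for $k_0+\Delta\lesssim\sqrt p$; larger $\Delta$ is handled by truncating the support to size $\Delta'\asymp\sqrt p$ and using monotonicity) yields $\chi^2+1\lesssim\exp\{((k_0+\Delta)^2/p)(\cosh\lambda-1)\}$.

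The main obstacle is extracting the exponential amplification factor in the last step. A crude bound $\cosh\lambda\leq e^\lambda$ already reproduces the first-order separation $\rho^2\lesssim(\Delta/n)\log(1+\sqrt p/\Delta)$ (matching the independent setting). To obtain the additional factor $\exp[c'\Delta\log(1+\sqrt p/\Delta)/n]$, the prefactor $(1+\eta^2)^n$ must be kept inside the expectation rather than canceled at the outset, and the estimate must use $\cosh\lambda-1\leq(\lambda^2/2)e^\lambda$; imposing $\chi^2\leq c_\gamma$ then produces the amplified bound. This Bayesian argument parallels~\cite{2010_AS_Verzelen} for signal detection with unknown variance but requires this sharper exponential estimate to capture the regime in which $\Delta\log(1+\sqrt p/\Delta)/n$ is not negligibly small.
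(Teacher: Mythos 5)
Your treatment of the regime $\sqrt{p}\leq k_0\leq n$ is exactly the paper's: $\boldsymbol{\rho}^{*}_{g,\gamma}[k_0,\Delta]\geq \rho^{*}_{\gamma}[k_0,\Delta]$ and then Theorem~\ref{thm:lbkvkd}. For $k_0\leq \sqrt{p}\wedge n$, however, the paper does not redo any Bayesian computation: it quotes the detection ($k_0=0$) lower bounds with \emph{unknown} variance from \cite{2010_AS_Verzelen,verzelen_minimax} -- in particular the exponential bound $-1+\big(\tfrac{p}{2ek}\big)^{k/n}(4k)^{-2/n}$ valid when $k\log(\sqrt{p}/(e^{3/2}k))\geq n$ -- and transfers them to general $k_0$ via Lemma~\ref{lem:reduction} (planting $k_0$ deterministic coordinates, so that the $(p,k_0,\Delta)$ problem dominates detection with $p-k_0\geq p/2$ covariates) together with monotonicity in $\Delta$. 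You instead propose to re-derive the detection bound from scratch with $k_0$ folded into the alternative prior. Your variance-matched $\chi^2$ identity is indeed exactly correct (one gets $\chi^2+1=\E\big[(1-\theta^{\top}\theta'/(\sigma^2+\|\theta\|_2^2))^{-n}\big]$), but the route as sketched has two genuine gaps.

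First, with all $k_0+\Delta$ coordinates at a common height $\mu$, the overlap statistics are governed by $K=k_0+\Delta$, and the cosh/hypergeometric-MGF bound you invoke can only yield the constraint $n\mu^2/(\sigma^2+K\mu^2)\lesssim \log(1+p/K^2)$, hence a first-order separation of order $\tfrac{\Delta}{n}\log(1+\sqrt{p}/K)$. In the sub-regime $\Delta\ll k_0\asymp\sqrt{p}$ this is smaller than the stated $\tfrac{\Delta}{n}\log(1+\sqrt{p}/\Delta)$ by up to a $\log p$ factor; to recover the right logarithm you must decouple the $k_0$ nuisance coordinates from the $\Delta$ signal coordinates, which is precisely what the reduction \eqref{eq:subproblem} of Lemma~\ref{lem:reduction} does and which you never use in this case. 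Second, the exponential amplification -- the only new feature of this regime relative to the known-variance bound -- is exactly the hard estimate of \cite{2010_AS_Verzelen,verzelen_minimax}, and your recipe does not deliver it: the elementary bound $-\log(1-\beta O)\leq \beta O/(1-\beta K)$ costs a factor $1+\eta^2$ in the exponent, while the $\cosh$ route gives at best an amplification of the form $\big(1-cK\log(1+p/K^2)/n\big)^{-1}$, not $\exp\big[c'\tfrac{\Delta}{n}\log(1+\sqrt{p}/\Delta)\big]$. Producing the exponential requires balancing, for each overlap size $m$, the hypergeometric tail $\P(|S\cap S'|=m)\lesssim (K^2/p)^m$ against the conditional blow-up of $\E\big[(1-\beta O)^{-n}\,\big|\,m\big]$, which can reach $(1+\eta^2)^n$ -- that is, re-proving the bound $-1+(p/(2ek))^{k/n}(4k)^{-2/n}$, not invoking $\cosh\lambda-1\leq(\lambda^2/2)e^{\lambda}$. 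Either carry out that computation in full, or do as the paper does: quote it and reduce.
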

In fact, this result is a combination of  Theorem~\ref{thm:lbkvkd} together with known minimax lower bounds for the detection problem ($k_0=0$) with unknown variance \cite{2010_AS_Verzelen,verzelen_minimax}. 

In comparison to the independent setting, one cannot achieve anymore the rate $1/\sqrt{n}+ k_0\log(1+ \sqrt{p}/k_0)/n$. Most importantly, the testing problem becomes almost impossible for dense alternative ($\Delta \gtrsim n/\log(p)$) in the high-dimensional regime $p\geq n^2$.

 \subsection{Testing procedures}

We cannot rely anymore on the test $\phi^{(\chi)}$ as the noise level is unknown nor on $\phi^{(t)}$ and $\phi^{(f)}$ as their reconstruction relies on the independence of the covariates.

As in the previous sections we introduce two properties {({\bf gP1}}) and {({\bf gP2}}) characterizing the type I and II error probabilities in this setting where the noise level $\sigma$ and the covariance matrix $\bSigma$ are unknown. 

\noindent 
{\bf Property  gP1}. A test $\phi$ satisfies {(\bf gP1}[$\alpha$]) if its type I error probability is less or equal to $\alpha$, that is 
$
 \sup_{\theta^*\in \bbB_0[k_0]}\sup_{\sigma>0}\sup_{\bSigma\in \cU[\eta]}\P_{\theta^*,\sigma,\bSigma}[\phi=1]\leq \alpha$. 

 \medskip 

\noindent 
{\bf Property  gP2}. A test $\phi$ satisfies ({\bf gP2}[$\beta$]) on the collection  $\Theta$  of parameters if its type II error probability is uniformly less or equal to $\beta$ that is 
$ \inf_{\sigma>0,\bSigma\in \cU[\eta]}\inf_{\theta^* \in \Theta}\P_{\sigma \theta^*,\sigma,\bSigma}[\phi=1]\geq 1-\beta$. 

\medskip

\noindent 
Note that that in the above bound $\theta^*$ is rescaled by $\sigma$ for homogeneity purpose. As in Section \ref{sec:independent_design}, we restrict our attention to sparsities $k_0$ that are less than $n/\log(p)$. The numerical constants $\underline{c}^{({\bf B})}$ and  $\underline{c}^{({\bf B})'}$ in the following condition are introduced in the proof of Proposition \ref{prp:U} and Corollary \ref{cor:test_MCP}.

\noindent 
([$\bB[\alpha]$)\centerline{$(k_0\vee 1) \big[1+\log(p/\alpha)\big]+ \log^3\big(\frac{1}{\alpha}\big)+ \log(p)\log(\frac{1}{\alpha}) \leq \underline{c}_{\eta}^{(\bf B)} n$\,\text{ and }\, $p\geq \underline{c}^{(\bB)'}.$\quad}

\bigskip 

In this section, we divide the sample in two subsamples $(Y^{(0)},\bX^{(0)})$ and  $(Y^{(1)},\bX^{(1)})$ of equal size $m=n/2$. As previously, we shall combine several tests to match the minimax lower bounds.

\subsubsection{Test $\phi^{(u)}$ based on a $U$-statistic.} \label{sec:U-stat}

The first test is specific to the moderate regime $p\leq n^2$. For known $\sigma$, we introduced in the previous section a statistic relying on the observation that $\|Y\|_2^2/n-\sigma^2$ estimates well $\|\theta^*\|_2^2$. Then, relying on a good $k_0$-sparse estimator $\widetilde{\theta}_{SL,k_0}$ of $\theta^*$ and computing the square norm of the residuals, we estimate $\|\theta^*-\widetilde{\theta}_{SL,k_0}\|_2^2$, which under the null, should be small. Here, we follow the same strategy by considering an estimator of the signal strength, still valid for unknown $\sigma$.

\medskip 
In~\cite{dicker_variance}, Dicker tackled the problem of estimating the signal strength $\|\theta^*\|_2^2$ in the setting where $\bSigma=\bI_p$ and  $\sigma^2$ is unknown. This led him to introduce
the $U$-statistic $N= \frac{1}{n^2}[Y^T [\bX \bX^{T}-\frac{1}{n}\mathrm{tr}[\bX\bX^{T}]\bI_{n}]Y]$, which is unbiased and $\sqrt{p}/n$ consistent. For general $\bSigma$, this statistic has later been shown to be  concentrated around the quadratic form $\theta^{*T} \bSigma^2\theta^*$ (see \cite[Sect.2.1]{verzelen2016adaptive}). 
As a consequence, one can rely on it to test the nullity of $\theta^*$.

For composite null hypotheses, we use $(Y^{(1)},\bX^{(1)})$ to build $\widetilde{\theta}_{SL,k_0}$ as in Subsection \ref{ss:phi_chi} and then compute the residuals $\widehat{R}_{SL}$ with respect to the the second sample, $\widehat{R}_{SL}= Y^{(0)}-\bX^{(0)}\widetilde{\theta}_{SL,k_0}$. Finally, we define the normalized $U$-statistic $Z^{(u)}$ by 
\beq 
  Z^{(u)} = \frac{\widehat{R}_{SL}^T \Big[\bX^{(0)}\bX^{(0)T}-\frac{1}{m}\mathrm{tr}[\bX^{(0)}\bX^{(0)T}]\bI_{m}\Big] \widehat{R}_{SL}}{\|\widehat{R}_{SL}\|_2^2(m+1)}\ .\label{eq:phi_moment}
  \eeq
Conditionally to $\widetilde{\theta}_{SL,k_0}$, $\widehat{R}_{SL}$ is the response  of a linear regression model with parameter $(\theta^*- \widetilde{\theta}_{SL,k_0})$, variance $\sigma^2\bI_m$, and random design $\cN(0,\bSigma)$ . Hence, the second moment of each entry of $\widehat{R}_{SL}$ equals $\sigma^2+ (\theta^* - \widetilde{\theta}_{SL,k_0})^T \bSigma (\theta^* - \widetilde{\theta}_{SL,k_0})$ and $\|\widehat{R}_{SL}\|_2^2/m$ is therefore close to $\sigma^2+ (\theta^* - \widetilde{\theta}_{SL,k_0})^T \bSigma (\theta^* - \widetilde{\theta}_{SL,k_0})$. Intuitively, the statistic  $Z^{(u)}$ is therefore expected to be close to 
\[
\frac{(\theta^* - \widetilde{\theta}_{SL,k_0})^T \bSigma^2 (\theta^* - \widetilde{\theta}_{SL,k_0})}{\sigma^2 + (\theta^* - \widetilde{\theta}_{SL,k_0})^T \bSigma (\theta^* - \widetilde{\theta}_{SL,k_0})}\ ,
\]
so that a large value for $Z^{(u)}$ suggests that $\theta^*$ is significantly different from a $k_0$ sparse vector. 
Setting the threshold 
\beq\label{eq:u_M}
v^{(u)}_{\alpha}=  \frac{(k_0\vee 1)\log(p/\delta)}{m}+  \frac{\sqrt{p\log(2/\alpha)}}{m}\ ,
\eeq
we consider  the test $\phi^{(u)}$ rejecting the null hypothesis when $Z^{(u)}>\underline{c}^{(u)}_{\eta} v^{(u)}_{\alpha}$.

\begin{prp}\label{prp:U}
 There exist three constants $\underline{c}^{(u)}_{\eta}$, $c_{\eta}$ and $c'_{\eta}$ such that the following holds under Condition 
\emph{({\bf B}($\alpha\wedge \beta\wedge \delta$)} and if $ 2n\leq p\leq c_{\eta} n^2\log^{-1}\big(\frac{2}{\alpha \wedge \beta}\big)$.
The test $\phi^{(u)}$ satisfies \emph{({\bf gP1}[$\alpha+\delta$])}
and \emph{({\bf gP2}[$\beta$])} on  the collection 
\beq \label{eq:cond_M_separation}
  \Big\{\theta^*\ , \quad d^2_2\big[\theta^*;\bbB_0(k_0)\big]\geq c'_{\eta} \Big[\frac{\sqrt{p\log(2/(\alpha\wedge \beta))}}{n}+\frac{(k_0\vee 1)}{n}\log\big(\frac{p}{\delta}\big)  \Big]\Big\}\ .
\eeq
\end{prp}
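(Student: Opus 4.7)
My plan is to work conditionally on the first subsample, freezing $\widetilde\theta_{SL,k_0}$, and to recognize that $Z^{(u)}$ is then precisely the ratio-normalized Dicker-type $U$-statistic for the conditional linear regression model $\widehat R_{SL}=\bX^{(0)}h+\sigma\epsilon^{(0)}$, where $h:=\theta^*-\widetilde\theta_{SL,k_0}$ is deterministic and $\bX^{(0)}$ still has iid rows $\cN(0,\bSigma)$. The starting block of the proof is then a conditional deviation inequality, adapted from~\cite{dicker_variance,verzelen2016adaptive}, of the form
\[
\left| Z^{(u)} - \frac{h^{T}\bSigma^{2}h}{\sigma^{2}+h^{T}\bSigma h} \right| \ \lesssim_{\eta}\ \frac{\sqrt{p\,\log(2/\alpha)}}{m},
\]
valid with probability at least $1-\alpha/2$ (and similarly with $\alpha$ replaced by $\beta$ on an event of probability $1-\beta/2$) in the regime $p\leq c_\eta n^2/\log(2/(\alpha\wedge\beta))$ assumed by the statement. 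Obtaining this inequality requires controlling simultaneously the numerator (a quadratic Gaussian form of mean $\sim m^{2}\,h^{T}\bSigma^{2}h$ and standard deviation $\sim m\sqrt{p}\,(\sigma^{2}+h^{T}\bSigma h)$) and the denominator $\|\widehat R_{SL}\|_{2}^{2}$ (which concentrates around $m(\sigma^2+h^{T}\bSigma h)$ at relative rate $O(1/\sqrt m)$); the scale factor $(\sigma^{2}+h^{T}\bSigma h)$ cancels, so the resulting deviation is uniform in the nuisance $h$.

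For the type I bound, I combine the above with a square-root Lasso oracle inequality valid for $\bSigma\in\cU(\eta)$ and Gaussian random design: under Condition $(\bB[\alpha\wedge\beta\wedge\delta])$ the restricted eigenvalue condition holds on $\bX^{(1)}$ with high probability, whence $\|\bSigma^{1/2}(\widehat\theta_{SL}-\theta^*)\|_{2}^{2}\lesssim_\eta \sigma^{2}k_{0}\log(p/\delta)/m$ on an event of probability $\geq 1-\delta$. Under the null, $\theta^{*}\in\bbB_{0}[k_{0}]$ and $\widetilde\theta_{SL,k_{0}}$ is by construction the closest $k_{0}$-sparse vector to $\widehat\theta_{SL}$ in $\ell_{2}$, so $\|h\|_{2}\leq 2\|\widehat\theta_{SL}-\theta^{*}\|_{2}$; the spectral bounds $\eta^{-1}\bI\preceq\bSigma\preceq\eta\bI$ then yield $h^{T}\bSigma^{2}h/(\sigma^{2}+h^{T}\bSigma h)\lesssim_{\eta}k_{0}\log(p/\delta)/n$. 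Summed with the deviation term, this gives $Z^{(u)}\leq \underline{c}^{(u)}_{\eta}v^{(u)}_{\alpha}$ on an event of probability $\geq 1-\alpha-\delta$, once $\underline{c}^{(u)}_{\eta}$ is chosen sufficiently large (depending on $\eta$).

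For type II, the crucial structural observation is that $\widetilde\theta_{SL,k_{0}}\in\bbB_{0}[k_{0}]$ by construction (no Lasso oracle inequality is needed here), so under the rescaled law $\P_{\sigma\theta^{*},\sigma,\bSigma}$ one automatically has $\|h\|_{2}\geq \sigma\, d_{2}(\theta^{*},\bbB_{0}[k_{0}])$. Applying $\eta^{-1}\bI\preceq \bSigma\preceq \eta\bI$ twice yields
\[
\frac{h^{T}\bSigma^{2}h}{\sigma^{2}+h^{T}\bSigma h}\ \gtrsim_{\eta}\ \min\!\left(d_{2}^{2}(\theta^{*},\bbB_{0}[k_{0}]),\,1\right).
\]
Since $v^{(u)}_{\alpha}\lesssim 1$ under $(\bB[\cdot])$ together with $p\leq c_{\eta}n^{2}$, the separation hypothesis $d_{2}^{2}(\theta^{*},\bbB_{0}[k_{0}])\geq c'_{\eta}[\sqrt{p\log(2/(\alpha\wedge\beta))}/n+(k_{0}\vee 1)\log(p/\delta)/n]$ places us in the linear branch of the minimum and produces a value at least $2\underline{c}^{(u)}_{\eta}v^{(u)}_{\alpha}$ for $c'_{\eta}$ sufficiently large. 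Subtracting the $U$-statistic deviation term (with budget $\beta/2$) closes the bound.

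The main technical obstacle is the first step: proving the conditional deviation inequality for $Z^{(u)}$ uniformly in $h$, and in particular handling the ratio normalization by $\|\widehat R_{SL}\|_{2}^{2}$. When $h^{T}\bSigma h\gg\sigma^{2}$ the population target saturates toward a constant and one must argue that the relative fluctuations of the denominator (themselves a quadratic form in correlated Gaussians) do not contaminate the numerator bound; when $h^{T}\bSigma h\ll\sigma^{2}$ one conversely needs a sharp $O(\sqrt{p\log(1/\alpha)}/m)$ CLT-type control on the numerator. The remainder of the proof is bookkeeping: a union bound over the Lasso control event, the numerator concentration, and the denominator concentration.
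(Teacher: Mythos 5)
Your proposal is correct and follows essentially the same route as the paper: condition on the first subsample so that $\widetilde{\theta}_{SL,k_0}$ is frozen, invoke a conditional deviation inequality of order $\eta\sqrt{pt}/m$ for $Z^{(u)}$ around $h^T\bSigma^2 h/(\sigma^2+h^T\bSigma h)$, control $h=\theta^*-\widetilde{\theta}_{SL,k_0}$ under the null via the square-root Lasso bound of Lemma~\ref{lem:square_root_Lasso}, use the $k_0$-sparsity of $\widetilde{\theta}_{SL,k_0}$ under the alternative to get $\|h\|_2\geq \sigma d_2(\theta^*;\bbB_0[k_0])$, and exploit $p\leq c_\eta n^2\log^{-1}(2/(\alpha\wedge\beta))$ together with Condition ($\bB$) to stay in the linear branch of the $\min(\cdot,1)$ saturation. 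The only substantive difference is that the conditional deviation inequality you single out as the main technical obstacle is not proved in the paper but imported as a black box from Theorem 2.1 of \cite{verzelen2016adaptive} (Lemma~\ref{lem:deviation_moment_statistic}), whose restriction $t\leq n^{1/3}$ is ensured by the $\log^3(1/\alpha)\lesssim n$ clause of Condition ($\bB$).
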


\subsubsection{Recovering the $\Delta\log(p)/n$ rate with variable selection}

To achieve the $\Delta\log(p)/n$ rate, it would suffice to estimate $\theta^*$ at the $l_{\infty}$ rate $\sigma\sqrt{\log(p)/n}$ as we did for the test $\phi^{(t)}$ in the previous section. However, we are unaware of any estimator achieving this rate uniformly over the class $\cU(\eta)$ of covariance matrices $\bSigma$. For $k_0\geq \sqrt{n}$, it is even proved that no such estimator exists~\cite{cai2017confidence}.

Here, we adopt another strategy. We shall first estimate the support $\cS(\theta^*)$ of $\theta^*$ and count the number of large entries of the least-squares estimator of $\theta^*$ restricted to the estimated support $\widehat{S}$. Of course, if $\widehat{S}= \cS(\theta^*)$ with high probability, then the restricted least-squares estimator $\widehat{\theta}_{\widehat{S}}$ (see below for a definition) will be close to $\theta^*$ in $l_{\infty}$ norm. Unfortunately, it is impossible for an estimator $\widehat{S}$ to estimate exactly the support $\cS(\theta^*)$, especially when $\theta^*$ contains arbitrarily small coordinates. 

This is why we shall require that the estimator 
$\widehat{S}$ satisfies a weaker property.  Given $a>0$, let $M(\ba_1,\frac{\theta^*}{\sigma})= |\{i, 0< \frac{|\theta^*_i|}{\sigma}\leq \ba_1 \sqrt{\log(p)/m}\}|$ be the number of small but non zero coefficients of $\theta^*$. Below $\ba_1$, $\ba_2$, $\ba_3$ refer to three positive quantities. Recall that $\overline{S}$ is the complement of $S$.

 \medskip 
 
 \noindent 
{\bf Property  ($\bS[\ba_1,\ba_2,\ba_3]$)}. A (possibly random) set $S$ is said to satisfy this property if  
\beq\label{eq:upper_CS}
|S| \leq \ba_2\|\theta^*\|_0\ ;\quad \quad 
\|\theta_{\overline{S}}^*\|_2^2 \leq \ba_3^2\sigma^2 M(\ba_1,\frac{\theta^*}{\sigma})\frac{ \log(p)}{ m}\ .
 \eeq

In other words, the cardinal of $S$ is not too large compared to the sparsity of $\theta^*$ and the square norm of $\theta^*$ outside $S$ is at most as large as that of the small entries of $\theta^*$. Observe that the large entries of $\theta^*$ are not required to belong to $S$.

Then, given a set $S$, we consider the restricted least-square estimator and the plug-in variance estimators 
\beq\label{eq:definition_restricted_least_squares}
\widehat{\theta}_{ls,S} = \argmin_{\theta \ : \, \cS(\theta)\subset S} \| Y^{(0)} - \bX^{(0)}\theta\|_2^2\ ;\quad \quad  \widehat{\sigma}_S^2= \frac{1}{m}\| Y^{(0)} - \bX^{(0)}\widehat{\theta}_{ls,S}\|_2^2\ . 
\eeq

For a vector $u\in \mathbb{R}^p$ and $c>0$, $N[c;u]= |\{i: |u_i|\geq c\sqrt{\log(p)/m}\}|$  is the number of entries of $u$ larger or equal (in absolute value) than $c\sqrt{\log(p)/m}$. Then, we define the  test 
 $\phi^{(th)}[S; c]$ rejecting the null if and only if $N[\underline{c};\widehat{\theta}_{ls,S}/\widehat{\sigma}_{S} ]\geq k_0+1$, which means  that  $\widehat{\theta}_{ls,S}$ contains at least  $k_0+1$ large entries.

\begin{thm}\label{thm:puissance_thres}
There exist constants $c$ and $c'_{\eta}$  such  that the following holds for any $p\geq 3$. 
Consider any $\theta^*$, $\sigma$, $\bSigma\in \cU[\eta]$,  $\delta\in (0,1)$, $(\ba_1,\ba_2, \ba_3)>0$ satisfying
$c\big[\ba_2\|\theta^*\|_0+ \log\left(\frac{4}{\delta}\right)\big]\leq m$, and  $S$ satisfying  $\bS[\ba_1,\ba_2,\ba_3]$. Taking 
\beq\label{eq:definition_c_*}
\underline{c}_*\geq  \sqrt{2}\ba_1+ 11\eta^2(\ba_3\vee 1)\sqrt{\log\left(\frac{4e}{\delta}\right)}\ . 
\eeq
 we have $\mathbb{P}^{(0)}_{\theta^*,\sigma,\bSigma}\big[\phi^{(th)}[S,\underline{c}_*]=1\big]\leq \delta$ if $\|\theta^*\|_0\leq k_0$.
Besides, $\mathbb{P}^{(0)}_{\theta^*,\sigma,\bSigma}\big[\phi^{(th)}[S,\underline{c}_*]=1\big]\geq 1- \delta$ if $\|\theta^*\|_0> k_0$ and  \[
d^2_2(\theta_*;\bbB_0[k_0])\geq c_{\eta}\big[1+ \ba_3^2 \|\theta^*\|_0\frac{\log(p)}{m}\big]\left[\ba_3^2\sqrt{\log(4/\delta)}+ \underline{c}_*^2\right]\sigma^2 \frac{[\|\theta^*\|_0-k_0]\log(p)}{m}\ .\]

\end{thm}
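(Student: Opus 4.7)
The plan rests on the orthogonal decomposition $\bX^{(0)}_{\overline{S}} = \bX^{(0)}_S\bSigma_{SS}^{-1}\bSigma_{S\overline{S}} + W$, where $W$ is independent of $\bX^{(0)}_S$ with i.i.d.\ rows $\cN(0,\bSigma_{\overline{S}|S})$ and $\bSigma_{\overline{S}|S}:=\bSigma_{\overline{S}\overline{S}}-\bSigma_{\overline{S}S}\bSigma_{SS}^{-1}\bSigma_{S\overline{S}}$ is the Schur complement (whose eigenvalues lie in $[\eta^{-1},\eta]$). Setting $\widehat{A}=(\bX^{(0)}_S)^T\bX^{(0)}_S/m$, substitution into the closed form of the restricted least-squares estimator yields, for every $i\in S$,
\[
\widehat{\theta}_{ls,S,i} = \theta^*_i + (\bSigma_{SS}^{-1}\bSigma_{S\overline{S}}\theta^*_{\overline{S}})_i + \xi_i,
\]
with deterministic systematic bias $b_i:=(\bSigma_{SS}^{-1}\bSigma_{S\overline{S}}\theta^*_{\overline{S}})_i$ and $\xi_i$ which, conditional on $\bX^{(0)}_S$, is centered Gaussian with variance $(\sigma^2+\theta^{*T}_{\overline{S}}\bSigma_{\overline{S}|S}\theta^*_{\overline{S}})(\widehat{A}^{-1})_{ii}/m$. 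Crucially, $(I-P_S)Y^{(0)}=(I-P_S)(W\theta^*_{\overline{S}}+\sigma\epsilon)$, so conditional on $\bX^{(0)}_S$, $m\widehat{\sigma}_S^2$ is, independently of $\xi$, a $(\sigma^2+\theta^{*T}_{\overline{S}}\bSigma_{\overline{S}|S}\theta^*_{\overline{S}})$-multiple of a $\chi^2_{m-|S|}$. This matched scaling between $\xi_i$ and $\widehat{\sigma}_S$ is what lets the normalization $\xi_i/\widehat{\sigma}_S$ absorb the dependence on $\|\theta^*_{\overline{S}}\|_2$.

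I would then introduce an event $\Omega$ with $\mathbb{P}(\Omega)\geq 1-\delta$ on which (a) $\|\widehat{A}^{-1}\|_{op}\leq 2\eta$ (Davidson--Szarek, valid since $|S|\leq \ba_2\|\theta^*\|_0\lesssim m$); (b) $\widehat{\sigma}_S^2/(\sigma^2+\theta^{*T}_{\overline{S}}\bSigma_{\overline{S}|S}\theta^*_{\overline{S}})\in[\tfrac12,2]$ ($\chi^2$ deviation); and (c) $|\xi_i|\leq C_\eta\widehat{\sigma}_S\sqrt{\log(|S|/\delta)/m}$ uniformly over $i\in S$ (Gaussian union bound). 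The hypothesis $c[\ba_2\|\theta^*\|_0+\log(4/\delta)]\leq m$ leaves room for these concentrations.

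For type I, suppose $\|\theta^*\|_0\leq k_0$. Coordinates $i\notin S$ contribute nothing to $N[\underline{c}_*;\widehat{\theta}_{ls,S}/\widehat{\sigma}_S]$, and coordinates in $\cS(\theta^*)\cap S$ number at most $k_0$, so it suffices to check, on $\Omega$, that $|b_i+\xi_i|/\widehat{\sigma}_S<\underline{c}_*\sqrt{\log(p)/m}$ for every $i\in S\setminus\cS(\theta^*)$. The noise piece $|\xi_i|/\widehat{\sigma}_S\leq C_\eta\sqrt{\log(|S|/\delta)/m}$ is absorbed by the $11\eta^2(\ba_3\vee 1)\sqrt{\log(4e/\delta)}$ summand of $\underline{c}_*$; for the bias, $|b_i|\leq \eta^2\|\theta^*_{\overline{S}}\|_2$ combined with property $\mathbf{S}$'s bound $\|\theta^*_{\overline{S}}\|_2\leq \ba_3\sigma\sqrt{M(\ba_1)\log(p)/m}$ and the lower bound $\widehat{\sigma}_S\geq\sigma/\sqrt2$ from $\Omega$ closes the estimate, the $\sqrt2\ba_1$ term in $\underline{c}_*$ accounting for the small on-support entries $0<|\theta^*_i|\leq \ba_1\sigma\sqrt{\log(p)/m}$.

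For type II, a pigeonhole argument on $d_2^2(\theta^*;\bbB_0[k_0])=\sum_{j>k_0}\theta^{*2}_{(j)}$ gives $|\theta^*_{(k_0+1)}|^2\geq d_2^2(\theta^*;\bbB_0[k_0])/(\|\theta^*\|_0-k_0)$, which combined with the hypothesised distance bound and the upper bound $\widehat{\sigma}_S^2\lesssim_\eta\sigma^2(1+\ba_3^2\|\theta^*\|_0\log(p)/m)$ on $\Omega$ produces $|\theta^*_{(k_0+1)}|\geq 2\underline{c}_*\widehat{\sigma}_S\sqrt{\log(p)/m}$ for $c_\eta$ large enough. A reverse triangle inequality on $\widehat{\theta}_{ls,S,i}=\theta^*_i+b_i+\xi_i$ then converts these $k_0+1$ large-signal entries into $k_0+1$ detections, provided each sits in $S$; any top entry missed by $S$ would contribute at least $4\underline{c}_*^2\sigma^2\log(p)/m$ to $\|\theta^*_{\overline{S}}\|_2^2$, which together with the $\ell_2$ clause of $\mathbf{S}$ bounds the number of such missing entries strictly below the redundancy provided by the distance condition. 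The main obstacle is the systematic bias $b_i$: the natural $\ell_\infty$ bound $\eta^2\|\theta^*_{\overline{S}}\|_2$ is not of order $\sigma\sqrt{\log(p)/m}$ on its own, and the argument closes only because $\widehat{\sigma}_S$ inflates by the same factor $\|\theta^*_{\overline{S}}\|_2$, keeping the normalized bias on the threshold scale. Property $\mathbf{S}$ is calibrated precisely for this balancing, and the quantitative book-keeping needed to transfer the lower bound on $|\theta^*_{(k_0+1)}|$ into exactly $k_0+1$ detections of $\widehat{\theta}_{ls,S}/\widehat{\sigma}_S$ is the most delicate step.
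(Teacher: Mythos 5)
Your decomposition $\bX^{(0)}_{\overline{S}} = \bX^{(0)}_S\bSigma_{SS}^{-1}\bSigma_{S\overline{S}}+W$ and the observation that, conditionally on $\bX^{(0)}_S$, both the fluctuation $\xi$ and $m\widehat{\sigma}_S^2$ scale with $\sigma^2+\theta^{*T}_{\overline{S}}\bSigma_{\overline{S}|S}\theta^*_{\overline{S}}$ are correct (and consistent with Lemma~\ref{lem:variance_estimation}). The genuine gap is in how you treat the bias $b=\bSigma_{SS}^{-1}\bSigma_{S\overline{S}}\theta^*_{\overline{S}}$. Your type I step requires $|b_i+\xi_i|/\widehat{\sigma}_S<\underline{c}_*\sqrt{\log(p)/m}$ for \emph{every} $i\in S\setminus\cS(\theta^*)$, and your type II step needs every large entry lying in $S$ to survive the bias. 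But the only control available on $b$ is in $\ell_2$: $\|b\|_2\le \eta^2\|\theta^*_{\overline{S}}\|_2\le \eta^2\ba_3\sigma\sqrt{M(\ba_1,\theta^*/\sigma)\log(p)/m}$, and a single coordinate can carry essentially all of this mass (take $\bSigma$ in $\cU(\eta)$ with one column of $S$ correlated at level of order $M^{-1/2}$ with all $M$ small non-zero coordinates of $\theta^*_{\overline{S}}$; then $|b_i|$ is of order $\|\theta^*_{\overline{S}}\|_2$). Since $\underline{c}_*$ in \eqref{eq:definition_c_*} does not grow with $M$, the normalized bias can exceed the threshold by a factor of order $\sqrt{M}$; your remark that $\widehat{\sigma}_S$ inflates by the same factor only brings $|b_i|/\widehat{\sigma}_S$ down to constant order, not to the threshold scale $\sqrt{\log(p)/m}$. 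So both ``no off-support false positives'' and ``all top entries inside $S$ are detected'' fail in general, precisely when $\theta^*$ has many small non-zero entries --- which the null hypothesis allows.

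The fix, and the route the paper takes, is a counting/offset argument rather than a per-coordinate one. Chebyshev applied to the bias vector shows that the number of coordinates where the bias exceeds a constant multiple of $\eta^2\ba_3\sqrt{\log(4/\delta)}\,\sigma\sqrt{\log(p)/m}$ is at most $M(\ba_1,\theta^*/\sigma)$, using $\|B\|_2^2\le 17\eta^4\log(4/\delta)\,\ba_3^2 M(\ba_1,\theta^*/\sigma)\,\sigma^2\log(p)/m$. Under the null these at most $M$ bias-induced detections are absorbed because at most $k_0-M$ true coordinates exceed $\ba_1\sigma\sqrt{\log(p)/m}$ (this is the role of the $\sqrt{2}\ba_1$ term in $\underline{c}_*$), so the count stays below $k_0$. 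Under the alternative, the subset $T_0$ of large coordinates whose bias is comparable to the signal satisfies $\|\theta^*_{T_0}\|_2^2\le c_\eta\big[1+\eta\ba_3^2\|\theta^*\|_0\log(p)/m\big]\|B\|_2^2 < d_2^2(\theta^*_T;\bbB_0[k_0])$ under the assumed separation, whence $|T_0|<|T|-k_0$ and at least $k_0+1$ detections remain; note this handles simultaneously the entries missed by $S$ (for which $B_i=\theta^*_i$) and the entries inside $S$ corrupted by bias. Your pigeonhole on entries missed by $S$ is a step in this direction, but the same cardinality argument must also be run for the bias inside $S$; the per-coordinate control you invoke does not hold.
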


If $S$ satisfies ($\bS[\ba_1,\ba_2,\ba_3]$), then the test $\phi^{(th)}[S; c]$ with a suitable tuning parameter $c$ has a controlled type I error probability. Besides, its square separation distance over $\bbB_0[k_0+\Delta]$ is (up to constants depending on $\ba_1$ and $\ba_3$) of the order of $\Delta\log(p)/n$.

In view of this general result, it suffices to build an estimator $\widehat{S}$ of the support based on $(Y^{(1)},\bX^{(1)})$ that satisfies ($\bS[\ba_1,\ba_2,\ba_3]$) for small $\ba_1,\ba_2,\ba_3$ to get the squared separation distance $\Delta\log(p)/n$.

\bigskip

Unfortunately, the support of the Lasso estimator is only proved to satisfy the first part of property ($\bS[\ba_1,\ba_2,\ba_3]$). Its number of false positives is at most of the order of $\|\theta^*\|_0$, see~\cite{zhang2008sparsity}. It turns out that the second part of the property has only recently been proved to be achieved by  non-convex penalized estimators, see~\cite{feng2017sorted} such as MCP estimator~\cite{zhang2010nearly}.

As in the previous section, we consider the column normalized design $\bT^{(1)}$. Given $\beta\in \mathbb{R}^p$ and two tuning parameters $b>0$, $\lambda>0$, the MCP criterion is defined by 
\beq\label{eq:critere_MCP}
 L(\theta):= \|Y^{(1)}-\bT^{(1)}\theta\|_2^2+ \sum_{i=1}^p\rho(|\theta_i|; \lambda)\ ; \quad \quad \rho(t; \lambda)= \lambda\int_{0}^t (1- x/(\kappa \lambda))_+ dx \ ,
\eeq
where $x_+=\max(x,0)$.  Local minimizers of the MCP criterion can be efficiently computed using the PLUS Algorithm from~\cite{zhang2010nearly} or by approximate regularization path by~\cite{wang2014optimal}. It turns out that non-convex penalized estimators suffer from less bias than Lasso estimators.

Consider the square-root Lasso estimator~\eqref{eq:def_SL_1} $\widehat{\theta}_{SL}$ with $\delta=1/p$ and the plug-in variance estimator $\widehat{\sigma}_{SL}=\|Y-\bX\widehat{\theta}_{SL}\|_2/\sqrt{m}$. Define the tuning parameters
\[
 \widehat{\lambda}_{MCP}= \underline{c}^{(MCP)}_{\eta} \widehat{\sigma}_{SL}\sqrt{\log(p)}\ ;\quad\quad \kappa= \underline{c}^{'(MCP)}_{\eta}
\]
for some constants $\underline{c}^{(MCP)}_{\eta}$ and  $\underline{c}^{'(MCP)}_{\eta}$ whose range of possible values follows from~\cite{zhang2010nearly} and~\cite{feng2017sorted}. 
The following proposition is a consequence of Corollary 1 in~\cite{feng2017sorted} together with Theorem 6 in~\cite{zhang2010nearly}. 
\begin{prp}{\label{prp:estimator_MCP}}
There exist constants $\underline{c}^{(MCP)}_{\eta}$, $c$, $c^{(1)}_{\eta}$-- $c^{(4)}_{\eta}$  such that the following holds for any $\theta^*$ with $ c^{(1)}_{\eta}\|\theta^*\|_0\leq m/\log(p)$. 
 With probability higher than $1-cp^{-1}$, the support $\widehat{S}_{MCP}$ of any stationary point of the criterion~\eqref{eq:critere_MCP} satisfies $\bS[c^{(2)}_{\eta},c^{(3)}_{\eta},c^{(4)}_{\eta}]$.
\end{prp}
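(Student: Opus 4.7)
The plan is to view the proposition as a verification-of-hypotheses statement: once we check that the data-driven tuning parameter $\widehat{\lambda}_{MCP}$ lies in the ``universal'' range $\sigma \sqrt{\log p}$ up to constants depending only on $\eta$, and once we check that the normalized design matrix $\bT^{(1)}$ satisfies the sparse eigenvalue (SRC) condition used in~\cite{zhang2010nearly,feng2017sorted}, then Theorem~6 of~\cite{zhang2010nearly} delivers the cardinality bound in $\bS[\cdot]$ and Corollary~1 of~\cite{feng2017sorted} delivers the $\ell_2$ bound on the missed small coordinates.

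The first step is to control $\widehat{\sigma}_{SL}$. Under the assumption $\|\theta^*\|_0 \log p \lesssim_\eta m$ and for $\bSigma\in\cU(\eta)$, standard oracle inequalities for the square-root Lasso (e.g.\ the analysis used for $\widehat{\theta}_{SL}$ in Section~\ref{sec:ides}, extended to general $\bSigma$ via the compatibility constant, which is bounded above and away from zero thanks to $\bSigma\in\cU(\eta)$) yield $c^-_\eta \sigma \leq \widehat{\sigma}_{SL} \leq c^+_\eta \sigma$ with probability at least $1 - c p^{-1}$. Consequently $\widehat{\lambda}_{MCP}$ equals $c_\eta \sigma \sqrt{\log p}$ for some constant in the admissible range of~\cite{zhang2010nearly,feng2017sorted}, provided $\underline{c}^{(MCP)}_\eta$ is chosen large enough depending on $\eta$.

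The second step is to verify the design-side hypotheses for the column-normalized matrix $\bT^{(1)}$. Since the rows of $\bX^{(1)}$ are i.i.d. $\cN(0,\bSigma)$ with $\bSigma\in\cU(\eta)$ and $m=n/2\gtrsim\|\theta^*\|_0 \log p$, concentration of sample covariances (Davidson--Szarek, or standard $\epsilon$-net arguments for sparse eigenvalues) shows that, with probability at least $1 - c p^{-1}$, $\bT^{(1)}$ satisfies a sparse Riesz condition at sparsity level $c'_\eta \|\theta^*\|_0$ with constants depending only on $\eta$. Column normalization does not spoil this, because $\|\bX^{(1)}_{.,j}\|_2/\sqrt m$ concentrates around $\sqrt{\bSigma_{jj}} \in [\eta^{-1/2},\eta^{1/2}]$ uniformly in $j\leq p$ by a standard union bound on $\chi^2_m$ concentration, again using the assumption $\log p \lesssim n$.

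The third step is to invoke the two cited results on this high-probability event. Theorem~6 of~\cite{zhang2010nearly} applied to any stationary point of \eqref{eq:critere_MCP} gives $|\widehat{S}_{MCP}| \leq c^{(3)}_\eta \|\theta^*\|_0$, yielding the first inequality in $\bS[\cdot]$. Corollary~1 of~\cite{feng2017sorted}, which is tailored precisely to control $\|\theta^*_{\overline{\widehat{S}_{MCP}}}\|_2^2$ in terms of the mass of small coordinates below a threshold of order $\sigma \sqrt{\log p / m}$, yields $\|\theta^*_{\overline{\widehat{S}_{MCP}}}\|_2^2 \leq (c^{(4)}_\eta)^2 \sigma^2 M(c^{(2)}_\eta, \theta^*/\sigma) \log(p)/m$, which is the second inequality of $\bS[\cdot]$. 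A final union bound over the good event for $\widehat{\sigma}_{SL}$, the sparse eigenvalue event for $\bT^{(1)}$, and the events inside the two cited statements gives the probability $1 - c p^{-1}$. The main obstacle is the bookkeeping between normalization conventions (data-driven $\lambda$ versus fixed $\lambda$, column-normalized versus un-normalized design, and tracking that the constants in Zhang's SRC and Feng et al.'s restricted condition can both be taken uniformly in $\bSigma\in\cU(\eta)$); no new probabilistic argument is required beyond these reductions.
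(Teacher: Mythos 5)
Your overall architecture (concentration of $\widehat{\sigma}_{SL}$ so that $\widehat{\lambda}_{MCP}\asymp_\eta \sigma\sqrt{\log p}$, verification of the sparse-eigenvalue/compatibility conditions for $\bT^{(1)}$ under $\bSigma\in\cU(\eta)$, then Theorem~6 of Zhang for the cardinality bound) matches the paper's proof. The gap is in your third step: you assert that Corollary~1 of Feng--Zhang ``is tailored precisely to control $\|\theta^*_{\overline{\widehat{S}_{MCP}}}\|_2^2$'' and hence directly delivers the second inequality of $\bS[\ba_1,\ba_2,\ba_3]$. It does not. What that corollary gives (and what the paper uses it for) is a bound of the form $\|\widehat{\theta}_{MCP}-\widehat{\theta}^{(1)}_{ls,S^*}\|_2^2\lesssim_\eta \sigma^2\frac{\log p}{m}\,M(c'_\eta,\theta^*/\sigma)$, i.e.\ control of the distance between the MCP stationary point and the \emph{oracle} least-squares estimator restricted to the true support $S^*$; in particular it only bounds $\|(\widehat{\theta}^{(1)}_{ls,S^*})_{\overline{\widehat{S}}_{MCP}}\|_2^2$, not the missed mass of $\theta^*$ itself.

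Converting that into the required bound on $\|\theta^*_{\overline{\widehat{S}}_{MCP}}\|_2^2$ is a genuine extra step, which your proposal skips: one must show that $\widehat{\theta}^{(1)}_{ls,S^*}$ (a Gaussian vector with mean $\theta^*$ and covariance $(\bX^{(1)T}_{S^*}\bX^{(1)}_{S^*})^{-1}$, whose eigenvalues are controlled by the restricted-eigenvalue event) satisfies $\|\widehat{\theta}^{(1)}_{ls,S^*}-\theta^*\|_\infty\leq c'''_\eta\sigma\sqrt{\log(p)/m}$ with probability $1-c/p$, and then split the coordinates of $\theta^*$ outside $\widehat{S}_{MCP}$ into those above and below the threshold $2c'''_\eta\sigma\sqrt{\log(p)/m}$: the large ones satisfy $|\theta^*_i|\leq 2|(\widehat{\theta}^{(1)}_{ls,S^*})_i|$ and are absorbed by the Feng--Zhang bound, while the small ones are counted by $M(\cdot,\theta^*/\sigma)$. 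Without this intermediate comparison the second inequality of property $\bS$ does not follow from the cited results, so the ``no new argument beyond bookkeeping'' claim is too optimistic; the rest of your reductions (normalization of columns, data-driven versus fixed $\lambda$, uniformity of the constants over $\cU(\eta)$) are indeed routine and agree with the paper.
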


A similar result holds if we use the non-convex SCAD penalty instead of MCP from~\cite{feng2017sorted}.

Now, we can plug the support estimator $\widehat{S}_{MCP}$ into the test $\phi^{(th)}$ with a suitable constant $\underline{c}_*$.
The following result is a straightforward consequence of Theorems \ref{thm:puissance_thres} and Proposition \ref{prp:estimator_MCP} and its proof is therefore omitted.

\begin{cor}\label{cor:test_MCP}
There exist  constants  $c$, $c_{\eta,\delta}$, and $c'_{\eta,\delta}$ such that the following holds under Condition  \emph{({\bf B}($\delta$))}. The test $\phi^{(th)}[\widehat{S}_{MCP}; \underline{c}^{(MCP),*}_{\eta}]$ satisfies \emph{({\bf gP1}[$ \frac{c}{p^2}+\delta$])} and  \emph{({\bf gP2}[$ \frac{c}{p^2}+\delta$])}  over the collections
\beq \label{eq:cond_MCP_separation}
 \bbB_{0}[k_0+\Delta]\bigcap \Big\{ \theta^*\, ,\ d^2_2\big[\theta^*;\bbB_0[k_0]\big]\geq c_{\eta,\delta}\sigma^2\frac{\Delta \log(p)}{n}\Big\}\ ,
\eeq\
for all  $1\leq \Delta \leq c'_{\eta,\delta} n/\log(p)$. 
\end{cor}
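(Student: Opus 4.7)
The plan is to chain Proposition~\ref{prp:estimator_MCP} and Theorem~\ref{thm:puissance_thres}, exploiting the independence of the two subsamples. First, I would work on the sample $(Y^{(1)},\bX^{(1)})$: under Condition~($\bB[\delta]$), the sparsity constraint $k_0+\Delta\leq c'_{\eta,\delta} n/\log(p)$ ensures the hypothesis $c^{(1)}_\eta \|\theta^*\|_0 \leq m/\log(p)$ of Proposition~\ref{prp:estimator_MCP}, so on an event $\cA$ of probability at least $1-c/p^2$ (shrinking constants as needed), the support $\widehat{S}_{MCP}$ satisfies property $\bS[c^{(2)}_\eta,c^{(3)}_\eta,c^{(4)}_\eta]$. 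I would then condition on $\cA\cap\sigma(Y^{(1)},\bX^{(1)})$ and apply Theorem~\ref{thm:puissance_thres} with $S=\widehat{S}_{MCP}$ and $\ba_i$ replaced by the $c^{(\cdot)}_\eta$, using that $(Y^{(0)},\bX^{(0)})$ is independent of $\widehat{S}_{MCP}$.

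Next, I would check the two quantitative inputs of Theorem~\ref{thm:puissance_thres}. The cardinality bound $c[c^{(3)}_\eta \|\theta^*\|_0+\log(4/\delta)]\leq m$ follows from Condition~($\bB[\delta]$). The constant $\underline{c}^{(MCP),*}_\eta$ is chosen once and for all so that \eqref{eq:definition_c_*} is satisfied with $\ba_1=c^{(2)}_\eta$ and $\ba_3=c^{(4)}_\eta$ (absorbing the $\sqrt{\log(4e/\delta)}$ factor into the $\eta,\delta$-dependence of the overall constants, since $\delta$ is fixed in the corollary statement).

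For property~({\bf gP1}), when $\theta^*\in\bbB_0[k_0]$ the conclusion of Theorem~\ref{thm:puissance_thres} gives $\P^{(0)}[\phi^{(th)}=1\mid \cA]\leq \delta$, and a union bound with $\P(\cA^c)\leq c/p^2$ yields the desired $c/p^2+\delta$. For property~({\bf gP2}), I would use that on $\bbB_0[k_0+\Delta]$ we have $\|\theta^*\|_0\log(p)/m\lesssim_{\eta,\delta} 1$ by the range of $\Delta$, so the prefactor $[1+\ba_3^2\|\theta^*\|_0\log(p)/m]$ in the separation bound of Theorem~\ref{thm:puissance_thres} is a constant depending only on $\eta$ and $\delta$. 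The resulting sufficient separation distance is $c_{\eta,\delta}\sigma^2 \Delta\log(p)/n$, matching~\eqref{eq:cond_MCP_separation}; combining with $\P(\cA^c)$ produces the $c/p^2+\delta$ bound on the type~II error.

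There is no real obstacle: the only care needed is (i) verifying that the conditioning argument is valid, which follows immediately from subsample independence, and (ii) tracking that the three constants $c^{(2)}_\eta,c^{(3)}_\eta,c^{(4)}_\eta$ delivered by Proposition~\ref{prp:estimator_MCP} feed into a single choice of $\underline{c}^{(MCP),*}_\eta$ compatible with~\eqref{eq:definition_c_*} and into a single constant $c_{\eta,\delta}$ in the separation distance. Both are bookkeeping.
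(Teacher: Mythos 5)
Your proposal is correct and is exactly the route the paper intends: the paper omits the proof of Corollary~\ref{cor:test_MCP} precisely because it is the straightforward chaining of Proposition~\ref{prp:estimator_MCP} (to get, on a high-probability event built from $(Y^{(1)},\bX^{(1)})$, a support satisfying $\bS[c^{(2)}_{\eta},c^{(3)}_{\eta},c^{(4)}_{\eta}]$) with Theorem~\ref{thm:puissance_thres} applied conditionally on that event, using subsample independence, plus a union bound. Your bookkeeping (verifying the cardinality condition from ({\bf B}[$\delta$]) and the range of $\Delta$, choosing the threshold constant to satisfy~\eqref{eq:definition_c_*}, and bounding the prefactor $1+\ba_3^2\|\theta^*\|_0\log(p)/m$ by a constant) matches what the omitted proof would contain.
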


\subsubsection{Aggregated tests and summary}

Consider some $\delta>0$. Since the performances of the test $\phi^{(u)}$ are only assessed in the regime $p\leq c_{\eta} n^2\log^{-1}\big(\frac{2}{\delta}\big)$ ($c_{\eta}$ is introduced in Proposition \ref{prp:U}), we combine the tests $\phi^{(u)}$ and $\phi^{(th)}[\widehat{S}_{MCP}; \underline{c}^{(MCP),*}_{\eta}]$ only in that regime. For larger $p$, we solely use $\phi^{(th)}[\widehat{S}_{MCP}; \underline{c}^{(MCP),*}_{\eta}]$. Combining Proposition \ref{prp:U} and Corollary \ref{cor:test_MCP} to evaluate the separation distance of the aggregated test and comparing them with the minimax lower bounds of Proposition \ref{prp:lbuvkd}, we obtain the following characterization - note that we assume here that we are in the high dimensional regime, i.e.~ $p \geq n^{1+\zeta}$ where $\zeta>0$ is an arbitrarily small absolute constant.

 \medskip

\noindent 
 {\bf Case 1}: $p\leq n^{2-\kappa}$ with an arbitrary but fixed  $\kappa\in (0,1/2)$ and $k_0\leq \sqrt{p}p^{-\varsigma}$. 
 \[
  \boldsymbol{\rho}_{g,\gamma}^{*2}[k_0,\Delta]\asymp_{\gamma,\eta, \kappa,\zeta,\varsigma}\left\{
  \begin{array}{cc}
  \frac{\Delta}{n}\log(p) & \text{ if } \Delta \leq \min(\sqrt{p},k_0)p^{-\zeta}\ ; \\
 \frac{\sqrt{p}}{n} & \text{ if } \Delta \geq \sqrt{p}\ ,
                                                                    \end{array}
  \right.
 \]
for $1\leq \Delta\leq p-k_0$.

 \noindent 
 {\bf Case 2}: $p\leq n^{2-\kappa}$ with an arbitrary but fixed $\kappa\in (0,1/2)$ and $k_0\geq   \sqrt{p}$. 
 \beqn 
  \boldsymbol{\rho}_{g,\gamma}^{*2}[k_0,\Delta]&\asymp_{\gamma,\eta, \kappa,\zeta}&\frac{\Delta}{n}\log(p) \hspace{2cm}  \text{ if }\Delta \leq k_0p^{-\zeta}\ ; \\
   \frac{k_0}{n\log(p)}\lesssim_{\gamma,\eta, \kappa} \boldsymbol{\rho}_{g,\gamma}^{*2}[k_0,\Delta]&\lesssim_{\gamma,\eta, \kappa} & \frac{k_0\log(p)}{n}
  \hspace{2cm}  \text{ if } 
  \Delta \geq k_0\ , 
\eeqn 
for $1\leq \Delta\leq p-k_0$.

 \noindent 
{\bf Case 3: $p\geq n^2$.} For any $k_0$ and $\Delta$ smaller than  $c_{\eta}n/\log(p)$, we have
\[\boldsymbol{\rho}_{g,\gamma}^{*2}[k_0,\Delta]
\asymp_{\gamma,\eta} \frac{\Delta \log(p)}{n}\ , 
\]
whereas the problem become much more difficult for larger $\Delta$ or $k_0$.

\medskip

In conclusion, the aggregated test achieves the minimax separation distance except in the regime where $\sqrt{p}\leq k_0\leq \Delta \leq n$ where there is $\log^2(p)$ gap between the two squared rates.

Summing up our findings, we observe that 
\begin{itemize}
\item For sparse alternatives (small $\Delta$) - first result in Cases 1 and 2 and result in Case 3 - then the minimax separation distance is analogous to that of signal detection ($k_0=0$), i.e.~of order $\tfrac{\Delta \log(p)}{n}$. It would be straightforward to achieve this distance if we had at our disposal a $\sqrt{\log(p)/n}$  $l_\infty$-consistent estimator of $\theta^*$. However, this is not possible over the class of $\bSigma\in \cU(\eta)$ ($\bSigma$ unknown in this class)~\cite{cai2017confidence,javanmard2018debiasing}. This is why we use a slightly different approach that focuses on selecting most of the relevant features (as in Property
 $\bS[\ba_1,\ba_2,\ba_3]$~\eqref{eq:upper_CS}).
 
\item If $\Delta$ is large and $k_0$ is small - second result in Case 1 - then the squared minimax separation distance is of the order of $\sqrt{p}/n$ and is the same as for  signal detection ($k_0=0$). It is achieved by a $U$-statistic originally introduced for estimating $\|\theta^*\|_2^2$ when $\bSigma=\bI_p$~\cite{dicker_variance,verzelen2016adaptive}.

\item If $\Delta$ is large and $k_0$ is large - second result in Case 2 - then the lower bound on the minimax separation distance reflects the complexity of the null hypothesis. The lower bound is the same as in the independent setting, described in the previous section. The upper bound is based on the same $U$-statistic as in the previous case. Unfortunately the upper and lower bounds only match up to $\log^2(p)$ factor. In this general setting, we doubt that adapting the Fourier statistic of the previous section is possible and we conjecture that the squared separation distance is actually of the order $k_0\log(p)/n$. 

\item Finally, we emphasize that, for $\Delta$ large compared to $n/\log(p)$ and $p\geq n^2$, the optimal separation distance is huge (Proposition \ref{prp:lbuvkd}). Without further assumptions, it is therefore almost impossible to test whether $\theta^*$ is $k_0$-sparse or if $\theta^*$ is a dense vector when $p\geq n^2$. This result is in sharp contrast with the independent setting. 
\end{itemize}

\subsubsection{An alternative variable selection procedure}

In the previous section, we established that the test $\phi^{(th)}$ applied to the support $\widehat{S}_{MCP}$ estimated by the MCP estimator achieves the square separation rate $\Delta\log(p)/n$. Here, we introduce an alternative to the concave penalized estimator MCP based on simple iterations of the thresholded square-root Lasso.

 Starting from $\widehat{S}_0 = \emptyset$, the algorithm builds a subset  $\widehat{S}_t$  of variables iteratively from a subset $\widehat{S}_{t-1}$ of variables. It is done by applying a thresholded square-root Lasso to the data projected on the orthogonal of the variables  in $\widehat{S}_{t-1}$. Then, $\widehat{S}_t$ is the concatenation of  $\widehat{S}_{t-1}$ and the variables selected by the thresholded square-root Lasso. The procedure stops after approximately  $\log(n)$ iterations, and returns the current subset. The general idea is to iteratively remove  non-zero coordinates of $\theta^*$ so that the projected square-root Lasso estimator is less perturbed by large coordinates of $\theta^*$.

 \medskip 

 We need to introduce some notation. Define $T= \lfloor \log_2(n)\rfloor+1$. Assume without loss of generality that $m/T=n/(2T)$ is an integer. We divide the sample $(Y^{(1)},\bX^{(1)})$ into $T$ subsamples $\{( \underline{Y}^{(t)},\underline{\bX}^{(t)})\}$ of size $m/T$. 
Given a $r \times d$ matrix $\mathbf M$ and  some subset $S \subset \{1,\ldots,p\}$, we write $\mathbf M_{S}$ for the $r\times d$  matrix defined by $(\mathbf M_{S})_{i,j} = \mathbf M_{i,j} \1_{\{j \in S\}}$. Given $S$ and any $1\leq t \leq T$, define the subspace $V[S,\underline{\bX}^{(t)}]= \mathrm{vect}(\underline{\bX}_{S}^{(t)})$ of $\mathbb R^{m/T}$ and an $(m/T-\mathrm{dim}(V[S,\underline{\bX}^{(t)}]))\times m/T$ matrix  $\underline{\bPi}^{\perp}_{t,S}$ (measurable with  respect to $\bX^{(t)}_{S}$) whose corresponding linear application is null on $V[S,\bX^{(t)}]$ and maps isometrically the orthogonal of  $V[S,\bX^{(t)}]$ to $\mathbb{R}^{m/T-\mathrm{dim}(V[S,\underline{\bX}^{(t)}])}$.

\medskip

Next, we define the {\bf Thresholded square-root Lasso estimator}. Let $\underline{m}>0$ and let $\delta >0$.
Given a $\underline{m}\times p$ matrix $\underline{\bX}$ and a size  $\underline{m}$ vector $\underline{Y}$, we write $\underline{\bX}_c$ for the subdesign matrix of $\underline{\bX}$ where its null rows have been removed. Then, $\widehat{\theta}_{SL}(\underline{\bX},\underline{Y})$ stands for the square-root Lasso estimator  (see Equation~\eqref{eq:def_SL_1}) of $(\underline{\bX}_c,\underline{Y})$ with parameter $\lambda = 2 \sqrt{\overline{\Phi}^{-1}(\delta/(2p)}$. For the purpose of notation, we consider that $\widehat{\theta}_{SL}(\underline{\bX},\underline{Y})\in \mathbb{R}^p$ and that its entries $\widehat{\theta}_{SL}(\underline{\bX},\underline{Y})$ corresponding to null rows of $\underline{\bX}$ are equal to zero. 
Using the plug-in variance estimator 
$\hat \sigma^2 =  \|\underline{Y} -  \underline{\bX} [\widehat{\theta}_{SL}(\underline{\bX}, \underline{Y})] \|_2^2/\underline{m}$, 
we define the thresholding modification $\widehat{\theta}_{SL,t}(\underline{\bX},\underline{Y})$ of $\widehat{\theta}_{SL}(\underline{\bX}, \underline{Y})$ such that 
$$\big[\widehat{\theta}_{SL,t}(\underline{\bX},\underline{Y})\big]_i =\big[\widehat{\theta}_{SL}(\underline{\bX}, \underline{Y})\big]_i \mathbf 1\Big\{\big|[\widehat{\theta}_{SL}(\underline{\bX}, \underline{Y})]_i\big| \geq  \underline c_\eta^{(SL)} \hat \sigma \frac{8}{3}\sqrt{\frac{\log(p/\delta)}{\underline{m}}}\Big\},\quad \quad i=1,\ldots, p\ ,$$
where the constant $\underline c_\eta^{(SL)}$ is introduced in Lemma~\ref{lem:square_root_Lasso}.

\medskip

The set $\widehat{S}^{(ith)}$ is constructed as follows. We start with the empty support $\widehat{S}_0=\emptyset$. At each step $t=1,\ldots, T$, we project  both $\underline{\bX}^{(t)}$ and $\underline{Y}^{(t)}$ along the space $V[\widehat{S}_{t-1},\underline{\bX}^{(t)}]$ spanned by the variables in $\widehat{S}_{t-1}$. Then, we apply thresholded square-root Lasso to these projected data to select new variables. Finally, $\widehat{S}^{(ith)}$ is the last set $\widehat{S}_T$.

\begin{algorithm}[H]\label{algo:1}
\caption{Iterative construction of a set of relevant coordinates}
\label{EWA}
\begin{algorithmic}[1]
\REQUIRE  $\eta, \bf X, Y$   \COMMENT{$\eta$ is required to compute $\underline c_\eta^{(SL)}$ in the definition of $\widehat{\theta}_{(SL,t)}}$
\STATE $\widehat{S}_0=\emptyset$ \COMMENT{Initialization of the set of relevant coordinates}
	\FOR{$t = 1, \ldots, T, \do $}
		\STATE $\widehat{S}_{t} = \widehat{S}_{t-1} \cup \cS(\widehat{\theta}_{(SL,t)}[\underline{\bPi}^{\perp}_{t,\widehat{S}_{t-1}}\underline{\bX}^{(t)}, \underline{\bPi}^{\perp}_{t,\widehat{S}_{t-1}}\underline{Y}^{(t)}])$ \COMMENT{We re-compute at each step the thresholded square-root Lasso ) on data that have been projected on the orthogonal of previous supports}
	\ENDFOR
	\ENSURE $\widehat{S}^{(ith)}= \widehat{S}_{T}$ 
\end{algorithmic}

\end{algorithm}

\begin{thm}\label{thm:support}
There exist constants $\underline{c}^{(ith)}_{\eta}$ and $c'_{\eta}$ such that the following holds for any $\sigma>0$, $\bSigma\in \cU(\eta)$ and any $\theta^*$ satisfying
\beq\label{eq:boundspa}
c'_{\eta}\|\theta^*\|_0 \leq \frac{n}{\log^2(n)\log(p)}\ .
\eeq
With probability higher than $1- Tp^{-2}$, the estimator $\widehat{S}^{(ith)}$ satisfies 
$\bS[\underline{c}^{(ith)}_{\eta}\sqrt{T}, 2T,\underline{c}^{(ith)}_{\eta}\sqrt{T/2} ]$.
\end{thm}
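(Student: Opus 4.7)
The plan is to argue inductively over $t=1,\ldots,T$, exploiting the independence of the subsamples $\{(\underline{Y}^{(t)},\underline{\bX}^{(t)})\}$. The crucial structural point is that, conditionally on $\widehat{S}_{t-1}$ (which depends only on subsamples $1,\ldots,t-1$) and on $\underline{\bX}^{(t)}_{\widehat{S}_{t-1}}$, the projected data
\[
\underline{\bPi}^{\perp}_{t,\widehat{S}_{t-1}}\underline{Y}^{(t)} = \underline{\bPi}^{\perp}_{t,\widehat{S}_{t-1}}\underline{\bX}^{(t)}_{\overline{\widehat{S}_{t-1}}}\,\theta^*_{\overline{\widehat{S}_{t-1}}} + \sigma\,\underline{\bPi}^{\perp}_{t,\widehat{S}_{t-1}}\epsilon^{(t)}
\]
still form a Gaussian random-design linear regression problem: $\underline{\bPi}^{\perp}_{t,\widehat{S}_{t-1}}$ is an isometry on its domain, so the projected noise remains $\cN(0,\sigma^2 I)$ in the reduced coordinate system, and the rows of $\underline{\bPi}^{\perp}_{t,\widehat{S}_{t-1}}\underline{\bX}^{(t)}_{\overline{\widehat{S}_{t-1}}}$ are i.i.d.\ $\cN(0,\bSigma')$, where $\bSigma'$ is the Schur complement of $\bSigma$ with respect to $\widehat{S}_{t-1}$ and still lies in $\cU(\eta^2)$. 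Lemma~\ref{lem:square_root_Lasso} then applies to this fresh problem with effective sample size $m/T-|\widehat{S}_{t-1}|\gtrsim m/T$, where the lower bound uses~\eqref{eq:boundspa} to keep $|\widehat{S}_{t-1}|$ much smaller than $m/T$.

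Let $V_t:=\|\theta^*_{\overline{\widehat{S}_t}}\|_2^2$ and $\sigma_{t,\mathrm{eff}}^2:=\sigma^2+\theta^{*T}_{\overline{\widehat{S}_{t-1}}}\bSigma'\theta^*_{\overline{\widehat{S}_{t-1}}}\asymp_\eta \sigma^2+V_{t-1}$. With probability at least $1-p^{-2}$ at iteration $t$, Lemma~\ref{lem:square_root_Lasso} delivers (i) $\hat\sigma_t\asymp_\eta\sigma_{t,\mathrm{eff}}$, (ii) every $i\in\overline{\widehat{S}_{t-1}}$ with $|\theta^*_i|\geq c_\eta\sigma_{t,\mathrm{eff}}\sqrt{T\log(p)/m}$ is added to $\widehat{S}_t$, and (iii) at most $c_\eta\|\theta^*\|_0$ coordinates are added at this step. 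The non-zero coordinates of $\theta^*$ that escape $\widehat{S}_t$ therefore all satisfy $|\theta^*_i|<\tau_t:=c_\eta\sigma_{t,\mathrm{eff}}\sqrt{T\log(p)/m}$, so $V_t\leq \tau_t^2\|\theta^*\|_0$; combined with $\sigma_{t,\mathrm{eff}}^2\leq c_\eta(\sigma^2+V_{t-1})$, this gives the linear recursion
\[
V_t \leq \gamma_\eta\bigl(\sigma^2+V_{t-1}\bigr),\qquad \gamma_\eta:=c_\eta\,\frac{T\log(p)\|\theta^*\|_0}{m}.
\]
Under~\eqref{eq:boundspa}, one factor of $\log(n)$ in the denominator absorbs $T\asymp\log n$ while the second drives $\gamma_\eta$ below $1/e$ (choosing $c'_\eta$ large enough). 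Iterating gives $V_{T-1}\leq \gamma_\eta\sigma^2/(1-\gamma_\eta)+\gamma_\eta^{T-1}V_0\lesssim_\eta\sigma^2$, since $\gamma_\eta^{T-1}$ is super-polynomially small in $n$. Consequently $\tau_T\lesssim_\eta\sigma\sqrt{T\log(p)/m}$.

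At the final step, the remaining non-zero coordinates of $\theta^*$ all satisfy $|\theta^*_i|\leq \underline{c}^{(ith)}_\eta\sigma\sqrt{T\log(p)/m}$, so they are exactly those counted by $M(\underline{c}^{(ith)}_\eta\sqrt{T},\theta^*/\sigma)$, and each contributes at most $(\underline{c}^{(ith)}_\eta)^2\sigma^2 T\log(p)/m$ to $V_T$. This yields
\[
V_T \leq (\underline{c}^{(ith)}_\eta)^2\,\frac{T}{2}\,\sigma^2\, M\!\left(\underline{c}^{(ith)}_\eta\sqrt T,\frac{\theta^*}{\sigma}\right)\frac{\log(p)}{m},
\]
after choosing $\underline{c}^{(ith)}_\eta$ large enough, matching the second clause of $\bS[\underline{c}^{(ith)}_\eta\sqrt{T},2T,\underline{c}^{(ith)}_\eta\sqrt{T/2}]$. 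For the first clause, (iii) adds at most $2\|\theta^*\|_0$ coordinates per step (absorbing $c_\eta$ into $\ba_2$), so $|\widehat{S}^{(ith)}|\leq 2T\|\theta^*\|_0$. A union bound over the $T$ iterations provides the global probability $1-Tp^{-2}$.

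The main technical obstacle will be step (ii) of Lemma~\ref{lem:square_root_Lasso}: establishing a clean thresholding/support guarantee for the square-root Lasso applied to the \emph{projected} problem, uniformly over values of $\sigma_{t,\mathrm{eff}}$ that may be much larger than $\sigma$ in early iterations. This requires combining the support-stability arguments of~\cite{squarerootLasso} with a transfer of restricted-eigenvalue properties from $\bSigma$ to its Schur complement $\bSigma'\in\cU(\eta^2)$. A secondary difficulty is tracking constants so that $\gamma_\eta$ stays uniformly below $1/e$ across all $T\asymp\log(n)$ iterations—this is precisely why the sparsity assumption~\eqref{eq:boundspa} carries a $\log^2(n)$ factor rather than a single $\log(n)$.
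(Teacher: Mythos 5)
There is a genuine gap, and it sits exactly where you flag it: your step (ii), the claim that at each iteration every coordinate of $\theta^*_{\overline{\widehat S}_{t-1}}$ exceeding $c_\eta\sigma_{t,\mathrm{eff}}\sqrt{T\log(p)/m}$ is selected, is a beta-min support-recovery guarantee for the (square-root) Lasso. Lemma~\ref{lem:square_root_Lasso} gives only an $\ell_2$ error bound and consistency of the variance estimate; over the class $\cU(\eta)$ (no irrepresentability, possibly $\|\theta^*\|_0\gg\sqrt n$) such a guarantee is not available — the best one can extract from the $\ell_2$ bound is an $\ell_\infty$ bound inflated by $\sqrt{\|\theta^*_{\overline S_{t-1}}\|_0}$, which destroys your recursion. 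Note that Property $\bS[\ba_1,\ba_2,\ba_3]$ is deliberately formulated so that large entries of $\theta^*$ need \emph{not} belong to $S$; the paper's proof never establishes, and never needs, the statement you rely on. Instead it proceeds by induction on the residual \emph{sparsity}: by Lemma~\ref{lem:estbon}, since $S_{t+1}\supset\cS(\widehat\theta_{(SL,t)})$, every missed coordinate contributes its full value to the error of the thresholded estimator, so $\|\theta^*_{\overline S_{t+1}}\|_2^2\le\|\theta^*_{\overline S_t}-\widehat\theta_{(SL,t)}\|_2^2\lesssim_\eta\sigma^2\|\theta^*_{\overline S_t}\|_0\,T\log(p/\delta)/m$; a counting argument then shows the number of remaining ``large'' coordinates shrinks by a constant factor per step (Property $(\bQ_t)$), and false positives are controlled because each one costs at least the squared threshold in the Lasso's $\ell_2$ error. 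After $T\gtrsim\log_2(n)$ steps only the small coordinates, counted by $M$, survive, which is what produces the $\sqrt T$ factors in $\bS[\underline c^{(ith)}_\eta\sqrt T,2T,\underline c^{(ith)}_\eta\sqrt{T/2}]$.

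A second, related flaw is the premise $\sigma_{t,\mathrm{eff}}^2\asymp\sigma^2+V_{t-1}$. At step $t$ the square-root Lasso is run on the projected design with \emph{all} remaining covariates, so by Lemma~\ref{lem:mattransf} the conditional model is exactly Gaussian regression with parameter $\theta^*_{\overline S_{t-1}}$ and noise variance $\sigma^2$ (Lemma~\ref{lem:mistic} even keeps the Schur complement in $\cU(\eta)$, not $\cU(\eta^2)$): the unexplained signal remains signal, not noise. Consequently $\widehat\sigma^{(t)}/\sigma\in[3/4,5/4]$ by Lemma~\ref{lem:square_root_Lasso}, the threshold stays at scale $\sigma\sqrt{T\log(p/\delta)/m}$ at every iteration, and there is no ``much larger than $\sigma$ in early iterations'' regime to control. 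Your energy recursion $V_t\le\gamma_\eta(\sigma^2+V_{t-1})$ is thus built on a model that does not describe the algorithm, and even granting it, its engine (step (ii)) is unproven; replacing it by the $\ell_2$-error-plus-counting argument of Lemma~\ref{lem:estbon} is the substantive content of the paper's proof.
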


It turns out that $\widehat{S}^{(ith)}$ satisfies the desired property $\cS[\ba_1,\ba_2,\ba_3]$ but with $\ba_1$ and $\ba_3$ that  are logarithmically large. As a consequence, the squared separation distance of the corresponding test $\phi^{(th)}$ with $\widehat{S}^{(ith)}$ is of order $\Delta\log(p)\log(n)/n$, which is optimal up to  an additional $\log(n)$ term in the regime $\Delta\leq k_0\wedge \sqrt{p}$.

\section{Discussion}\label{disc}
 
 In this section, we briefly discuss several related problems.

 \subsection{Low-dimensional problems}

Although some of our results are valid in a low-dimensional setting, we focused our attention on pinpointing the minimax separation distance in a high-dimensional regime $p\geq n^{1+\zeta}$ which is arguably the most interesting one. Let us briefly discuss the low dimensional regime $p\leq n/2$. In the independent setting, the main difference is that the $n^{-1/2}$ rate can be improved to $\sqrt{p}/n +k_0\log(p)/n$ by considering the ordinary least-square estimator and computing its $l_2$-norm when its $k_0$ largest entries are removed. In the general setting, we can recover similar upper bounds as in Section \ref{sec:general}, but with much simpler procedures based on the ordinary least-squares estimator.

Between these two regimes, the medium-dimensional case where $p$ and $n$ are of the same order is technically challenging. Our upper bounds and lower bounds only match up to polylogarithmic factor. Deriving the sharp minimax separation distance requires further work.

\subsection{Sparse inverse covariance matrices $\bSigma^{-1}$ and debiased Lasso}

Consider an intermediary setting where both $\sigma$ and $\bSigma$ are unknown but $\bSigma^{-1}$ is also restricted to have less than $\sqrt{n}/\log(p)$ non-zero entries on each rows. In this setting, the minimax lower bounds of Proposition~\ref{prp:lbuvkd} in the general setting turn out to be still valid. Indeed, the proof of Proposition~\ref{prp:lbuvkd} holds in the simpler setting where $\bSigma=\bI_p$ and $\sigma$ is unknown. As in the general setting,  the upper bound $k_0\log(p)/n + \sqrt{p}/n$ is achieved by the polynomial time $U$-statistic of Section \ref{sec:U-stat}. In contrast, achieving the $\sqrt{\Delta\log(p)/n}$ separation distance in the small $\Delta$ regime is now much easier than in the general setting. Whereas we introduced a refitted least-square estimator combined with the non-convex MCP regularized estimator,  one can now alternatively rely on the debiased Lasso method~\cite{javanmard2014confidence, zhang2014confidence, van2014asymptotically,javanmard2018debiasing} to obtain a $\sqrt{\log(p)/n}$ $l_{\infty}$-consistent  estimator of $\theta^*$ and then simply count  the number of its large entries. This was already done in \cite{javanmard2017flexible} as discussed previously.

\subsection{Know $\bSigma$ and unknown $\sigma^2$.}  Consider the intermediate scenario where  $\bSigma=\bI_p$ is the identity matrix, but $\sigma^2$ is unknown. As explained in the previous subsection, the minimax lower bounds of Proposition \ref{prp:lbuvkd} stated for the general setting  are still valid in this intermediate scenario. Obviously, we can also apply the testing procedures of Section~\ref{sec:general}.

 However, in the general scenario, our lower and upper bounds are only matching up to a $\log^2(p)$ factor in the large $k_0$, large $\Delta$ setting. More specifically, when $k_0\geq p^{1/2+\zeta}$ (for some $\zeta>0$) and $\Delta\geq k_0$, the lower bound of Proposition~\ref{prp:lbuvkd} is of order $k_0/[n\log(p)]$ whereas Proposition~\ref{prp:U} provides an upper bound of the  order of $k_0\log(p)$. 
 
 It turns out that, in this intermediate scenario, the gap is easily closed by 
adapting the Fourier-based test $\phi^{(f)}$ and $\phi^{(i)}$ introduced in Section \ref{sec:independent_design}. Indeed, the only place where the knowledge of $\sigma$ is necessary in these two tests is in the definition of the pre-estimator $\overline{\theta}_{\bI}$ which is a thresholded version of $\widetilde{\theta}_{\bI}$~\eqref{eq:phi_proj}. If we replace $\sigma$ in this threshold by the plug-in estimator of the variance based on the square-root Lasso and if we increase some constants, this modification of the tests $\phi^{(f)}$ and $\phi^{(i)}$ does not depend anymore on $\sigma$. Besides, one can easily check that (up to some changes in the numerical constants) Propositions \ref{prp:analyse_Z_f} and \ref{prp:analyse_Z_i} are still valid for these tests.

 \subsection{Unknown $\bSigma$ and known $\sigma^2$.} 

 In this case, we can improve the upper bounds of the general case by adapting the test $\phi^{(\chi)}$ from Section~\ref{sec:independent_design}. Indeed, the statistic $Z_\chi(\hat R_{k_0})$ is now  centered on $\|\bSigma^{1/2}(\theta^* - \tilde \theta_{SL, k_0)})\|_2^2 \geq \eta^{-1}\|\theta^* - \tilde \theta_{SL, k_0)}\|_2^2$ on the class $\mathcal U(\eta)$ of $\bSigma$. Hence, the corresponding test achieves a squared separation distance of the order of $n^{-1/2}+ k_0\log(p)/n$. The main difference with the independent case is that we are not able to adapt the Fourier-based test $\phi^{(f)}$ and $\phi^{(i)}$ to unknown $\bSigma$. In regimes where $p^{1/2+\kappa} \leq k_0 \leq c_\gamma \frac{n}{\log(p)}$ and $\Delta \geq k_0$, there is therefore  a $\log^2(p)$ gap between our upper and lower bounds.

\section{Proofs of the minimax upper bounds}

\subsection{Some results on the square-root Lasso and a simple debiased Lasso}

We start with a few probability bounds for the square-root Lasso $\widehat{\theta}_{SL}$ and its thresholded modification $\widetilde{\theta}_{SL,k_0}$ where only the $k_0$ largest values of $\widehat{\theta}_{SL}$ are not set to $0$. They almost follow straightforwardly from earlier results~\cite{squarerootLasso,kolt11,2012_Sun,10:RWG_restricted}. As we shall apply this lemma in different contexts, we reintroduce the setting here. We consider a $m\times q$ linear regression model $Y=\bX \theta^* + \sigma \epsilon$ with $\epsilon\sim \cN(0,\bI_m)$ and where the rows of $\bX$ are independent and follow a centered normal distribution with common covariance matrix $\bSigma$. The $m\times q$ matrix $\bT$ is the column normalized version of $\bX$ i.e.
$$\mathbf T = \Big(\bX_{.,1}/\|\bX_{.,1}\|_2, \ldots, \bX_{.,q}/\|\bX_{.,q}\|_2 \Big).$$
  We take
 \beq\label{eq:choic_lambda_generic}
  \lambda_{\delta,m,q} = 2\sqrt{\overline{\Phi}^{-1}(\delta/(4q))}\ , 
  \eeq
and consider the square-root Lasso estimator~\cite{squarerootLasso,2012_Sun}\ , 
\beq\label{eq:def_SL_generic}
\widehat{\theta}_{SL,N} \in \arg \min \|Y-\bT\theta\|_2 + \lambda_{\delta,m,q} \|\theta\|_1\ ,
\eeq
Then, define $(\widehat{\theta}_{SL})$ as $(\widehat{\theta}_{SL})_i= (\widehat{\theta}_{SL,N})_i/\|\bX_{.,i}\|_2$ for any $i=1,\ldots, q$ and $\widetilde{\theta}_{SL,k_0}= \argmin_{\theta \in \mathbb B_0[k_0]}\|\theta-\widehat{\theta}_{SL}\|_{2}^2$. The plug-in variance estimator is $\widehat{\sigma}_{SL}= \|Y-\bX \widehat{\theta}_{SL}\|_2/ \sqrt{m}$.

\begin{lem}\label{lem:square_root_Lasso}
 Fix any $0<\delta\leq 1/2$ and any $\eta \geq 1$. There exist constants $\underline c^{(SL)}_{\eta}$ and  $\underline c^{(SL),2}_{\eta}$ such that the following holds. Let $k_{\max}$ be the largest integer such that 
 \[
\underline{c}^{(SL),2}_{\eta} \big[k_{\max}\log(q/\delta)+ \log(1/\delta)\log(q)\big] \leq m\ ,
\]
For any  $\sigma>0$, $\bSigma\in \cU[\eta]$ and $\theta^*$ with $\|\theta^*\|_0\leq k_{\max}$,
there exists an event $\cE$ of probability higher than $1-\delta$, such that
 \beqn 
  \|\widehat{\theta}_{SL}- \theta^*\|_2^2&\leq& \underline c^{(SL)}_{\eta} \sigma^2  \frac{\|\theta^*\|_0 \vee 1}{m} \log\left(\frac{q}{\delta}\right)\ ;\\
 \|\widetilde{\theta}_{SL,k_0}- \theta^*\|_2^2&\leq&  \underline c^{(SL)}_{\eta}\Big[\sigma^2 \frac{\|\theta^*\|_0 \vee 1}{m} \log\left(\frac{q}{\delta}\right) + d^2_2\big[\theta^*;\bbB_0[k_0]\big] \Big]\ ;\\
 \frac{3}{4} &\leq& \frac{\widehat{\sigma}_{SL}}{\sigma}\leq \frac{5}{4}\ . 
 \eeqn 
\end{lem}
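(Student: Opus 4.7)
The result is essentially a repackaging of the now-classical oracle analysis of the square-root Lasso (Belloni--Chernozhukov--Wang, Sun--Zhang) under a Gaussian random design (Raskutti--Wainwright--Yu), so the plan is to (i) isolate a single high-probability event $\cE$ on which the usual deterministic ingredients -- column normalization, noise concentration, and restricted eigenvalue -- are simultaneously controlled; (ii) invoke the known oracle inequality on $\cE$ to obtain the $\ell_2$-rate for $\widehat{\theta}_{SL}$ together with the two-sided control of $\widehat{\sigma}_{SL}/\sigma$; and (iii) deduce the bound for the thresholded estimator by a triangle inequality exploiting that $\widetilde{\theta}_{SL,k_0}$ is the best $k_0$-sparse approximation of $\widehat{\theta}_{SL}$.

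\textbf{The three deterministic ingredients.} First, each column $\bX_{\cdot,i}$ is $\cN(0,\Sigma_{ii}\bI_m)$ with $\Sigma_{ii}\in[\eta^{-1},\eta]$, so a union bound over a $\chi^2_m$-concentration inequality shows that $\|\bX_{\cdot,i}\|_2^2\approx_\eta m$ simultaneously for all $i\leq q$ with probability $1-\delta/4$, as soon as $m\gtrsim \log(q/\delta)$; this pins down the scaling between $\bT$ and $\bX$. Second, conditionally on $\bT$, the vector $\bT^{T}\epsilon$ is Gaussian with column variances bounded by $1$, so a standard Gaussian maximal inequality combined with the choice $\lambda_{\delta,m,q}=2\sqrt{\overline\Phi^{-1}(\delta/(4q))}$ yields $\|\bT^{T}\epsilon\|_\infty\leq (\lambda_{\delta,m,q}/2)\sqrt{m}$ on an event of probability $1-\delta/4$, which is exactly the ``dual norm dominates noise'' condition required by the square-root Lasso analysis. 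Third, for Gaussian designs with covariance in $\cU(\eta)$, the restricted eigenvalue condition of $\bT$ over sets of sparsity $k_{\max}$ holds with constants depending only on $\eta$, with probability $1-\delta/4$, provided $m\gtrsim_\eta k_{\max}\log(q/\delta)$; this is the Raskutti--Wainwright--Yu theorem applied to the column-normalized Gaussian ensemble. Intersecting these three events defines $\cE$, and the condition on $\underline{c}^{(SL),2}_\eta$ in the hypothesis guarantees that all three probability bounds hold.

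\textbf{Applying the square-root Lasso oracle inequality.} On $\cE$, the standard analysis of the square-root Lasso (see e.g.~Sun--Zhang or Belloni--Chernozhukov--Wang) gives simultaneously the prediction / parameter bound
\[
\|\widehat{\theta}_{SL}-\theta^*\|_2^2\;\lesssim_\eta\;\sigma^2\frac{(\|\theta^*\|_0\vee 1)\log(q/\delta)}{m}
\]
and the two-sided control $3/4\leq \widehat{\sigma}_{SL}/\sigma\leq 5/4$, after translating the bound on $\widehat{\theta}_{SL,N}$ on the normalized design back to $\widehat{\theta}_{SL}$ using the column-norm control from step~(i). The assumption $\|\theta^*\|_0\leq k_{\max}$ is exactly what makes the RE constant on $\cE$ uniformly bounded below by a function of $\eta$, so the absolute constants can be tracked into a single $\underline c^{(SL)}_\eta$.

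\textbf{From $\widehat{\theta}_{SL}$ to $\widetilde{\theta}_{SL,k_0}$.} Let $\theta^*_{k_0}$ be the best $k_0$-sparse approximation of $\theta^*$. Since $\widetilde{\theta}_{SL,k_0}$ minimizes $\|\cdot-\widehat{\theta}_{SL}\|_2$ over $\bbB_0[k_0]$, the triangle inequality yields
\[
\|\widetilde{\theta}_{SL,k_0}-\widehat{\theta}_{SL}\|_2\;\leq\;\|\theta^*_{k_0}-\widehat{\theta}_{SL}\|_2\;\leq\; d_2(\theta^*,\bbB_0[k_0])+\|\widehat{\theta}_{SL}-\theta^*\|_2,
\]
hence $\|\widetilde{\theta}_{SL,k_0}-\theta^*\|_2\leq 2\|\widehat{\theta}_{SL}-\theta^*\|_2+d_2(\theta^*,\bbB_0[k_0])$, and squaring and invoking the previous step gives the second displayed bound. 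The main technical obstacle is the bookkeeping of step~(ii): the existing oracle inequalities are stated for various normalizations and various definitions of the penalty, so one has to carefully reconcile the choice $\lambda_{\delta,m,q}=2\sqrt{\overline\Phi^{-1}(\delta/(4q))}$ and the column-normalized $\bT$ with the form of those inequalities, and to absorb the $\eta$-dependence of the RE constant into the final constants $\underline c^{(SL)}_\eta$ and $\underline c^{(SL),2}_\eta$.
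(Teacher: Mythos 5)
Your proposal is correct and follows essentially the same route as the paper: compatibility/restricted-eigenvalue control of the Gaussian design via Raskutti--Wainwright--Yu, the Sun--Zhang square-root Lasso oracle inequality for the $\ell_2$ bound and the control of $\widehat{\sigma}_{SL}/\sigma$, and then a short argument for $\widetilde{\theta}_{SL,k_0}$. The only (immaterial) difference is in that last step, where you use the triangle inequality against the best $k_0$-sparse approximation of $\theta^*$ while the paper compares the supports of the $k_0$ largest entries of $\widehat{\theta}_{SL}$ and of $\theta^*$; both give the stated bound up to constants.
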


\begin{proof}[Proof of Lemma \ref{lem:square_root_Lasso}]
We first argue that the design matrix $\bT$ satisfies  the compatibility  property (see \cite{kolt11,2012_Sun}) with any set of size less than $k_{\max}$ and constant depending on $\eta$. Indeed, Corollary 1 in~\cite{10:RWG_restricted} enforces that this property holds with probability higher than $1-qe^{-cm}\geq 1-\delta/2$. Then, we are in position to apply Theorem 1 in~\cite{2012_Sun}, which implies that $\widehat{\sigma}_{SL}/\sigma$ belongs to $[3/4,5/4]$ and that
\[
\|\widehat{\theta}_{SL}- \theta^*\|_2^2\leq \underline c^{(SL)}_{\eta} \sigma^2  \frac{\|\theta^*\|_0 \vee 1}{m} \log\left(\frac{q}{\delta}\right)\ .
\]
 
 The second result of the lemma is a consequence of the first result. Denote $S_1$ (resp. $S_2$) the subset of the $k_0$ largest entries of $\widehat{\theta}_{SL}$ (resp. $\theta^*$). 
 From the definition of 
 $\widetilde{\theta}_{SL,k_0}$, we deduce that 
 \beqn
 \|\widetilde{\theta}_{SL,k_0}-\theta^*\|_2^2&\leq&  
 2  \|\widehat{\theta}_{SL} - \theta^*\|_2^2 +  2\sum_{i\in \overline{S_1}}\big(\widehat{\theta}_{SL}\big)^2_{i} \\
 &\leq & 2\|\widehat{\theta}_{SL} - \theta^*\|_2^2 +  2\sum_{i\in \ol{S_2}}\big(\widehat{\theta}_{SL}\big)^2_{i} \\
& \leq & 6 \|\widehat{\theta}_{SL} - \theta^*\|_2^2 + 4 \sum_{i\in \ol{S_2}}\theta^{*2}_{i}\ = 6 \|\widehat{\theta}_{SL} - \theta^*\|_2^2 + 4 d_2^2[\theta^*; \bbB_0[k_0]]\ . 
\eeqn 
The result follows.
\end{proof}

 \subsection{Analysis of the tests $\phi^{(t)}$, $\phi^{(\chi)}$, and $\phi^{(u)}$}

 \subsubsection{Proof of Proposition~\ref{prp:t} (Test $\phi^{(t)}$)}

We start with a $l_{\infty}$ error bound on $\widetilde{\theta}_{\bI}$. 
\begin{lem}\label{lem:thresh}
 There exists a constant $c$ such that the following holds under ($\bA[\alpha\wedge \delta]$). For any  $\theta^*\in \mathbb R^p$ with $\|\theta^*\|_0\leq k_{\max}$ (with $k_{\max}$ as in Lemma \ref{lem:square_root_Lasso}), we have 
 \[ \|\theta^*-\widetilde{\theta}_{\bI}\|_\infty \leq c \sigma \sqrt{\frac{\log(p/\alpha)}{n}} \ , \]
 with  probability higher  than $1-\delta-\alpha$. Besides, for any $\theta^*\in \mathbb{R}^p$, we have 
 \[ \|\theta^*-\widetilde{\theta}_{\bI}\|_\infty  \1_{\|\widehat{\theta}_{SL}-\theta^*\|_2^2\leq 2 \sigma^2 }\leq c \sigma \sqrt{\frac{\log(p/\alpha)}{n}} \ , \]
 with probability higher than $1-\alpha$. 
 \end{lem}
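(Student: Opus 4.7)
The plan is to exploit the independence of the two subsamples together with the explicit formula for $\widetilde{\theta}_{\bI}$. Writing $v = \theta^* - \widehat{\theta}_{SL}$ and plugging $Y^{(2)} = \bX^{(2)}\theta^* + \sigma \epsilon^{(2)}$ into the definition~\eqref{eq:phi_proj}, I would obtain the decomposition
\[
\widetilde{\theta}_{\bI} - \theta^* \;=\; \Big(\tfrac{1}{m}\bX^{(2)T}\bX^{(2)} - \bI_p\Big) v \;+\; \tfrac{\sigma}{m}\bX^{(2)T}\epsilon^{(2)}.
\]
Since $\widehat{\theta}_{SL}$ (and hence $v$) is a function of the first subsample only, it is independent from $(\bX^{(2)},\epsilon^{(2)})$. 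This reduces the problem to controlling, coordinatewise, the two right-hand side terms conditionally on $v$.

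For the noise term, the $j$-th coordinate is $\sigma m^{-1} \bX^{(2)T}_{\cdot,j} \epsilon^{(2)}$; conditionally on $\bX^{(2)}_{\cdot,j}$, it is centered Gaussian with variance $\sigma^2 \|\bX^{(2)}_{\cdot,j}\|_2^2/m^2$. A standard $\chi^2$ concentration bound together with a Gaussian tail bound and a union bound over the $p$ coordinates yields $\|\sigma m^{-1}\bX^{(2)T}\epsilon^{(2)}\|_\infty \lesssim \sigma \sqrt{\log(p/\alpha)/m}$ on an event of probability at least $1-\alpha/2$. For the bias term, writing the $j$-th coordinate as
\[
\Big[\big(\tfrac{1}{m}\bX^{(2)T}\bX^{(2)} - \bI_p\big)v\Big]_j \;=\; v_j\Big(\tfrac{\|\bX^{(2)}_{\cdot,j}\|_2^2}{m} - 1\Big) \;+\; \tfrac{1}{m}\bX^{(2)T}_{\cdot,j}\bX^{(2)}_{\cdot,-j}v_{-j},
\]
and using that under $\bSigma = \bI_p$ the column $\bX^{(2)}_{\cdot,j}$ is independent of $\bX^{(2)}_{\cdot,-j}$, the second summand is, conditionally on $\bX^{(2)}_{\cdot,-j}$, centered Gaussian with variance $\|\bX^{(2)}_{\cdot,-j}v_{-j}\|_2^2/m^2$, itself concentrating around $\|v_{-j}\|_2^2/m$. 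The first summand is controlled via $\chi^2$ concentration of $\|\bX^{(2)}_{\cdot,j}\|_2^2/m$ by $|v_j|\sqrt{\log(p/\alpha)/m}$. A union bound yields
\[
\Big\|\big(\tfrac{1}{m}\bX^{(2)T}\bX^{(2)} - \bI_p\big)v\Big\|_\infty \;\lesssim\; \|v\|_2 \sqrt{\tfrac{\log(p/\alpha)}{m}}
\]
on an event of probability at least $1-\alpha/2$ (the $\|v\|_\infty$ factor from the first summand is absorbed into $\|v\|_2$).

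To conclude the first statement, I apply Lemma~\ref{lem:square_root_Lasso} which, under $\|\theta^*\|_0 \leq k_{\max}$, gives $\|v\|_2^2 \lesssim_\eta \sigma^2 (\|\theta^*\|_0 \vee 1)\log(p/\delta)/m$ on an event of probability at least $1-\delta$. Combining, $\|v\|_2 \sqrt{\log(p/\alpha)/m} \lesssim \sigma \sqrt{(k_0 \vee 1)\log(p)\log(p/\alpha)}/m$, which is at most $c\sigma\sqrt{\log(p/\alpha)/n}$ exactly because Condition ($\bA[\alpha\wedge\delta]$) ensures $(k_0\vee 1)\log(p/\alpha) \lesssim n$. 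For the second statement, the event $\{\|v\|_2^2 \leq 2\sigma^2\}$ is imposed by assumption and is used in place of Lemma~\ref{lem:square_root_Lasso}; the bound $\|v\|_2 \sqrt{\log(p/\alpha)/m} \leq \sqrt{2}\sigma\sqrt{\log(p/\alpha)/m}$ is then directly of the required order, and only the two tail events of total mass $\alpha$ are paid.

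The only mildly delicate point is the bias term: one must observe that the independence of the columns of $\bX^{(2)}$ (specific to the independent setting $\bSigma=\bI_p$) is what makes the cross term $\bX^{(2)T}_{\cdot,j}\bX^{(2)}_{\cdot,-j}v_{-j}$ a conditional Gaussian with variance of order $\|v\|_2^2/m$ rather than $\|v\|_2^2$, which is the reason the final rate scales as $\|v\|_2\sqrt{\log(p)/m}$ instead of $\|v\|_2\sqrt{\log(p)}$. All the rest is a bookkeeping of Gaussian and chi-square deviation bounds.
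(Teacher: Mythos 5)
Your proposal is correct and follows essentially the same route as the paper's proof: the same decomposition $\widetilde{\theta}_{\bI}-\theta^* = (\tfrac{1}{m}\bX^{(2)T}\bX^{(2)}-\bI_p)v + \tfrac{\sigma}{m}\bX^{(2)T}\epsilon^{(2)}$, the same split of the first term into its diagonal and off-diagonal parts controlled by chi-square/Gaussian concentration and a union bound (the paper phrases these via a quadratic-form deviation lemma, which is the same computation), and the same use of Lemma~\ref{lem:square_root_Lasso} for the first claim versus the imposed event $\{\|\widehat{\theta}_{SL}-\theta^*\|_2^2\leq 2\sigma^2\}$ for the second, with the identical probability accounting $\delta+\alpha$ versus $\alpha$. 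The only cosmetic slip is invoking $(k_0\vee 1)\log(p/\alpha)\lesssim n$ at the end, whereas the bound should be stated through $\|\theta^*\|_0\leq k_{\max}$ and the definition of $k_{\max}$, which gives $\|v\|_2\lesssim \sigma$ exactly as needed.
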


From Lemma~\ref{lem:thresh}, we derive that with probability higher than $1-\delta-\alpha$, we have 
  $\|\theta^*-\widetilde{\theta}_{\bI}\|_\infty \leq c \sigma \sqrt{\frac{\log(p/\alpha)}{n}}$. Setting $\underline{c}^{(t)}$ as $c$ in Lemma~\ref{lem:thresh}, we derive that the test $\phi^{(t)}$ has a type I error probability less or equal to $\alpha+\delta$. 
Now consider a vector $\theta^*\in \bbB_0[k_0+\Delta]$ such that  $|\theta^*_{(k_0+1)}|\geq 2.1\underline{c}^{(t)} \sigma\sqrt{\log[p/(\alpha\wedge \beta)]/n}$. From Lemma \ref{lem:thresh}, we deduce that, with probability higher than $1-\alpha-\beta$,
\[
|(\widetilde{\theta}_{\bI})_{(k_0+1)}|\geq |\theta^*_{(k_0+1)}|- \|(\widetilde{\theta}_{\bI})-\theta^*\|_{\infty}  \geq  \underline{c}^{(t)}\sigma(2.1\sqrt{\log(p/(\alpha \wedge \beta)} - \sqrt{\log(p/\beta)}) \frac{\sigma}{\sqrt{n}} > \underline{c}^{(t)} \sigma \sqrt{\frac{\log(p/\alpha)}{n}}\ , 
\]
and the test $\phi^{(t)}$ therefore  rejects the null hypothesis, which concludes the proof.

\begin{proof}[Proof of Lemma~\ref{lem:thresh}]
 Set $\widehat{\gamma} = \theta^*-\widehat{\theta}_{SL}$. If $\|\theta^*\|_0\leq k_{\max}$, the conditions of Lemma \ref{lem:square_root_Lasso} are satisfied and it follows from this lemma  that 
\begin{equation}\label{eq:det:2}
\|\widehat{\gamma} \|_2^2 \leq c \sigma^2  (\|\theta^*\|_0  \lor 1)\frac{\log(p/\delta)}{m}  \leq c'\sigma^2\ ,
\end{equation}
with $\P^{(1)}_{\theta^*,\sigma}$ probability higher than $1-\delta$.  In the second result of Lemma \ref{lem:thresh},  we restrict ourselves to the case $\|\widehat{\gamma}\|_2^2 \leq 2 \sigma^2$. Hence, it suffices to prove that, conditionally to $\widehat{\gamma}$ satisfying  $\|\widehat{\gamma}\|_2^2 \leq (2\vee c') \sigma^2$, we have $\|\theta^*-\widetilde{\theta}_{\bI}\|_\infty \leq c_1 \sigma \sqrt{\frac{\log(p/\alpha)}{n}}$ with probability higher than $1-\alpha$.

Write $Z$ the statistic defined by $Z= \theta^* - \widehat{\theta}_{SL}  - \frac{1}{m}\bX^{(2)T} (Y^{(2)}- \bX^{(2)}\widehat{\theta}_{SL})$. Also, define $\widehat{\bSigma}= \frac{1}{m}\bX^{(2)T} \bX^{(2)}$ and $\widehat{\bGamma}$ its diagonal part. We have
\beqn 
\|Z\|_{\infty}&= &\|\widehat{\gamma}- \frac{1}{m}\bX^{(2)T}\bX^{(2)}\widehat{\gamma} - \frac{\sigma}{m}\bX^{(2)T}\epsilon^{(2)}\|_{\infty} \\
&\leq & \|\widehat{\gamma}- \frac{1}{m}\bX^{(2)T}\bX^{(2)}\widehat{\gamma}\|_{\infty} +  \| \frac{\sigma}{m}\bX^{(2)T}\epsilon^{(2)}\|_{\infty}\\
&\leq & \|(\bI_m - \widehat{\bGamma}) \widehat{\gamma}\|_{\infty}+ \|(\widehat{\bSigma}- \widehat{\bGamma}) \widehat{\gamma}\|_{\infty} +  \| \frac{\sigma}{m}\bX^{(2)T}\epsilon^{(2)}\|_{\infty}= A_1+ A_2 + A_3.
\eeqn 
We control each of these three quantities independently.

\begin{lem}\label{lem:conchi}
Let $\bQ$ be a $d\times d$ symmetric matrix and  let $G\sim \cN(0,\bI_d)$. Define $S=G^T \bQ G$. For any $t>0$, one has 
\[
 S\leq Tr(\bQ) + 2\|\bQ\|_F t + 2 \|\bQ\|_{op} t, 
\]
with probability higher than $1-e^{-t}$. Here, $\|\bQ\|_F$ and $\|\bQ\|_{op}$ respectively correspond to the Frobenius and operator norm of $\bQ$. 
\end{lem}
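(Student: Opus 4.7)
The statement is the familiar Laurent--Massart / Hanson--Wright deviation inequality for Gaussian quadratic forms (the Frobenius term almost surely should read $\sqrt{t}$ rather than $t$, which is the standard form; the weaker form as stated follows from it). I would prove it by the classical Chernoff/Laplace transform method.

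Step 1 (reduction to a weighted $\chi^2$). Since $\bQ$ is symmetric, write $\bQ = U D U^T$ with $U$ orthogonal and $D = \diag(\lambda_1,\dots,\lambda_d)$. Rotational invariance gives $Z := U^T G \sim \cN(0,\bI_d)$, hence
\[
S - \tr(\bQ) \;\stackrel{d}{=}\; \sum_{i=1}^d \lambda_i(Z_i^2 - 1),
\]
while $\tr(\bQ) = \sum_i \lambda_i$, $\|\bQ\|_F^2 = \sum_i \lambda_i^2$, and $\|\bQ\|_{op} = \max_i|\lambda_i|$.

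Step 2 (cumulant bound). For $0 < \theta < 1/(2\|\bQ\|_{op})$, the Laplace transform of $\lambda_i(Z_i^2-1)$ is $e^{-\theta\lambda_i}(1-2\theta\lambda_i)^{-1/2}$, so
\[
\log \E\bigl[e^{\theta(S - \tr(\bQ))}\bigr] \;=\; \sum_{i=1}^d\Bigl[-\tfrac12\log(1-2\theta\lambda_i) - \theta\lambda_i\Bigr].
\]
From the power series $-\log(1-x) - x = \sum_{k\ge 2} x^k/k$ and a geometric comparison, I would derive the elementary inequality $-\log(1-x) - x \le x^2/(2(1-|x|))$ valid for $|x|<1$, which handles both signs of the eigenvalues without case analysis. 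Applying it with $x = 2\theta\lambda_i$ (where $|x| \le 2\theta\|\bQ\|_{op}<1$) and summing yields
\[
\log \E\bigl[e^{\theta(S - \tr(\bQ))}\bigr] \;\le\; \frac{\theta^2 \|\bQ\|_F^2}{1 - 2\theta\|\bQ\|_{op}}.
\]

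Step 3 (Chernoff + optimization). Markov's inequality gives, for any $u>0$ and admissible $\theta$,
\[
\P\bigl(S > \tr(\bQ) + u\bigr) \;\le\; \exp\!\Bigl(-\theta u + \tfrac{\theta^2 \|\bQ\|_F^2}{1 - 2\theta\|\bQ\|_{op}}\Bigr).
\]
Choosing the standard Laurent--Massart value $\theta = u/\bigl(2\|\bQ\|_F^2 + 2u\|\bQ\|_{op}\bigr)$ reduces the right-hand side to $\exp\!\bigl(-u^2/(4\|\bQ\|_F^2 + 4u\|\bQ\|_{op})\bigr)$. Setting this equal to $e^{-t}$ and solving the resulting quadratic in $u$ gives $u = 2\|\bQ\|_F\sqrt{t} + 2\|\bQ\|_{op}t$, which is the standard form of the bound and in particular implies the stated inequality.

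This is essentially a textbook computation; the only minor technical point is the uniform cumulant bound for eigenvalues of either sign, which is handled by the $|x|$ in the elementary inequality above. Everything else is routine optimization of an exponential Chernoff bound.
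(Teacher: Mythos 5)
Your overall route is the same one the paper has in mind: the paper simply cites Lemma 1 of Laurent--Massart and omits the "slight extension" to signed eigenvalues, and your Steps 1--2 (diagonalization, the exact log-Laplace transform, and the inequality $-\log(1-x)-x\le x^2/(2(1-|x|))$ for $|x|<1$, which handles negative eigenvalues) are exactly that extension and are correct. The problem is in Step 3. With your choice $\theta=u/(2\|\bQ\|_F^2+2u\|\bQ\|_{op})$ the Chernoff exponent is $-u^2/(4\|\bQ\|_F^2+4u\|\bQ\|_{op})$, and solving $u^2=t\,(4\|\bQ\|_F^2+4u\|\bQ\|_{op})$ gives
\[
u \;=\; 2\|\bQ\|_{op}t+2\sqrt{\|\bQ\|_{op}^2t^2+\|\bQ\|_F^2t}\;\le\; 2\|\bQ\|_F\sqrt{t}+4\|\bQ\|_{op}t,
\]
\emph{not} $2\|\bQ\|_F\sqrt{t}+2\|\bQ\|_{op}t$ as you claim: plugging $u=2\|\bQ\|_F\sqrt{t}+2\|\bQ\|_{op}t$ into your exponent, one finds $u^2-4t\|\bQ\|_{op}u-4t\|\bQ\|_F^2=-4\|\bQ\|_{op}^2t^2<0$, so the exponent is strictly larger than $-t$ whenever $\|\bQ\|_{op}>0$. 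Thus your suboptimal (Bernstein-type) $\theta$ only delivers the bound with constant $4$ on the operator-norm term and cannot give the stated constant $2$. The fix is to optimize $\theta$ exactly (the Legendre transform of $\theta\mapsto v\theta^2/(2(1-b\theta))$ is $(v/b^2)h(bu/v)$ with $h(x)=1+x-\sqrt{1+2x}$ and $h^{-1}(t)=t+\sqrt{2t}$, which is precisely how Laurent--Massart obtain $\sqrt{2vt}+bt$), or, more simply, to take $\theta=\sqrt{t}/(\|\bQ\|_F+2\|\bQ\|_{op}\sqrt{t})$: with $u=2\|\bQ\|_F\sqrt{t}+2\|\bQ\|_{op}t$ one then checks $1-2\theta\|\bQ\|_{op}=\|\bQ\|_F/(\|\bQ\|_F+2\|\bQ\|_{op}\sqrt{t})$ and the exponent equals exactly $-t$.

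A secondary point: your remark that the lemma as printed (with $2\|\bQ\|_F\,t$) is "weaker" and follows from the standard $\sqrt{t}$ form is only true for $t\ge 1$; for $t<1$ the printed threshold is smaller than $2\|\bQ\|_F\sqrt{t}+2\|\bQ\|_{op}t$, so it does not follow from the standard bound. The printed statement is almost surely a typo (the paper's subsequent uses, e.g.\ the bound on $A_3$ in the proof of Lemma~\ref{lem:thresh}, invoke the $\sqrt{t}$ form), so proving the Laurent--Massart form is indeed the right target --- but then the constant-$2$ issue above must be repaired as indicated.
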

This result is a slight extension of Lemma 1 in \cite{Laurent00} (that requires $\bQ$ to be positive). The extension to general symmetric  matrices proceeds from the same arguments and we omit the proof.

Let us first control $A_3$. Each of the $p$ entries of $\sigma m^{-1}\bX^{(2)T}\epsilon^{(2)}$ is distributed as a quadratic form of $2m$ standard normal random variables. The corresponding matrix $\bQ$ satisfies $\tr(\bQ)=0$ and $\|\bQ\|_F^2 = \sigma^2/(2m)$. Since $\|\bQ\|_{op}\leq \|\bQ\|_{F}$, it follows from the above lemma together with an union bound that
\beq\label{eq:upper:A3}
A_3 \leq  4\sigma \sqrt{\frac{\log(6p/\alpha)}{2 m}}\ .
\eeq
with $\P^{(2)}_{\theta^*,\sigma}$ probability higher than $1-\alpha/3$. As for $A_1$ and $A_2$, we first work conditionally to $\widehat{\gamma}$. Fix $i\in \{1,\ldots,p\}$, $\widehat{\bGamma}_{ii}$ is distributed as quadratic form of $m$ standard normal variable and the corresponding matrix $\bQ$ satisfies $tr(\bQ)= 1$, $\|\bQ\|_F= m^{-1/2}$ and $\|\bQ\|_{op}\leq 1/m$. It then follows from Lemma \ref{lem:conchi}, that conditionally to $\widehat{\gamma}$, 
\beq\label{eq:upper:A1}
A_1 \leq  2 (1+ m^{-1/2})\sqrt{\frac{\log(6p/\alpha)}{m}}\|\widehat{\gamma}\|_{\infty}\ ,
\eeq
with $\P^{(2)}_{\theta^*,\sigma}$ probability higher than $1-\alpha/3$. As for $A_2$, observe that, conditionally to $\widehat{\gamma}$, $[(\widehat{\bSigma}-\widehat{\bGamma})\widehat{\gamma}]_{j} =  \frac{1}{m}\sum_i \bX_{i,j} \sum_{j'\neq j}\bX_{i,j'} \hat \gamma_{j'}$ is distributed as $(\sum_{j'\neq j}\widehat{\gamma}_{j'}^2)^{1/2}/m \sum_{q=1}^m U_q U'_q$ where the $U_q$'s and $U'_q$'s are independent standard normal random variables. Again, we deduce from Lemma \ref{lem:conchi}
 that, conditionally to $\widehat{\gamma}$, 
\beq\label{eq:upper:A2}
A_2 \leq  4 \sqrt{\frac{\log(6p/\alpha)}{2 m}} \max_{i}(\sum_{j\neq i}\widehat{\gamma}_j^2)^{1/2} \leq 4\|\widehat{\gamma}\|_2 \sqrt{\frac{\log(6p/\alpha)}{2 m}}\  ,
\eeq
with $\P^{(2)}_{\theta^*,\sigma}$ probability higher than $1-\alpha/6$. Finally, we gather \eqref{eq:det:2} with (\ref{eq:upper:A3}--\ref{eq:upper:A2}) to conclude that there exists $c>0$ such that 
\[
 \|Z\|_{\infty}\leq c \sigma\sqrt{\frac{\log(p/\alpha)}{m}}\ ,  
\]
with probability larger than $1-\delta-\alpha$.

\end{proof}

\subsubsection{Proof of Proposition~\ref{prp:chi} (Test $\phi^{(\chi)}$)}

We first state the following lemma that characterizes the deviations of $Z_{\chi}[R]$.

\begin{lem}\label{lem:t_comp}
For any $t>0$, any $\theta^*\in \mathbb R^p$, any  $\sigma>0$, and any fixed $\theta$, we have for, $\widehat{R}= Y^{(2)}-\bX^{(2)}\theta$, 
\begin{align}
  \big|Z_{\chi}(\widehat{R}) - \frac{\|\theta^*-\theta \|_2^2}{\sigma^2}\big| \leq 2 \Big[1 +\frac{\|\theta^*-\theta\|_2^2}{\sigma^2}\Big] \big[\sqrt{\frac{t}{m}} +\frac{t}{m}\big]\ ,
 \end{align}
 with $\P^{(2)}_{\theta^*,\sigma}$ probability higher than $1-e^{-t}$.
\end{lem}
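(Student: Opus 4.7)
The plan is to reduce $Z_\chi(\widehat{R})$ to a rescaled chi-square random variable, then apply a standard deviation bound. Since $\theta$ is fixed (non-random), set $u := \theta^* - \theta$ and write
\[
\widehat{R} \;=\; Y^{(2)} - \bX^{(2)}\theta \;=\; \bX^{(2)}u + \sigma \epsilon^{(2)}.
\]
In the independent setting ($\bSigma = \bI_p$), each row of $\bX^{(2)}$ is $\cN(0,\bI_p)$ and $\epsilon^{(2)} \sim \cN(0,\bI_m)$ is independent of $\bX^{(2)}$. Therefore, coordinatewise, $\bX^{(2)}_{i,\cdot}u \sim \cN(0,\|u\|_2^2)$ independently across $i$, and adding $\sigma \epsilon^{(2)}$ gives $\widehat{R} \sim \cN\!\bigl(0,(\|u\|_2^2 + \sigma^2)\bI_m\bigr)$.

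Consequently, $\|\widehat{R}\|_2^2 / (\|u\|_2^2 + \sigma^2)$ follows a $\chi^2_m$ distribution. The Laurent--Massart inequality (Lemma~1 of \cite{Laurent00}, or equivalently the form already used via Lemma~\ref{lem:conchi} with $\bQ = \bI_m$) yields, for any $t > 0$,
\[
\left| \frac{\|\widehat{R}\|_2^2}{m(\|u\|_2^2 + \sigma^2)} - 1 \right| \;\leq\; 2\sqrt{\tfrac{t}{m}} + 2\tfrac{t}{m}
\]
with probability at least $1 - e^{-t}$ (keeping only the larger upper-tail constant suffices; both tails of $\chi^2_m$ satisfy this bound).

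Finally, multiplying through by $(\|u\|_2^2 + \sigma^2)/\sigma^2$ and using
\[
\frac{\|\widehat{R}\|_2^2}{m\sigma^2} - 1 - \frac{\|u\|_2^2}{\sigma^2} \;=\; \frac{\|u\|_2^2+\sigma^2}{\sigma^2}\left(\frac{\|\widehat{R}\|_2^2}{m(\|u\|_2^2+\sigma^2)} - 1\right),
\]
we recover the stated bound. No real obstacle arises: the only subtlety is simply to observe that $\bX^{(2)}u + \sigma\epsilon^{(2)}$ is exactly isotropic Gaussian with variance $\|u\|_2^2+\sigma^2$, which collapses the joint randomness over $(\bX^{(2)},\epsilon^{(2)})$ into a single chi-square variable and makes the Laurent--Massart bound immediately applicable.
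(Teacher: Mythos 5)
Your proof is correct and follows essentially the same route as the paper: identify $\widehat{R}$ as an isotropic Gaussian vector with per-coordinate variance $\sigma^2+\|\theta^*-\theta\|_2^2$, so that $\|\widehat{R}\|_2^2/(\sigma^2+\|\theta^*-\theta\|_2^2)$ is $\chi^2_m$, and then apply the chi-square concentration of Lemma~\ref{lem:conchi} (Laurent--Massart) and rescale. The explicit algebraic identity you give for converting the chi-square deviation into the bound on $Z_\chi(\widehat{R})$ is exactly the step the paper leaves implicit.
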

\begin{proof}[Proof of Lemma \ref{lem:t_comp}]
We have
\begin{align*}
\widehat{R}_i &= (Y^{(2)}-\mathbf{X}^{(2)}\theta)_i = \epsilon^{(2)}_i + (\mathbf{X}^{(2)}(\theta^* - \theta))_i\\
&=  \epsilon^{(2)}_i + \sum_j \bX^{(2)}_{i,j}(\theta^*_j - \theta_j)\ .
\end{align*}
Hence, $\widehat R_i \sim \mathcal N(0, \sigma^2 + \|\theta^* - \theta\|_2^2)$ and these variables are independent from each other.
 So the random variable  
 $\|\widehat{R}\|_2^2[\sigma^2+ \|\theta^*-\theta\|_2^2]^{-1}$ follows a $\chi^2$ distribution with $n$ degrees of freedom. To prove the result, we only have to apply  Lemma~\ref{lem:conchi} with $\mathbf{Q} = \mathbf{I}_p$.
\end{proof}

First assume that $\theta^*$ belongs to $\bbB_0[k_0]$.  With $\P^{(2)}_{\theta^*,\sigma}$ probability higher than $1-\alpha$, we have 
\beqn
 Z_{\chi}(\widehat{R}_{k_0})&\leq & \frac{\|\theta^*-\widetilde{\theta}_{SL,k_0}\|_2^2}{\sigma^2}+ 2\Big[1 +\frac{\|\theta^*-\widetilde{\theta}_{SL,k_0}\|_2^2}{\sigma^2}\Big] \Big[\sqrt{\frac{\log(1/\alpha)}{m}} +\frac{\log(1/\alpha)}{m}\Big] \\
 &\leq &  \frac{\|\theta^*-\widetilde{\theta}_{SL,k_0}\|_2^2}{\sigma^2}+ c\Big[1 +\frac{\|\theta^*-\widetilde{\theta}_{SL,k_0}\|_2^2}{\sigma^2}\Big]\sqrt{\frac{\log(1/\alpha)}{m}} \ ,
\eeqn 
where we used Condition ($\bA[\alpha\wedge \delta$]). Then, we apply Lemma \ref{lem:square_root_Lasso} to control $\|\theta^*-\widetilde{\theta}_{SL,k_0}\|_2^2$ with probability higher than $1-\delta$. With probability higher than $1-\alpha-\delta$, we get
\[
  Z_{\chi}(\widehat{R}_{k_0})\leq c\Big[ (k_0\vee 1) \frac{\log(p/\delta)}{m}+ \sqrt{\frac{\log(1/\alpha)}{m}}\Big]\ ,
\]
so that choosing the constant $\underline{c}^{(\chi)}$ large enough leads to $({\bf P1}[\alpha+\delta])$.

Now assume that $d_2(\theta^*;\bbB_0[k_0])>0$. Since $\widetilde{\theta}_{SL,k_0}$ is $k_0$-sparse, it follows that  $\|\theta^*-\widetilde{\theta}_{SL,k_0}\|_2^2\geq d^2_2\big[\theta^*;\bbB_0[k_0]\big]$. 
Then, Lemma \ref{lem:t_comp} enforces that, for $\log(1/\beta)$ small enough compared to $n$ (which is ensured by Condition $({\bf A}[\alpha\wedge \beta\wedge \delta])$), one has 
\beqn 
Z_{\chi}(\widehat{R}_{k_0})&\geq& \frac{d^2_2\big[\theta^*;\bbB_0[k_0]\big]}{\sigma^2}\Big[1 - 4 \sqrt{\frac{\log(1/\beta)}{m}}  \Big]-  4 \sqrt{\frac{\log(1/\beta)}{m}}\\
&\geq & \frac{d^2_2\big[\theta^*;\bbB_0[k_0]\big]}{2\sigma^2}- 4 \sqrt{\frac{\log(1/\beta)}{m}}\ ,
\eeqn 
with $\P^{(2)}_{\theta^*,\sigma}$ probability larger than $1-\beta$. As a consequence, under condition \eqref{eq:condition_distance_chi} with a constant $c$ large enough, the type II error probability is less than $\beta$.

\subsubsection{Proof of Proposition~\ref{prp:U} (test $\phi^{(u)}$)}

The following lemma is borrowed from Theorem 2.1  in~\cite{verzelen2016adaptive}.
\begin{lem}\label{lem:deviation_moment_statistic}  
  There exist numerical constants $c>0$ and $c'>0$ such that the following holds. Assume that $p\geq m$. Consider any $\theta^*\in \mathbb{R}^p$, any $\sigma>0$, and  any  $\bSigma\in \cU(\eta)$. Given any estimator $\widehat{\theta}$ based on the subsample $(Y^{(1)},\bX^{(1)})$,  define  $\widehat{R}= Y^{(0)}-\bX^{(0)}\widehat{\theta}$.  We have, conditionally on $(\bX^{(1)}, Y^{(1)})$,
\beq\label{eq:concentration_M_dense}
\P_{\theta^*,\sigma,\bSigma}^{(0)}\left[\Big|Z^{(u)} - \frac{(\theta^*-\widehat{\theta})^{T} \bSigma^2 (\theta^*-\widehat{\theta})}{\sigma^2 + (\theta^*-\widehat{\theta})^{T}\bSigma(\theta^*-\widehat{\theta})}\Big|\leq c\eta\frac{\sqrt{pt}}{m}\right]\geq 1- c' e^{-t}\ ,
\eeq
for  all $t\leq n^{1/3}$. 
 \end{lem}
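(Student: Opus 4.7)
Conditionally on $(\bX^{(1)},Y^{(1)})$, set $v = \theta^*-\widehat{\theta}$ and write the fresh residual as $\widehat{R} = \bX^{(0)}v + \sigma\epsilon^{(0)}$ with $\epsilon^{(0)}\sim\cN(0,\bI_m)$ independent of $\bX^{(0)}$. Introduce the (trace-free by construction) matrix $A = \bX^{(0)}\bX^{(0)T} - \tfrac{1}{m}\tr(\bX^{(0)}\bX^{(0)T})\bI_m$. The plan is to (i) compute in closed form the conditional mean of the numerator $\widehat{R}^T A \widehat{R}$; (ii) control its fluctuation by a Gaussian chaos concentration inequality; (iii) control the denominator $\|\widehat{R}\|_2^2$ via a standard $\chi^2$ deviation; and finally combine everything by a ratio argument.

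For step (i), expand
\[
\widehat{R}^T A \widehat{R} = v^T \bX^{(0)T} A \bX^{(0)} v + 2\sigma\, v^T \bX^{(0)T} A \epsilon^{(0)} + \sigma^2 \epsilon^{(0)T} A \epsilon^{(0)}.
\]
The cross-term and the $\epsilon^{(0)T}A\epsilon^{(0)}$ term have zero conditional mean given $\bX^{(0)}$, the second one because $\tr(A)=0$ by construction. The quadratic Gaussian form $v^T\bX^{(0)T}A\bX^{(0)}v$ is handled by a Wick/Isserlis calculation: $\E[\bX^{(0)T}A\bX^{(0)}] = m(m+1)\bSigma^2$, the extra Wick contraction being precisely cancelled by the trace-subtraction in $A$ -- this is exactly why Dicker's $U$-statistic is unbiased for $\theta^{*T}\bSigma^2\theta^*$. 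Hence $\E[\widehat{R}^T A\widehat{R}\mid\bX^{(1)},Y^{(1)}] = m(m+1)\,v^T\bSigma^2 v$.

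For step (ii), $\widehat{R}^T A\widehat{R}$ is a degree-$4$ polynomial in jointly Gaussian variables. A second-moment computation through the same Wick expansion bounds its variance by $c\eta^2 m^2 p\,(\sigma^2+v^T\bSigma v)^2$: the factor $p$ stems from $\E\|A\|_F^2\asymp m^2 p$ once the spectrum of $\bSigma$ is upper bounded by $\eta$, and the factor $(\sigma^2+v^T\bSigma v)^2$ from the second moment of each entry of $\widehat{R}$. Polynomial-chaos concentration at degree $4$ (a Lata\l{}a-type tail bound, or the iterated Hanson--Wright argument used in \cite{verzelen2016adaptive}) then converts this variance bound into $|\widehat{R}^T A\widehat{R} - m(m+1)v^T\bSigma^2 v|\leq c\eta\, m\sqrt{pt}\,(\sigma^2+v^T\bSigma v)$ with probability at least $1-c'e^{-t}$; the constraint $t\leq n^{1/3}$ is precisely the sub-Gaussian regime of the degree-$4$ Gaussian chaos (beyond this range the tails transition to a weaker stretched-exponential regime). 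Step (iii) is much easier: the entries of $\widehat{R}$ are iid $\cN(0,\sigma^2+v^T\bSigma v)$, so $\|\widehat{R}\|_2^2/(\sigma^2+v^T\bSigma v)\sim\chi^2_m$, and Lemma~\ref{lem:conchi} applied with $\bQ=\bI_m$ gives $\|\widehat{R}\|_2^2\geq \tfrac{1}{2}m(\sigma^2+v^T\bSigma v)$ with probability $\geq 1-e^{-t}$ whenever $t\lesssim m$, which is ensured by $t\leq n^{1/3}$.

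Finally, dividing the numerator bound of step (ii) by $(m+1)\|\widehat{R}\|_2^2\asymp m(m+1)(\sigma^2+v^T\bSigma v)$ and invoking step (iii) yields the announced mean $v^T\bSigma^2 v/(\sigma^2+v^T\bSigma v)$ and fluctuation $c\eta\sqrt{pt}/m$, after adjusting the absolute constants. The main obstacle is step (ii): a blind second-moment bound on the quartic form produces an extra factor of $m$, so one has to carefully track which Wick contractions survive the trace-subtraction in $A$, and exploit $\bSigma\in\cU(\eta)$ in order to control $\tr(\bSigma^k)$ for $k=2,3,4$ uniformly over the class -- without paying the ambient dimension $p$ in places where one should only pay $\sqrt{p}$. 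The restriction $t\leq n^{1/3}$ is intrinsic to degree-$4$ Gaussian chaos and is not an artefact of the argument.
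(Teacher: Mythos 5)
You should first know that the paper does not prove this lemma at all: it is imported verbatim, with the sentence ``The following lemma is borrowed from Theorem 2.1 in~\cite{verzelen2016adaptive}.'' So there is no internal argument to compare against, and the route you sketch (expand $\widehat{R}^T A\widehat{R}$ with $A=\bX^{(0)}\bX^{(0)T}-\tfrac1m\tr(\bX^{(0)}\bX^{(0)T})\bI_m$, exploit the trace-subtraction for unbiasedness, chaos concentration for the numerator, $\chi^2$ for the denominator, then a ratio argument) is essentially the proof of that cited theorem rather than anything in this paper.

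As a standalone proof, however, your write-up has a genuine gap exactly at the step you yourself flag as the hard one. For a Gaussian chaos of order $4$ (plus the order-$3$ cross term $2\sigma v^T\bX^{(0)T}A\epsilon^{(0)}$), a bound on the variance does \emph{not} by itself yield $\P\bigl[|W-\E W|\geq c\sqrt{Vt}\bigr]\leq c'e^{-t}$ over the whole range $t\leq n^{1/3}$: the Lata\l{}a/Adamczak--Wolff tail involves all the partition (operator-type) norms of the associated multilinear forms, and one must actually compute these norms for this specific quartic --- conditionally on $v$, uniformly over $\bSigma\in\cU(\eta)$ --- and check that for $t\leq n^{1/3}$ the Gaussian term dominates. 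Your sketch asserts this conversion and, for its justification, points back to the ``iterated Hanson--Wright argument used in \cite{verzelen2016adaptive}'', i.e.\ to the very source the lemma is quoted from; so the decisive technical content is delegated rather than proved. Two smaller points: your Wick computation is slightly off, since $\E[\bX^{(0)T}A\bX^{(0)}]=(m-1)(m+2)\,\bSigma^2=\bigl(m(m+1)-2\bigr)\bSigma^2$ rather than $m(m+1)\bSigma^2$ (harmless, because the resulting $O(\eta/m^2)$ centering error is absorbed by the tolerance $c\eta\sqrt{pt}/m$, but it should be stated correctly); and in the final ratio step you should make explicit that $N-\mu D$ (with $\mu=v^T\bSigma^2v/(\sigma^2+v^T\bSigma v)\leq\eta$ and $D=(m+1)\|\widehat R\|_2^2$) inherits both the numerator fluctuation and $\mu$ times the $\chi^2$ fluctuation of $D$, the latter contributing $\eta\sqrt{t/m}\leq\eta\sqrt{pt}/m$ since $p\geq m$ --- this is easy, but it is where the hypothesis $p\geq m$ is used.
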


First, assume that $\theta^*$ belongs to $\bbB_0[k_0]$. Since Condition ({\bf B}[$\alpha,\beta$]) is satisfied with a constant $\underline{c}_{\bB}^{\eta}$ large enough, we can apply \eqref{eq:concentration_M_dense}. With probability higher than $1-\alpha$, one has 
\beqn 
Z^{(u)}&\leq&  \frac{(\theta^*-\widetilde{\theta}_{SL,k_0})^{T} \bSigma^2 (\theta^*-\widetilde{\theta}_{SL,k_0})}{\sigma^2 + (\theta^*-\widetilde{\theta}_{SL,k_0})^{T}\bSigma(\theta^*-\widetilde{\theta}_{SL,k_0})}+ c\eta\frac{\sqrt{p\log(c'/\alpha)}}{m}\\
&\leq & \eta^2 \frac{\|\theta^* -  \widetilde{\theta}_{SL,k_0}\|_2^2}{\sigma^2} + c\eta\frac{\sqrt{p\log(c'/\alpha)}}{m}\ .
\eeqn 
Then, we use Lemma \ref{lem:square_root_Lasso}  to conclude that 
\[
 Z^{(u)}\leq c_{\eta}\Big[(k_0\vee 1)\frac{\log(p/\delta)}{m}+ \frac{\sqrt{p\log(c'/\alpha)}}{m}  \Big]\ , 
\]
with probability higher than $1-\alpha -\delta$. Setting the constant $\underline{c}_{\eta}^{(u)}$ small enough, we conclude that the type I error probability of $\phi^{(u)}$ is less than $\alpha+\delta$.

Now assume that $d_2(\theta^*;\bbB_0[k_0])>0$. Since $\widetilde{\theta}_{SL,k_0}$ is $k_0$-sparse, it follows that  $\|\theta^*-\widetilde{\theta}_{SL,k_0}\|_2^2\geq d^2_2\big[\theta^*;\bbB_0[k_0]\big]$. 
Then, Lemma \ref{lem:deviation_moment_statistic} enforces that, for $\log(1/\beta)$ small enough compared to $n$,  with probability higher than $1-\beta$, one has 
\beqn 
Z^{(u)}&\geq&  \frac{(\theta^*-\widetilde{\theta}_{SL,k_0})^{T} \bSigma^2 (\theta^*-\widetilde{\theta}_{SL,k_0})}{\sigma^2 + (\theta^*-\widetilde{\theta}_{SL,k_0})^{T}\bSigma(\theta^*-\widetilde{\theta}_{SL,k_0})}- c\eta\frac{\sqrt{p\log(c'/\beta)}}{m}\\
&\geq & \eta^{-1}\frac{(\theta^*-\widetilde{\theta}_{SL,k_0})^{T} \bSigma (\theta^*-\widetilde{\theta}_{SL,k_0})}{\sigma^2 + (\theta^*-\widetilde{\theta}_{SL,k_0})^{T}\bSigma(\theta^*-\widetilde{\theta}_{SL,k_0})}- c\eta\frac{\sqrt{p\log(c'/\beta)}}{m}\\
&\geq & \frac{1}{2\eta}\Big[ \frac{(\theta^*-\widetilde{\theta}_{SL,k_0})^{T} \bSigma (\theta^*-\widetilde{\theta}_{SL,k_0})}{\sigma^2} \wedge 1 \Big] - c\eta\frac{\sqrt{p\log(c'/\beta)}}{m}\\
&\geq &\frac{1}{2\eta^2}\Big[ \frac{d^2_2\big[\theta^*;\bbB_0[k_0]\big]}{\sigma^2} \wedge 1 \Big] - c\eta\frac{\sqrt{p\log(c'/\beta)}}{m}\ , 
\eeqn 
where we used in the second and fourth line that $\bSigma$ belongs $\cU(\eta)$. Now assume that $d_2(\theta^*;\bbB_0[k_0])$ is large enough so that Condition \eqref{eq:cond_M_separation} is satisfied. Choosing the constant $c'_{\eta}$ in \eqref{eq:cond_M_separation} large enough and the constant  $c_{\eta}$ small enough, it then 
follows that the type II error probability is smaller than $\beta$.

\subsection{Analysis of $\phi^{(f)}$ and $\phi^{(i)}$ (Propositions \ref{prp:analyse_Z_f} and \ref{prp:analyse_Z_i})}

In the proofs of this subsection, we set
$$\theta = \theta^* - \overline{\theta}_{\bI}.$$
To alleviate the notation, and since $\overline{\theta}_{\bI}$ only depends on the first two subsamples, $\theta$ can be considered as fixed when we condition to these  subsamples. To simplify the notation, we respectively write henceforth $Y$ and $\bX$ for $\overline{Y}^{(3)}$  and $\bX^{(3)}$  and work conditionally to $\theta$. For any $1\leq i\leq m$ and $1\leq j\leq p$, we have  $\var{Y_i}=\sigma^2+\|\theta\|_2^2$ and $\cov{Y_i,\bX_{i,j}}= \theta_j$. Hence, we have \begin{equation*}
\bX_{i,.}|(Y, \theta) \sim \mathcal N\left(\frac{Y_i \theta}{\sigma^2 +\|\theta\|_2^2}, \left(\mathbf I_p - \frac{\theta\theta^T}{\sigma^2 +\|\theta\|_2^2}\right) \right)\ ,
\end{equation*}
and since the $\bX_{i,.}|(Y, \theta)$ are independent, we have
\begin{equation*}
\bX^T Y |(Y, \theta) \sim \mathcal N\left(\frac{\|Y\|_2^2 \theta}{\sigma^2 +\|\theta\|_2^2}, \|Y\|_2\left(\mathbf I_p - \frac{\theta\theta^T}{\sigma^2 +\|\theta\|_2^2}\right) \right).
\end{equation*}
 For 
$W = \bX^T Y$, it holds that 
\beq\label{eq:definition_v}
\frac{W}{\|Y\|_2}\Big|(\|Y\|_2, \theta)\sim \cN\left(v, \bI_p - \frac{\theta \theta^T}{\sigma^2 + \|\theta\|_2^2}\right)\ ; \quad \text{ with } v=\frac{\theta \|Y\|_2}{\sigma^2 +\|\theta\|_2^2}.
\eeq
As a consequence, given $\|Y\|_2$ and $\theta$, $\frac{W}{\|Y\|_2}$ behaves almost like a standard Gaussian vector. We shall prove that, under the condition of the propositions, the term  $\frac{\theta \theta^T}{\sigma^2 + \|\theta\|_2^2}$ in the covariance turns out to be negligible, whereas $\theta\|Y\|_2/[\sigma^2+\|\theta\|_2^2]$ is closely related to $\sqrt{n}\theta^*/\sigma$. 
The following lemma states that the conditional expectations of $Z_f$ and $V(r_l,\omega_l)$  are almost the same as if the conditional covariance of $W/\|Y\|_2$ was the identity matrix. Recall the function $g$ introduced in Section \ref{ss:fourier}. Define the function $\Psi_l(x)$ by $\Psi_l(x)= \E[\eta_{r_l,\omega_l}(X)]$ where $X\sim \cN(x,1)$. As explained in Section \ref{ss:fourier_intermediary} and proved in \cite{carpentier2017adaptive} (Section C.2.3), $\Psi_l(x)= \frac{1}{1-2\overline{\Phi}(r_l)}\int_{-r_l}^{r_l}\phi(\xi) \cos(\xi x \tfrac{\omega_l}{r_l})d\xi$. Obviously, we have $\Psi_l(0)=1$. Besides it is also shown in \cite{carpentier2017adaptive} (Section C.2.3) that $-\frac{l}{k_0}\leq\Psi_l(x)\leq \frac{l}{k_0}+ 2 \exp\left(- \frac{\omega_l^2x^2}{r_l^2}\right)$.

\begin{lem}\label{lem:esperance_fourier_statistics}
 If  $s\|\theta\|_{\infty}\leq \sigma$, we have
 \beq
 \label{eq:control_E_Z_f}
\Big|\E_{\theta^*,\sigma}^{(3)}[Z_f|(\|Y\|_2,\theta)] -  \sum_{i=1}^{p}\left[\1_{(\overline{\theta}_I)_i= 0}g(s v_i) + \1_{(\overline{\theta}_I)_i\neq 0}\right]\Big|\leq  \frac{s^2}{5} \ .
\eeq
Consider any $l\in \cL_0$.
If $\omega_l \|\theta\|_{\infty}\leq \sigma$, we have 
 \beq \label{eq:control_E_V}
\Big|\E_{\theta^*,\sigma}^{(3)}[V(r_l,\omega_l)|(\|Y\|_2,\theta)] -  \sum_{i=1}^{p}\left[ \1_{(\overline{\theta}_I)_i= 0}\left(1-\Psi_{l}(v_j)\right)+  \1_{(\overline{\theta}_I)_i\neq 0}\right]\Big|\leq  
\frac{e^{1/2}}{2}\omega_l^2\ . 
\eeq
\end{lem}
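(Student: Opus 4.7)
The plan is to reduce each conditional expectation to an integral in which the only deviation from the ``ideal'' Gaussian mean-$v_j$ variance-$1$ case appears as a single exponential correction factor, and then to bound that factor uniformly. Since $Z_f$ and $V(r_l,\omega_l)$ are additive in $j$, the off-diagonal part of the conditional covariance of $W/\|Y\|_2$ will play no role, and from \eqref{eq:definition_v} I can work directly with the marginal $W_j/\|Y\|_2\sim \cN(v_j,\,1-\tau_j^2)$, where I set $\tau_j^2:=\theta_j^2/(\sigma^2+\|\theta\|_2^2)$.

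Using the characteristic function of a Gaussian, I will rewrite
\[
\E^{(3)}_{\theta^*,\sigma}\!\bigl[\varphi(s;W_j/\|Y\|_2)\,\big|\,\|Y\|_2,\theta\bigr]=\int_{-1}^{1}(1-|\xi|)\cos(\xi s v_j)\, e^{\xi^2 s^2 \tau_j^2/2}\,d\xi,
\]
and analogously, for $\eta_{r_l,\omega_l}$,
\[
\E^{(3)}_{\theta^*,\sigma}\!\bigl[\eta_{r_l,\omega_l}(W_j/\|Y\|_2)\,\big|\,\|Y\|_2,\theta\bigr]=\frac{r_l}{1-2\overline{\Phi}(r_l)}\int_{-1}^{1}\frac{e^{-r_l^2\xi^2/2}}{\sqrt{2\pi}}\cos(\xi\omega_l v_j)\,e^{\xi^2\omega_l^2\tau_j^2/2}\,d\xi.
\]
The target quantities $g(sv_j)$ and $\Psi_l(v_j)$ are exactly these integrals with the exponential factors $e^{\xi^2 s^2\tau_j^2/2}$ and $e^{\xi^2\omega_l^2\tau_j^2/2}$ replaced by $1$, so the term-by-term error is controlled by $|e^{\xi^2 s^2\tau_j^2/2}-1|$ (resp.\ $|e^{\xi^2\omega_l^2\tau_j^2/2}-1|$). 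The hypothesis $s\|\theta\|_{\infty}\leq \sigma$ (resp.\ $\omega_l\|\theta\|_{\infty}\leq \sigma$), combined with $\tau_j^2\leq \|\theta\|_{\infty}^2/\sigma^2$, places these exponents in $[0,1/2]$, on which I can use the linear bound $|e^{y}-1|\leq e^{1/2}y$.

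After summing over $j$ and invoking the key identity $\sum_{j=1}^{p}\tau_j^2=\|\theta\|_2^2/(\sigma^2+\|\theta\|_2^2)\leq 1$, the bound for $Z_f$ will reduce to
\[
\frac{e^{1/2}s^2}{2}\int_{-1}^{1}(1-|\xi|)\xi^2\,d\xi=\frac{e^{1/2}s^2}{12}\leq \frac{s^2}{5},
\]
yielding \eqref{eq:control_E_Z_f}. For $V(r_l,\omega_l)$ the analogous bound becomes
\[
\frac{e^{1/2}\omega_l^2}{2}\cdot\frac{r_l}{1-2\overline{\Phi}(r_l)}\int_{-1}^{1}\frac{e^{-r_l^2\xi^2/2}}{\sqrt{2\pi}}\xi^2\,d\xi,
\]
and I will evaluate the last integral via a change of variables $u=r_l\xi$ and integration by parts as $r_l^{-3}\bigl[1-2\overline{\Phi}(r_l)-2r_l\phi(r_l)\bigr]\leq r_l^{-3}(1-2\overline{\Phi}(r_l))$. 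The normalising constant then cancels and the bound reduces to $e^{1/2}\omega_l^2/(2r_l^2)\leq e^{1/2}\omega_l^2/2$, using $r_l\geq 1$, which holds because $l\leq k_0/4$ in $\cL_0$ so that $r_l=\sqrt{2\log(k_0/l)}\geq \sqrt{2\log 4}>1$. I expect no genuine obstacle: the whole argument will be a Gaussian characteristic-function computation, a linearisation of $e^{y}-1$ on $[0,1/2]$, and two elementary one-dimensional integrals. The one point requiring care is the cancellation between the prefactor $r_l/(1-2\overline{\Phi}(r_l))$ and the Gaussian second-moment integral, which is precisely what the definition of $\Psi_l$ is designed to produce.
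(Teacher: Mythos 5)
Your plan is correct and follows essentially the same route as the paper's proof: compute each conditional expectation via the Gaussian characteristic function so that the only discrepancy from $g(sv_j)$ (resp.\ $\Psi_l(v_j)$) is the factor $e^{\xi^2 s^2\tau_j^2/2}-1$ (resp.\ with $\omega_l$), linearise it via $|e^y-1|\leq e^{1/2}y$ on $[0,1/2]$ using the assumption $s\|\theta\|_\infty\leq\sigma$ (resp.\ $\omega_l\|\theta\|_\infty\leq\sigma$), and sum over $j$ using $\sum_j\theta_j^2/(\sigma^2+\|\theta\|_2^2)\leq 1$. Your explicit evaluation of the Gaussian second-moment integral and the check $r_l\geq 1$ only make precise a step the paper states without detail, so there is no substantive difference.
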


Also, the next lemma enforces that the deviations of the statistics $Z_f$ and $V(r_l,\omega_l)$ are almost the same as if the conditional covariance of $W/\|Y\|_2$ was the identity matrix. 
\begin{lem}\label{lem:concentration_fourier_statistics}
Assume that $\|\theta\|_{\infty}\leq [\sigma^2 +\|\theta\|_2^2]^{1/2}/s$. For any $t>0$, one has
\begin{eqnarray}\label{eq:concentation_Z_f}
\P_{\theta^*,\sigma}^{(3)}[Z_f- \E^{(3)}_{\theta^*,\sigma}[Z_f|(\|Y\|_2,\theta)]\geq se^{s^2/2}\sqrt{2pt}\big| (\|Y\|_2,\theta)\big]&\leq& e^{-t}  \ ;\\
\P_{\theta^*,\sigma}^{(3)}[Z_f- \E^{(3)}_{\theta^*,\sigma}[Z_f|(\|Y\|_2,\theta)]\leq -se^{s^2/2}\sqrt{2pt}\big| (\|Y\|_2,\theta)\big]&\leq& e^{-t}  \ .\nonumber
\end{eqnarray}
Besides, for any $l\in \cL_{0}$ and any $t>0$, one has 
\begin{eqnarray}
\P_{\theta^*,\sigma}^{(3)}\left[V(r_l,\omega_l)- \E^{(3)}_{\theta^*,\sigma}[V(r_l,\omega_l)|(\|Y\|_2,\theta)]\geq 
\sqrt{2lp^{1/2}t}
\Big| (\|Y\|_2,\theta)\right]&\leq& e^{-t}  \ ;\label{eq:concentation_V}\\
\P_{\theta^*,\sigma}^{(3)}\left[V(r_l,\omega_l)- \E^{(3)}_{\theta^*,\sigma}[V(r_l,\omega_l)|(\|Y\|_2,\theta)]\leq -\sqrt{2lp^{1/2}t}\Big| (\|Y\|_2,\theta)\right]&\leq& e^{-t}  \ .\nonumber
\end{eqnarray} 
\end{lem}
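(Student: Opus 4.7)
The plan is to condition on $(\|Y\|_2,\theta)$, which by~\eqref{eq:definition_v} reduces the problem to concentration of $F(G)$ for the Gaussian vector $G := W/\|Y\|_2 \sim \cN(v,\Sigma)$ with $\Sigma = \bI_p - \theta\theta^T/(\sigma^2+\|\theta\|_2^2)$. The crucial structural fact is that $\Sigma$ is the identity minus a rank-one positive semidefinite block, so $0\preceq \Sigma\preceq \bI_p$ and in particular $\|\Sigma^{1/2}\|_{\mathrm{op}}\leq 1$. Writing $G = \Sigma^{1/2}Z + v$ with $Z\sim \cN(0,\bI_p)$, I would invoke the Tsirelson--Ibragimov--Sudakov inequality: for every $L$-Lipschitz $F$, $\P(|F(Z)-\E F(Z)|\geq u)\leq 2e^{-u^2/(2L^2)}$. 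Both statistics are separable in the coordinates of $G$, and for $F(g) = \sum_j f_j(g_j)$ with $|f_j'|\leq L_{\mathrm{coord}}$ one has $\|\nabla F(g)\|_2\leq \sqrt{p}\,L_{\mathrm{coord}}$; composition with $\Sigma^{1/2}$ preserves this $\ell_2$-Lipschitz bound since $\|\Sigma^{1/2}\|_{\mathrm{op}}\leq 1$.

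The main task is therefore to bound the per-coordinate derivatives. For $\varphi(s;\cdot)$, differentiating under the integral in~\eqref{phi_t} and using $e^{\xi^2 s^2/2}\leq e^{s^2/2}$ gives $\sup_x|\partial_x\varphi(s;x)|\leq se^{s^2/2}$, so $L^2\leq ps^2e^{s^2}$ and the choice $u = se^{s^2/2}\sqrt{2pt}$ yields~\eqref{eq:concentation_Z_f}. For $\eta_{r,w}$ from~\eqref{eq:def_eta}, the same differentiation followed by the substitution $u=\xi^2$ delivers
$\sup_x|\partial_x \eta_{r,w}(x)|\leq \tfrac{rw}{(1-2\overline{\Phi}(r))\sqrt{2\pi}}\cdot \tfrac{|e^{(w^2-r^2)/2}-1|}{|(w^2-r^2)/2|}$. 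Plugging in $r_l$ and $\omega_l$ from~\eqref{eq:param}, using that $\omega_l^2\geq r_l^2$ throughout $l\in\cL_0$ (a consequence of $l_0\geq (k_0^2\sqrt{p})^{1/3}$ when $k_0\geq \sqrt{p}$) together with the identity $e^{(\omega_l^2-r_l^2)/2} = l^{3/2}/(k_0 p^{1/4})$, algebraic manipulation should deliver $\sup_x(\partial_x \eta_{r_l,\omega_l}(x))^2 \lesssim l/\sqrt{p}$; the factor $(1-2\overline{\Phi}(r_l))^{-1}$ is harmless since $r_l^2\geq 2\log 4$ for $l\leq l_{\max}\leq k_0/4$. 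Then $L^2\leq l\sqrt{p}$ and $u = \sqrt{2l\sqrt{p}\,t}$ yield~\eqref{eq:concentation_V}.

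The principal obstacle will be this Lipschitz computation for $\eta_{r_l,\omega_l}$: the bound is essentially tight, and the precise choice $l_0 = \lceil k_0^{4/5}p^{1/10}\rceil$ is what places $\omega_l^2$ close to $r_l^2$ at the bottom of the dyadic scale $\cL_0$, producing the needed cancellation between the prefactor $r_l\omega_l$, the exponential growth $e^{(\omega_l^2-r_l^2)/2}$, and the polynomial denominator $\omega_l^2-r_l^2$. Once the two per-coordinate Lipschitz bounds are in hand, everything else is standard Gaussian concentration applied conditionally on $(\|Y\|_2,\theta)$.
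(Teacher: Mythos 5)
Your proposal is correct and follows essentially the same route as the paper: condition on $(\|Y\|_2,\theta)$, note that the conditional covariance $\bI_p-\theta\theta^T/(\sigma^2+\|\theta\|_2^2)$ has operator norm at most one, and apply one-sided Gaussian (Borell--TIS) concentration with Lipschitz constants $s e^{s^2/2}\sqrt{p}$ for $Z_f$ and $p^{1/4}l^{1/2}$ for $V(r_l,\omega_l)$, the latter obtained from exactly the per-coordinate derivative bound $\tfrac{r_l\omega_l}{(1-2\overline{\Phi}(r_l))\sqrt{2\pi}}\tfrac{e^{(\omega_l^2-r_l^2)/2}-1}{(\omega_l^2-r_l^2)/2}$ that the paper also uses. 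The only caveat is that your final step must yield the bound with constant one (not merely $\lesssim$) to match the stated thresholds $se^{s^2/2}\sqrt{2pt}$ and $\sqrt{2lp^{1/2}t}$; the sketched algebra with $e^{(\omega_l^2-r_l^2)/2}=l^{3/2}/(p^{1/4}k_0)$, $l\leq k_0/4$ and $r_l^2\geq 2\log 4$ does deliver this, which is the same verification the paper delegates to Lemma~6 of~\cite{carpentier2017adaptive}.
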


\bigskip

\noindent 
\underline{Analysis of the tests under the null hypothesis}.
The assumptions of Lemma~\ref{lem:thresh} are fulfilled. As a consequence,  we have  $\|\theta^* - \widetilde{\theta}_{\bI}\|_{\infty} \leq \underline{c}^{(t)} \sigma\sqrt{\log(2p/\alpha)/n}$ with  probability larger than $1-\delta-\alpha/2$. Henceforth, we call this event $\cB$ and work conditionally to it. 
Thus, the support of $\overline{\theta}_{\bI}$ is included in that of $\theta^*$ which in turn implies that
$\sum_{j=1}^p \1_{\theta_j\neq 0}\1_{(\overline{\theta}_{\bI})_j=0}+ \1_{(\overline{\theta}_{\bI})_j\neq 0}\leq k_0$ which implies $\|\theta\|_0\leq k_0$. Besides, we also have $\|\theta\|_{\infty}\leq  \underline{c}^{(t)} \sigma\sqrt{\frac{\log(2p/\alpha)}{n}}$. 

Since $\max_{l\in \cL_0} \omega_l \leq  s = \sqrt{\log(ek_0/\sqrt{p})}\lor 1 \leq (\underline{c}^{(t)})^{-1}\sqrt{\frac{n}{\log(2p/\alpha)}}$ for $n \geq 9\underline{c}^{(t)2}\log^2(ep/\alpha)$, it also follows from Assumption $\bA[\alpha\wedge\delta]$ that 
\beq\label{eq:upper_theta_infty_h0}
\frac{\|\theta\|_{\infty}}{\sigma}\leq \frac{1}{s}\wedge \left(\min_{l\in \cL_0} \frac{1}{\omega_l}\right).
\eeq
Thus, we are in position to apply Lemma~\ref{lem:esperance_fourier_statistics}.  As explained in Section \ref{ss:fourier}, we have  $g(0)=0$ and $g(x)\in [0,1]$, it follows from that Lemma  that $\E^{(3)}_{\theta^*,\sigma}\big[Z_f\big|(\|Y\|_2,\theta)\big]\leq k_0 + s^2/5$. Also since $1-\Psi_{l}(0)=0$ and  $1-\Psi_{l}(x)\in [0,1+\frac{l}{k_0}]$ we have
\[
 \E^{(3)}_{\theta^*,\sigma}\big[V[r_l,\omega_l)\big|(\|Y\|_2,\theta)\big]\leq k_0 + l+ \frac{e^{1/2}}{2}\omega_l^2\ . 
\]
Then, we apply the deviation inequalities of Lemma \ref{lem:concentration_fourier_statistics} and integrate them with respect to $\|Y\|_2$ to conclude that 
\[\P_{\theta^*,\sigma}^{(3)}\big[Z_f \geq k_0 + v_{\alpha}^{(f)}|\theta\big]\leq \alpha/2\  ;\]
\[
 \sum_{l\in \cL_0}\P_{\theta^*,\sigma}^{(3)}\big[V[r_l,\omega_l] \geq k_0 + l+ v_{\alpha,l}^{(i)}|\theta\big]\leq \alpha/2\  . 
\]
 Taking the  probability of the event $\cB$ into account, we conclude that the type I error probability of both tests  is bounded by $\alpha+\delta$.

\bigskip 

\noindent
\underline{Analysis of the tests under the alternative hypothesis}.  Since $\|\theta^*\|_0$ is not too large, the assumptions of Lemma~\ref{lem:thresh} are fulfilled. As under the null hypothesis,  we have  $\|\theta^* - \widetilde{\theta}_{\bI}\|_{\infty} \leq \underline{c}^{(t)} \sigma\sqrt{\log(2p/\alpha)/n}$ with  probability higher than $1-\delta-\alpha/2$ and we still work conditionally to this event called $\cB$. 
 If $(\widetilde{\theta}_{\bI})_{(k_0+1)}\geq  \underline{c}^{(t)} \sigma  \sqrt{\log(2p/\alpha)/n}$, then both tests reject the null hypothesis, so that we can assume henceforth that $(\overline{\theta}_{\bI})_{k_0+1}=0$.

Since \eqref{eq:upper_theta_infty_h0} is still valid, we are in position to apply again Lemmas \ref{lem:esperance_fourier_statistics} and \ref{lem:concentration_fourier_statistics}. Hence, conditionally on $\theta$ and $\|Y\|_2$, we have 
\[
Z_f \geq \|\overline{\theta}_{\bI}\|_0+ \sum_{i=1}^p g(sv_i)\1_{(\overline{\theta}_{\bI})_i=0} - \frac{s^2}{5} - s e^{s^2/2}\sqrt{2p\log(\frac{2}{\alpha})}\ , 
\]
with probability higher than $1-\alpha/2$.  Define $\tilde{v}$ by 
$$
\tilde{v}_i = +\infty~~\mathrm{if}~~~(\overline{\theta}_I)_i \neq 0~~~~~\mathrm{and}~~~~~\tilde v_i =v_i ~~~~\mathrm{if}~~~(\overline{\theta}_I)_i= 0\ .
$$
Recall that $\lim_{x\rightarrow +\infty}g(x)= 1$ and $\forall x \in \mathbb R$, $0 \leq g(x) \leq 1$ (see~\cite{carpentier2017adaptive}). So it holds that
\beq \label{eq:lower_Z_f}
Z_f \geq \sum_{i=1}^p g(s \tilde v_i) - \frac{s^2}{5} - s e^{s^2/2}\sqrt{2p\log(\frac{2}{\alpha})}\ . 
\eeq
 Also, for any $l\in \cL_0$, we have  
\[
 V[r_l,\omega_l]\geq \|\overline{\theta}_{\bI}\|_0+ \sum_{i=1}^{p}\left(1-\Psi_{l}(\tilde v_i)\right)\1_{(\overline{\theta}_{\bI})_i=0}  - 
\frac{\sqrt{e}}{2}\omega_l^2 -\sqrt{2p^{1/2}l\log\left(\frac{2}{\alpha}\right)}\ ,
\]
with probability larger than $1-\alpha/2$. As above, we have $\lim_{x\rightarrow \infty}\Psi_l(x)=0$, and so 
\beq\label{eq:lower_V_r}
 V[r_l,\omega_l]\geq\sum_{i=1}^{p}\left(1-\Psi_{l}(\tilde v_i)\right)  - 
\frac{\sqrt{e}}{2}\omega_l^2 -\sqrt{2p^{1/2}l\log\left(\frac{2}{\alpha}\right)}\ ,
\eeq

In the sequel, we show that \eqref{eq:lower_Z_f} and \eqref{eq:lower_V_r} imply the desired type II error probability bounds.

\noindent
{\bf Case 1}: Analysis of \eqref{eq:lower_Z_f} for $\phi^{(f)}$. Write $\overline{s}= \sqrt{\log(e\frac{k_0^2}{p})}\lor 1$ the tuning parameter used in \cite{carpentier2017adaptive} for the corresponding test in the Gaussian sequence model. Note 
that $s\geq \overline{s}/\sqrt{2}$, $se^{s^2/2}\leq 2\overline{s}^{-1}e^{\overline{s}^2/2}$, and $s^2/5 \leq 
se^{s^2/2}$. We have shown in the proof of Proposition 2 in \cite{carpentier2017adaptive} that for a vector $x\in \mathbb{R}^p$ and any $\alpha \in (0,1)$
\[
 \sum_{i=1}^p g(\overline{s}x_i)\geq k_0 + \overline{s}^{-1}e^{\overline{s}^2/2}2\sqrt{8p\log(2/\alpha)}\ ,
\]
as soon as one of the two following condition holds for constants $c_{\alpha},c'_{\alpha},c''_{\alpha}$ positive and large enough, depending only on $\alpha$
\beqn 
\big|x_{(k_0+q)}\big|&\geq &c_{\alpha} \sqrt{\frac{k_0}{q\log\left(1+k_0/\sqrt{p}\right)}}\ , \text{ for some }q\geq c'_{\alpha} \frac{k_0}{\sqrt{\log\left(1+\frac{k_0^2}{p}\right)}}\ ;\\
\sum_{i=1}^p \left[x_i^2\wedge \overline{s}^{-1}\right]&\geq& c''_{\alpha}\frac{1}{\log\left(1+k_0/\sqrt{p}\right)}\ .
\eeqn 
It then follows from \eqref{eq:lower_Z_f}, that, given $\theta$ and $\|Y\|_2^2$ satisfying $\cB$, the test rejects the null with probability higher than $1-\alpha/2$ if  
\begin{eqnarray}
 \big| \tilde v_{(k_0+q)}\big|&\geq &c_{\alpha} \sqrt{\frac{k_0}{q\log\left(1+k_0/\sqrt{p}\right)}}\ , \text{ for some }q\geq c'_{\alpha} \frac{k_0}{\sqrt{\log\left(1+\frac{k_0^2}{p}\right)}}\ ;\label{eq:rejection1}\\
\sum_{i=1}^p \left[\tilde  v_i^2\wedge \overline{s}^{-1}\right]&\geq& c''_{\alpha}\frac{1}{\log\left(1+k_0/\sqrt{p}\right)}\ .\label{eq:rejection2}
\end{eqnarray}
Recall that, for $i\notin \cS(\overline{\theta}_{\bI})$,  $\tilde v_i = v_i = \theta_i \|Y\|_2/(\sigma^2+\|\theta\|_2^2)= \theta^*_i \|Y\|_2/(\sigma^2+\|\theta\|_2^2)$ . Since $\|Y\|_2^2/(\sigma^2+\|\theta\|_2^2)$ follows a $\chi^2$ distribution with $n/3$ degrees of freedom, we have $\|Y\|_2^2 \geq n(\sigma^2+\|\theta\|_2^2)/6$ with probability higher than $1-e^{-n/27}$ (see Lemma \ref{lem:conchi}). This implies that for any $i=1,\ldots, p$, we have 
\[
 |\tilde v_i| \geq |\theta^*_i| \sqrt{\frac{n}{6(\sigma^2+\|\theta\|_2^2)} }.
\]
\begin{lem}\label{lem:lower_theta}
Assume that the event $\cB$ holds, that $d_2^2(\theta^*;\bbB_0[k_0])\leq \sigma^2$, and that $(\overline{\theta}_{\bI})_{k_0+1}=0$. We have $\|\theta\|_2^2\leq c'\sigma^2$ .
\end{lem}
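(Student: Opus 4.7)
The plan is to split $\|\theta\|_2^2 = \|\theta^*-\overline{\theta}_{\bI}\|_2^2$ along the support $S_1 := \cS(\overline{\theta}_{\bI})$, which has cardinality at most $k_0$ since $(\overline{\theta}_{\bI})_{(k_0+1)} = 0$. By the thresholding rule~\eqref{eq:definition_theta_bar}, every index $i \notin S_1$ satisfies $|\widetilde{\theta}_{\bI,i}| \leq \underline{c}^{(t)}\sigma\sqrt{\log(2p/\alpha)/n}$, and on the event $\cB$ we also have $\|\theta^*-\widetilde{\theta}_{\bI}\|_\infty \leq \underline{c}^{(t)}\sigma\sqrt{\log(2p/\alpha)/n}$. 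Condition $(\bA[\alpha\wedge\delta])$ will turn each appearance of $k_0\log(p/\alpha)/n$ into an absolute constant.

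First I would bound the contribution of $S_1$. There, $\theta_i=\theta_i^*-\widetilde{\theta}_{\bI,i}$, so on $\cB$
\[
\sum_{i\in S_1}\theta_i^2 \;\leq\; k_0\,(\underline{c}^{(t)})^2\sigma^2\,\frac{\log(2p/\alpha)}{n}\;\lesssim\;\sigma^2.
\]
Next, for $i\notin S_1$ we simply have $\theta_i=\theta_i^*$. Letting $S^*$ denote the index set of the $k_0$ largest entries of $|\theta^*|$, I would split
\[
\sum_{i\notin S_1}\theta_i^{*2} \;=\; \sum_{i\in S_1^c\cap (S^*)^c}\theta_i^{*2}+\sum_{i\in S_1^c\cap S^*}\theta_i^{*2} \;\leq\; d_2^2\big(\theta^*;\bbB_0[k_0]\big)+\sum_{i\in S_1^c\cap S^*}\theta_i^{*2}\;\leq\;\sigma^2+\sum_{i\in S_1^c\cap S^*}\theta_i^{*2},
\]
using the hypothesis $d_2^2(\theta^*;\bbB_0[k_0])\leq\sigma^2$.

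The only mildly subtle point is the cross term $\sum_{i\in S_1^c\cap S^*}\theta_i^{*2}$: since $S^*$ need not be contained in $S_1$, these indices could a priori carry large coefficients of $\theta^*$. However, because $i\in S_1^c$ forces $|\widetilde{\theta}_{\bI,i}|\leq \underline{c}^{(t)}\sigma\sqrt{\log(2p/\alpha)/n}$, the triangle inequality on $\cB$ yields $|\theta_i^*|\leq 2\underline{c}^{(t)}\sigma\sqrt{\log(2p/\alpha)/n}$. Since $|S_1^c\cap S^*|\leq k_0$, Condition $(\bA[\alpha\wedge\delta])$ again gives
\[
\sum_{i\in S_1^c\cap S^*}\theta_i^{*2}\;\leq\;4k_0(\underline{c}^{(t)})^2\sigma^2\,\frac{\log(2p/\alpha)}{n}\;\lesssim\;\sigma^2.
\]
Summing the three pieces produces $\|\theta\|_2^2\leq c'\sigma^2$ for an absolute constant $c'$. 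There is no genuine obstacle; the whole argument is a bookkeeping exercise combining the $\ell_\infty$ control from $\cB$, the $k_0$-sparsity of $\overline{\theta}_{\bI}$, and the hypothesis $d_2(\theta^*;\bbB_0[k_0])\leq\sigma$. The one conceptual point to flag is that $S^*$ and $S_1$ may differ, which is precisely why we need the thresholding bound on $S_1^c$ to control indices where $\theta^*$ is large but $\overline{\theta}_{\bI}$ vanishes.
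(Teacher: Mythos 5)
Your proof is correct and follows essentially the same route as the paper: split $\|\theta\|_2^2$ over the support of $\overline{\theta}_{\bI}$ (of size at most $k_0$), use the $\ell_\infty$ bound from $\cB$ and the thresholding rule to control each piece, and invoke $d_2^2(\theta^*;\bbB_0[k_0])\leq\sigma^2$ together with Condition $(\bA[\alpha\wedge\delta])$; your explicit handling of the indices in $S^*\setminus S_1$ is exactly the step the paper condenses into its middle inequality.
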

As a consequence, on the intersection of $\cB$ and an event of probability higher than $1-e^{-n/27}$, we have 
\[
 | \tilde v_i|  \geq c \frac{\sqrt{n} }{\sigma }|\theta_i^*|.
\]
Together with \eqref{eq:rejection1} and \eqref{eq:rejection2}, we have characterized the type II error probability of $\phi^{(f)}$.

\medskip 

\noindent 
{\bf Case 2}: Analysis of \eqref{eq:lower_V_r} for  $\phi^{(i)}$.
Observe $\sqrt{e}\omega_l^2/2$ is at most of the order of $\log(p)$ and is therefore negligible compared to $\sqrt{p^{1/2}l}$. 
We have shown in the proof of Proposition 3 in \cite{carpentier2017adaptive} that, for a vector $x\in \mathbb{R}^p$, and for any $\alpha$ in $(0,1)$ we have 
\[
 \sum_{i=1}^n \Psi_{l}(x_i)\geq k_0+ 2l + 2\sqrt{2lp^{1/2}}\sqrt{\log\left(\frac{\pi^2[1+ \log_2\left(l/l_0\right)]}{3\alpha}\right)},
\]
for some $l\in \cL_0$, if for constants $c_{\alpha},c'_{\alpha}$ positive and large enough, depending only on $\alpha$
\beq\label{eq:condition_x}
 |x_{(k_0+q)}|\geq  c_{\alpha} \frac{1+\log\left(\frac{k_0}{q\wedge k_0}\right)}{\sqrt{\log\left(1+\frac{k_0}{\sqrt{p}}\right)}}\ , \text{ for  some }q\geq c'_{\alpha}k_0^{4/5}p^{1/10}\ . 
\eeq
Actually, in Proposition 3 in \cite{carpentier2017adaptive}, we had considered a wider range of $q$'s as the collection $\cL_0$ was slightly larger, but this does not change the arguments here. 
In our setting, Condition \eqref{eq:condition_x} and \eqref{eq:lower_V_r} imply that $V[r_l,\omega_l] \geq k_0 + l+ v_{\alpha,l}^{(i)}$ for some $l\in \cL_0$ if  
\beq\label{eq:condition_v3}
|\tilde v_{(k_0+q)}|\geq  c_{\alpha} \frac{1+\log\left(\frac{k_0}{q\wedge k_0}\right)}{\sqrt{\log\left(1+\frac{k_0}{\sqrt{p}}\right)}}\ , \text{ for  some }q\geq c'_{\alpha}k_0^{4/5}p^{1/10}\ . 
\eeq
Then, arguing as in Case 1, we have $|\tilde v_i| \geq  c'|\theta_i|/\sigma$ on the intersection of $\cB$ and an event of probability higher than $1-e^{-n/27}$. Putting everything together, we have controlled the type II error probability of $\phi^{(i)}$.

\begin{proof}[Proof of Lemma \ref{lem:esperance_fourier_statistics}]

In view of the conditional distribution of $W_j$ given $Y$, one has
\beqn 
\E_{\theta^*,\sigma}^{(3)}[\varphi(s;W_j/\|\ol{Y}^{(3)}\|_2)|(\|Y\|_2,\theta)]&=&\int_{-1}^1 (1-|\xi|)\cos\Big[\xi \frac{s \theta_j \|Y\|_2}{\sigma^2+ \|\theta\|_2^2}  \Big] \exp\Big[\xi^2  \frac{s^2 \theta_j^2}{2(\sigma^2 + \|\theta\|_2^2)}\Big]d\xi\\ & =& g(s v_j) +  \int_{-1}^1 (1-|\xi|)\cos\Big[\xi \frac{s \theta_j \|Y\|_2}{\sigma^2+ \|\theta\|_2^2}  \Big] \left(\exp\Big[\xi^2  \frac{s^2 \theta_j^2}{2(\sigma^2 + \|\theta\|_2^2)}\Big]-1 \right)d\xi\ .
\eeqn 
 Since $s\|\theta\|_{\infty}\leq \sigma$, the remainder term is (in absolute value) less than 
\[
2\int_{0}^{1}(1-\xi)(e^{\tfrac{\xi s^2\theta_j^2}{2\sigma^2}}-1)d\xi\leq 2e^{1/2}\int_{0}^{1}(1-\xi)\xi^2\frac{s^2\theta_j^2}{2(\sigma^2+\|\theta\|_2^2)}d\xi\leq \frac{s^2\theta_j^2}{5(\sigma^2+\|\theta\|_2^2)} \ . 
\]
 Summing over all $j=1,\ldots, p$ such that $(\overline{\theta}_{\bI})_j=0$, we obtain the first result of Lemma \ref{lem:esperance_fourier_statistics}. 
 Turning to $V[r_l,\omega_l]$, we have 
 \beqn 
\E_{\theta^*,\sigma}^{(3)}\left[\eta_{r_l,\omega_l}(\frac{W_j}{\|Y\|_2})|(\|Y\|_2,\theta)\right]&=&\frac{r_l}{1-2\overline{\Phi}(r_l)}\int_{-1}^1 \frac{e^{-r_l^2\xi^2/2}}{\sqrt{2\pi}}\cos\left(\xi \omega_l \frac{ \theta_j \|Y\|_2}{\sigma^2+ \|\theta\|_2^2}\right)\exp\Big[\xi^2  \frac{\omega_l^2 \theta_j^2}{2(\sigma^2 + \|\theta\|_2^2)}\Big]d\xi
\eeqn 
As a consequence, 
\beqn 
\left|\E_{\theta^*,\sigma}^{(3)}\left[\eta_{r_l,\omega_l}(\frac{W_j}{\|Y\|_2})|(\|Y\|_2,\theta)\right]- \Psi_l(v_j)\right| &\leq &\frac{r_l}{1-2\overline{\Phi}(r_l)} \int_{-1}^1 \frac{e^{-r_l^2\xi^2/2}}{\sqrt{2\pi}} \left|e^{\xi^2 \omega_l^2 \frac{\theta_j^2}{2(\sigma^2+\|\theta\|_2^2)}} - 1\right|d\xi\\
&\leq & \frac{e^{1/2}r_l}{1-2\overline{\Phi}(r_l)} \int_{-1}^1 \frac{e^{-r_l^2\xi^2/2}}{\sqrt{2\pi}}  \xi^2 \omega_l^2 \frac{\theta_j^2}{2(\sigma^2+\|\theta\|_2^2)} d\xi \\
&\leq  & \frac{e^{1/2}\omega_l^2 \theta_j^2}{2(\sigma^2+\|\theta\|_2^2)}\ ,
\eeqn 
where we used the condition $\omega_l \|\theta\|_\infty\leq \sigma$ in the second line.  Summing this bound over all $j$ such that $(\overline{\theta}_{\bI})_j=0$ yields the desired result.

\end{proof}

\begin{proof}[Proof of Lemma \ref{lem:concentration_fourier_statistics}]
We shall apply the Gaussian concentration theorem~(see e.g. \cite{book_concentration}) to both $Z_f$ and $V[r_l,\omega_l]$. The covariance matrix $\bGamma$ associated to the conditional distribution of $W/\|Y\|_2$ decomposes as $\bI_p -  a \frac{\theta}{\|\theta\|_2} \frac{\theta^T}{\|\theta\|_2}$ with $a=\|\theta\|_2^2/[\sigma^2 +\|\theta\|_2^2]\in [0,1)$ and in particular its operators norm is less than one. Write $\bGamma^{1/2}$ for a square-root of this matrix and let $U$ denote a standard Gaussian vector. Conditionally to $Y$, $W/\|Y\|_2$ is distributed as $v  + \bGamma^{1/2} U$. For any $u\in \mathbb{R}^p$, define 
\[\zeta(u)= \sum_{j=1}^p\1_{(\overline{\theta}_{\bI})_j=0}\int_{-1}^{+1}(1-|\xi|) \cos\big(\xi s [\mu_j + (\bGamma^{1/2}  u)_j]\big)e^{ \xi^2 s^2/2}d\xi\ .\]
Given two vectors $u$ and $u'$, one has 
\beqn 
\big|\zeta(u)-\zeta(u')\big|&\leq& \sum_{j=1}^p\1_{(\overline{\theta}_{\bI})_j=0}\int_{-1}^{+1}(1-|\xi|) \Big|\cos\big(\xi s [\mu_j + (\bGamma^{1/2} u)_j]-\cos\big(\xi s [\mu_j + (\bGamma^{1/2}  u')_j]\big)\Big|e^{ \xi^2 s^2/2}d\xi\\
&\leq & s e^{s^2/2}\sum_{j=1}^p |\bGamma^{1/2}(u-u')_j|= se^{s^2/2}\|\bGamma^{1/2}(u-u')\|_1\\
&\leq& se^{s^2/2}\sqrt{p}\|(u-u')\|_2\  ,
\eeqn 
since the cosinus function is $1$-Lipschitz. As a consequence, the function $u\mapsto Z(u)$ is $se^{s^2/2}\sqrt{p}$-Lipschitz. The deviation inequalities \eqref{eq:concentation_Z_f} then follow from the Gaussian concentration theorem~(see e.g. \cite{book_concentration}).

As for $V[r_l,\omega_l]$, we argue similarly that, for $\omega_l > r_l$, it is conditionally distributed as a Lipschitz function of a standard Gaussian vector with Lipschitz constant
\[
 \frac{r_l\omega_l}{1-2\overline{\Phi}(r_l)}\int_{-1}^{1}e^{\xi^2 (\omega_l^2-r_l^2)/2}\frac{\sqrt{p}}{\sqrt{2\pi}}d\xi\leq 
 \sqrt{\frac{8p}{\pi}}~\frac{r_l\omega_l}{(1-2\overline{\Phi}(r_l))(\omega_l^2-r_l^2)}e^{ (\omega_l^2-r_l^2)/2}\ .
\]
Since $l \geq k_0^{4/5}p^{1/10}$, we have $\omega_l^2-r_l^2\geq 2\omega_l$ for any $l\in \cL_0$ and the Lipschitz constant is therefore less than 
\[
 \frac{r_l}{(1-2\overline{\Phi}(r_l))}\sqrt{\frac{2p}{\pi}}~e^{ (\omega_l^2-r_l^2)/2}\leq p^{1/4}l^{1/2}\ ,
\] 
where the last inequality is a consequence of the definition of $r_l$ and $\omega_l$ and is detailed in the proof of Lemma 6 in \cite{carpentier2017adaptive}.
 
\end{proof}

\begin{proof}[Proof of Lemma \ref{lem:lower_theta}]
Under $\cB$, we have $\|\theta^* - \widetilde{\theta}_{\bI}\|_{\infty} \leq \underline{c}^{(t)} \sigma\sqrt{\log(2p/\alpha)/n}$. Hence, 
\beqn 
\|\theta\|_2^2& =  &\sum_{i:\overline{\theta}_{\bI}\neq 0} (\theta^*_i - (\overline{\theta}_{\bI})_i)^2 + \sum_{i:\ \overline{\theta}_{\bI}= 0} \theta_i^{*2}\\
&\leq & k_0 \underline{c}^{(t)2} \sigma^2 \frac{\log(2p/\alpha)}{n} +  \sum \theta_i^{*2}\1_{|\theta_i^{*}|\leq 2 \underline{c}^{(t)} \sigma\sqrt{\log(2p/\alpha)/n}}\\
&\leq & 5 k_0 \underline{c}^{(t)2} \sigma^2 \frac{\log(2p/\alpha)}{n}+ \sigma^2 \leq c'\sigma^2\ ,
\eeqn 
where we used in the second line the definition of $\overline{\theta}_{\bI}$ and $(\overline{\theta}_{\bI})_{k_0+1}=0$ and we used $d_2^2(\theta^*;\bbB_0[k_0])\leq \sigma^2$ together with Assumption $(\bA[\alpha\wedge\delta])$ in the last line. 

\end{proof}

\subsection{Proof of Theorem~\ref{thm:ubkvkd}}

Consider any $\theta\in \bbB_0[k_0]$. In view of  Propositions~\ref{prp:chi}--\ref{prp:analyse_Z_i}, we can bound the rejection probability as follows
\beq \label{eq:first_type_I_error_bound}
\mathbb{P}_{\theta,\sigma}[\phi^{(ag)}=1]\leq 4(\delta+ \alpha)+ \mathbb{P}_{\theta,\sigma}[d_2^2(\widehat{\theta}_{SL};\bbB_0[k_0])\geq \sigma^2/2] 
\eeq 
Since, under the null hypothesis, $\theta^*$ is $k_0$-sparse, we have 
\[ d_2^2(\widehat{\theta}_{SL};\bbB_0[k_0]) \leq  \|\theta^*-\widehat{\theta}_{SL}\|_2^2\ .\]
Applying Lemma \ref{lem:square_root_Lasso}, we derive that, with probability higher than $1-\delta$, 
$d_2^2(\widehat{\theta}_{SL};\bbB_0[k_0])\leq \underline{c}_{1}^{SL}\sigma^2 \frac{k_0}{n}\log(p/\delta)$. Thus, by Condition $\bA[\alpha\wedge \delta]$, we have $\mathbb{P}_{\theta,\sigma}[d_2^2(\widehat{\theta}_{SL};\bbB_0[k_0]\geq \sigma^2/2] \leq \delta$. From \eqref{eq:first_type_I_error_bound}, we derive that 
$\mathbb{P}_{\theta,\sigma}[\phi^{(ag)}=1]\leq 5\delta+4\alpha$. Looking more closely at the proof of Propositions~\ref{prp:chi}--\ref{prp:analyse_Z_i}, we observe that each occurrence of the probability $\delta$ corresponds to the same control of 
the square-root Lasso estimator $\widehat{\theta}_{SL}$.  As a consequence $\phi^{(ag)}$ satisfies (${\bf P}_1[\delta+4\alpha]$). 
Turning to the Type II error, we fix $\Delta\leq p-k_0$ and assume that $\theta^*\in \bbB_0[k_0+\Delta]$. 

\medskip 

\noindent 
{\bf Case 1}: $\Delta \leq c n/\log(p/\delta)$. If $k_0\leq p^{1/2 - \varsigma}$, then the squared separation distance 
$\min[\Delta\log(p)/n, 1/\sqrt{n}+ k_0\log(p)/n]$ in \eqref{eq:upper_lbkvkd} is a consequence of Propositions \ref{prp:t} and \ref{prp:chi} and is achieved by the combination of  $\phi^{(t)}$ and $\phi^{(\chi)}$. If $k_0\geq p^{1/2+\varsigma}$, the squared separation distance $\Delta\log(p)/n$ is still achieved by $\phi^{(t)}$. To prove the last part of the result,
 let us assume that $\theta^*$ is such that  $\max(\phi^{(t)}, \phi^{(\chi)}, \phi^{(f)}, \phi^{(i)})$ does not reject the null with high probability. We shall prove that this implies $d_2^2\left(\theta^*; \bbB_0[k_0]\right)\leq c_{\alpha,\varsigma} \sigma^2 k_0/[\log(p)n]$. From Proposition \ref{prp:chi}, we have $d_2^2(\theta^*; \bbB_0[k_0])\leq \sigma^2$. In view of Proposition \ref{prp:analyse_Z_f}, we have 
\[
 \sum_{i}\theta_i^{*2}\wedge \frac{\sigma^2 }{n\log(p)}\leq c_{\alpha,\varsigma}\sigma^2 \frac{k_0}{n\log(p)}\ .
\]
In view of Proposition \ref{prp:analyse_Z_i}, we have 
\[
 |\theta_{(k_0+q)}^*| \leq c_{\alpha,\varsigma} \sigma \frac{1+\log(\frac{k_0}{q\wedge k_0})}{\sqrt{n\log(p)}}\ ,
\]
for all $q\geq c'_{\alpha} k_{0}^{4/5}p^{1/10}$. Finally, Proposition \ref{prp:t} enforces that 
\[
 |\theta_{(k_0+q)}^*| \leq c_{\alpha,\varsigma} \sigma \sqrt{\frac{\log(p)}{n}}\ ,
\]
for all $q < c'_{\alpha} k_{0}^{4/5}p^{1/10}$. Putting everything together, we obtain
\beqn 
d_2^2\left(\theta^*; \bbB_0[k_0]\right)&\leq& \sum_{q=1}^{\Delta} \theta^{*2}_{(k_0+q)}\leq c_{\alpha,\varsigma} \sigma^2 \left[\frac{k_0}{n\log(p)}+ \sum_{q=k_0/\log^2(p)}^{k_0}\frac{1+\log^2(\frac{k_0}{q\wedge k_0})}{n\log(p)}+ \sum_{q=k_0}^{\Delta} \theta_i^{*2} \right]\\ 
&\leq& c'_{\alpha,\varsigma} \sigma^2 \frac{k_0}{n\log(p)}\ ,
\eeqn 
where we used that, for $q\geq k_0$, $|\theta_{(k_0+q)}^*|$ is small compared to $\sigma/\sqrt{n\log(p)}$. This concludes the proof for Case 1.

\medskip

\noindent 
{\bf Case 2}: $\Delta \geq c n/\log(p/\delta)$. In that case, $\Delta \log(p)/n$ is larger than $k_0\log(p)/n+n^{-1/2}$ and the first result in \eqref{eq:upper_lbkvkd} is a consequence of the analysis of $\phi^{(\chi)}$ in  Proposition \ref{prp:chi}. We now turn to the case  $k_0 \geq p^{1/2+\varsigma}$ and we need to prove that the squared separation distance is less than $c_{\alpha,\delta} \sigma^2 k_0/[n\log(p)]$. If $d_2^2(\theta^*;\bbB_0[k_0])\geq \sigma^2$, then $\phi^{(\chi)}$ rejects the null hypothesis with high probability. Thus, we can assume that $d_2^2(\theta^*;\bbB_0[k_0])\leq \sigma^2$. Also, we can assume that $\|\widehat{\theta}_{SL}-\widetilde{\theta}_{SL,k_0}\|_2^2 \leq \sigma^2/2$, otherwise the test $\phi^{(ag)}$ rejects the null. Finally, we can assume that $\|\theta^*-\widetilde{\theta}_{SL,k_0}\|_2^2 \leq \sigma^2/2$, otherwise the test $\phi^{(\chi)}$ also rejects the null with high probability. By triangular inequality, $\theta^*$ therefore satisfies  $\|\theta^*-\widehat{\theta}_{SL}\|_2^2 \leq 2\sigma^2$ and we are in position to apply Lemma \ref{lem:thresh}, which implies 
\beq \label{eq:upper_linfty_1}
 \|\theta^*-\widetilde{\theta}_{\bI}\|_\infty \leq c_1 \sigma \sqrt{\frac{\log(2p/\alpha)}{n}} \ , 
\eeq
with probability higher than $1-\alpha/2$ conditionally to $\widehat{\theta}_{SL}$. As a consequence, the event $\cB$ involved in the proof of Propositions \ref{prp:analyse_Z_f} and \ref{prp:analyse_Z_i} is true. As ensuring this event is the only occurrence in the proof of these propositions where the restrictions $\|\theta^*\|_0\leq cn/\log(p/\delta)$ is needed, we conclude that, given $\cB$, $\max(\phi^{(f)},\phi^{(i)})$ rejects the null with probability higher than $1-\alpha/2$ if any of the conditions  \eqref{eq:separation_log}, \eqref{eq:separation_log_l2}, or  \eqref{eq:separation_log_intermediairy} is satisfied. Similarly, Condition \eqref{eq:upper_linfty_1} (with $\alpha/2$ replaced by $\alpha)$ allows to adapt the proof of Proposition~\ref{prp:t} without the restriction on $\|\theta^*\|_0$. Thus,   $\phi^{(t)}$ rejects the null with conditional probability higher than $1-\alpha$ under \eqref{eq:cond_P_separation}. 

Arguing as Case 1, we conclude that the aggregated test rejects the null with high probability if $d_2^2(\theta^*;\bbB_0[k_0])$ is large compared to $\sigma^2 k_0/[n\log(p)]$.

\subsection{Proof of Theorem \ref{thm:puissance_thres}}

Let $d$ denote any positive integer.  Let $S \subset \{1, \ldots, d\}$. 
For $u\in \mathbb{R}^d$, we write $u_{S}= (u_i \1_{i\in S})_i$ for the vector in $\mathbb{R}^d$ whose values outside $S$ have been set to $0$.

These notation are also extended to matrices.  Given $r$ a positive integer and a $r \times d$ matrix $\mathbf M$, we write $\mathbf M_{S}$ for the $r\times d$  matrix defined by $(\mathbf M_{S})_{i \leq r,j \leq d} = (\mathbf M_{i,j} \1_{\{j \in S\}})_{i \leq r,j \leq d}$. 
For $R \subset \{1, \ldots, r\}$, we also write $\mathbf M_{R, S}$ for the $r \times d$-dimensional matrix such that $(\mathbf M_{R,S})_{i\leq r,j \leq d} = (\mathbf M_{i,j} \mathbf 1\{i\in R, j \in S\})_{i\leq r,j \leq d}$.

\subsubsection{Proof of Theorem \ref{thm:puissance_thres}}

Let $\delta > 0$ and consider any subset $S$ satisfying the property ({${\bf S}[\ba_1,\ba_2,\ba_3]$).

\begin{lem}\label{lem:eigv} The exists a constant $c$ such that the following holds for all $\delta>0$. If  
\[
c\left[ \ba_2\|\theta^*\|_0+ \log\left(\frac{4}{\delta}\right)\right] \leq m\ ,
\]
there exists an event $\cB_1$ of probability higher than $1-\delta/2$ such that 
\begin{eqnarray}
\label{eq:upper:restricted_eigen_A0}
(2\eta)^{-1}\leq \frac{1}{m}\lambda_{\min,S}(\bX^{(0)T}_{S}\bX^{(0)}_{S}) &\leq &\frac{1}{m}\lambda_{\max,S}(\bX^{(0)T}_{S}\bX^{(0)}_{S}) \leq 2\eta\ \ ;\\
\label{eq:upper:restricted_eigen_A02}
\Big\|\frac{1}{m}\bX^{(0)T}_{S}\bX^{(0)}_{\overline{S}}\theta^*_{\overline{S}}\Big\|_2 &\leq &2\eta\sqrt{\log(4/\delta)} \|\theta^*_{\overline{S}}\|_2\ ,
\end{eqnarray}
where $\lambda_{\min, S}$ and $\lambda_{\max, S}$ respectively refer to the smallest and largest eigenvalue of a matrix restricted to its coordinates in $S\times S$. 
\end{lem}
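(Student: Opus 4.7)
The plan is to first condition on $\bX^{(1)}$, so that $S$ (which is measurable with respect to $\bX^{(1)}$) can be treated as a fixed subset of $\{1,\ldots,p\}$ with $|S|\leq \ba_2\|\theta^*\|_0$ thanks to the first clause of property $\bS[\ba_1,\ba_2,\ba_3]$; under this conditioning $\bX^{(0)}$ retains its original law (rows i.i.d.\ $\cN(0,\bSigma)$, independent of $\bX^{(1)}$) and $\theta^*_{\overline S}$ is deterministic. The two claims \eqref{eq:upper:restricted_eigen_A0} and \eqref{eq:upper:restricted_eigen_A02} will then be established by separate concentration arguments, and I will take $\cB_1$ to be the intersection of the two underlying good events, each of probability at least $1-\delta/4$, so that a union bound yields the required $1-\delta/2$.

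For \eqref{eq:upper:restricted_eigen_A0}, the key observation is that the $|S|\times|S|$ principal submatrix $\bSigma_{S,S}$ inherits the spectral sandwich $\eta^{-1}\bI_{|S|}\preceq\bSigma_{S,S}\preceq\eta\bI_{|S|}$ by zero-padding test vectors from $\mathbb{R}^{|S|}$ back to $\mathbb{R}^p$. The rows of $\bX^{(0)}$ restricted to the columns in $S$ are therefore i.i.d.\ $\cN(0,\bSigma_{S,S})$, and a standard Gaussian sample-covariance concentration bound (e.g.\ Davidson--Szarek, or equivalently Lemma~\ref{lem:conchi} combined with a $1/4$-net on the unit sphere of $\mathbb{R}^{|S|}$) yields
\[
\Big\|\tfrac{1}{m}(\bX^{(0)}_{\cdot,S})^{T}\bX^{(0)}_{\cdot,S} - \bSigma_{S,S}\Big\|_{op} \leq \tfrac{1}{2\eta}
\]
with probability at least $1-\delta/4$, provided $m\geq c[|S|+\log(4/\delta)]$ for a numerical constant $c$ chosen large enough. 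Adding this deviation to the sandwich on $\bSigma_{S,S}$ gives $\lambda_{\min,S}(\bX_S^{(0)T}\bX_S^{(0)})/m \geq \eta^{-1}-1/(2\eta)=1/(2\eta)$ and $\lambda_{\max,S}(\bX_S^{(0)T}\bX_S^{(0)})/m\leq \eta+1/(2\eta)\leq 2\eta$.

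For \eqref{eq:upper:restricted_eigen_A02}, introduce $w=\bX^{(0)}_{\overline S}\theta^*_{\overline S}\in\mathbb{R}^m$, whose entries are i.i.d.\ $\cN(0,\sigma_w^2)$ with $\sigma_w^2=\theta^{*T}_{\overline S}\bSigma_{\overline S,\overline S}\theta^*_{\overline S}\leq\eta\|\theta^*_{\overline S}\|_2^2$. Applying Lemma~\ref{lem:conchi} with $\bQ=\bI_m$ (using $m\gtrsim \log(4/\delta)$) gives $\|w\|_2^2/m\leq 2\sigma_w^2\leq 2\eta\|\theta^*_{\overline S}\|_2^2$ on an event of probability at least $1-\delta/4$. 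On the intersection of this event with the one from the previous step, the deterministic sub-multiplicative inequality
\[
\Big\|\tfrac{1}{m}\bX^{(0)T}_S\bX^{(0)}_{\overline S}\theta^*_{\overline S}\Big\|_2 = \tfrac{1}{m}\big\|(\bX^{(0)}_{\cdot,S})^{T}w\big\|_2 \leq \Big(\tfrac{\|\bX^{(0)}_{\cdot,S}\|_{op}^2}{m}\Big)^{\!1/2}\Big(\tfrac{\|w\|_2^2}{m}\Big)^{\!1/2}
\]
combined with $\|\bX^{(0)}_{\cdot,S}\|_{op}^2/m\leq 2\eta$ delivers the crude estimate $2\eta\|\theta^*_{\overline S}\|_2$, which is already below the stated $2\eta\sqrt{\log(4/\delta)}\|\theta^*_{\overline S}\|_2$ since $\sqrt{\log(4/\delta)}\geq 1$ for any $\delta\leq 1$. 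The only real technical point is to track the absolute constant $c$ in the condition $m\geq c[\ba_2\|\theta^*\|_0+\log(4/\delta)]$ precisely enough to reach the $1/(2\eta)$ accuracy in the first step; apart from that, both parts reduce to textbook Gaussian concentration applied after freezing the $\bX^{(1)}$-measurable set $S$.
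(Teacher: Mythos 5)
Your proposal follows essentially the same route as the paper: freeze the $\bX^{(1)}$-measurable set $S$ (so $|S|\leq \ba_2\|\theta^*\|_0$ and $\bX^{(0)}$ keeps its Gaussian law), control the restricted Gram matrix via Gaussian sample-covariance concentration (Davidson--Szarek), control $\|\bX^{(0)}_{\overline{S}}\theta^*_{\overline{S}}\|_2$ by a $\chi^2$ deviation bound, combine the two by operator-norm submultiplicativity, and union-bound two events of probability $\delta/4$ each; all of these steps are sound. The one point where you fall short of the statement is the first step: asking for the \emph{additive} deviation $\|\tfrac1m\bX^{(0)T}_{S}\bX^{(0)}_{S}-\bSigma_{S,S}\|_{op}\leq 1/(2\eta)$ is a relative accuracy of order $1/\eta^{2}$ (since $\|\bSigma_{S,S}\|_{op}$ may be as large as $\eta$), so the sample-size condition you actually need is $m\gtrsim \eta^{4}\big[\ba_2\|\theta^*\|_0+\log(4/\delta)\big]$ — your $c$ is not a numerical constant, contrary to the lemma as stated (and to its use in Theorem~\ref{thm:puissance_thres}, where $c$ is numerical and only $c'_{\eta}$ carries the $\eta$-dependence). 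The paper avoids this by whitening first: the restriction of $\bSigma_{S,S}^{-1/2}\bX^{(0)T}_{S}\bX^{(0)}_{S}\bSigma_{S,S}^{-1/2}$ to $S\times S$ is a standard Wishart matrix, whose spectrum lies in $[1/2,2]$ under a condition on $m$ with a purely numerical constant, and conjugating back by $\bSigma_{S,S}^{1/2}$ then gives the $[\,(2\eta)^{-1},2\eta\,]$ sandwich; inserting this whitening step turns your argument into a complete proof of the lemma as stated.
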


So on the event $\cB_1$ defined above, the matrix $\bX^{(0)T}_{S}\bX^{(0)}_{S}$ restricted to its coordinates in $S\times S$ is non-singular. Recall that the matrix $\bX^{(0)T}_{S}\bX^{(0)}_{S}$  is $0$ outside $S\times S$. Nevertheless, we can define its pseudo-inverse  $(\bX^{(0)T}_{S}\bX^{(0)}_{S})^{-1}$  by considering its inverse when restricted to $S\times S$ and fixing all its remaining entries to 0. 
The restricted least-squares estimator $\widehat{\theta}_{ls,S}$ is then conditionally distributed as follows 
\beq\label{eq:distribution_Theta_RLS}
\big(\widehat{\theta}_{ls,S}|\bX^{(0)}\big)\sim \mathcal N(\theta_{S}^* + (\bX^{(0)T}_{S}\bX^{(0)}_{S})^{-1}\bX^{(0)T}_{S} \bX^{(0)}_{\overline{S}} \theta^*_{\overline{S}}, \sigma^2 (\bX^{(0)T}_{S}\bX^{(0)}_{S})^{-1}).
\eeq
Define the bias $B= \theta^*-\E^{(0)}[\widehat{\theta}_{ls,S}|\bX^{(0)}]$.
On the event $\cB_1$, it follows from the definition in Equation~\eqref{eq:upper_CS} of $\bS[\ba_1,\ba_2,\ba_3]$ and Lemma~\ref{lem:eigv} that 
\beq\label{eq:imp2}
\|B\|_2^2 \leq 16\eta^4\log(4/\delta) \|\theta^*_{\overline{S}}\|_2^2+\|\theta_{\overline{S}}^*\|_2^2 \leq 17\eta^4\log(4/\delta) \|\theta_{\overline{S}}^*\|_2^2\leq 17\eta^4\log(4/\delta) \ba^2_3 M(\ba_1,\frac{\theta^*}{\sigma})\frac{ \log(p)}{m}\sigma^2\ .
\eeq
Next, since $\widehat{\theta}_{ls,S}$ follows a normal distribution~\eqref{eq:distribution_Theta_RLS}, we can easily bound its deviations. In particular, we deduce from \eqref{eq:upper:restricted_eigen_A0}  that there exists an event  $\cB_2$ of probability higher than $1-\delta/3$ such that on $\cB_1\cap \cB_2$, one has 
\beq \label{eq:imp1_1}
\big|(\widehat{\theta}_{ls,S})_i - \theta^*_i\big| \leq \sigma\sqrt{2\eta\frac{\log(6p/\delta)}{m}} + |B_i|,\quad  \text{for}~~i \in S~~~~\text{and}~~~~\big|(\widehat{\theta}_{ls,S})_i - \theta^*_i\big|= |\theta^{*}_i| = |B_i|~~~\text{otherwise}\ .
\eeq

\begin{lem}\label{lem:variance_estimation}
 Assume that $\log(6/\delta)\leq c n$. There exists an event $\cB_3$ of probability higher than $1-\delta/6$ such that on $\cB=\cap_{i=1}^3\cB_i$, we have 
\[
   \frac{\sigma^2}{2} \leq \widehat{\sigma}^2_S \leq 2 \sigma^2 \left[1+\eta  \ba_3^2 M(\ba_1,\frac{\theta^*}{\sigma})\frac{\log(p)}{m}\right] 
\]
\end{lem}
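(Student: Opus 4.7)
The plan is to use the projection identity $Y^{(0)}-\bX^{(0)}\widehat{\theta}_{ls,S}=P_S^\perp Y^{(0)}$, where $P_S^\perp$ is the orthogonal projector onto the orthogonal complement of $\mathrm{Im}(\bX^{(0)}_S)$. On $\cB_1$, inequality \eqref{eq:upper:restricted_eigen_A0} ensures that $\bX^{(0)}_S$ has full column rank $|S|$, so $\mathrm{rank}(P_S^\perp)=m-|S|$. Plugging $Y^{(0)}=\bX^{(0)}_S\theta^*_S+\bX^{(0)}_{\overline{S}}\theta^*_{\overline{S}}+\sigma\epsilon^{(0)}$ into this identity and setting $\mu=P_S^\perp \bX^{(0)}_{\overline{S}}\theta^*_{\overline{S}}$ yields
\[
m\widehat{\sigma}^2_S \;=\; \|\mu+\sigma P_S^\perp \epsilon^{(0)}\|_2^2.
\]
Choosing an orthonormal basis of $\mathrm{Im}(P_S^\perp)$ whose first vector is $\mu/\|\mu\|_2$, the right-hand side rewrites conditionally on $\bX^{(0)}$ as $(\|\mu\|_2+\sigma G_1)^2+\sigma^2\sum_{j=2}^{m-|S|}G_j^2$ for i.i.d.\ $\cN(0,1)$ variables $G_j$.

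For the upper bound, I expand $m\widehat{\sigma}^2_S\le 2\|\mu\|_2^2+2\sigma^2\|P_S^\perp\epsilon^{(0)}\|_2^2$ and control the two terms separately. Conditionally on $\bX^{(0)}$, the second term is distributed as $2\sigma^2\chi^2_{m-|S|}$, so by Lemma~\ref{lem:conchi} with $t=\log(12/\delta)$ and the assumption $\log(6/\delta)\le cn$ it is at most $4\sigma^2 m$ on an event of probability $\ge 1-\delta/12$. For the first term, $\bX^{(0)}_{\overline{S}}\theta^*_{\overline{S}}$ is, unconditionally, a vector of $m$ i.i.d.\ Gaussians with variance $(\theta^*_{\overline{S}})^T\bSigma\,\theta^*_{\overline{S}}\le \eta\|\theta^*_{\overline{S}}\|_2^2$ since $\bSigma\in\cU(\eta)$, so a second application of Lemma~\ref{lem:conchi} combined with the second part of property $\bS[\ba_1,\ba_2,\ba_3]$ gives
\[
\|\mu\|_2^2\;\le\;\|\bX^{(0)}_{\overline{S}}\theta^*_{\overline{S}}\|_2^2\;\le\;2m\eta\|\theta^*_{\overline{S}}\|_2^2\;\le\;2\eta\ba_3^2\,\sigma^2\,M(\ba_1,\theta^*/\sigma)\log(p)
\]
with probability $\ge 1-\delta/12$. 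Dividing by $m$ and recombining yields the upper bound of the lemma up to an adjustment of the numerical constants.

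For the lower bound, the rotated decomposition gives $m\widehat{\sigma}^2_S\ge \sigma^2\sum_{j=2}^{m-|S|}G_j^2\sim \sigma^2\chi^2_{m-|S|-1}$ by simply dropping the non-negative first summand. Lemma~\ref{lem:conchi} with $t=\log(12/\delta)$, together with the theorem-level assumption $c[\ba_2\|\theta^*\|_0+\log(4/\delta)]\le m$ that forces $|S|\le \ba_2\|\theta^*\|_0\le m/4$, yields $\sum_{j=2}^{m-|S|}G_j^2\ge m/2$ on an event of probability $\ge 1-\delta/12$. Defining $\cB_3$ as the intersection of the three high-probability events just established (total probability $\ge 1-\delta/6$ by a union bound) and collecting the three estimates on $\cB=\cB_1\cap\cB_2\cap\cB_3$ completes the proof. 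The main obstacle is purely bookkeeping: one must carefully handle the conditioning on $\bX^{(0)}_S$ versus on the full design $\bX^{(0)}$ and track how numerical constants are absorbed, but no tool beyond the chi-squared tail bound of Lemma~\ref{lem:conchi} is required.
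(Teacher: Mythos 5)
Your proposal is correct in substance and reaches the same conclusion, but it takes a more hands-on route than the paper and, as written, only delivers the lemma with slightly degraded constants. The paper conditions on $(\bX^{(0)}_S,S)$ alone: because the design is Gaussian, $\E[Y^{(0)}_i\mid \bX^{(0)}_{i,S}]$ is linear in $\bX^{(0)}_{i,S}$, so the part of $\bX^{(0)}_{\overline{S}}\theta^*_{\overline{S}}$ not explained by $\bX^{(0)}_S$ is simply absorbed into the noise, the residual sum of squares is exactly $\Var(Y\mid\bX_S)$ times a $\chi^2_{m-|S|}$ variable, and a single application of Lemma~\ref{lem:conchi} puts $\widehat{\sigma}^2_S/\Var(Y\mid\bX_S)$ in $(1/2,2)$; the deterministic inequalities $\sigma^2\le\Var(Y\mid\bX_S)\le\sigma^2+\eta\|\theta^*_{\overline{S}}\|_2^2$ combined with property $\bS[\ba_1,\ba_2,\ba_3]$ then give the stated bound with the constant $2$ and one event. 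You instead condition on the full design and split the residual into the projected bias $\mu$ and the projected noise; this is valid (the contraction $\|\mu\|_2\le\|\bX^{(0)}_{\overline{S}}\theta^*_{\overline{S}}\|_2$ needs no independence, and the rotation argument and the dropped-coordinate lower bound are fine), but it costs an extra concentration event for $\|\bX^{(0)}_{\overline{S}}\theta^*_{\overline{S}}\|_2^2$ and, through $(a+b)^2\le 2a^2+2b^2$, a factor: your chain yields $\widehat{\sigma}^2_S\le 4\sigma^2\bigl[1+\eta\ba_3^2 M(\ba_1,\theta^*/\sigma)\log(p)/m\bigr]$ rather than the stated constant $2$, and three events each at level $\delta/12$ only give probability $1-\delta/4$, not $1-\delta/6$. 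Both slips are cosmetic and fixable within your own scheme (take $t=\log(18/\delta)$, and bound the cross term in your exact decomposition $(\|\mu\|_2+\sigma G_1)^2+\sigma^2\sum_{j\ge 2}G_j^2$ directly instead of using the crude quadratic inequality, since $2\sigma\|\mu\|_2|G_1|$ is lower order), or avoided entirely by adopting the conditional-variance viewpoint, which is what makes the paper's proof a three-line argument. Your lower bound $\widehat{\sigma}^2_S\ge\sigma^2/2$ matches the lemma exactly.
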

Putting everything together, we derive that, under $\cB$, one has
\[
\frac{|(\widehat{\theta}_{ls,S})_i - \theta^*_i|}{\widehat{\sigma}_S} \leq 2\sqrt{\eta\frac{\log(6p/\delta)}{m}} +  \sqrt{2}\frac{|B_i|}{\sigma} \ , \quad i=1,\ldots, p\ . 
\]
 This implies that, for all $i=1,\ldots, p$, 
\beq \label{eq:imp1''}
\frac{|\theta^*_i|}{\sqrt{2}\sigma[1+ \eta  \ba_3^2 M(\ba_1,\frac{\theta^*}{\sigma})\frac{\log(p)}{m}]^{1/2}} -   2\sqrt{ \eta}\frac{\log(6p/\delta)}{m} - \sqrt{2}\frac{|B_i|}{\sigma} \leq \frac{|(\widehat{\theta}_{ls,S})_i| }{\widehat{\sigma}_S}  \leq  \sqrt{ 2}\frac{|\theta^*_i|}{\sigma} +    2\sqrt{\eta\frac{\log(6p/\delta)}{m}} + \sqrt{2} \frac{|B_i|}{\sigma}\ .
\eeq

\bigskip

\noindent 
\underline{Under the null hypothesis}. Suppose that $\theta^*\in \bbB_0[k_0]$. 
Note that \eqref{eq:definition_c_*} implies that 
\[\underline{c}_*\geq   \sqrt{2}\ba_1+ t_1+ 2\sqrt{\eta \frac{\log(6p/\delta)}{\log(p)}}\text{ with }t_1= 2\eta^2\ba_3\sqrt{17 \log(\frac{4}{\delta})}\ .\]

Assume that $\theta^*$ belongs to $\bbB_0[k_0]$. From \eqref{eq:imp1''}, we deduce that, 
conditionally on the event $\cB$, one has 
\beqn
 N\left[\underline{c}_*;\widehat{\theta}_{ls,S}/\widehat{\sigma}_S\right]&\leq& \left|\left\{i:\, \frac{ \sqrt{2} |\theta^*_i\big|}{\sigma}\geq \sqrt{2}\ba_1\sqrt{\frac{\log(p)}{m}}\right\}\right|+ \left|\left\{i:\, \frac{\sqrt{2}|B_i|}{\sigma}\geq t_1\sqrt{\frac{\log(p)}{m}}\right\}\right| \\
 &\leq & k_0 - M[\ba_1,\frac{\theta^*}{\sigma}]+ \frac{ 2\|B\|_2^2m}{\sigma^2 t_1^2\log(p)} \\
 &\stackrel{\eqref{eq:imp2}}{\leq }& k_0 -  M[\ba_1,\frac{\theta^*}{\sigma}]+ \frac{34\eta^4}{t_1^2}\log(\frac{4}{\delta}) \ba_3^2 M(\ba_1,\frac{\theta^*}{\sigma})  \leq k_0\ .
 \eeqn 
As a consequence, the test accepts the null hypothesis under the event $\cB$. 
 
 \medskip 
 
\noindent 
\underline{Under the alternative hypothesis}. We now assume that $\theta^*$ belongs to $\bbB_0[k_0+\Delta]$ and satisfies 
\beq\label{eq:condition_alternative}
d_2^2(\theta^* ; \bbB_0[k_0]) \geq \underline{t}^2\sigma^2\frac{\Delta \log(p)}{m}\, \,\quad \text{ with  }\quad \underline{t} =  4\sqrt{2}\left[1+ \eta  \ba_3^2 (k_0+\Delta)\frac{\log(p)}{m}\right]^{1/2} \left[\underline{c}_* \lor \Big( 5\eta^2 \sqrt{\log(4/\delta)}\ba_3\right]\ .
\eeq
Consider the set  $T= \big\{i, |\theta^*_i|\geq \sigma \underline{t} \sqrt{\frac{\log(p)}{2m}}\big\}$ of large  coordinates of $\theta^*$. In view of $d_2^2(\theta^* ; \bbB_0[k_0])$, we have 
\begin{equation}\label{eq:setset2}
d_2^2(\theta^*_{T} , \bbB_0[k_0]) \geq \underline{t}^2\sigma^2\frac{\Delta \log(p)}{2m}\ .
\end{equation}
On the event $\cB$, it follows from~\eqref{eq:imp1''} and the definition of $\underline{c}_*$ that $|(\widehat{\theta}_{ls,S})_i|/\widehat{\sigma}_S\geq \underline{c}_* \sqrt{ \log(p)/m}$ if 
\[
\frac{|\theta_i^*|}{\sigma}\geq 4\sqrt{2}[1+ \eta  \ba_3^2 M(\ba_1,\frac{\theta^*}{\sigma})\frac{\log(p)}{m}]^{1/2} \underline{c}_* \sqrt{\frac{\log(p)}{m}}\quad \text{ and }\quad |B_i|\leq \frac{|\theta_i^*|}{4[1+ \eta  \ba_3^2 M(\ba_1,\frac{\theta^*}{\sigma})\frac{\log(p)}{m}]^{1/2}}\ .  
\]
Observe that $\underline{t}\geq 4\sqrt{2}[1+ \eta  \ba_3^2 M(\ba_1,\frac{\theta^*}{\sigma})\frac{\log(p)}{m}]^{1/2}\underline{c}_*$.
 Denoting $T_0= T\cap \{i:\ 4|B_i| \geq |\theta^*_i|[1+ \eta  \ba_3^2 M(\ba_1,\frac{\theta^*}{\sigma})\frac{\log(p)}{m}]^{-1/2} \}     $, we obtain that $N[\underline{c}_*;\widehat{\theta}_{ls,S}/\widehat{\sigma}_S] \geq |T| -|T_0|$. We can bound $\|\theta^*_{T_0}\|^2_2$ in terms of the bias $\|B\|_2^2$ and then use~\eqref{eq:imp2} and \eqref{eq:condition_alternative}.
\beqn
\|\theta^*_{T_0}\|^2_2&\leq& 16\big[1+ \eta  \ba_3^2 M(\ba_1,\frac{\theta^*}{\sigma})\frac{\log(p)}{m}\big]\|B\|_2^2\\
&\leq& 272 \eta^4 \log(4/\delta)\big[1+ \eta  \ba_3^2 M(\ba_1,\frac{\theta^*}{\sigma})\frac{\log(p)}{m}\big] \ba_3^2\frac{ M[\ba_1,\frac{\theta^*}{\sigma}]\log(p)}{m} \sigma^2 <  \underline{t}^2\sigma^2\frac{ \Delta \log(p)}{2m}\ ,
\eeqn 
where the inequality $ M[\ba_1,\frac{\theta^*}{\sigma}]\leq \Delta$ is a consequence of \eqref{eq:condition_alternative} and  $\underline{c}_*\geq \sqrt{2}\ba_1$.    In view of Equation~\eqref{eq:setset2},
we have $\|\theta^*_{T_0}\|^2_2 <d_2^2(\theta^*_{T} , \bbB_0[k_0])$, which implies $|T_0|<|T|-k_0$ and therefore  $N[ \underline{c}_*;\widehat{\theta}_{ls,S}/\widehat{\sigma}]>k_0$. The test therefore rejects the null hypothesis under the event $\cB$, which concludes the proof.

\begin{proof}[Proof of Lemma~\ref{lem:eigv}] 
We first show~\eqref{eq:upper:restricted_eigen_A0}. 
\noindent
 Recall that $\bX^{(0)}$ is independent of $S$ and that the restriction of $\bN= \bSigma_{S,S}^{-1/2}\bX^{(0)T}_{S} \bX^{(0)}_{S}\bSigma_{S,S}^{-1/2}$ to $S\times S$ follows a standard Wishart distribution - all coordinates outside $S\times S$ being $0$. by $\bS[\ba_1,\ba_2,\ba_3]$, the size of  the corresponding  covariance matrix is less than $|S|\leq \ba_2\|\theta^*\|_0$. From e.g.~\cite{Davidson2001}, we deduce, on an event $\cB_{1-1}$ of probability larger than $1-\delta/4$, we have 
\begin{align*}
&\eta^{-1}\Big(1 - c_{R} \sqrt{\frac{|S|\log(4/\delta)}{m}} - c_{R}\frac{\log(4/\delta)+|S|}{m}\Big)\leq \frac{1}{m}\lambda_{\min,S}(\bN)\\ 
&\leq \frac{1}{m}\lambda_{\max,S}(\bN) \leq \eta \Big(1 + c_{R}\sqrt{\frac{|S|\log(4/\delta)}{n}} +  c_{R} \frac{|S|+ \log(4/\delta)}{m}\Big)\ ,
\end{align*}
where $c_R$ is an universal constant. Assuming that $\ba_3|\theta^*_0|+\log(4/\delta)$ is small compared to $m$, we deduce that the spectrum of $\bN$ lies in $[1/2,2]$. Thus, under $\bB_{1-1}$, the spectrum of $m^{-1}(\bX^{(0)T}_{S}\bX^{(0)}_{S})$ restricted to its coordinates $S\times S$ lies in $[\eta^{-1}/2, 2\eta]$.

\noindent
Turning to~\eqref{eq:upper:restricted_eigen_A02}, we observe that $\bX^{(0)}_{\overline{S}}\theta^*_{\overline{S}}$ follows a mean zero normal distribution. 
 Using a deviation inequality for $\chi^2$ distribution (Lemma \ref{lem:conchi}), we deduce the existence
of  an event $\cB_{1-2}$ of probability larger higher $1-\delta/4$ such that 
$ \|\frac{1}{\sqrt{m}}\bX^{(0)}_{\overline{S}}\theta^*_{\overline{S}}\|_2 \leq \|\theta^*_{\overline{S}}\|_2\sqrt{\eta}  [1+  \sqrt{2\log(4/\delta)/m}]\leq  \|\theta^*_{\overline{S}}\|_2   \sqrt{2\eta\log(4/\delta)}$, since $m$ is large enough  as assumed in Theorem \ref{thm:puissance_thres}. 
So from Equation~\eqref{eq:upper:restricted_eigen_A0}, we deduce that, on $\cB_1= \cB_{1-1} \cap \cB_{1-2}$, we have 
$$\Big\|\frac{1}{m}\bX^{(0)T}_{S}\bX^{(0)}_{\overline{S}}\theta^*_{\overline{S}}\Big\|_2 \leq \lambda^{1/2}_{\max}\Big(\frac{1}{\sqrt{m}}\bX^{(0)T}_{S}\bX^{(0)}_{S}\Big) \|\frac{1}{\sqrt{m}}\bX^{(0)}_{\overline{S}}\theta^*_{\overline{S}}\|_2 \leq 2\eta\sqrt{\log(4/\delta)} \|\theta^*_{\overline{S}}\|_2\ .$$

\end{proof}

\begin{proof}[Proof of Lemma \ref{lem:variance_estimation}]

$\widehat{\sigma}^2_S/\Var(Y|\bX_S)$ follows a $\chi^2$ distribution with $m$ degrees of freedom. Using a deviation inequality for $\chi^2$ distribution (Lemma \ref{lem:conchi}), we derive that $\widehat{\sigma}^2_S/\Var(Y|\bX_S)\in (1/2, 2)$, with probability higher than $1-e^{-cm}\geq 1-\delta/6$. 
Thus, it remains to bound $\Var(Y|\bX_S)$. From the definition of the property $\bS[\ba_1,\ba_2,\ba_3]$, we deduce that 
\[
\sigma^2\leq \Var(Y|\bX_S)\leq   \sigma^2 +\var{\bX \theta^*_{\overline{S}}}\leq  \eta \|\theta^*_{\overline{S}}\|_2^2 \leq \sigma^2 \left[1+ \eta \ba_3^2 M(\ba_1,\frac{\theta^*}{\sigma})\frac{\log(p)}{m}\right] \ . 
\]

\end{proof}

\subsubsection{Proof of Proposition~\ref{prp:estimator_MCP}}
Write $\widehat{\theta}_{MCP,N}$ for a stationary point of the MCP criterion and let $\widehat{S}_{MCP}$ denote its support. Since, we used the normalized design, we are more interested in the rescaled estimator $\widehat{\theta}_{MCP}$ defined by $(\widehat{\theta}_{MCP})_i= (\widehat{\theta}_{MCP,N})_i/\|\bX^{(1)}_{.,i}\|_2$. 
As explained in the proof of Lemma~\ref{lem:variance_estimation}, the design matrix $\bT$ satisfies, with probability higher than $1-1/p$  the compatibility  property (see~\cite{kolt11,2012_Sun}) with any set of size less than $n/[c^{(1)}_{\eta}\log(p)]$, see~\cite{10:RWG_restricted}. Besides, the restricted eigenvalue condition satisfied for sparsities of size less than $n/[c^{(1)}_{\eta}\log(p)]$ are bounded by some constants depending on $\eta$, see~\cite{Davidson2001,zhang2010nearly} with probability higher than $1-1/p$. From Lemma~\ref{lem:square_root_Lasso}, we deduce that $\widehat{\sigma}_{SL}/\sigma\in (3/4,5/4)$ with probability higher than $1-1/p$. 
We are therefore in position to apply Theorem 6 in~\cite{zhang2010nearly} and Corollary 1 in  Feng and Zhang~\cite{feng2017sorted}\footnote{Actually, our definition of MCP uses a different normalization from that in~\cite{zhang2010nearly} and~\cite{feng2017sorted} and one has therefore to translate their results in our setting. } provided that we chose the constant $\underline{c}^{(MCP)}_{\eta}$ large enough and $\underline{c}^{'(MCP)}_{\eta}$ small enough. From Theorem 6 in~\cite{zhang2010nearly} with $B= S^*$ (the support of $\theta^*$), we deduce that, with probability higher than $1-1/p$, 
\[
 |\widehat{S}_{MCP}|\leq |\widehat{S}_{MCP}\setminus S^*|+ |S^*|\leq c_{\eta}\|\theta^*\|_0\ . 
\]
Write $\widehat{\theta}^{(1)}_{ls,S^*}$ for the least-square estimator of $\theta^*$ restricted to $S^*$: 
$\widehat{\theta}^{(1)}_{ls,S^*} = \argmin_{\theta \ : \, \cS(\theta)\subset S^*} \| Y^{(1)} - \bX^{(1)}\theta\|_2^2$
as defined in Equation~\eqref{eq:definition_restricted_least_squares}. We deduce from Corollary 1 in~\cite{feng2017sorted} that 
\[
\|(\widehat{\theta}^{(1)}_{ls,S^*})_{\overline{\widehat{S}}_{MCP}}\|_2^2 \leq \|\widehat{\theta}_{MCP}-\widehat{\theta}^{(1)}_{ls,S^*}\|_2^2 \leq c''_{\eta}\sigma^2 \frac{\log(p)}{m}M(c'_{\eta},\frac{\theta^*}{\sigma})\ ,
\]
The restricted least-square estimator $\widehat{\theta}^{(1)}_{ls,S^*}$  follows a normal distributions with mean $\theta^*$ and covariance $(\bX_{S^*}^{(1)T}\bX_{S^*}^{(1)})^{-1}$ where we consider here the pseudo-inverse. The eigenvalues of $m(\bX_{S^*}^{(1)T}\bX_{S^*}^{(1)})^{-1}$ are bounded by the restricted eigenvalue condition on the design $\bX^{(1)}$. Hence, we obtain $\|\widehat{\theta}^{(1)}_{ls,S^*}-\theta^*\|_{\infty}\leq c'''_{\eta}\sigma \sqrt{\log(p)/m}$  with probability higher than $1-c/p$, from some $c'''_{\eta}>0$. This implies that $|\theta^*_i|\leq 2| (\widehat{\theta}^{(1)}_{ls,S^*})_i|$ if 
$|\theta^*_i|\geq 2c'''_{\eta}\sigma \sqrt{\log(p)/m}$. We obtain 
\beqn 
 \|\theta^*_{\overline{\widehat{S}}_{MCP}}\|_{2}^{2}&\leq& \sum_{i\in \overline{\widehat{S}}_{MCP}} \1_{|\theta^*_i|\geq 2c'''_{\eta}\sigma \sqrt{\log(p)/m}}4|(\widehat{\theta}^{(1)}_{ls,S^*})_i|^2+ \1_{|\theta^*_i|\leq 2c'''_{\eta}\sigma \sqrt{\log(p)/m}} 2\left([ (\widehat{\theta}^{(1)}_{ls,S^*})_i]^2+(c'''_{\eta})^2\sigma^2\frac{\log(p)}{m} \right)\\
 &\leq & 4 \|(\widehat{\theta}^{(1)}_{ls,S^*})_{\overline{\widehat{S}}_{MCP}}\|_2^2+ 2(c'''_{\eta})^2\sigma^2\frac{\log(p)}{m} M(c'''_{\eta},\frac{\theta^*}{\sigma})\\
 &\leq & 4[c''_{\eta}+ (c'''_{\eta})^2]\sigma^2 \frac{\log(p)}{m}M(c'_{\eta}\vee c'''_{\eta},\frac{\theta^*}{\sigma})\ .
\eeqn 
The result follows.

\subsubsection{Proof of Theorem~\ref{thm:support} }

To alleviate the notation, we simply write $S_t$ for $\widehat{S}_t$ in this proof. 
For a random vector $X\sim \cN(0,\bSigma)$, we write $\bSigma^{(t)}$ for the conditional variance of $X$ given $(X_{S_{t-1}},S_{t-1})$. Standard computations for conditional variance based on Schur complement lead to 
\beq\label{eq:definition_Sigma(t)}
\bSigma^{(t)}= \bSigma_{\overline{S}_{t-1},\overline{S}_{t-1}} - \bSigma_{\overline{S}_{t-1},S_{t-1}} (\bSigma_{S_{t-1},S_{t-1}})^{-1}\bSigma_{S_{t-1},\overline{S}_{t-1}},
\eeq 
where $(\bSigma_{S_{t-1},S_{t-1}})^{-1}$ is the pseudo-inverse of $\bSigma_{S_{t-1},S_{t-1}}$ obtained by considering its inverse when restricted to $S_{t-1} \times S_{t-1}$ and setting all its remaining entries to $0$.

In the sequel, we denote $ Y_{\perp}^{(t)}= \underline{\bPi}^{\perp}_{t, S_{t-1}}   Y^{(t)}$ and  $\bX_{\perp}^{(t)}= \underline{\bPi}^{\perp}_{t, S_{t-1}} \bX^{(t)}$. 
The following lemma ensures that the linear regression of $Y^{(t)}$ on $\bX_{\perp}^{(t)}$ involves the restriction of $\theta^*$ to $\overline{S}_{t-1}$. 

\begin{lem}\label{lem:mattransf}
Fix any $t\in [1;T]$ and consider the event such that $|S_{t-1}|< m/T$. Then, given $S_{t-1}$, the rows of $\bX_{\perp}^{(t)}$ are independent and follow a centered normal distribution with covariance matrix $\bSigma^{(t)}$. Besides, we have 
\[
 \Big(Y_{\perp}^{(t)}\big|\bX_{\perp}^{(t)}, S_{t-1}  \Big) \sim \cN\big(\bX_{\perp}^{(t)} \theta^*_{\overline{S}_{t-1}}, \sigma^2 \bI_{m/T-|S_{t-1}|}\big)\ .
\]
\end{lem}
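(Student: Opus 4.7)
The plan is to condition appropriately and use two standard decompositions: one for the Gaussian design matrix across the partition $(S_{t-1}, \overline{S}_{t-1})$, and one for the response given the design, exploiting that $\underline{\bPi}^{\perp}_{t,S_{t-1}}$ is a deterministic function of $\underline{\bX}^{(t)}_{S_{t-1}}$ alone and has orthonormal rows.

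First, I would fix $S_{t-1}$ and recall that $(\underline{Y}^{(t)},\underline{\bX}^{(t)})$ is drawn from a subsample which is independent of $(\underline{Y}^{(s)},\underline{\bX}^{(s)})$ for $s<t$, hence independent of $S_{t-1}$. For the design part, I would write the standard Gaussian conditional decomposition: conditionally on $\underline{\bX}^{(t)}_{S_{t-1}}$, there exists a matrix $\bXi$ of size $(m/T)\times |\overline{S}_{t-1}|$, independent of $\underline{\bX}^{(t)}_{S_{t-1}}$, with rows i.i.d.\ $\cN(0, \bSigma^{(t)}|_{\overline{S}_{t-1},\overline{S}_{t-1}})$, such that
\[
\underline{\bX}^{(t)}_{\overline{S}_{t-1}} \;=\; \underline{\bX}^{(t)}_{S_{t-1}}\bSigma_{S_{t-1},S_{t-1}}^{-1}\bSigma_{S_{t-1},\overline{S}_{t-1}} + \bXi \, .
\]
Since $\underline{\bPi}^{\perp}_{t,S_{t-1}}$ is measurable with respect to $\underline{\bX}^{(t)}_{S_{t-1}}$ and vanishes on $V[S_{t-1},\underline{\bX}^{(t)}]$, applying it to the identity above yields $\bX_{\perp}^{(t)}|_{S_{t-1}}=0$ and $\bX_{\perp}^{(t)}|_{\overline{S}_{t-1}} = \underline{\bPi}^{\perp}_{t,S_{t-1}}\bXi$. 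On the event $|S_{t-1}|<m/T$, this matrix has orthonormal rows, so conditionally on $\underline{\bX}^{(t)}_{S_{t-1}}$ (and hence on $S_{t-1}$) the rows of $\underline{\bPi}^{\perp}_{t,S_{t-1}}\bXi$ are i.i.d.\ $\cN(0,\bSigma^{(t)}|_{\overline{S}_{t-1},\overline{S}_{t-1}})$. Since this distribution does not depend on $\underline{\bX}^{(t)}_{S_{t-1}}$, integrating out yields the first conclusion given only $S_{t-1}$, consistent with the convention that $\bSigma^{(t)}$ is zero on coordinates of $S_{t-1}$.

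For the response, I would plug the regression equation into the projection:
\[
Y_{\perp}^{(t)} \;=\; \underline{\bPi}^{\perp}_{t,S_{t-1}}\underline{\bX}^{(t)}_{S_{t-1}}\theta^*_{S_{t-1}} + \underline{\bPi}^{\perp}_{t,S_{t-1}}\underline{\bX}^{(t)}_{\overline{S}_{t-1}}\theta^*_{\overline{S}_{t-1}} + \sigma\,\underline{\bPi}^{\perp}_{t,S_{t-1}}\epsilon^{(t)}\, .
\]
The first term is zero by construction of $\underline{\bPi}^{\perp}_{t,S_{t-1}}$, and the second is exactly $\bX_{\perp}^{(t)}\theta^*_{\overline{S}_{t-1}}$. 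Since $\epsilon^{(t)}$ is $\cN(0,\bI_{m/T})$ and independent of $(\underline{\bX}^{(t)},S_{t-1})$, and $\underline{\bPi}^{\perp}_{t,S_{t-1}}$ has orthonormal rows, the vector $\underline{\bPi}^{\perp}_{t,S_{t-1}}\epsilon^{(t)}$ is, conditionally on $(\underline{\bX}^{(t)},S_{t-1})$, distributed as $\cN(0,\bI_{m/T-|S_{t-1}|})$. As the mean of $Y_{\perp}^{(t)}$ given $(\underline{\bX}^{(t)},S_{t-1})$ is a function of $(\bX_{\perp}^{(t)},S_{t-1})$ only and the conditional noise distribution is free of $\underline{\bX}^{(t)}$, the tower property gives the second conclusion.

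The whole argument is essentially bookkeeping; the only mildly delicate point is ensuring that in the chain of conditionings the projection $\underline{\bPi}^{\perp}_{t,S_{t-1}}$ and the residual noise $\bXi$ are independent. This is guaranteed because $\underline{\bPi}^{\perp}_{t,S_{t-1}}$ is defined from $\underline{\bX}^{(t)}_{S_{t-1}}$ only, and the Gaussian conditional decomposition makes $\bXi$ independent of $\underline{\bX}^{(t)}_{S_{t-1}}$ by construction.
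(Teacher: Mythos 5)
Your proof is correct and takes essentially the same route as the paper's: both use the Gaussian conditional decomposition of $\underline{\bX}^{(t)}_{\overline{S}_{t-1}}$ given $\underline{\bX}^{(t)}_{S_{t-1}}$ (the paper writes $\bZ=\bX-\E[\bX|\bX_{S}]=\bX-\bX_S\bR$, which is your $\bXi$), kill the $S_{t-1}$-span term with $\underline{\bPi}^{\perp}_{t,S_{t-1}}$, use that the projector is $\bX_S$-measurable with orthonormal rows to preserve the i.i.d.\ $\cN(0,\bSigma^{(t)})$ row structure, and treat the noise term in the same way. The only detail the paper makes explicit that you leave implicit is that, since $\bSigma$ is invertible and $|S_{t-1}|<m/T$, one has $\mathrm{dim}\,V[S_{t-1},\underline{\bX}^{(t)}]=|S_{t-1}|$ almost surely, which is what guarantees the projector has exactly $m/T-|S_{t-1}|$ rows as claimed in the covariance $\sigma^2\bI_{m/T-|S_{t-1}|}$.
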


The next lemma ensures that the population covariance matrix of the projected design still belongs to $\cU[\eta]$. 
\begin{lem}\label{lem:mistic}
For any  $\bSigma \in \mathcal U(\eta)$ and any set $S_{t-1}$, The restriction of  $\bSigma^{(t)}$ to $\overline{S}_{t-1}\times  \overline{S}_{t-1}$ belongs to  
$\mathcal U(\eta)$.
\end{lem}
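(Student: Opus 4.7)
The plan is to recognize the restriction of $\bSigma^{(t)}$ to $\overline{S}_{t-1}\times\overline{S}_{t-1}$ as the Schur complement of the block $\bSigma_{S_{t-1},S_{t-1}}$ inside $\bSigma$, and then to translate the spectral control by using that this Schur complement is the inverse of the corresponding principal block of $\bSigma^{-1}$.

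Concretely, writing $\bSigma$ in block form with $A=\bSigma_{S_{t-1},S_{t-1}}$, $B=\bSigma_{S_{t-1},\overline{S}_{t-1}}$ and $C=\bSigma_{\overline{S}_{t-1},\overline{S}_{t-1}}$, formula~\eqref{eq:definition_Sigma(t)} restricted to $\overline{S}_{t-1}\times\overline{S}_{t-1}$ reads $C-B^{T}A^{-1}B$. The classical Schur complement identity then gives
\[
(\bSigma^{(t)})_{\overline{S}_{t-1},\overline{S}_{t-1}} \;=\; C-B^{T}A^{-1}B \;=\; \bigl[(\bSigma^{-1})_{\overline{S}_{t-1},\overline{S}_{t-1}}\bigr]^{-1}\ ,
\]
which reduces the spectral bound on $(\bSigma^{(t)})_{\overline{S}_{t-1},\overline{S}_{t-1}}$ to a spectral bound on the principal submatrix $(\bSigma^{-1})_{\overline{S}_{t-1},\overline{S}_{t-1}}$ of $\bSigma^{-1}$.

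To conclude, I would use that $\bSigma\in\cU(\eta)$ immediately gives $\bSigma^{-1}\in\cU(\eta)$, and then invoke the Courant--Fischer min--max characterization (equivalently Cauchy interlacing) to pass spectral bounds from $\bSigma^{-1}$ to any of its principal submatrices: for any unit vector $u$ supported on $\overline{S}_{t-1}$, the Rayleigh quotient $u^T(\bSigma^{-1})u$ lies in $[\eta^{-1},\eta]$, so the eigenvalues of the restriction of $\bSigma^{-1}$ to $\overline{S}_{t-1}\times\overline{S}_{t-1}$ lie in $[\eta^{-1},\eta]$. Taking the inverse preserves this interval, which yields exactly $(\bSigma^{(t)})_{\overline{S}_{t-1},\overline{S}_{t-1}}\in\cU(\eta)$.

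There is essentially no obstacle here: the whole argument rests on a one-line application of the Schur complement identity together with a standard interlacing statement for principal submatrices of symmetric positive definite matrices. The only point requiring minor care is that $\bSigma^{-1}$ must be interpreted as a genuine inverse (which is fine since $\bSigma\in\cU(\eta)$ is positive definite) and that the pseudo-inverse convention used in \eqref{eq:definition_Sigma(t)} is consistent with taking the true inverse of the $S_{t-1}\times S_{t-1}$ block, which it is by construction.
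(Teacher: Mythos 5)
Your proof is correct, and it takes a somewhat different route from the paper's. The paper argues probabilistically and direction by direction: for each unit vector $u$ supported on $\overline{S}_{t-1}$ it identifies $u^T\bSigma^{(t)}u$ with the conditional variance $\Var(u^TX\mid X_{S_{t-1}})$ for $X\sim\cN(0,\bSigma)$, forms the $(|S_{t-1}|+1)$-dimensional covariance matrix $\bGamma$ of $\big((X_i)_{i\in S_{t-1}},u^TX\big)$, notes $\bGamma\in\cU(\eta)$, and uses $\Var(u^TX\mid X_{S_{t-1}})=1/(\bGamma^{-1})_{|S_{t-1}|+1,|S_{t-1}|+1}$ to conclude. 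You instead apply the block-inversion (Schur complement) identity once, globally, to write $(\bSigma^{(t)})_{\overline{S}_{t-1},\overline{S}_{t-1}}=\bigl[(\bSigma^{-1})_{\overline{S}_{t-1},\overline{S}_{t-1}}\bigr]^{-1}$, and finish with the Rayleigh-quotient/interlacing bound for principal submatrices and the fact that $[\eta^{-1},\eta]$ is stable under inversion. The two arguments rest on the same underlying identity (the paper's conditional-variance formula is the Schur complement for a one-dimensional block), but yours is a self-contained linear-algebra statement about the whole matrix, while the paper's fits the Gaussian-conditioning viewpoint already used in Lemma~\ref{lem:mattransf}; both are equally short, and your remark that the pseudo-inverse in \eqref{eq:definition_Sigma(t)} coincides with the true inverse of the $S_{t-1}\times S_{t-1}$ block (by positive definiteness of $\bSigma$) correctly disposes of the only notational subtlety.
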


\noindent 
Denote $\delta= p^{-2}$. For $1\leq t\leq T$, {\bf Property ($\bQ_t$)} is said to be satisfied if there exists an event $\xi_t$ measurable with respect to $(( \underline{Y}^{(1)},\underline{\bX}^{(1)}), \ldots, ( \underline{Y}^{(t)},\underline{\bX}^{(t)}))$ of probability higher than $(1-\delta)^{t}$  such that the three  following inequalities hold:
\beq\label{eq:condition_H_t}
|\cS(\theta^*) \setminus S_{t}| \leq \frac{\|\theta^*\|_0}{2^{t}} \vee  2M\Big[2\sqrt{c_{\eta}T\frac{\log(p/\delta)}{\log(p)}}, \frac{\theta^*}{\sigma}\Big]\ ;\quad \, |S_{t}| \leq 2t\|\theta^*\|_0\ ;\quad \,
\|\theta^*_{\overline{S}_{t}}\|_2^2\leq \sigma^2 c_{\eta}\|\theta^*_{\overline{S}_{t-1}}\|_0\frac{T\log(p/\delta)}{m}\ .
\eeq

Assume that the property $\bQ_T$ holds and recall that $\widehat{S}=S_T$. Since $|\cS(\theta^*) \setminus S_T|$ is an integer, $T \geq \log_2(n)$, and $\|\theta^*\|_0 \leq n/4$, there exists an event of probability larger than $(1-\delta)^{T}$ such that 
\beqn 
|\cS(\theta^*) \setminus \widehat{S}| &\leq& 2M\Big[2\sqrt{c_{\eta}T\frac{\log(p/\delta)}{\log(p)}}, \frac{\theta^*}{\sigma}\Big]\ ; \quad|\widehat{S}| \leq 2T\|\theta^*\|_0\ ;\\  \|\theta_{\overline{\widehat{S}}}^*\|_2^2&\leq &\sigma^2c_{\eta}2M\Big[2\sqrt{c_{\eta}T\frac{\log(p/\delta)}{\log(p)}}, \frac{\theta^*}{\sigma}\Big]\frac{T\log(p/\delta)}{m}\ ,
\eeqn
which, with $\delta=p^{-2}$, is the result of Theorem \ref{thm:support}. Thus, it suffices to prove $(\bQ_t)$ by induction.

\begin{lem}\label{lem:estbon} 
Assume that $\bSigma \in \mathcal U(\eta)$ with $\eta>0$. Assume that $|S_{t}|$  is such that 
\[
\underline{c}_{\eta}^{(SL),2}\left[\|\theta^*\|_0\log(p/\delta) + \log(1/\delta)\log(p)\right]\leq m/T-|S_t|\ . 
\]
(Recall that $\underline{c}_{\eta}^{(SL),2}$ is introduced in Lemma \ref{lem:square_root_Lasso}).
Then, given $S_{t}$, there exists an event $\cF_{t+1}$ measurable with respect to $(\underline{Y}^{(t+1)},\underline{\bX}^{(t+1)})$ of probability higher than $1-\delta$ such that 
\[
 \|\theta^*_{\overline{S}_{t+1}}\|_2^2 \leq c_\eta \sigma^2\frac{T\|\theta^*_{\overline{S}_t}\|_0}{m}\log\left(\frac{p}{\delta}\right)\text{ and } |S_{t+1}|\leq |S_t|+ 2\|\theta^*_{\overline{S}_{t}}\|_0\ . 
\]

\end{lem}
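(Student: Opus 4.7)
The plan is to view step $t{+}1$ of the algorithm as running a single square-root Lasso on the new regression problem supplied by Lemmas~\ref{lem:mattransf} and~\ref{lem:mistic}, and then to read off both claims from the $\ell_2$-error bound of Lemma~\ref{lem:square_root_Lasso} combined with the hard-thresholding step.

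Conditionally on $S_t$, Lemma~\ref{lem:mattransf} identifies $(\bX_\perp^{(t+1)}, Y_\perp^{(t+1)})$ with the data of a linear regression model having sample size $m' := m/T - |S_t|$, noise level $\sigma$, regression parameter $\theta^*_{\overline{S}_t}$ (viewed in $\mathbb R^p$ with zeros on $S_t$) of sparsity at most $\|\theta^*_{\overline{S}_t}\|_0 \leq \|\theta^*\|_0$, and a design whose covariance, restricted to $\overline{S}_t$, lies in $\cU(\eta)$ by Lemma~\ref{lem:mistic}. The hypothesis in Lemma~\ref{lem:estbon} is exactly what Lemma~\ref{lem:square_root_Lasso} requires with $m \leftarrow m'$ and $q \leq p$, producing an event $\cF_{t+1}$ of conditional probability at least $1-\delta$ on which $\widehat\sigma_{SL}/\sigma \in [3/4, 5/4]$ and
$$\|\widehat\theta_{SL} - \theta^*_{\overline{S}_t}\|_2^2 \;\leq\; \underline{c}^{(SL)}_\eta \,\sigma^2 \,\frac{\|\theta^*_{\overline{S}_t}\|_0 \vee 1}{m'}\, \log(p/\delta).$$
Write $\tau := \underline{c}^{(SL)}_\eta \,\widehat\sigma_{SL}\,(8/3)\sqrt{\log(p/\delta)/m'}$ for the threshold defining $\widehat\theta_{SL,t}$; on $\cF_{t+1}$ we have $\tau^2 \asymp_\eta \sigma^2 \log(p/\delta)/m'$.

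For the cardinality bound, any spuriously selected index $i \in \cS(\widehat\theta_{SL,t}) \setminus \cS(\theta^*_{\overline{S}_t})$ satisfies $|\widehat\theta_{SL,i} - \theta^*_{\overline{S}_t,i}| = |\widehat\theta_{SL,i}| \geq \tau$. Summing squares gives
$$|\cS(\widehat\theta_{SL,t}) \setminus \cS(\theta^*_{\overline{S}_t})| \;\leq\; \tau^{-2}\,\|\widehat\theta_{SL} - \theta^*_{\overline{S}_t}\|_2^2 \;\leq\; \|\theta^*_{\overline{S}_t}\|_0,$$
provided $\underline{c}^{(SL)}_\eta$ is chosen large enough compared with the constant in the square-root Lasso error bound. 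Combined with $S_{t+1} = S_t \cup \cS(\widehat\theta_{SL,t})$ and $|\cS(\widehat\theta_{SL,t})| \leq |\cS(\theta^*_{\overline{S}_t})| + \|\theta^*_{\overline{S}_t}\|_0$, this yields $|S_{t+1}| \leq |S_t| + 2\|\theta^*_{\overline{S}_t}\|_0$. For the $\ell_2$ bound, any $i \in \overline{S}_{t+1}\subseteq \overline{S}_t$ with $\theta^*_i \neq 0$ must satisfy $|\widehat\theta_{SL,i}| \leq \tau$ (otherwise $i$ would have been picked up by the thresholding), whence $|\theta^*_i| \leq \tau + |\widehat\theta_{SL,i} - \theta^*_i|$. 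Squaring, summing over the at most $\|\theta^*_{\overline{S}_t}\|_0$ non-zero coordinates involved, and plugging the bounds on $\tau^2$ and on the $\ell_2$-error gives
$$\|\theta^*_{\overline{S}_{t+1}}\|_2^2 \;\leq\; 2\tau^2\,\|\theta^*_{\overline{S}_t}\|_0 + 2\|\widehat\theta_{SL} - \theta^*_{\overline{S}_t}\|_2^2 \;\leq\; c_\eta\,\sigma^2 \,\frac{\|\theta^*_{\overline{S}_t}\|_0}{m'}\,\log(p/\delta).$$

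To reach the stated form with $T/m$ in place of $1/m'$ one needs $m' \geq m/(2T)$, i.e.~$|S_t| \leq m/(2T)$; this is always ensured in the induction behind Theorem~\ref{thm:support} by the second part of property $(\bQ_t)$ together with the sparsity assumption~\eqref{eq:boundspa} and $T \lesssim \log n$. The main subtlety is precisely this constant calibration: one must choose $\underline{c}^{(SL)}_\eta$ large enough so that $\tau^{-2}\|\widehat\theta_{SL} - \theta^*_{\overline{S}_t}\|_2^2$ is bounded by $\|\theta^*_{\overline{S}_t}\|_0$ itself, not merely by an $\eta$-dependent multiple, so that the factor $2$ in $|S_{t+1}| \leq |S_t| + 2\|\theta^*_{\overline{S}_t}\|_0$ is enough; the rest is bookkeeping.
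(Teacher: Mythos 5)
Your argument is correct and follows essentially the same route as the paper's proof: condition on $S_t$, use Lemmas~\ref{lem:mattransf} and~\ref{lem:mistic} to reduce to a regression with parameter $\theta^*_{\overline{S}_t}$ and covariance in $\cU(\eta)$, invoke Lemma~\ref{lem:square_root_Lasso}, and then exploit the threshold level (calibrated with the same constant $\underline{c}^{(SL)}_\eta$ and the bound $\widehat\sigma_{SL}/\sigma\in[3/4,5/4]$) both to bound the number of spurious selected coordinates by the squared-error budget and to bound $\|\theta^*_{\overline{S}_{t+1}}\|_2^2$ by threshold plus estimation error. Even your appeal to the induction in Theorem~\ref{thm:support} to guarantee $m/T-|S_t|\geq m/(2T)$ matches how the paper itself handles that point, so no gap.
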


\paragraph{Step $(\bQ_1)$:} Recall that $S_0=\emptyset$. By Lemma~\ref{lem:estbon} and Equation~\eqref{eq:boundspa}, there exists an event $\cE_1$ with probability higher than $1-\delta$  such that
$$\|\theta^*_{\overline{S_1}}\|_2^2\leq c_\eta \sigma^2 \|\theta^*\|_0\frac{T\log(p/\delta)}{m}~~~\text{and}~~~|S_1| \leq 2 \|\theta^*\|_0\ .$$
Counting the components of $\theta^*_{\overline{S}_1}$ that are larger (in absolute value) than $2\sigma \sqrt{c_{\eta}T\log(p/\delta)/m}$, we derive that 
\[
\|\theta^*_{\overline{S_1}}\|_2^2\geq \left[|\cS(\theta^*)\setminus S_1|-M\left[2\sqrt{c_{\eta}T\frac{\log(p/\delta)}{\log(p)}}, \frac{\theta^*}{\sigma}\right]\right]  4c_{\eta}\sigma^2 \frac{T}{m}\log\left(\frac{p}{\delta}\right)\ ,
\]
which, together with the previous bound  implies 
$$|\cS(\theta^*) \setminus S_1| \leq M\big[2\sqrt{c_{\eta}T\frac{\log(p/\delta)}{\log(p)}}, \frac{\theta^*}{\sigma}\big]+ \frac{\|\theta^*\|_0}{4}\leq \frac{\|\theta^*\|_0}{2} \vee  \left(2M\big[2\sqrt{c_{\eta}T\frac{\log(p/\delta)}{\log(p)}}, \frac{\theta^*}{\sigma}\big]\right)\ .$$
So $(\bQ_1)$ holds.

\paragraph{Induction step:} Assume that $(\bQ_{t-1})$ holds for some $T-1 \geq t \geq 1$. By $(\bQ_{t-1})$ and on $\xi_{t-1}$, we have that $|S_{t-1}| \leq 2(t-1)\|\theta^*\|_0 \leq  m/(2T)$ by Condition~\eqref{eq:boundspa}. Thus, $m/T-|S_{t-1}|$ is large enough and we can apply Lemma~\ref{lem:estbon}. As a consequence, there exists an event $\cE_t$ of probability higher than $(1- \eta)^{t}$ such that 
\[
 \|\theta^*_{\overline{S}_{t}}\|_2^2 \leq c_\eta \sigma^2\frac{T\|\theta^*_{\overline{S}_{t-1}}\|_0}{m}\log\left(\frac{p}{\delta}\right)\text{ and } |S_{t}|\leq |S_{t-1}|+ 2\|\theta^*_{\overline{S}_{t-1}}\|_0\ . 
\]
Together with $(\bQ_{(t-1)})$, this implies $|S_{t}|\leq 2(t-1)\|\theta^*\|_0+ 2\|\theta^*\|_0= 2t\|\theta^*\|_0$. As for the proof of $(\bQ_1)$, we lower bound  $\|\theta^*_{\overline{S}_{t}}\|_2^2$ by considering separately the entries larger than (in absolute value) than $2\sigma\sqrt{c_{\eta}T\log(p/\delta)/m}$. This leads us to 
\beqn  
|\cS(\theta^*) \setminus S_t| &\leq& M\big[2\sqrt{c_{\eta}T\frac{\log(p/\delta)}{\log(p)}}, \frac{\theta^*_{\overline{S}_{t-1}}}{\sigma}\big]+ \frac{\|\theta^*_{\overline{S}_{t-1}}\|_0}{4}\leq \frac{\|\theta^*_{\overline{S}_{t-1}}\|_0}{2} \vee  \left(2M\big[2\sqrt{c_{\eta}T\frac{\log(p/\delta)}{\log(p)}}, \frac{\theta^*}{\sigma}\big]\right)\\
&\leq & \frac{\|\theta^*\|_0}{2^t}  \vee  \left(2M\big[2\sqrt{c_{\eta}T\frac{\log(p/\delta)}{\log(p)}}, \frac{\theta^*}{\sigma}\big]\right)\ ,
\eeqn 
where we used $(\bQ_{t-1})$ in the second line. We have proved $(\bQ_t)$. This concludes the proof.

\begin{proof}[Proof of Lemma~\ref{lem:mattransf}]

To alleviate the notation, we simply write $S$ for $S_{t-1}$, $\hat S$ for $\hat S^{(ith)}$, $\bX$ (resp. $Y$) for $\underline{\bX}^{(t)}$ (resp. $\underline{Y}^{(t)}$), $\bE$ for the  expectation $\bE^{(t)}$, and $\underline{\bPi}_S^{\perp}$ for $\underline{\bPi}_{t,S}^{\perp}$ (in the proof of this lemma only). Besides, since $S$ has been built based on independent samples, we consider it as fixed. Also, without loss of the generality, we assume that $S=\{1,\ldots, |S|\}$.

Define $\bZ = \bX-\mathbb{E}[\bX|\bX_{S}]$. Since $\bX$ follows a normal distribution, $\bZ$ is independent of $\bX_{S}$. Besides, the rows of $\bZ$ are i.i.d. distributed according to centered normal distribution with covariance matrix $\bSigma^{(t)}$. Since the rows of $\bX$ are i.i.d., each column of  $\mathbb{E}[\bX|\bX_{S}]$ is a linear combination of the columns of $\bX_S$. As a consequence, there exists a $|S|\times p$ matrix $\bR$ such that 
$\mathbb{E}[\bX|\bX_{S}]= \bX_S \bR$.

Since $T|S|<m$ and since $\bSigma$ is invertible, the rank of $V[S,\bX]$ equals $|S|$ almost surely. 
As a consequence, applying the orthogonal projection along $V[S,\bX]$ to $\bX$ leads to 
\[
\underline{\bPi}_S^{\perp}\bX= \underline{\bPi}_S^{\perp} \bZ + \underline{\bPi}_S^{\perp} \bX_S\bR= \underline{\bPi}_S^{\perp} \bZ \ . 
\]
Since the rows of $\bZ$ are i.i.d. with covariance $\bSigma^{(t)}$, there exists a matrix $\bU$ with i.i.d. standard normal entries such that $\bZ= \bU\bGamma^{(t)}$ where $\bGamma$ is a square root of $\bSigma^{(t)}$. As a consequence, $\underline{\bPi}_S^{\perp}\bX= \underline{\bPi}_S^{\perp} \bU\bGamma$. Since $\bX_S$ is independent of $\bU$ it follows that, given $\bX_S$, the $(m/T-|S|)\times p$ matrix $\underline{\bPi}_S^{\perp} \bU$ is made of independent standard normal entries and the rows of $\bX_{\perp}^{(t)}= \underline{\bPi}_S^{\perp}\bX$ therefore follow independent normal distributions with covariance matrix $\bSigma^{(t)}$.

Also we have
\[
Y_{\perp}^{(t)} =  \underline{\bPi}_S^{\perp} Y =\bX_{\perp}^{(t)} \theta^* + \sigma  \underline{\bPi}_S^{\perp} \epsilon =\bX_{\perp}^{(t)} \theta^*_{\overline{S}} + \sigma \underline{\bPi}_S^{\perp} \epsilon \ ,
\]
since the columns of $\bX_{\perp}^{(t)}$ in $S$ are equal to zero. 
Given $\bX_S$, $\underline{\bPi}_S^{\perp} \epsilon$  is projection of a standard normal vector onto a subspace of dimension $m/T-|S|$. As a consequence, $\underline{\bPi}_S^{\perp} \epsilon$ follows a normal distribution with covariance matrix $ \bI_{m/T-|S|}$ and is independent of $\bX$. The result follows. 
\end{proof}

\begin{proof}[Proof of Lemma \ref{lem:mistic}]
For simplicity, we write $S$ for $S_{t-1}$. 
Let $u$ be a normed vector supported  in $\overline{S}$. We shall prove that $u^T \bSigma^{(t)}u$ belongs to $(1/\eta,\eta)$. Consider a random vector $X\sim \cN(0,\bSigma)$ so that  $u^T \bSigma^{(t)}u=\var{u^TX|X_S}$. Consider the $|S|+1$ size covariance matrix $\bGamma$ of $((X_i)_{i\in S},u^TX)$. Then, $\bGamma\in \cU(\eta)$ and $\var{u^TX|\bX_S}=1/(\bGamma^{-1}_{|S|+1,|S|+1})$, which therefore lies in $(1/\eta,\eta)$. 
\end{proof}

\begin{proof}[Proof of Lemma~\ref{lem:estbon}] 
To alleviate the notation, we simply write $\widehat{\theta}_{(SL)}$ and 
$\widehat{\theta}_{(SL,t)}$ for $\widehat{\theta}_{(SL)}\big[\underline{Y}_{\perp}^{(t+1)},\underline{\bX}_{\perp}^{(t+1)} \big]$ and $\widehat{\theta}_{(SL,t)}\big[\underline{Y}_{\perp}^{(t+1)},\underline{\bX}_{\perp}^{(t+1)} \big]$ respectively. Recall that $S_{t+1}= S_t\cup \cS(\widehat{\theta}_{(SL,t)})$. The rows of $\underline{\bX}_{\perp}^{(t+1)}$ corresponding to indices in $S_t$ are null. Therefore, 
$\widehat{\theta}_{(SL)}\big[\underline{Y}_{\perp}^{(t+1)},\underline{\bX}_{\perp}^{(t+1)} \big]$ is a square-root Lasso estimator of $\underline{Y}_{\perp}^{(t+1)}$ given the restriction of $\underline{\bX}_{\perp}^{(t+1)}$ to the rows in $\overline{S_t}$. 
In view of Lemmas \ref{lem:mattransf} and \ref{lem:mistic}, we can apply  Lemma~\ref{lem:square_root_Lasso}. Thus, given $S_t$, there exists an event $\cF_t$ of probability higher than $1-\delta$ such that 
\begin{align}\label{eq:SL2}
\|\theta^*_{\overline{S}_t} - \widehat{\theta}_{(SL)}\|_2^2\leq \underline c_\eta^{(SL)}\sigma^2\frac{\|\theta^*_{\overline{S}_t}\|_0}{  m/T - |S_{t-1}|}\log\left(\frac{p}{\delta}\right)\ \, \quad \text{ and } \big|\frac{\widehat{\sigma}^{(t)}}{\sigma}-1\big|\leq \frac{1}{4}\ ,
\end{align}
By assumption,   $m/T\geq 2 |S_{t-1}|$.
Since $\widehat{\theta}_{(SL,t)}$ is a hard thresholded modification of 
$\widehat{\theta}_{(SL)}$ at level 
\[
\frac{8}{3}\widehat{\sigma}^{(t)} \sqrt{\underline{c}_\eta^{(SL)}\frac{T}{m} \log(\frac{p}{\delta})}\geq 2\sigma  \sqrt{\underline{c}_\eta^{(SL)}\frac{T}{m} \log(\frac{p}{\delta})}\ ,
\]
 its entry-wise error increases only at the non-zero entries of $\theta^*_{\overline{S}_t}$ and at most by $10/3\sigma\sqrt{\underline{c}_\eta^{(SL)}\frac{T}{m} \log(\tfrac{p}{\delta})}$.   This implies that 
\[
 \|\theta^*_{\overline{S}_{t+1}}\|_2^2 \leq \|\theta^*_{\overline{S}_t} - \widehat{\theta}_{(SL,t)}\|_2^2 \leq 2\big(2+ \tfrac{100}{9}\big)\sigma^2\underline{c}_\eta^{(SL)}\frac{T \|\theta^*_{\overline{S}_t}\|_0}{m}\log\left(\frac{p}{\delta}\right)\ . 
\]
Recall that $(S_{t+1}\setminus S_t)$ is the support of $\widehat{\theta}_{(SL,t)}$.
Each non-zero entry of $\widehat{\theta}_{(SL,t)}$ is equal to that of $ \widehat{\theta}_{(SL)}$. As a consequence,  each index in the support of $ \widehat{\theta}_{(SL,t)}$ and outside the support of $\theta^*$ contributes at least by $2\sigma^2  \underline{c}_\eta^{(SL)}\frac{T}{m} \log(\frac{p}{\delta})$ in the loss $\|\theta^*_{\overline{S}_t} - \widehat{\theta}_{(SL)}\|_2^2$. This implies 
\[
 \|\theta^*_{\overline{S}_t} - \widehat{\theta}_{(SL)}\|_2^2\geq 2 \sigma^2  \underline{c}_\eta^{(SL)}\frac{T}{m} \log(\frac{p}{\delta})\big|S_{t+1}\setminus \cS(\theta^*_{\overline{S}_t})\big|\ ,
\]
which in view of~\eqref{eq:SL2} leads us to 
$
|S_{t+1}\setminus \cS(\theta^*_{\overline{S}_t})|\leq \|\theta^*_{\overline{S}_t}\|_0$ and 
$$|S_{t+1}\setminus S_t|\leq \|\theta^*_{\overline{S}_t}\|_0+ |S_{t+1}\setminus \cS(\theta^*_{\overline{S}_t})|\leq 2\|\theta^*_{\overline{S}_t}\|_0\ ,$$
which concludes the proof. 

\end{proof}

\section{Proofs of the minimax lower bounds}

We first state the following classical lemma that links the total variation distance with the performance of a test with composite hypotheses. Some  variants of it may be found in textbooks such as~\cite{tsybakovbook}. For a sake of completeness, we provide a proof below. 
\begin{lem}\label{lem:minimax_lower_bound_general}
Consider a parametric model $\{\P_{\theta},\, \theta\in \Theta\}$ and two subsets $\Theta_0\subset \Theta ,\Theta_1\subset \Theta$. Let $\mu_0$ and $\mu_1$ be any probability measures on $\Theta$. Denote ${\bf P}_{\mu_i}=\int \P_{\theta}\mu_i(d\theta)$ for $i=0,1$.  Any test $\phi$ of $\Theta_0$ against $\Theta_1$ satisfies
\beq \label{eq:lower_bound_risk_test}
 \sup_{\theta\in \Theta_0}\P_{\theta}[\phi=1]+ \sup_{\theta\in \Theta_1}\P_{\theta}[\phi=0]\geq 1- \mu_0[\theta\notin \Theta_0] - \mu_1[\theta\notin \Theta_1] - \|{\bf P}_{\mu_0}- {\bf P}_{\mu_1}\|_{TV}\ .
 \eeq
 
\end{lem}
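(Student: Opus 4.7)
The plan is to combine two classical ingredients: (i) a reduction from the worst-case risk over $\Theta_0,\Theta_1$ to the Bayes-averaged risk under priors $\mu_0,\mu_1$, with a correction for mass outside $\Theta_0,\Theta_1$, and (ii) the standard Le~Cam two-point bound controlling the sum of error probabilities by one minus the total variation distance between the mixtures.

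First I would lower bound each supremum by the corresponding mixture probability minus a mass defect. For any test $\phi$,
\begin{equation*}
{\bf P}_{\mu_0}[\phi=1]=\int_{\Theta_0}\P_\theta[\phi=1]\,d\mu_0(\theta)+\int_{\Theta_0^c}\P_\theta[\phi=1]\,d\mu_0(\theta)\leq \mu_0[\Theta_0]\sup_{\theta\in\Theta_0}\P_\theta[\phi=1]+\mu_0[\Theta_0^c],
\end{equation*}
and since $\mu_0[\Theta_0]\leq 1$ this rearranges to $\sup_{\theta\in\Theta_0}\P_\theta[\phi=1]\geq {\bf P}_{\mu_0}[\phi=1]-\mu_0[\Theta_0^c]$. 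The same argument applied to $1-\phi$ and $\mu_1$ gives $\sup_{\theta\in\Theta_1}\P_\theta[\phi=0]\geq {\bf P}_{\mu_1}[\phi=0]-\mu_1[\Theta_1^c]$. Summing these two inequalities reduces the problem to bounding ${\bf P}_{\mu_0}[\phi=1]+{\bf P}_{\mu_1}[\phi=0]$ from below.

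Next I would invoke the two-point Le Cam identity: writing ${\bf P}_{\mu_1}[\phi=0]=1-{\bf P}_{\mu_1}[\phi=1]$, we get
\begin{equation*}
{\bf P}_{\mu_0}[\phi=1]+{\bf P}_{\mu_1}[\phi=0]=1+\bigl({\bf P}_{\mu_0}[\phi=1]-{\bf P}_{\mu_1}[\phi=1]\bigr)\geq 1-\bigl|{\bf P}_{\mu_0}[\phi=1]-{\bf P}_{\mu_1}[\phi=1]\bigr|\geq 1-\|{\bf P}_{\mu_0}-{\bf P}_{\mu_1}\|_{TV},
\end{equation*}
where the final inequality uses the definition $\|\mu-\nu\|_{TV}=\sup_{A}|\mu(A)-\nu(A)|$ applied to $A=\{\phi=1\}$. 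Combining with the two suprema bounds from the first step yields \eqref{eq:lower_bound_risk_test}.

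There is no real obstacle: the argument is a direct chain of elementary inequalities. The only subtle point worth flagging is the edge case $\mu_0[\Theta_0]=0$ (or $\mu_1[\Theta_1]=0$), which is handled automatically because the corresponding defect $\mu_0[\Theta_0^c]=1$ makes the lower bound on the supremum trivial and the inequality degenerates harmlessly. Measurability of $\{\phi=1\}$ is assumed implicitly in the definition of a test, so the TV step is legitimate.
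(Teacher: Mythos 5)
Your proof is correct, and it reaches \eqref{eq:lower_bound_risk_test} by a slightly different route than the paper. The paper first replaces $\mu_0,\mu_1$ by their conditioned versions $\mu_i'[\cdot]=\mu_i[\cdot\cap\Theta_i]/\mu_i[\Theta_i]$, applies Le Cam's two-point bound to the mixtures ${\bf P}_{\mu_i'}$, and then transfers back to ${\bf P}_{\mu_i}$ via the triangle inequality together with the contraction $\|{\bf P}_{\mu_i'}-{\bf P}_{\mu_i}\|_{TV}\leq\|\mu_i'-\mu_i\|_{TV}=\mu_i[\theta\notin\Theta_i]$. You instead keep the original mixtures throughout: you split ${\bf P}_{\mu_i}[\cdot]$ into the contributions from $\Theta_i$ and $\Theta_i^c$, absorb the latter into the defect $\mu_i[\Theta_i^c]$ (using $\P_\theta[\cdot]\leq 1$ and $\mu_i[\Theta_i]\leq1$), and then apply Le Cam directly to ${\bf P}_{\mu_0},{\bf P}_{\mu_1}$, where the defects enter additively from the start rather than through a triangle inequality on total variation. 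The two arguments are of comparable length and give the identical constant in front of each defect term; your version has the minor advantage of never forming the conditioned measures, so it does not implicitly require $\mu_i[\Theta_i]>0$ (a degenerate case the paper's construction leaves aside, and which you correctly note is harmless), while the paper's version isolates the reusable fact that mixing is a TV-contraction.
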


\begin{proof}[Proof of Lemma \ref{lem:minimax_lower_bound_general}]
For $i=0,1$, define the probability measure $\mu'_i$ by $\mu'_i[A]= \mu_i[A\cap \Theta_i]/\mu_i[\Theta_i]$ for any event $A$. Given $\mu'_i$, let ${\bf P}'_{\mu_i}=\int \P_{\theta}\mu'_i(d\theta)$ . It follows from Le Cam's arguments that 
\beq\label{eq:ll}
\sup_{\theta\in \Theta_0}\P_{\theta}[\phi=1]+ \sup_{\theta\in \Theta_1}\P_{\theta}[\phi=0]\geq 1- \|{\bf P}'_{\mu_0}- {\bf P}'_{\mu_1}\|_{TV}\ .
\eeq
By triangular inequality, one has
\[
 \|{\bf P}'_{\mu_0}- {\bf P}'_{\mu_1}\|_{TV}\leq \|{\bf P}_{\mu_0}- {\bf P}_{\mu_1}\|_{TV}+ \|{\bf P}'_{\mu_0}- {\bf P}_{\mu_0}\|_{TV}+ \|{\bf P}'_{\mu_1}- {\bf P}_{\mu_1}\|_{TV}
\]
Obviously, the total variation distance $\|\mu'_0 - \mu_0\|_{TV}$ equals $\mu'_0[\Theta_0]- \mu_0[\Theta_0]= \mu_0[\overline{\Theta_0}]$.
\[\|{\bf P}'_{\mu_0}- {\bf P}_{\mu_0}\|_{TV}=\sup_{\cA}\Big|\int \P_{\theta}(\cA)[\mu'_0(d\theta)- \mu_0(d\theta)]\Big|\leq \|\mu'_0 - \mu_0\|_{TV}\ .\]
Arguing similarly for $\|{\bf P}'_{\mu_1}- {\bf P}_{\mu_1}\|_{TV}$ and plugging these bound into \eqref{eq:ll} concludes the proof.

\end{proof}

\subsection{Proof of Proposition~\ref{prp:k0large}}

\begin{proof}[Proof of Proposition~\ref{prp:k0large}]
Intuitively, testing the sparsity for $k_0\geq n$ is impossible because  $\theta^*$ cannot be even recovered in noiseless setting ($\sigma=0$) when it contains more than $n$ non-zero entries. As the design matrix $\bX$ is random, this argument needs to be slightly refined.
Without loss of generality, we consider the case $p=n+1$, $k_0=n$ and $\Delta=1$. 
Let us write $\underline{\bX}$ the submatrix of $\bX$ made of its $n$ first  columns. In order to apply Lemma~\ref{lem:minimax_lower_bound_general}, we shall build two suitable prior distributions on the set of $n$ and $n+1$ sparse vectors.

With probability one, the square matrix $\underline{\bX}$ is invertible. Also denote $s_{\min}$ (resp. $s_{\max}$) the smallest (resp. highest) singular values of $\underline{\bX}$. Fix any $\delta \in (0,1)$.
As stated for instance in~\cite{MR2827856},  there exist $c_-(n,\delta)=c_->0$, $c_+(n,\delta)=c_+>0$ such that the following holds  
\beq\label{eq:control_c-}
\mathbb P_{\bX}\big[s_{min}>c_-\ ; s_{max}<c_+ \ ; c_-<\|\bX_{.,p}\|_2<c_+\big]\geq 1-\delta\ ,
\eeq
where $\mathbb P_{\bX}$ stands for the distribution of $\bX$. Here, $\bX_{.,p}$ stands for the $p$-th column of $\bX$. Although the exact expression of $c_-$ and $c_+$ is not relevant in this proof, these two quantities are of the order $n^{-1/2}$ and $n^{1/2}$.We call $\cA$ the event defined in the above probability bound.

Let $\mu_0$ stand for the centered Gaussian measure in $\mathbb{R}^{n+1}$ with covariance matrix $\big(\begin{array}{cc}
                                                                                                    \bI_n & 0 \\ 0 & 0 
                                                                                                   \end{array}\big)$
. We write ${\bf P}_{0,0}= \int_{\mathbb{R}^{n+1}}\P_{\theta,0}\mu_0(d\theta)$. Given any $r>0$, define the vector $v_r= (0,\ldots,0,r)^{T}$. We fix ${\bf P}_{1,r,0}= \int_{\mathbb{R}^{n+1}}\P_{\theta+v_r,0}\mu_0(d\theta)$. We argue that, for $r$ small enough, the total variation distance $\|{\bf P}_{0,0} - {\bf P}_{1,r,0}\|_{TV}$ is smaller than $2\delta$.  

Under $\mathbf{P}_{0,0}$, for a fixed  $\bX$,  it holds that $Y\sim \cN(0,\underline{\bX}\underline{\bX}^T)$ whereas, under $\mathbf{P}_{1,r,0}$, it holds that  $Y\sim \cN(\bX v_r,\underline{\bX}\underline{\bX}^T)$. When $\underline{\bX}$  satisfies $\cA$, these two covariance matrices are invertible with eigenvalues in $(c_-^2, c_{+}^2)$ and $\|\bX v_r\|_2\leq r c_+$. Thus, for $r$ going to zero,  the total variation distance between these conditional distributions   goes to zero uniformly over all $\bX$ satisfying $\cA$. In particular, there exists some $r_0$ such that these distances are uniformly smaller than $\delta$. Since $\P(\cA)\geq 1-\delta$, it follows that 
\[
\| {\bf P}_{0,0} - {\bf P}_{1,r_0,0}\|_{TV}\leq 2\delta.
\]
Consider $\sigma_0>0$ whose value will be fixed later. Define ${\bf P}_{0,\sigma_0}= \int_{\mathbb{R}^{n+1}}\P_{\theta,\sigma_0}\mu_0(d\theta)$ and ${\bf P}_{1,r_0,\sigma_0}= \int_{\mathbb{R}^{n+1}}\P_{\theta+v_r,\sigma_0}\mu_0(d\theta)$ the distributions associated to the linear regression models. By contraction properties of the total variation distances, one has 
\[
 \| {\bf P}_{0,\sigma_0} - {\bf P}_{1,r_0,\sigma_0}\|_{TV}\leq \| {\bf P}_{0,0} - {\bf P}_{1,r_0,0}\|_{TV}\leq 2\delta.
\]
When $\theta$ is sampled according to $\mu_0$, then the smallest (in absolute value) entry of $\theta$  among the $n$ first entries  is larger than some positive quantity $\underline{c}_{-}$, with probability larger than $1-\delta$.  Let us call $\cB$ the corresponding event. Define $\underline{\mu}$ as the measure $\mu_0$ conditioned to the event $\cB$, i.e. $\underline{\mu}(\cC)= {\mu}_{0}(\cC\cap \cB)/ {\mu}_{0}(\cB)$ for any measurable event $\cC$. Then, we introduce $\underline{\bf P}_{1,r_0,\sigma_0}= \int_{\mathbb{R}^{n+1}}\P_{\theta+v_r,\sigma_0}\underline{\mu}(d\theta)$. By triangular inequality, we obtain
\[
 \| {\bf P}_{0,\sigma_0} - \underline{\bf P}_{1,r_0,\sigma_0}\|_{TV}\leq 3\delta\ .
\]

When $\theta$ is sampled according to $\underline{\mu}$, $(\theta+v_{r_0})$ satisfies $d_2(\theta+v_{r_0},\bbB_0[n])\geq \underline{c_-}\wedge r_0$. As a consequence of Lemma \ref{lem:minimax_lower_bound_general}, any test of 
$\{\|\theta\|_0\leq n, \sigma=\sigma_0\}$ versus $\{\|\theta\|_0\leq n+1, d_2(\theta+v_{r_0},\bbB_0[n])\geq \underline{c}_-\wedge r_0, \sigma=\sigma_0\}$ has a risk higher than $1-3\delta$. We have 
\[
 \rho^*_{3\delta}[k_0,\Delta]\geq \frac{\underline{c}_-\wedge r_0}{\sigma_0}\ ,
\]
where $\underline{c}_{-}\wedge r_0$ does not depend on $\sigma_0$. Taking $\sigma_{0}$ arbitrarily small leads to the desired result.

\end{proof}

\subsection{Proof of Theorem~\ref{thm:lbkvkd}}

Given integers $k_0$ and $\Delta\leq p-k_0$, and $\rho>0$, we define the collection
\[
 \bbB_0[k_0,\Delta,\rho]= \bbB_0[k_0+\Delta]\ \bigcap\ \big\{\theta: d_2(\theta;\bbB_0[k_0]\geq\rho\big\}\ .
\]

We start by a simple reduction result to narrow the range of parameters. Its proof is postponed to the end of the section. 
\begin{lem}\label{lem:reduction}
For any $ \Delta'\leq \Delta\leq p-k_0$, we have
 \beq\label{eq:monotonic}
  \rho_{\gamma}^{*}[k_0,\Delta]\geq \rho_{\gamma}^{*}[k_0,\Delta']\ .
 \eeq
For the sake of the following bound, we explicit the dependency of $\rho_{\gamma}^{*}[k_0,\Delta]$ with respect to $p$ by denoting it  $\rho_{\gamma}^{*}[p,k_0,\Delta]$. For any $k'_0< k_0< p$ and $\Delta\leq p-k_0$, we have
 \beq\label{eq:subproblem}
 \rho_{\gamma}^{*}[p,k_0,\Delta] \geq \rho_{\gamma}^{*}[p-k_0+k'_0,k'_0,\Delta]\ .
 \eeq
 \end{lem}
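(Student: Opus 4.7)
The two displays in Lemma~\ref{lem:reduction} decouple, so I will prove them separately.

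For~\eqref{eq:monotonic}, the plan is to use the obvious set inclusion $\bbB_0[k_0+\Delta']\subset\bbB_0[k_0+\Delta]$ when $\Delta'\leq\Delta$: the type~II supremum in the risk $R(\phi;k_0,\Delta',\rho)$ then runs over a smaller set than in $R(\phi;k_0,\Delta,\rho)$, while the type~I term is identical, so $R(\phi;k_0,\Delta',\rho)\leq R(\phi;k_0,\Delta,\rho)$ for every test and every $\rho>0$. This inequality passes to the thresholds $\rho_\gamma(\phi;k_0,\cdot)$ and then, after infimum in $\phi$, to the minimax separation distances.

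For~\eqref{eq:subproblem}, set $d:=k_0-k'_0$ and $p':=p-d$. The plan is to embed the $(p',k'_0,\Delta)$-problem inside the $(p,k_0,\Delta)$-problem by extending the parameter with $d$ ``phantom'' coordinates of large common value. I would fix $M>0$, put $v_M=M\mathbf{1}_d\in\bbR^d$, and convert any test $\phi$ of the big problem into a randomized test $\tilde\phi_M$ of the small one as follows: on input $(Y',\bX')$, draw an independent $n\times d$ standard Gaussian matrix $\bX^{\mathrm{aux}}$, form $Y=Y'+\bX^{\mathrm{aux}}v_M$ and $\bX=(\bX^{\mathrm{aux}},\bX')$, and return $\phi(Y,\bX)$. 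Under $\P^{(p')}_{\theta'',\sigma}$ the pair $(Y,\bX)$ has the same law as under $\P^{(p)}_{(v_M,\theta''),\sigma}$; when $\theta''\in\bbB_0[k'_0]$ we have $(v_M,\theta'')\in\bbB_0[k_0]$, so the type~I error of $\tilde\phi_M$ is dominated by that of $\phi$. The key ingredient is the distance identity
\[
d_2\bigl((v_M,\theta''),\bbB_0[k_0]\bigr)=d_2\bigl(\theta'',\bbB_0[k'_0]\bigr)\qquad\text{whenever }M\geq\|\theta''\|_\infty,
\]
which I would prove by decomposing a candidate $u=(u^{(1)},u^{(2)})\in\bbB_0[k_0]$ and noting that if $\|u^{(2)}\|_0=k'_0+s$ with $s\geq 1$ then $\|u^{(1)}\|_0\leq d-s$, so the $v_M$-mismatch contributes at least $sM^2\geq s\|\theta''\|_\infty^2\geq\sum_{i=k'_0+1}^{k'_0+s}|\theta''_{(i)}|^2$, which exactly offsets the extra $\theta''$-mass absorbed by $u^{(2)}$. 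Granting this identity, each $\theta''$ in the $(p')$-alternative at separation $\rho$ with $\|\theta''\|_\infty\leq M$ maps to a parameter in the $(p)$-alternative at separation $\rho$, so the truncated type~II error of $\tilde\phi_M$ is controlled by that of $\phi$.

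This construction gives, for every $M>0$, $\rho^{*,M}_\gamma[p',k'_0,\Delta]\leq\rho^*_\gamma[p,k_0,\Delta]$, where $\rho^{*,M}_\gamma$ denotes the minimax separation for the $(p')$-problem when its alternative is further restricted to $\|\theta''\|_\infty\leq M$. The main obstacle is the final step: letting $M\to\infty$ and showing that $\rho^{*,M}_\gamma[p',k'_0,\Delta]\to\rho^*_\gamma[p',k'_0,\Delta]$. I would justify this by arguing that the hardest alternatives are of bounded magnitude---parameters with arbitrarily large entries produce strong signals that any reasonable test detects, so they never drive the minimax risk---or, more concretely, by checking that the Le~Cam-type priors used to witness the lower bound of Theorem~\ref{thm:lbkvkd} can be taken with bounded $\ell_\infty$-norm, so that the truncation is painless for the intended application of the lemma.
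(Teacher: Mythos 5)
Your proof of \eqref{eq:monotonic} is correct and is exactly the paper's argument: the alternative set shrinks with $\Delta$, the type~I term is unchanged, so the risk and hence the separation thresholds are monotone.

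For \eqref{eq:subproblem} your embedding (augment the design with $d=k_0-k'_0$ independent Gaussian phantom covariates carrying a common coefficient, then feed the augmented data to a test of the big problem) is the same device the paper uses, but your choice of the phantom magnitude creates a genuine gap. Because you take the phantom value to be an arbitrary large constant $M$ and rely on the identity $d_2\bigl((v_M,\theta''),\bbB_0[k_0]\bigr)=d_2\bigl(\theta'',\bbB_0[k'_0]\bigr)$, which (even in the direction you need, ``$\geq$'') requires $M\geq\|\theta''\|_\infty$, you are forced to truncate the alternative to $\{\|\theta''\|_\infty\leq M\}$ and then claim $\rho^{*,M}_\gamma[p',k'_0,\Delta]\to\rho^{*}_\gamma[p',k'_0,\Delta]$ as $M\to\infty$. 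That step is not proved and is not obvious: restricting the alternative only makes the testing problem easier, so $\rho^{*,M}_\gamma\leq\rho^{*}_\gamma$ for every $M$, and passing to the supremum over $M$ does not recover $\rho^{*}_\gamma$ without a uniform argument; moreover ``large entries are easy to detect'' is false as stated, since an alternative may carry its huge entries among its $k'_0$ largest coordinates, where they contribute nothing to $d_2(\theta'',\bbB_0[k'_0])$ and act purely as a nuisance. Your fallback (checking that the lower-bound priors of Theorem~\ref{thm:lbkvkd} are bounded in $\ell_\infty$) would at best salvage the specific applications, not the inequality \eqref{eq:subproblem} as stated.

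The paper sidesteps all of this by calibrating the phantom coefficients to the target separation rather than to $\|\theta''\|_\infty$: fix $\zeta>0$, set $r=\rho^{*}_\gamma[p,k_0,\Delta]+\zeta$, take a test $\phi$ with $R[\phi;k_0,\Delta,r]\leq\gamma$, and give each of the $d$ phantom coordinates the value $r\sigma$. Then only the one-sided bound $d_2\bigl((r\sigma\mathbf 1_d,\theta''),\bbB_0[k_0]\bigr)\geq r\sigma$ is needed, and it follows from the very case analysis you already wrote: any $u=(u^{(1)},u^{(2)})\in\bbB_0[k_0]$ either keeps $u^{(2)}$ $k'_0$-sparse, in which case the $\theta''$-block alone costs at least $d_2(\theta'',\bbB_0[k'_0])\geq r\sigma$, or uses $k'_0+s$ nonzeros there with $s\geq1$, in which case at least $s$ phantom coordinates are missed at cost at least $\sqrt{s}\,r\sigma\geq r\sigma$. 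No condition on $\|\theta''\|_\infty$, no truncation, no limit in $M$; letting $\zeta\downarrow0$ gives \eqref{eq:subproblem} exactly. I recommend replacing your $M$-construction and the unjustified limiting step by this choice.
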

In other words, the minimax separation distance in non-decreasing with respect to $\Delta$ and, up to a change in the number $p$ of covariates, it is also nondecreasing with respect to $k_0$. Next, we state three lemmas whose combination implies Theorem \ref{thm:lbkvkd}.

\begin{lem}\label{lem:detection}
Assume that $p\geq 2n$. There exists a numerical constant $c>0$ such that 
\beq \label{eq:lower_detection}
\rho_{\gamma}^{*2}[k_0,\Delta]  \geq c \Big[\frac{1}{\sqrt{n}} \bigwedge \frac{\Delta  \log(1+\sqrt{p}/\Delta)}{n}\Big]\  ,
\eeq
for any $\gamma\leq 0.53$,  all $k_0\leq n$ and $1\leq \Delta\leq p-k_0$.
\end{lem}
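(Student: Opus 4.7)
The plan is to reduce this lower bound to the already-known signal detection lower bound (i.e., the case $k_0=0$) and then absorb the small change in the ambient dimension into the numerical constants.

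First, I would apply~\eqref{eq:subproblem} of Lemma~\ref{lem:reduction} with $k_0'=0$, which gives $\rho_\gamma^*[p,k_0,\Delta]\geq \rho_\gamma^*[p-k_0,0,\Delta]$. Under the assumptions $p\geq 2n$ and $k_0\leq n$, the reduced covariate dimension satisfies $p-k_0\geq p/2\geq n$, so the reduced detection problem remains in the high-dimensional regime where known sharp bounds apply.

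Second, I would invoke the minimax lower bound for signal detection in Gaussian random design linear regression with independent design and known noise level, as established in~\cite{2010_EJS_Ingster}. That result ensures that, for any $\gamma\leq 0.53$ (the specific numerical threshold in the statement is dictated by the allowed total variation distance in that proof) and any $p'\geq 2n$, one has
\[
\rho_{\gamma}^{*2}[p',0,\Delta] \geq c \Big[\tfrac{1}{\sqrt{n}} \wedge \tfrac{\Delta\log(1+\sqrt{p'}/\Delta)}{n}\Big].
\]
Applying this with $p'=p-k_0$ and combining with the reduction above yields
\[
\rho_{\gamma}^{*2}[k_0,\Delta] \geq c \Big[\tfrac{1}{\sqrt{n}} \wedge \tfrac{\Delta\log(1+\sqrt{p-k_0}/\Delta)}{n}\Big].
\]

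Finally, since $p-k_0\geq p/2$, one has $\sqrt{p-k_0}\geq \sqrt{p}/\sqrt 2$, and a routine check of the function $x\mapsto \log(1+x/\sqrt 2)/\log(1+x)$ shows it is bounded below by a positive absolute constant on $(0,\infty)$; absorbing this factor into $c$ gives the claimed bound. The only non-trivial ingredient is the detection bound from~\cite{2010_EJS_Ingster}, whose proof rests on a Bernoulli--Gaussian prior over $\Delta$-sparse vectors of common Euclidean norm $\rho\sigma$ and a chi-square control of the total variation between the resulting mixture and the null distribution (applied through Lemma~\ref{lem:minimax_lower_bound_general}); since it is an already-published result, the proof of Lemma~\ref{lem:detection} itself is essentially just the monotonicity reduction.
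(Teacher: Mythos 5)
Your proposal is correct and follows essentially the same route as the paper: the paper's proof also applies the reduction \eqref{eq:subproblem} of Lemma~\ref{lem:reduction} (with $k_0'=0$) to pass to a pure detection problem in ambient dimension $p-k_0\geq p/2$, invokes the known detection lower bound $c\big[n^{-1/2}\wedge \Delta\log(1+\sqrt{p'}/\Delta)/n\big]$, and absorbs the $\sqrt{p/2}$ versus $\sqrt{p}$ discrepancy into the numerical constant. The only cosmetic difference is the reference for the detection bound (the paper cites \cite{verzelen_minimax}, Sect.~9.1, rather than \cite{2010_EJS_Ingster}), which does not affect the validity of the argument.
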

\begin{proof}[Proof of Lemma \ref{lem:detection}]
This lemma is a consequence of known signal detection lower bounds ($k_0=0$). For instance, it is proved in~\cite[][Sect.9.1]{verzelen_minimax} in 
\[\rho_{\gamma}^{*2}[0,\Delta]  \geq c \Big[\frac{1}{\sqrt{n}} \bigwedge \frac{\Delta \log(1+\sqrt{p}/\Delta)}{n}\Big]\ ,\]
for all $1\leq \Delta\leq p$. Since $p\geq 2n$ and $k_0\leq n$, Lemma~\ref{lem:reduction} entails that 
\[\rho_{\gamma}^{*2}[k_0,\Delta]  \geq c \Big[\frac{1}{\sqrt{n}} \bigwedge \frac{\Delta \log(1+\sqrt{p/2}/\Delta)}{n}\Big]\ ,\]
 which concludes the proof. 
\end{proof}

\begin{lem}\label{lem:tierce}
Assume that $p\geq 2n$. There exist constants $c_1$--$c_5$ such that the following holds for all $\gamma\leq 0.06$ :
\beq\label{eq:lower_tierce}
\rho_{\gamma}^{*2}[k_0,\Delta]  \geq c_1  \frac{\Delta\wedge k_0}{n}  \log\big(1+\frac{\sqrt{p}}{\Delta\wedge k_0}\big)\ ,
\eeq
for all $k_0\leq n$ and $\Delta>0$.
Furthermore, if $p\geq c_2 n^2$ and $k_0\in (c_3 n /\log(\sqrt{p}/n), \sqrt{p}/e^4)$ and $\Delta\leq k_0$, then
\beq\label{eq:lower_tierce2}
\rho_{\gamma}^{*2}[k_0,\Delta] \geq c_4\frac{\Delta }{n} \log\left(2\vee  \frac{\sqrt{p}}{k_0}\right)e^{c_5\frac{k_0}{n}\log(1+\frac{\sqrt{p}}{k_0})}\ .
\eeq

\end{lem}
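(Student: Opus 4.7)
The strategy is to reduce the sparsity testing problem to a signal detection problem with \emph{unknown} noise variance, for which sharp minimax lower bounds are available from \cite{2010_AS_Verzelen, verzelen_minimax}. A Gaussian prior on the $k_0$-sparse null effectively ``inflates'' the noise level from $\sigma^2$ to some $\sigma'^2 > \sigma^2$, turning the known-variance sparsity problem into an unknown-variance detection problem. By the monotonicity \eqref{eq:monotonic} in Lemma~\ref{lem:reduction}, I may reduce \eqref{eq:lower_tierce} to $\Delta \leq k_0$ (taking $\Delta' = k_0$ when $\Delta > k_0$); \eqref{eq:lower_tierce2} already assumes $\Delta \leq k_0$.

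The concrete construction goes as follows. Fix disjoint subsets $S_0, S_1 \subset [p]$ with $|S_0|=k_0$, $|S_1|=\Delta$, and a parameter $\tau > 0$. Define the null prior $\mu_0$ by drawing $\theta^*_{S_0} \sim \cN(0, (\tau^2/k_0)\bI_{k_0})$ and setting $\theta^*_j=0$ for $j\notin S_0$; this is supported on $\bbB_0[k_0]$. The alternative prior $\mu_1$ uses the same marginal on $S_0$ and independently draws a signal $\theta^*_{S_1}$ from a prior used in the lower bound construction for detection with unknown variance (a rescaled signed Rademacher prior on a random sparse subset works), calibrated so that $\|\theta^*_{S_1}\|_2 \geq \rho\sigma$ with high probability. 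Conditionally on $\bX$, under ${\bf P}_{\mu_0}$ one has $Y | \bX \sim \cN(0, M_0)$ with $M_0 := \sigma^2\bI_n + (\tau^2/k_0)\bX_{S_0}\bX_{S_0}^T$, and under ${\bf P}_{\mu_1}$ a Gaussian mixture with the same base covariance $M_0$ plus an additional signal term coming from $\bX_{S_1}\theta^*_{S_1}$.

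The heart of the argument is bounding $\|{\bf P}_{\mu_0} - {\bf P}_{\mu_1}\|_{TV}$ via a $\chi^2$-computation conditional on $\bX$. On a ``good event'' for $\bX$ (controlled by Wishart-type concentration, along the lines of Lemma~\ref{lem:eigv}), this $\chi^2$ is controlled by its analogue in the detection problem with unknown variance in the range $[\sigma^2,\sigma^2+\tau^2]$. Plugging in the minimax lower bound from \cite{2010_AS_Verzelen, verzelen_minimax} for that problem (the same one underlying Proposition~\ref{prp:lbuvkd}) and applying Lemma~\ref{lem:minimax_lower_bound_general}, any test is forced to fail whenever $\rho^2\sigma^2 \lesssim (\sigma^2+\tau^2)\cdot \frac{\Delta}{n}\log(1+\sqrt{p}/\Delta)$ in the regime where the exponential factor is inactive, and a similar inequality carrying the exponential factor in the ``ultra-high-dimensional'' regime. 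For \eqref{eq:lower_tierce} I take $\tau^2\asymp\sigma^2$; this yields the $\frac{\Delta}{n}\log(1+\sqrt{p}/\Delta)$ rate, which after the $\Delta\leftarrow k_0$ reduction gives \eqref{eq:lower_tierce}. For \eqref{eq:lower_tierce2}, in the regime $p\geq c_2n^2$ and $k_0\in (c_3 n/\log(\sqrt{p}/n),\sqrt{p}/e^4)$, I would take $\tau^2/\sigma^2$ as large as the reduction allows; explicit bookkeeping in the $\chi^2$ shows that this maximum ratio is of order $\exp(c_5\frac{k_0}{n}\log(1+\sqrt{p}/k_0))$, producing the exponential factor.

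The main obstacle is the $\chi^2$ computation itself. Unlike in the Gaussian sequence model of \cite{carpentier2017adaptive}, the matrix $M_0$ is not close to a scalar multiple of $\bI_n$: when $n > k_0$, $\bX_{S_0}\bX_{S_0}^T$ is rank-$k_0$ with nonzero eigenvalues of order $n$, so $M_0$ acts as $\sigma^2$ on an $(n-k_0)$-dimensional subspace and as $\approx \sigma^2+n\tau^2/k_0$ on an orthogonal $k_0$-dimensional subspace. One must therefore invert $M_0$ (via Sherman--Morrison or the Woodbury identity) and carefully control cross-terms such as $\bX_{S_1}^T M_0^{-1}\bX_{S_1}$ on the good event. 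The second bound \eqref{eq:lower_tierce2} is particularly delicate because $\tau$ is taken very large; the ultra-high-dimensional condition $p\geq c_2 n^2$ together with $k_0\leq\sqrt{p}/e^4$ is what makes the unknown-variance detection lower bound still applicable (and sharp) in this regime and delivers the exponential factor.
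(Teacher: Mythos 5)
Your high-level idea (reduce to detection with unknown variance via an ``inflated-noise'' third-party hypothesis) is indeed the intuition behind the paper's proof, but your implementation has a genuine gap at its central step. You put a Gaussian prior $\cN(0,(\tau^2/k_0)\bI_{k_0})$ on a \emph{fixed} support $S_0$ of size $k_0$ under both hypotheses. As you yourself note, conditionally on $\bX$ this inflates the variance only on the $k_0$-dimensional column span of $\bX_{S_0}$, not isotropically. This is not a technical nuisance to be absorbed by Woodbury bookkeeping: since $S_0$ is deterministic in the construction, a test is allowed to project $Y$ onto the orthogonal complement of the span of the columns of $\bX_{S_0}$. Under your null mixture the projected data are pure noise with \emph{known} variance $\sigma^2$, and under your alternative they contain the $S_1$-signal with an effective Gaussian design of $n-k_0$ rows. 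Hence your two mixtures can be distinguished as soon as $\rho^2$ exceeds the known-variance detection threshold $\min\big(\tfrac{\Delta}{n-k_0}\log(1+\sqrt{p}/\Delta),(n-k_0)^{-1/2}\big)$, so their total variation distance is close to $1$ at the separations you claim whenever $k_0\leq cn$ and the target rate exceeds $n^{-1/2}$ — precisely the regime where \eqref{eq:lower_tierce} goes beyond the plain detection bound, and a fortiori for the exponential factor in \eqref{eq:lower_tierce2}. Consequently the key assertion that the conditional $\chi^2$ ``is controlled by its analogue in the detection problem with unknown variance'' is false for this prior, and this $\chi^2$ computation (which you flag as the main obstacle) is also never carried out, so the exponent $c_5$ in \eqref{eq:lower_tierce2} is not actually derived.

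The paper's proof avoids any new $\chi^2$ computation. After the same reduction to $\Delta\leq k_0$ via Lemma~\ref{lem:reduction}, it takes \emph{random-support} sparse Rademacher-type priors $\mu_{\rho,k}$ (uniform over vectors with exactly $k$ nonzero entries of magnitude $\rho/\sqrt{k}$) and uses, as a black box, the total variation bounds established in \cite{2010_AS_Verzelen,verzelen_minimax}: $\|\mathbf{P}_{\rho,k}-\bP_{0,1+\rho^2}\|_{TV}\leq 0.47$ whenever $\rho\leq\rho_k$ (or $\rho\leq\rho'_k$ in the ultra-high-dimensional regime). Applying this twice with the \emph{same} energy $\rho_{k_0}$ but sparsities $k_0$ (null) and $k_0+\Delta$ (alternative) — legitimate because $\rho_k$ is nondecreasing in $k$ — and using the triangle inequality through the common third-party law $\bP_{0,1+\rho_{k_0}^2}$ gives $\|\mathbf{P}_{\rho_{k_0},k_0}-\mathbf{P}_{\rho_{k_0},k_0+\Delta}\|_{TV}\leq 0.94$; Lemma~\ref{lem:minimax_lower_bound_general} then yields the separation $\Delta\rho_{k_0}^2/(k_0+\Delta)\geq\rho_{k_0}^2\Delta/(2k_0)$, and \eqref{eq:lower_tierce2} follows from the same argument with $\rho'_{k_0}$ under the conditions $p\geq c_2n^2$, $k_0\gtrsim n/\log(\sqrt{p}/n)$. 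The randomness of the support over $p\gg n$ coordinates is exactly what makes the sparse prior indistinguishable from an isotropic variance inflation; with a fixed support this fails, which is the root of the gap in your argument.
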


\begin{proof}[Proof of Lemma \ref{lem:tierce}]
 In the above lemma, the minimax lower bounds both depend on the size $k_0$ of the null hypothesis and on the size $\Delta$ of the alternative hypothesis. As a consequence, we cannot directly rely anymore on signal detection results as in the previous lemma. Nevertheless, we will introduce a third party hypothesis and make  make use of previous signal detection lower bounds for unknown $\sigma$~\cite{2010_AS_Verzelen,verzelen_minimax}. 
 
 By Lemma~\ref{lem:reduction}, we assume without loss of generality that $\Delta\leq k_0$ . Given $\rho>0$ and $1\leq k\leq p$, we define $\mu_{\rho,k}$ as the uniform measure over the set 
 \[
\Big\{\theta\in \mathbb{R}^p,\, \|\theta\|_{0}=k,\quad \forall i\in\{1,\ldots,p\},\quad \, |\theta_i|= 0\text{ or }\rho/\sqrt{k}\Big\}\ ,
 \]
and the mixture measure $\mathbf{P}_{\rho,k}= \int \bP_{\theta,1}\mu_{\rho,k}(d\theta)$. As a way to derive minimax lower bounds for signal detection with unknown noise level, it is proved in \cite[][Theorem 4.3]{2010_AS_Verzelen} and \cite[][Lemma 9.3]{verzelen_minimax}\footnote{Actually, the results in \cite{2010_AS_Verzelen,verzelen_minimax} are expressed in terms of minimax separation distance, the total variation distance control being stated in their respective proof.} that 
$\|\mathbf{P}_{\rho,k} - \bP_{0,1+\rho^2}\|_{TV}\leq 0.47$ if $\rho\leq \rho_{k}$ or if $\rho\leq \rho'_{k}$ with 
\beqn 
 \rho_k^2&= &\frac{k}{2n}\log\left(1+\frac{\sqrt{p}}{k} \right)\ ;\\
 \rho^{'2}_{k}&=& -1 + \left(\frac{p}{2ek} \right)^{k/n}(4k)^{-2/n}\quad \text{ for }\quad  k\log\left(\frac{\sqrt{p}}{e^{3/2}k}\right)\geq n\ . 
\eeqn 
Let us now deduce \eqref{eq:lower_tierce}. Since $\rho_k$ is increasing with respect to $k$, we have
\[
 \|\mathbf{P}_{\rho_{k_0},k_{0}}- \mathbf{P}_{\rho_{k_0},k_{0}+\Delta}\|_{TV}\leq \|\mathbf{P}_{\rho_{k_0},k_{0}}- \bP_{0,1+\rho_{k_0}^2}\|_{TV}+ \| \bP_{0,1+\rho_{k_0}^2}- \mathbf{P}_{\rho_{k_0},k_{0}+\Delta}\|_{TV}\leq 0.94
\]
Under $\mu_{\rho_{k_0},k_{0}}$, $\theta$ is $k_0$-sparse, whereas under $\mu_{\rho_{k_0},k_{0}+\Delta}$, $\theta$ is $k_0+\Delta$-sparse and  its square distance to $\bbB_0[k_0]$ is $\Delta \rho^2_{k_0}/(k_0+\Delta)$. From Lemma \ref{lem:minimax_lower_bound_general}, we deduce that, for $\gamma\leq 0.06$, one has 
\[
\rho_{\gamma}^{*2}[k_0,\Delta]\geq \Delta \rho^2_{k_0} \frac{\Delta}{\Delta+k_0}\geq \rho^2_{k_0}\frac{\Delta}{2k_0}\ ,
\]
which enforces \eqref{eq:lower_tierce} since we have $\Delta\leq k_0$. Turning to \eqref{eq:lower_tierce2}, we observe that, under the assumptions of the lemma (and with a suitable choice of $c_2$), 
$k\log\big(\tfrac{\sqrt{p}}{e^{3/2}k}\big)\geq n$ both for $k=k_0$ and $k=k_0+\Delta\leq 2k_0$. Arguing as above, we deduce that 
\[
\rho_{\gamma}^{*2}[k_0,\Delta]\geq \rho^{'2}_{k_0}\frac{\Delta}{2k_0}\geq c\frac{\Delta}{k_0}e^{c'\frac{k_0}{n}\log\left(\frac{p}{k_0}\right)}\geq c_2\frac{\Delta}{n}\log\left(\frac{p}{k_0}\right)e^{c_3\frac{k_0}{n}\log\left(\frac{p}{k_0}\right)}\ , 
\]
since the expression inside the exponential is bounded away from zero and since $e^{x}\geq 1+x$ for $x>0$. We have proved~\eqref{eq:lower_tierce2}.
\end{proof}

The following lemma provides the key new lower bound. It corresponds to the regime where both $k_0$ and $\Delta$ are large. Its proof relies on  more advanced arguments than the other regimes.

\begin{lem} \label{lem:lower_moments}  
There exists positive numerical constant $c$ and $c_2$ such that the following holds for any $\gamma\leq 0.5$ and all $p\geq c_2$. 
For any $p^{1/4}\leq k_0 \leq n$ and $\Delta \geq k_0^{2/3}\vee p^{1/4}$, one has 
\beq\label{eq:lower_moments_minimax}
\rho^{*2}_{\gamma}[k_0,\Delta] \geq  c \frac{\Delta}{n } \frac{\log^2\big[1+ \sqrt{\frac{k_0}{\Delta}}\big]}{\log(p)}\ .
\eeq
\end{lem}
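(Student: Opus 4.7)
My plan is to apply Lemma \ref{lem:minimax_lower_bound_general} with two carefully chosen priors $\mu_0$ and $\mu_1$ and to show that $\|\mathbf{P}_{\mu_0} - \mathbf{P}_{\mu_1}\|_{TV}$ is bounded away from $1$. In the regime under consideration, both $k_0$ and $\Delta$ are large, so the ``third-party hypothesis'' reduction to signal detection used in Lemma \ref{lem:tierce} is no longer available: the composite structure of the \emph{null} must be genuinely exploited. I would follow the moment-matching scheme of \cite{lepski1999estimation} and its refinement to sparsity testing in the Gaussian sequence model~\cite{carpentier2017adaptive}, adapted to the random-design setting.

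I would take product priors $\mu_i = \nu_i^{\otimes p}$ with
$$ \nu_i = (1 - \pi_i)\, \delta_0 + \pi_i\, g_i, \qquad \pi_0 = \tfrac{k_0}{p},\quad \pi_1 = \tfrac{k_0 + \Delta}{p}, $$
where $g_0, g_1$ are symmetric probability measures supported on an interval $[-b,b]$ with $b^2$ proportional to the target per-coordinate scale $\rho^2/\Delta$. Standard Bernoulli concentration yields that $\theta\sim\mu_0$ sits in $\bbB_0[k_0]$ and $\theta\sim\mu_1$ sits in $\bbB_0[k_0+\Delta]$ with $d_2^2(\theta, \bbB_0[k_0])\gtrsim \Delta b^2$, each with high probability. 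The crucial property is that the auxiliary measures $g_0, g_1$ are chosen via best polynomial approximation so that $\pi_0 \int x^{2j} g_0(dx) = \pi_1 \int x^{2j} g_1(dx)$ for all $j = 1, \ldots, L$ with $L\asymp \log(p)$; the factor $\log^2(1+\sqrt{k_0/\Delta})$ in the target rate then emerges as the Chebyshev-type approximation error of the indicator $\1_{x=0}$ by a degree-$L$ polynomial on the rescaled interval, exactly as in~\cite{carpentier2017adaptive}.

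To control the total variation via $\chi^2$, I would condition on the design. Since $\bX$ has the same marginal under both priors,
$$ \chi^2(\mathbf{P}_{\mu_1}, \mathbf{P}_{\mu_0}) \;=\; \E_{\bX}\bigl[\,\chi^2(\mathbf{P}_{\mu_1|\bX}, \mathbf{P}_{\mu_0|\bX})\,\bigr]. $$
Given $\bX$, $Y$ is a Gaussian mixture, and the Ingster--Suslina identity expresses the inner $\chi^2$ as an expectation of $\exp(\langle \bX\theta, \bX\theta'\rangle/\sigma^2)$-type quantities over independent copies $\theta,\theta'\sim \mu_i$. Expanding in Taylor series and integrating over the Gaussian $\bX$ produces sums indexed by multi-indices on $\{1,\ldots,p\}$: the ``diagonal'' contributions factorise over coordinates and vanish up to order $L$ by the matched-moments property (this is the sequence-model argument), whereas the ``off-diagonal'' contributions depend on the cross inner products $\langle \bX_{:,j}, \bX_{:,j'}\rangle$ and require new work.

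The main obstacle is precisely this off-diagonal contribution. Each cross product $\langle \bX_{:,j}, \bX_{:,j'}\rangle/n$ is of order $n^{-1/2}$, so whole classes of multi-indices that would vanish in the orthogonal sequence model now contribute. The key will be to show that these contributions remain $O(1)$ under the stated restrictions $p^{1/4}\leq k_0\leq n$ and $\Delta\geq k_0^{2/3}\vee p^{1/4}$: these constraints ensure both that the priors produce enough non-zero entries for concentration to hold and that the combinatorial sum over off-diagonal multi-indices stays bounded. The extra $\log(p)$ in the denominator of the target rate, absent from the corresponding sequence-model bound, is precisely the price paid for these cross-term contributions: it caps the admissible polynomial degree $L$ and thereby limits the precision of the moment matching.
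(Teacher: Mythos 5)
Your construction of the priors is essentially the paper's: product two-point mixtures with a Dirac at zero, moment matching up to degree of order $\log(p)$ (the paper uses the measures of \cite{carpentier2017adaptive}, Lemma~\ref{lem:nemirovski2} here, with support bounded away from zero by $a_m$, which is where the $\log^2\big[1+\sqrt{k_0/\Delta}\big]/\log(p)$ scaling comes from), followed by Lemma~\ref{lem:minimax_lower_bound_general} and Bernoulli concentration for the sparsity and the distance to $\bbB_0[k_0]$. The gap is in the step you yourself flag as ``the main obstacle'': you propose to bound the conditional $\chi^2$ divergence via the Ingster--Suslina expansion and assert that the off-diagonal contributions coming from the cross products $\langle \bX_{.,j},\bX_{.,j'}\rangle$ ``will be shown'' to be $O(1)$ under the stated restrictions on $k_0$ and $\Delta$, but you give no argument for this. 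This is not a routine verification: it is precisely the computation the authors state they were unable to carry out (they remark that handling the $\chi^2$ distance between $\mathbf{P}_0$ and $\mathbf{P}_1$ directly is what would be needed to sharpen the bound near $k_0\asymp\sqrt p$, ``but we were not able to do it''). The paper avoids it by a different device: the telescoping decomposition \eqref{eq:decomposition_TV} over hybrid distributions $\mathbf{P}_0^{(j)}$ that differ in a single coordinate. Conditioning on the design and on all other coordinates reduces each term to the one-dimensional quantity $\int e^{\omega_p\theta_p+\xi_p\theta_p^2}(\mu_0-\mu_1)(d\theta_p)$, whose power series vanishes up to degree $m/2\asymp\log p$ by moment matching, while the tail is killed on the event $\{|\omega_p|\le 5\sqrt{n\log p},\ \xi_p\le 2n\}$; each of the $p$ terms is then $O(p^{-2}+e^{-n/16})$. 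Without either this one-coordinate-at-a-time argument or an actual bound on your off-diagonal multi-index sums, the proof is incomplete at its central step; note also that the extra $\log(p)$ loss in the rate is already produced by this hybrid argument (through the need for $m\gtrsim M\sqrt{n\log p}+nM^2\asymp \log p$ and the $p$-fold union), not by a cross-term analysis.

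Two smaller calibration points. First, taking $\pi_0=k_0/p$ leaves no slack: under $\mu_0$ the sparsity is $\mathrm{Bin}(p,k_0/p)$ and exceeds $k_0$ with probability close to $1/2$, so Lemma~\ref{lem:minimax_lower_bound_general} yields nothing for $\gamma$ up to $0.5$; the paper recenters to $\overline{k}_0=k_0-\Delta/2$ and $\overline{k}_1=k_0+\Delta/2$ and uses Chebyshev together with $\Delta\ge p^{1/4}\vee k_0^{2/3}$ to make both exceptional probabilities $O(p^{-1/8})$. Second, one should first reduce to $\Delta\le k_0$ by monotonicity of $\rho^*_\gamma[k_0,\Delta]$ in $\Delta$, which is what makes the target rate and the choice $q=\overline{k}_0/\overline{k}_1$ compatible.
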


\begin{proof}[Proof of Theorem~\ref{thm:lbkvkd}]

First we prove \eqref{eq:lower_lbkvkd}. 
The case $k_0\leq \sqrt{p}$ is a consequence of Lemmas \ref{lem:detection} and \ref{lem:tierce}. As for the case $k_0\in (\sqrt{p}, n)$, we divide the analysis into several subcases. If $\Delta \leq p^{1/4}$, it follows from Lemma \ref{lem:tierce} that $\rho_{\gamma}^{*2}[k_0,\Delta]$  is at least of the  order of $\Delta\log(p)/n$  which is larger than the lower bound in \eqref{eq:lower_lbkvkd}. For $\Delta \geq p^{1/4}\vee k_0^{2/3}$ we rely on Lemma \ref{lem:lower_moments}. For $\Delta \in (p^{1/4},k_0^{2/3})$, we define $k'_0=\lfloor \Delta^{3/2}\rfloor$. From the reduction \eqref{eq:subproblem} and  Lemma \ref{lem:lower_moments}, we derive that 
\[
 \rho_{\gamma}^{*2}[k_0,\Delta]\geq c\frac{\Delta}{n } \frac{\log^2\big[1+ \sqrt{\frac{k'_0}{\Delta}}\big]}{\log(p-k_0+k'_0)}\geq c' \frac{\Delta}{n}\log(p) \geq c''\frac{\Delta}{n } \frac{\log^2\big[1+ \sqrt{\frac{k_0}{\Delta}}\big]}{\log(p)}\ . 
\]
Finally, the lower bound \eqref{eq:lower_ultra_high_known_variance} is a consequence of the second part of Lemma \ref{lem:tierce} together with the reduction lemma~\ref{lem:reduction}. 

\end{proof}

\begin{proof}[Proof of Lemma \ref{lem:reduction}]
The first bound is a simple consequence of the inclusion $\bbB_0[k_0,\Delta,\rho]\subset \bbB_0[k_0,\Delta',\rho]$. Let us turn to \eqref{eq:subproblem}. Take any $\zeta>0$ arbitrarily small and define $r= \rho_{\gamma}^{*}[k_0,\Delta]+\zeta$. There exists a test $\phi$ satisfying $R[\phi;k_0,\Delta,r]\leq \gamma$. For any linear regression problem with $p-k_0+k'_0$ covariates and response $Y$, we sample $k_0-k'_0$ new independent covariates, write $\underline{\bX}$ the corresponding new design matrix of size $n\times (k_0-k'_0)$, and define $\underline{Y}= Y + r\underline{\bX}1$ where $1$ is the constant vector of size $k_0-k'_0$. Since $R[\phi;k_0,\Delta,r]\leq \gamma$, we have
\[\sup_{\theta,\ \|\theta\|_0\leq k'_0} \mathbb P_{\theta}[\phi(\underline{Y})=1]+ \sup_{\theta,\ \|\theta\|_0\leq k'_0+ \Delta,\ d_2(\theta,\bbB_0[k'_0])\geq r}\mathbb P_{\theta}[\phi(\underline{Y})=0]\leq \gamma,\]
implying that $\rho_{\gamma}^{*}[p-k_0+k'_0,k'_0,\Delta]\leq r$. Taking the infimum over all $\zeta>0$, we obtain \eqref{eq:subproblem}.
\end{proof}

\begin{proof}[Proof  of Lemma \ref{lem:lower_moments}]

Without loss of generality we assume that the noise level $\sigma$ is equal to one and we write $\P_{\theta}$ for $\P_{\theta,1}$. Since the minimax separation distance $\rho_{\gamma}^{*}[k_0,\Delta]$ is a nondecreasing function of $\Delta$, we have $\rho_{\gamma}^{*}[k_0,\Delta]\geq \rho_{\gamma}^{*}[k_0,k_0]$ for any $\Delta>k_0$. In view of \eqref{eq:lower_moments_minimax} and since $\log(1+x)\geq x/2$ for any $x\in [0,1]$,  we only need to prove \eqref{eq:lower_moments_minimax} for $\Delta\leq k_0$.

Define $\overline{k}_0= k_0-\Delta/2$ and $\overline{k}_1= k_0+\Delta/2$. We introduce two priors $\mu_0^{\otimes p}$ and $\mu_{1}^{\otimes p}$ that are almost supported on $\bbB_0[k_0]$ and $\bbB_0[k_0+\Delta]$ respectively and such that the first moments of $\mu_0$ and $\mu_1$ are matching. In Step 3 below, we show that this moment matching property ensures that the corresponding mixture distributions of $(Y,\bX)$ are close in total variation distance.

\bigskip

\noindent 
\paragraph{Step 1. Construction of the priors.} As in \cite{carpentier2017adaptive}, we build prior measures $\mu_0$ and $\mu_1$ in such a way that their first moments are matching. Define the two quantities where $m$ is redefined only in this proof as follows)
\beq \label{eq:definition_m_M}
m= 2\lfloor 2\log(p)\rfloor\ , \quad \quad M= c \sqrt{\log(p)/n}\ , 
\eeq
for some universal constant $c$ whose value will be fixed later.
The following result is borrowed from~\cite[Lemma 3]{carpentier2017adaptive}.
\begin{lem}\label{lem:nemirovski2}
Given any positive and even integer $m$ and $q\in (0,1)$, define 
\beq\label{eq:defintion_am}
a_{m}= \tanh\Big[\frac{1}{m}\ \arg\cosh\big(\frac{1+q}{1-q}\big)\Big]\ .
\eeq
There exists two positive and symmetric measures $\nu_0$ and $\nu_1$ whose support lie in $[-1,-a_{m}]\cup [a_{m},1]$ satisfying:
\begin{eqnarray}
 \int \nu_0(dt)&=&q\ ;\quad \quad \int \nu_1(dt)=1 \ ;\label{eq:condition_moment0} \\
 \int t^d \nu_0(dt)&= &\int t^d \nu_1(dt) ;\quad \quad d=1,\ldots, m\ .\label{eq:condition_momentq}
\end{eqnarray}
\end{lem}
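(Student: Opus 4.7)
The plan is to exploit the required symmetry to reduce the problem, and then solve the reduced moment problem via an explicit construction based on shifted Chebyshev polynomials. Because both measures must be symmetric, every odd moment vanishes automatically, so matching the moments of orders $1,\ldots,m$ amounts to matching the even moments of orders $2,4,\ldots,m$. Pushing forward along $t\mapsto t^2$ reduces the task to building two positive measures $\tilde\nu_0,\tilde\nu_1$ on $[a_m^2,1]$ with respective total masses $q$ and $1$, and with matching moments of orders $1,\ldots,m/2$. Equivalently, the signed measure $\tau:=\tilde\nu_1-\tilde\nu_0$, of total mass $1-q$, must annihilate every monomial $s^k$ for $k=1,\ldots,m/2$, i.e.\ $\int P(s)\,\tau(ds)=(1-q)P(0)$ for every polynomial $P$ of degree at most $m/2$.

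I would construct $\tau$ explicitly as an atomic measure supported on Chebyshev--Lobatto nodes. Consider the shifted Chebyshev polynomial $Q(s):=T_{m/2}\bigl(\tfrac{2s-(1+a_m^2)}{1-a_m^2}\bigr)$, whose extrema on $[a_m^2,1]$ are the $m/2+1$ nodes $a_m^2=s_{m/2}<\cdots<s_0=1$ with alternating signs $Q(s_j)=(-1)^j$. Using the hyperbolic identity $(1+a_m^2)/(1-a_m^2)=\cosh(2w)$ with $w:=\tfrac{1}{m}\mathrm{arccosh}\bigl((1+q)/(1-q)\bigr)$, one obtains $T_{m/2}\bigl((1+a_m^2)/(1-a_m^2)\bigr)=\cosh(mw)=(1+q)/(1-q)$. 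Writing the Lagrange interpolation identity $P(0)=\sum_j\ell_j(0)P(s_j)$, valid for every polynomial $P$ of degree at most $m/2$, I would define $\tau:=(1-q)\sum_j\ell_j(0)\delta_{s_j}$. A classical Chebyshev--Lobatto identity gives $\sum_j|\ell_j(0)|=T_{m/2}\bigl((1+a_m^2)/(1-a_m^2)\bigr)=(1+q)/(1-q)$, so that the Hahn--Jordan decomposition $\tau=\tau_+-\tau_-$ has positive part of total mass $\tfrac12((1+q)+(1-q))=1$ and negative part of total mass $\tfrac12((1+q)-(1-q))=q$. Setting $\tilde\nu_1:=\tau_+$ and $\tilde\nu_0:=\tau_-$ yields the desired positive measures on $[a_m^2,1]$.

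The main obstacle is to verify the sum-of-Lagrange-weights identity $\sum_j|\ell_j(0)|=T_{m/2}\bigl((1+a_m^2)/(1-a_m^2)\bigr)$, which itself relies on the fact that the signs $\mathrm{sgn}(\ell_j(0))$ alternate with $j$; this is precisely what pins down the transcendental definition of $a_m$ through the $\mathrm{arccosh}$ factor, and what ensures that the prescribed total masses $q$ and $1$ can be realized simultaneously under the positivity constraint. Once this calibration is checked, pulling back $\tilde\nu_0,\tilde\nu_1$ through the two branches $t=\pm\sqrt{s}$ and splitting each atom symmetrically delivers the required symmetric measures $\nu_0,\nu_1$ on $[-1,-a_m]\cup[a_m,1]$ satisfying \eqref{eq:condition_moment0} and \eqref{eq:condition_momentq}. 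Since the lemma is imported verbatim as Lemma~3 of \cite{carpentier2017adaptive}, the natural writeup in the present paper is to record this sketch and refer to that source for the detailed Chebyshev computations.
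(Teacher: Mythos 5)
Your construction is correct, and it is worth noting that the paper itself offers no argument for this statement: Lemma~\ref{lem:nemirovski2} is simply imported from \cite[Lemma 3]{carpentier2017adaptive}, so your explicit Chebyshev--Lagrange construction is a genuine, self-contained alternative to a bare citation. The reduction is sound: symmetry kills the odd moments, the pushforward $t\mapsto t^2$ turns the problem into matching moments of orders $1,\ldots,m/2$ on $[a_m^2,1]$ with masses $q$ and $1$, and the whole problem then rests on producing a signed atomic measure $\tau$ with $\int P\,d\tau=(1-q)P(0)$ for $\deg P\le m/2$ and total variation exactly $1+q$. Your key identity does hold: with the Lobatto nodes $1=s_0>\cdots>s_{m/2}=a_m^2$ of $Q=T_{m/2}\circ(\text{affine})$, the point $0$ lies strictly to the left of all nodes, so $\ell_j(0)\neq 0$ with sign $(-1)^{m/2-j}$, hence $\ell_j(0)(-1)^j$ has constant sign and exactness of interpolation on $Q$ gives $\sum_j|\ell_j(0)|=|Q(0)|=T_{m/2}\big(\tfrac{1+a_m^2}{1-a_m^2}\big)=\cosh(mw)=\tfrac{1+q}{1-q}$, using $a_m=\tanh(w)$ and $T_{m/2}(\cosh 2w)=\cosh(mw)$; combined with $\sum_j\ell_j(0)=1$ (interpolation of the constant polynomial), the Hahn--Jordan parts of $\tau=(1-q)\sum_j\ell_j(0)\delta_{s_j}$ have masses $1$ and $q$ as you claim, and pulling back through $t=\pm\sqrt{s}$ with equal splitting yields symmetric positive measures supported in $[-1,-a_m]\cup[a_m,1]$ matching \eqref{eq:condition_moment0} and \eqref{eq:condition_momentq}. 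This is in the same spirit as the extremal-polynomial/duality arguments behind such moment-matching lemmas (the calibration \eqref{eq:defintion_am} is exactly the value at which the shifted Chebyshev polynomial attains $\tfrac{1+q}{1-q}$ at $0$), but your primal construction has the advantage of being fully explicit and avoiding any appeal to Hahn--Banach or linear-programming duality; if you include it in a writeup, the only steps deserving a line of proof are the sign alternation of the $\ell_j(0)$ and the exactness of Lobatto interpolation on $Q$, both of which you have correctly identified as the crux.
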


Fix $q=\overline{k}_0/\overline{k}_1$. Then, given $m=  2\lfloor 2\log(p)\rfloor$, we consider the measures $\nu_0$ and $\nu_1$ as in Lemma \ref{lem:nemirovski2}. Given any measurable event $A$, we define $\mu_0$  and $\mu_1$ by 
\beq
\mu_0(A)= (1-\frac{\overline{k}_0}{p}) \delta_0[M . A] +\frac{\overline{k}_1}{p}\nu_0[M  . A] \ , \quad    \mu_1(A)= (1-\frac{\overline{k}_1}{p})\delta_0[M . A] + \frac{\overline{k}_1}{p}\nu_1[M .A] \ .
\eeq
 Here,  $M.A$ stands for $\{Mx : x\in A\}$ and $\delta_0$ is the Dirac measure at $0$.  In view of this definition, the first $m$  moments of $\mu_0$ and $\mu_1$ are matching.
\bigskip

\noindent 
\paragraph{Step 2. Properties of the priors.} We consider the prior measures $\mu_0^{\otimes p}$ and $\mu_1^{\otimes p}$. In view of Lemma \ref{lem:minimax_lower_bound_general}, we need to show that $\mu_0^{\otimes p}$ is concentrated on $\bbB_0[k_0]$ and that $\mu_1^{\otimes p}$ is concentrated on $\bbB_0[k_0,\Delta,\rho]$ for some large $\rho$.

Under $\mu_0^{\otimes p}$, $\|\theta\|_0$ follows a binomial distribution with parameter $(p, (k_0-\Delta/2)/p)$. By Chebychev's inequality, 
\beq \label{eq:lower_theta_0}
\mu_0^{\otimes p}[\|\theta\|_0> k_0]\leq 4\frac{k_0}{\Delta^2}\leq 4 p^{-1/8}\ ,
\eeq
since $\Delta \geq p^{1/4}\vee k_0^{2/3}$. 
Similarly,
\beq \label{eq:lower_theta_1} 
\mu_0^{\otimes p}[\|\theta\|_0\in (k_0+\Delta/4, k_0+ 3\Delta/4)]\geq 1 - \frac{32k_0}{\Delta^2}\geq  1 -32 p^{-1/8}\ .
\eeq
Under the event $\|\theta\|_0\in (k_0+\Delta/4, k_0+ 3\Delta/4)$, the corresponding parameter $\theta$ satisfies
\beqn
d^2_2(\theta, \bbB_0[k_0])&\geq &\frac{\Delta}{4} a^2_{m}M^2
=  \frac{c^2}{4} \Delta \frac{\log(p)}{n} \tanh^2\big[\frac{1}{m} \arg\cosh[1+ \frac{2\overline{k}_0}{\Delta}]\big]\ . 
\eeqn 
Since $ \arg\cosh[1+ 2\overline{k}_0/\Delta]\leq \arg\cosh[1+ 2p]\leq 4\log(p)$ for $p\geq 2$ and since $\tanh(t)\geq 0.4 t$ for any $t\in (0,1)$, we deduce that 
\beq \label{eq:lower_theta_2} 
d^2_2(\theta, \bbB_0[k_0]) \geq  c' \frac{\Delta}{n \log(p)} \arg\cosh^2[1+ \frac{2\overline{k}_0}{\Delta}] 
\geq  c' \frac{\Delta}{n \log(p)} \log^2[1+ \frac{k_0}{\Delta}]\ .
\eeq
$\arg\cosh(x)\geq \log(x)$. As a consequence, with probability $\mu_{1}^{\otimes p}$ larger than $1- 32 p^{-1/8}$, $\theta$ belongs to $\bbB_0[k_0,\Delta, \rho]$ with $\rho^2=  c' \frac{\Delta}{n \log(p)} \log^2[1+ \frac{k_0}{\Delta}]$. To apply Lemma \ref{lem:minimax_lower_bound_general}, it remains to bound the total variation distance between 
\[\mathbf{P}_0= \int \P_{\theta}\mu_0^{\otimes p}(d\theta)\  \quad\text{ and } \quad \mathbf{P}_1= \int \P_{\theta}\mu_1^{\otimes p}(d\theta)\ . \]

\bigskip 

\noindent
\paragraph{Step 3. Control of $\|\mathbf{P}_0-\mathbf{P}_1\|_{TV}$.} 

For $j=0, \ldots, p$, define the distribution $\mathbf{P}^{(j)}_0= \int \P_{\theta}\mu_1^{\otimes j}\otimes \mu_0^{\otimes p-j}(d\theta)$ with $\mathbf{P}^{(0)}_0=\mathbf{P}_0$ 
and $\mathbf{P}^{(p)}_0=\mathbf{P}_1$. By triangular inequality, one has 
\beq \label{eq:decomposition_TV}
 \|\mathbf{P}_0-\mathbf{P}_1\|_{TV}\leq  \sum_{j=0}^p \|\mathbf{P}^{(j)}_0-\mathbf{P}^{(j+1)}_0\|_{TV}.\
\eeq
This upper bound greatly simplifies the following computations as the distributions $\mathbf{P}^{(j)}_0$ and $\mathbf{P}^{(j+1)}_0$ only differ by one coordinate. Unfortunately, we conjecture that our minimax lower bound in Theorem~\ref{thm:lbkvkd} is suboptimal in the regime where $k_0$ is close to $\sqrt{p}$ precisely because of the upper bound~\eqref{eq:decomposition_TV}. In the arguably simpler Gaussian sequence model~\cite{carpentier2017adaptive}, we have directly computed the $\chi^2$ distances between the corresponding distributions $\mathbf{P}_0$ and $\mathbf{P}_1$ to obtain the sharp separation distance in all regimes. If we use instead the decomposition \eqref{eq:decomposition_TV} for the  Gaussian sequence model, this leads to a suboptimal lower bound for $k_0$ close to $\sqrt{p}$. To close this gap in the linear regression model, one would therefore need to directly handle the $\chi^2$ distance between $\mathbf{P}_0$ and $\mathbf{P}_1$ but we were not able to do it. 

In the following, we shall bound independently each of these $p$ distances  $\|\mathbf{P}^{(j)}_0-\mathbf{P}^{(j+1)}_0\|_{TV}$.  Interestingly, $\mathbf{P}^{(j)}_0$ and $\mathbf{P}^{(j+1)}_0$ only differ by the distribution of the $j+1$-th coordinate of $\theta$. The general idea is to condition with respect to all the coordinates except the $j+1$-th one so that we consider a linear regression model with only one covariate.

Let us write $g^{(j)}_0(Y|\bX)$ the conditional density of $Y$ given $\bX$ under $\mathbf{P}^{(j)}_0$. 
\[
g^{(j)}_0(Y|\bX) = (2\pi)^{-n/2}\int  \exp\Big(-\frac{\|Y - \bX\theta\|_2^2}{2}\Big)\mu_1^{\otimes j}\otimes \mu_0^{\otimes p-j}(d\theta).
\]
Writing down $\E_{\bX}$ the expectation with respect to $\bX$, we have
\beqn
2\|\mathbf{P}^{(j)}_0-\mathbf{P}^{(j+1)}_0\|_{TV}& = & \E_{\bX}\Big[\int \Big| g^{(j)}_0(Y|\bX) - g^{(j+1)}_0(Y|\bX)\Big|dY \Big]\nonumber\\
& =& (2\pi)^{-n/2}\E_{\bX}\Big[\int \Big|\int e^{-\|Y - \bX\theta\|_2^2/2}\mu_{0}^{\otimes j}\otimes [\mu_0-\mu_1]\otimes \mu_{1}^{\otimes p-j-1}(d\theta)  \Big|dY\Big] \nonumber \\
 & = &  (2\pi)^{-n/2}\E_{\bX}\Big[\int\Big|\int e^{-\|Y - \bX\theta\|_2^2/2}\mu_{0}^{\otimes j}\otimes \mu_{1}^{\otimes p-j-1}\otimes [\mu_0-\mu_1](d\theta)  \Big|dY\Big]\ ,  
\eeqn
by permutation invariance. We call  $A_j$ this last quantity. 

Given a $p$-dimensional vector $\theta$, let  $\theta^{(-p)}$ be such that $\theta^{(-p)}_p=0$ and $\theta^{(-p)}_j=\theta_j$ for all $j < p$. Write $\boldsymbol{\mu}_j= \mu_{0}^{\otimes j}\otimes \mu_{1}^{\otimes p-j-1}$ and $\mu_{\Delta}= \mu_0-\mu_1$.

\[
(2\pi)^{n/2}A_j= \E_{\bX}\Big[\int\Big|\int e^{-\|Y - \bX\theta^{(-p)}\|_2^2/2}  \Big[ \int e^{\omega_p \theta_p + \xi_p \theta_p^2}  \mu_{\Delta}(d\theta_p) \Big]\boldsymbol{\mu}_j(d\theta^{-(p)})\Big|dY\Big]\ , 
\]
where the quantities $\omega_p$ and $\xi_p$ are defined by 
$$\omega_p = (Y -\bX\theta^{(-p)})^T  \bX_{.,p}\ ; ~~~~ \xi_p = \sum_{i} \bX_{i,p}^2.$$
Let $\Omega$ be the event such that  $\Omega= \{ |\omega_p| \leq 5\sqrt{n\log(p)},~\xi_p\leq 2n\}$.

Fix any $\theta\in \mathbb{R}^p$ such that $\|\theta\|_{\infty}\leq M$. Then, under $\P_{\theta}$, $\xi_p$ follows a $\chi^2$ distribution with $n$ degrees of freedom. As a consequence of deviation inequalities for $\chi^2$ distributions (Lemma \ref{lem:conchi}), its probability to be larger than $2n$ is smaller than $e^{-n/16}$. Besides, conditionally to $\bX$, $\omega_p$ follows a normal distribution with mean $\theta_p\|\bX_{.p}\|_2^2$ and variance $\|\bX_{.p}\|_2^2$. As a consequence, under the event $\{\xi_p\leq 2n\}$, the probability that $|\omega_p|\geq 2Mn + 2\sqrt{2n\log(p)}$ is smaller than $1/p^2$. In view of the definition \eqref{eq:definition_m_M} of $M$ and by taking the constant $c$ in that definition small enough, we conclude that 
\begin{align}\label{eq:probaomeg}
\mathbb P_{\theta}(\Omega) \geq 1-1/p^2- e^{-n/16}\ ,
\end{align}
for all $\theta$ such that $\|\theta\|_{\infty}\leq M$.
We set
\[A_{j,\Omega}= (2\pi)^{-n/2}\E_{\bX}\Big[\int\Big|\int e^{-\|Y - \bX\theta^{(-p)}\|_2^2/2}  \Big[ \int {\bf 1}_{\Omega}e^{\omega_p \theta_p + \xi_p \theta_p^2}  \mu_{\Delta}(d\theta_p) \Big]\boldsymbol{\mu}_j(d\theta^{-(p)})\Big|dY\Big]\ , 
\]
It follows from this definition and from Equation~\eqref{eq:probaomeg} that
\begin{eqnarray}
A_j &\leq & A_{j,\Omega} + (2\pi)^{-n/2}\E_{\bX} \Big[\int \Big|\int{\bf 1}_{\overline{\Omega}}  e^{-\|Y - \bX\theta\|_2^2/2} \boldsymbol{\mu}_j(d\theta^{-(p)}) \mu_{\Delta}(d\theta_p) \Big|dY\Big] \nonumber \\
&\leq & A_{j,\Omega} + 
\sup_{\theta \in [-M, M]^p}\mathbb P_{\theta} [ \overline{\Omega}] \leq A_{j,\Omega} + \frac{1}{p^2} + e^{-n/16} . \label{eq:probaAd}
\end{eqnarray}
.

In order to work out the term $A_{j,\Omega}$, we rely on the power expansion of  $e^{\omega_p \theta_p + \xi_p \theta_p^2}$ together with the nullity of the $m$ first moments of  $\mu_{\Delta}$. 
\beqn 
\Big|\int {\bf 1}_{\Omega}e^{\omega_p \theta_p + \xi_p \theta_p^2}  \mu_{\Delta}(d\theta_p) \Big|& =& \Big|\sum_{d=m/2+1}^{\infty}\int  {\bf 1}_{\Omega}\frac{\Big(\omega_p\theta_p + \xi_p \theta_p^2/2  \Big)^d}{d!} \mu_{\Delta}(d\theta_p)  \Big|
\\ 
& \leq & \frac{\overline{k}_1}{p}\sum_{d=m/2+1}^{\infty} \frac{\Big(5 M\sqrt{n\log(p)}   + n M^2  \Big)^d}{d!}\\
& \leq & \frac{\overline{k}_1}{p}\sum_{d=m/2+1}^{\infty} \Big[\frac{2e(5M\sqrt{n\log(p)} +  nM^2) }{m} \Big]^d \leq  \frac{\overline{k}_1}{p2^{m/2}} \ ,
\eeqn 
since, by \eqref{eq:definition_m_M}, $m\geq 10e (5M\sqrt{n\log(p)} +  nM^2)$ if we fix $c \leq 1/(10e)$.
Plugging this bound into the definition of $A_{j,\Omega}$, we obtain

\beq
A_{j,\Omega} \leq  \frac{\overline{k}_1}{p2^m}(2\pi)^{-n/2}\E_{\bX}\Big[\int\int e^{-\|Y - \bX\theta^{(-p)}\|_2^2/2}  \boldsymbol{\mu}_j(d\theta^{-(p)})dY\Big] \leq \frac{\overline{k}_1}{p2^m}\leq  \frac{1}{p^2}\ ,
\eeq
by definition  \eqref{eq:definition_m_M} of $m$. 
Together with~\eqref{eq:probaAd}, this implies that $A_{j}\leq 2/p^2+ e^{-n/16}$. Then, we use the definition of $A_j$ and~\eqref{eq:decomposition_TV} to conclude that 
\[
 \|\mathbf P_0 - \mathbf P_1\|_{TV} \leq \frac{2}{p}+ pe^{-n/16}\ ,
\]
which is smaller than $1/4$ for $p$ large enough since the assumptions of Lemma~\ref{lem:lower_moments} enforce that $p^{1/4}\leq n$. 
In view of the above bound, \eqref{eq:lower_theta_0}, \eqref{eq:lower_theta_1}, and \eqref{eq:lower_theta_1}, we are in position to apply Lemma \ref{lem:minimax_lower_bound_general}. Thus, for $p$ large enough, we conclude that 
\[
\rho^{*2}_{\gamma}[k_0,\Delta] \geq c \frac{\Delta}{n \log(p)} \log^2\big[1+ \frac{k_0}{\Delta}\big]\ . 
\]

\end{proof}

\subsection{Proof of Proposition \ref{prp:lbuvkd}}

Since $\boldsymbol{\rho}_{g,\gamma}^{*}[k_0,\Delta]\geq \rho_{\gamma}^*[k_0,\Delta]$, the second part of \eqref{eq:lower_lbuvkd} comes from Theorem \ref{thm:lbkvkd}. Turning to the first part of \eqref{eq:lower_lbuvkd}, we have already pointed out in the proof of Lemma \ref{lem:tierce} that it is proved in~\cite{2010_AS_Verzelen} and \cite{verzelen_minimax} that, for all $(k,n,p)$ one has 
\[
\boldsymbol{\rho}_{g,\gamma}^{*2}[0,k] \geq c \left\{\begin{array}{cc}
\frac{k}{2n}\log\left(1+\frac{\sqrt{p}}{k} \right)&\\
-1 + \left(\frac{p}{2ek} \right)^{k/n}(4k)^{-2/n}&\text{ if } k\log\left(\frac{\sqrt{p}}{e^{3/2}k}\right)\geq n.
                                                                     \end{array}\right.
\]
These two bounds imply that, for $p$ large enough and for all $\Delta\leq \sqrt{p}/e^3$, one has
\[
\boldsymbol{\rho}_{g,\gamma}^{*2}[0,\Delta] \geq c \frac{\Delta}{n}\log\left(1+\frac{\sqrt{p}}{\Delta}\right)\exp\left[c'\frac{\Delta}{n}\log\left(1+\frac{\sqrt{p}}{\Delta}\right)  \right].
\]
Since $\boldsymbol{\rho}_{g,\gamma}^{*}[0,\Delta]$ is nondecreasing with respect to $\Delta$ (Lemma \ref{lem:reduction}), the above bound is also valid for all $\Delta\leq p$ at the price of worse constants. Finally, we apply Lemma \ref{lem:reduction} together with the assumption $k_0\leq n\leq p/2$ to obtain the first part of \eqref{eq:lower_lbuvkd}.

\section*{Acknowledgements.} The work of A. Carpentier is partially supported by the Deutsche Forschungsgemeinschaft (DFG) Emmy Noether grant MuSyAD (CA 1488/1-1), by the DFG - 314838170, GRK 2297 MathCoRe, by the DFG GRK 2433 DAEDALUS, by the DFG CRC 1294 'Data Assimilation', Project A03, and by the UFA-DFH through the French-German Doktorandenkolleg CDFA 01-18. The authors thank anonymous reviewers for their helpful suggestions that improved the manuscript. The authors are also grateful to Alexandre Tsybakov and Cun-Hui Zhang for bringing to our knowledge some recent work on MCP.

\bibliography{biblio}
\bibliographystyle{plain}

\end{document}